\newtheorem{theo}{Theorem}[section]
\newtheorem{coro}[theo]{Corollary}
\newtheorem{lemm}[theo]{Lemma}
\newtheorem{prop}[theo]{Proposition}
\newtheorem{rema}[theo]{Remark}
\newtheorem{defi}[theo]{Definition}
\numberwithin{equation}{section}
\begin{document}

\title[Quasi-periodic solutions to nonlinear beam equation]{
Quasi-periodic solutions to nonlinear beam equation on Compact Lie Groups with a multiplicative potential}

\author{Bochao Chen}
\address{School of
Mathematics and Statistics, Center for Mathematics and
Interdisciplinary Sciences, Northeast Normal University, Changchun, Jilin 130024, P.R.China}
\email{chenbc758@nenu.edu.cn}

\author{Yixian Gao}
\address{School of
Mathematics and Statistics, Center for Mathematics and
Interdisciplinary Sciences, Northeast Normal University, Changchun, Jilin 130024, P.R.China}
\email{gaoyx643@nenu.edu.cn}

\author{Shan Jiang}
\address{School of
Mathematics and Statistics, Center for Mathematics and
Interdisciplinary Sciences, Northeast Normal University, Changchun, Jilin 130024, P.R.China}
\email{jiangs973@nenu.edu.cn}

\author{Yong Li}
\address{School of
Mathematics and Statistics, Center for Mathematics and
Interdisciplinary Sciences, Northeast Normal University, Changchun, Jilin 130024, P.R.China}
\email{yongli@nenu.edu.cn}

\thanks{The research  of YG was  supported in part by
 NSFC grant 11671071,  NSFJL grant: 20160520094JH, 20170101044JC and EDJL grant JJKH20170904KJ. The research of YL was supported in part by NSFC grant: 11571065, 11171132 and National Research Program of China Grant
2013CB834100}

\keywords{Beam equations; Compact Lie groups; Multiplicative potential;  Quasi-periodic solutions;  Nash-Moser iteration.}

\begin{abstract}
The goal of this work is to study the  existence  of quasi-periodic solutions in time to  nonlinear
beam equations with a multiplicative potential. The  nonlinearities are required  to only  finitely differentiable and  the frequency is along a pre-assigned direction.
The result holds on any compact Lie group or  homogenous manifold with respect to a compact Lie group, which includes the standard torus $\mathbf{T}^{d}$, the special orthogonal group $SO(d)$, the special unitary group $SU(d)$, the spheres $S^d$ and  the real and complex Grassmannians.
  The proof is based on
 a  differentiable  Nash-Moser iteration scheme.
\end{abstract}

\maketitle

\section{Introduction}
This paper concerns    the existence of quasi-periodic solutions of the forced nonlinear beam equation
\begin{align}\label{E1.1}
u_{tt}+\Delta^2u+V( \boldsymbol x)u=\epsilon f(\omega t, \boldsymbol x,u),\quad  \boldsymbol x\in \boldsymbol{M},
\end{align}
where $\bf{\boldsymbol M}$ is any simply connected compact Lie group with dimension $d$ and rank $r$, $\epsilon>0$,
the frequency $\omega\in\mathbf{R}^{\nu}$, $V\in C^q(\boldsymbol M;\mathbf{R})$ and $f\in C^q(\mathbf{T}^\nu\times \boldsymbol M\times \mathbf{R};\mathbf{R})$, where $q$ is large enough. Assume that the frequency vector $\omega$ satisfies
\begin{align}\label{E1.8}
\omega=\lambda{\omega}_0,\quad\lambda\in\Lambda:=[1/2,3/2], \quad|{\omega}_{0}|\leq1,
\end{align}
where $|\cdot|$ will be  defined later  in \eqref{E1.6}. For some $\gamma_0>0$, the following Diophantian condition holds:
\begin{align}\label{E1.96}
|{\omega}_0\cdot l|\geq 2\gamma_0|l|^{-\nu},\quad\forall l\in\mathbf{Z}^{\nu}\backslash\{0\}.
\end{align}
Moreover we suppose
\begin{align}\label{E1.84}
\Delta^2+V(\boldsymbol x)\geq \kappa_0\mathrm{I}\quad\text{with}\quad\kappa_0>0.
\end{align}

Equation \eqref{E1.1} is interesting by itself. It is  derived from the following Euler-Bernoulli beam equation
\begin{equation*}
\frac{\mathrm{d}^{2}}{\mathrm{d}x^{2}}\left(EI\frac{\mathrm{d}^{2}u}{\mathrm{d}x^{2}}\right)=\mathfrak{g},
\end{equation*}
which describes the relationship between  the applied load and the beam's deflection, where the curve $u(x)$ describes the deflection of the beam at some position $x$ in the $z$ direction, $\mathfrak{g}$ is distributed load which may be a function of $x$, $u$ or other variables, $I$ is the second moment of area of the beam's cross-section, $E$ is the elastic modulus, the product $EI$  is the flexural rigidity. Derivatives of the deflection $u$ have significant physical significance: $u_x$ is the slope of the beam, $-EI u_{xx}$ is the bending moment of the beam and $-(EIu_{xx})_{x}$ is the shear force of the beam. The dynamic beam equation is the Euler-Lagrange equation
\begin{equation}\label{E4.2}
\mathfrak{m}\frac{\partial^{2}u}{\partial t^{2}}+\frac{\partial^{2}}{\partial x^{2}}\left(EI\frac{\partial^{2}u}{\partial x^{2}}\right)=\mathfrak{g},
\end{equation}
where $\mathfrak{m}$ is the mass per unit length. If $E$ and $I$ are independent of $x$, then equation \eqref{E4.2} can be reduced to
\begin{align*}
\mathfrak{m}u_{tt}+EIu_{xxxx}=\mathfrak{g}.
\end{align*}
After a time rescaling $t\rightarrow ct$ with $c=\sqrt{\frac{EI}{\mathfrak{m}}}$, we obtain
\begin{align*}
u_{tt}+u_{xxxx}=\tilde{\mathfrak{g}}\quad \text{with}\quad\tilde{\mathfrak{g}} = \frac{\mathfrak{m}}{EI}\mathfrak{g}.
\end{align*}

The search for periodic or quasi-periodic solutions  to nonlinear PDEs has a long standing tradition.
 There  are  two main approaches: one is the infinite-dimensional KAM (Kolmogorov-Arnold-Moser)
theory to  Hamiltonian PDEs, refer to
 Kuksin \cite{Kuksin1987}, Wayne \cite{Wayne1990},
P\"{o}schel \cite{poschel1996quasi}. The main difficulty, namely the presence of
arbitrarily “small divisors” in the expansion series of the solutions, is handled via KAM
theory. Later, another more direct bifurcation approach was
 established by  Craig
 and Wayne \cite{craig1993newton} and improved by Bourgain \cite{bourgain1998quasi,bourgain1995construction,bourgain1994construction}  based on Lyapunov-Schmidt
procedure,
 to solved
the small divisors problem, for periodic solutions, with an analytic Newton iterative  scheme.
This approach  is often called as the Craig-Wayne-Bourgain method, which is different from KAM. The main
advantage for this approach is to require only the so called “first order Melnikov” non-resonance conditions for solving the linearized equations  at each step of the iteration.

Up to now,  the existence of quasi-periodic solutions of the
 nonlinear beam equations using KAM theory and Nash-Moser iteration  have received much attention by the mathematical communities. For the $1$ D beam equation,  when $f(\omega t, x, u) = O(u^3)$ is an analytic, odd function, in \cite{geng2003kam} they  proved the existence of   linearly stable
small-amplitude quasi-periodic solution  by an infinite KAM theorem \cite{Poschel1996kam}.  In \cite{Gao2015quasi}, Chang, Gao, and
Li got quasi-periodic solutions for the 1-dimensional nonlinear beam equation with prescribed
frequencies under Dirichlet boundary condition. Later, Wang and
Si in \cite{Wang2012result}
 considered 1-dimensional beam equation
with quasi-periodically forced perturbations $f(\omega t,  x, u)=\epsilon \phi(t) h(u)$.  Also,  in \cite{Liang2006Quasi} they studied  the  existence of
 quasi-periodic solutions of the 1-dimensional completely resonant nonlinear beam equation.
  Some KAM-theorems for small-amplitude solutions of
equations \eqref{E1.1} on $\mathbf{T}^{d}$  with typical $V(\boldsymbol x) =m$   were obtained
in \cite{gengyou2006kam,geng2006kam}.   Both works treat
equations with a constant-coefficient analytical nonlinearity $f(\omega t, \boldsymbol x, u)=f (u).$
 Subsequently, by a KAM type theorem,
 Mi and Cong in \cite{Lufang2015Quasi} proved the  existence of  quasi-periodic solutions
of the nonlinear beam equation
\begin{equation*}
u_{tt}+\Delta^{2}u+V(\boldsymbol x)\ast
u+\epsilon\Big(-\sum_{i,j=1}^{d}b_{ij}(u,\nabla u)\partial_{i}\partial_{j}u+g(u,\nabla u)\Big)=0, \quad \boldsymbol  x\in \mathbf{T}^{d}
\end{equation*}
and got the linear stability for corresponding solutions.
In perturbations term, $\nabla u\equiv(\partial_{x_1}u,\partial_{x_2}u,\cdots,\partial_{x_d}u)$ the derivatives of $u$ with respect to the space variables, and $b_{ij},g$ are real analytic function.
 In \cite{Eliasson2016beam}, Eliasson, Gr\'{e}bert, and Kuksin
proved that for  the nonlinear beam equation
\begin{equation*}
u_{tt}+\Delta^{2}u+mu+\partial_{u}f(\boldsymbol x,u)=0,\quad \boldsymbol  x\in\mathbf{T}^{d},
\end{equation*}
where $f(\boldsymbol x,u)=u^4+O(u^5)$, has many linearly stable or unstable small-amplitude quasi-periodic solutions.
 All above proofs depend on
some type of KAM techniques and are carried out in analytic nonlinearities cases. However, there is
no existence result for nonlinear beam equation with perturbations having only finitely differentiable
regularities. Recently, in \cite{Shi2016On}, based on a Nash-Moser type implicit function theorem, Shi prove
the existence of quasi-periodic solution of the following beam equation
\begin{align}
u_{tt}+\Delta^{2}u+mu=\epsilon f(\omega t, \boldsymbol x, u),\quad \boldsymbol  x\in\mathbf{T}^{d},
\end{align}
where $f$ is finitely differentiable, $\omega $ is the frequency, $m>0$. Note that these previous results are confined to tori on existence of quasi-periodic solutions of the nonlinear beam equation.
The reason why these results are confined to tori is that their proofs require specific
properties of the eigenvalues, while the eigenfunctions must be the exponentials or, at least,
strongly ``localised close to exponentials''.
 According to the harmonic analysis on compact Lie groups and the theory of
the highest weight which provides an accurate description of the eigenvalues of the Lapalce-Beltrami
operator as well as the multiplication rules of its eigenfunctions.
In \cite{Berti2011Duke} Berti and Procesi proved the  existence
of time-periodic solutions for NLW and NLS on $\boldsymbol M$, where $\boldsymbol M$ is any compact Lie group or  homogenous manifold
with respect to a compact Lie group. Later, in \cite{berti2015abstract}, Berti, Corsi and Procesi extended this result
to the case of quasi-periodic solutions in time of the following NLW and NLS:
\begin{align*}
u_{tt}-\Delta u+mu=\epsilon f(\omega t, \boldsymbol x,u),\quad\mathrm{i}u_{t}-\Delta u+mu=\epsilon f(\omega t, \boldsymbol x,u),\quad \boldsymbol x\in \boldsymbol M.
\end{align*}
Moreover, in \cite{Berti2012nonlinearity,Berti2013Quasi}, Berti and Bolle considered  the NLW and NLS respectively, with a multiplicative potential:
\begin{align*}
u_{tt}-\Delta u+V(\boldsymbol x)u=\epsilon f(\omega t, \boldsymbol x,u),\quad\mathrm{i}u_{t}-\Delta u+V(\boldsymbol x)u=\epsilon f(\omega t,\boldsymbol x,u),\quad \boldsymbol x\in \mathbf{T}^d.
\end{align*}
In the present  paper,
our goal is to  prove the existence of quasi-periodic solutions in time of the nonlinear beam equation \eqref{E1.1} with a multiplicative potential $V( \boldsymbol x)$  and finite regularity nonlinearities
 on any compact Lie group or  homogenous manifold with respect to a compact Lie group.
 There are three main difficulties in this work:  (i)  a multiplicative potential in  higher dimensions. The eigenvalues of the operator  $\Delta^2 + V (\boldsymbol x)$ appear in clusters of unbounded sizes and the the
 eigenfunctions  are (in general) not localized with respect to the exponentials. We will use the similar  properties of the eigenvalues and the eigenfunctions of the opearator $-\Delta + V (\boldsymbol x)$  in \cite{berti2015abstract}.
 (ii) the finite differentiable regularities of the nonlinearity. Clearly,  a difficulty when working with functions having only
Sobolev regularity is that the Green functions will exhibit only a polynomial decay off
the diagonal, and not exponential (or subexponential). A key concept one must exploit is
the interpolation/tame estimates. (iii) the nonlinear beam equation  are defined not only
on tori, but on any compact Lie group or  homogenous manifold with respect to a compact Lie group, which includes the standard torus $\mathbf{T}^{d}$, the special orthogonal group $SO(d)$, the special unitary group $SU(d)$, the spheres $S^d$, the real and complex Grassmannians, and so on, recall \cite{Brocker1995representation}.


The rest of the paper is organized  as follows: we state the main result (see Theorem \ref{theo2}) and introduce several notations in subsection \ref{sec:2.1}. In subsection \ref{sec:2.2}, we define the strong $s$-norm of a matrix $M$ and introduce its properties. Section \ref{sec:3} is devoted to give the iterative theorem, see Theorem \ref{theo1}. In subsection \ref{sec:3.1}, we give a multiscale analysis of the linearized operators $\mathfrak{L}_{N}(\epsilon,\lambda,u)$ (recall \eqref{E3.25}) as \cite{Berti2012nonlinearity}, see Proposition \ref{pro1}.  Our aim is to check that the assumption $(\mathrm{A3})$ in Proposition \ref{pro1} holds in subsection \ref{sec:3.2}. Under that Proposition \ref{pro1} and \ref{pro3} hold, we have to remove some $\lambda$ in $\Lambda$, recall \eqref{E1.8}. In subsection \ref{sec:3.3}, the measure of the excluded $\lambda$ satisfies \eqref{E3.10} and \eqref{E3.77} respectively. In subsection \ref{sec:3.4}, we establish Theorem \ref{theo1} and give the proof. At the end of the construction, we prove that the measure of the parameter $\lambda$ satisfying Theorem \ref{theo2} is a large measure Cantor-like set in subsection \ref{sec:3.5}.  Finally, in section \ref{sec:4}, we list the the proof of some related results for the sake of completeness.

\section{Main results}
\subsection{Notations}\label{sec:2.1}
After a time rescaling $\varphi=\omega\cdot t$,  we consider the existence of  solutions $u(\varphi,\boldsymbol x)$ of
\begin{align}\label{E1.2}
(\lambda\omega_0\cdot \partial_{\varphi})^2u+\Delta^2 u+V(\boldsymbol x)u=\epsilon f(\varphi, \boldsymbol x,u),\quad \boldsymbol x\in \boldsymbol M.
\end{align}
Define an index set $\mathfrak{N}$ as
\begin{align*}
\mathfrak{N}:=\mathbf{Z}^{\nu}\times \Gamma_{+}(\boldsymbol M)\quad\text{with}\quad\Gamma_{+}(\boldsymbol M):=\left\{{j}\in\mathbf{R}^r:~{j}=\sum\limits_{k=1}^{r}j_k\mathrm{\textbf{w}}_k,~ j_k\in\mathbf{N}\right\},
\end{align*}
where $\Gamma_{+}(\boldsymbol M)$ is contained in an $r$-dimensional lattice (in general not orthogonal)
\begin{align*}
\Gamma:=\left\{{j}\in\mathbf{R}^r:\quad j=\sum\limits_{k=1}^{r}j_k\mathrm{\textbf{w}}_k,\quad j_k\in\mathbf{Z}\right\}
\end{align*}
generated by independent vectors $\mathrm{\textbf{w}}_1,\cdots,\mathrm{\textbf{w}}_r\in\mathbf{R}^r$. There exists an integer $\mathfrak{z}\in\mathbf{N}$ such that the fundamental weights satisfy
\begin{align}\label{E1.56}
\textbf{w}_k\cdot\textbf{w}_{k'}\in \mathfrak{z}^{-1}\mathbf{Z}, \quad\forall k,k'=1,\cdots,r.
\end{align}
Moreover $\Gamma_{+}(\boldsymbol M)$ is required to satisfy a product structure, namely
\begin{align}\label{E1.95}
j&=\sum\limits_{k=1}^{r}{j_{k}\mathrm{\textbf{w}}_{k}},\quad j'=\sum\limits_{k=1}^{r}{j'_{k}\mathrm{\textbf{w}}_{k}}\nonumber\\
&\Rightarrow j''=\sum\limits_{k=1}^{r}{j''_{k}\mathrm{\textbf{w}}_{k}}\in\Gamma_{+}(\boldsymbol M) \quad\text{if}\quad\min\left\{j_k,j'_k\right\}\leq j''_k\leq\max{\{j_k, j'_k\}},\quad\forall~k=1,\cdots,r.
\end{align}
Remark that \eqref{E1.95} is used only in the proof of Lemma \ref{lemma5}.

We briefly recall the relevant properties of harmonic analysis on compact Lie group, see \cite{Berti2011Duke}. The eigenvalues of the Laplace-Beltrami operator $\Delta$ on $\boldsymbol M$ are
\begin{align*}
\lambda_j:=-\|j+\rho\|^2+\|\rho\|^2
\end{align*}
with respect to the the eigenfunctions
\[\textbf{e}_{j,p}(\boldsymbol x),\quad \boldsymbol x\in\boldsymbol M,\quad j\in\Gamma_{+}(\boldsymbol M),\quad p=1,\cdots,\mathfrak{d}_j,\] where
 $\|\cdot\|$ stands for the Euclidean norm on $\mathbf{R}^r$, $\rho:=\sum_{k=1}^{r}\mathrm{\textbf{w}}_k$,
$\textbf{e}_{j}(\boldsymbol x)$ is
 the (unitary) matrix associated to an irreducible unitary representations $(\mathrm{R}_{\mathrm{V}_{j}},\mathrm{V}_{j})$ of
$\boldsymbol M$, namely
\begin{align*}
(\textbf{e}_{j}(\boldsymbol x))_{p,p'}=\langle\mathrm{R}_{\mathrm{V}_{j}}(\boldsymbol x)\mathrm{v}_{p},\mathrm{v}_{p'}\rangle,~~~\mathrm{v}_{p},\mathrm{v}_{p'}\in
\mathrm{V}_{j},
\end{align*}
where $(\mathrm{v}_{p})_{p=1,\ldots,\mathrm{dim}\mathrm{V}_{j}}$ is an orthonormal basis of the finite dimensional euclidean space $\mathrm{V}_{j}$ with scalar product $\langle\cdot,\cdot\rangle$. Denote by $\mathcal{N}_{j}$ the eigenspace of $\Delta$ with respect to $\lambda_j$. The degeneracy of the eigenvalue $\lambda_j$ satisfies
\begin{align*}
\mathfrak{d}_j\leq \|j+\rho\|^{d-r}.
\end{align*}
Furthermore, by the Peter-Weyl theorem, we have the following
 orthogonal decomposition
\begin{align*}
    L^{2}(\boldsymbol M)=\bigoplus_{j\in\Gamma_{+}(\boldsymbol M)}\mathcal{N}_{j}.
\end{align*}
Given $\mathfrak{n}={(l,j)}\in\mathfrak{N}$ and $\mathfrak{U},\mathfrak{U}_{1},\mathfrak{U}_{2}\subset \mathfrak{E}\subset\mathfrak{N}$, define
\begin{align}\label{E1.6}
&|\mathfrak{n}|:=\max{\{|l|,|j|\}},\quad|l|:=\max_{1\leq k\leq \nu}|l_k|,\quad|j|:=\max_{1\leq k\leq r}|j_k|;\\
\mathrm{diam}(\mathfrak{E})&:=\sup_{\mathfrak{n},\mathfrak{n}'\in \mathfrak{E}}|\mathfrak{n}-\mathfrak{n}'|,
\quad \mathrm{d}(\mathfrak{U}_{1},\mathfrak{U}_{2}):=\inf_{\mathfrak{n}\in\mathfrak{U}_{1},\mathfrak{n}'\in\mathfrak{U}_{2}}|\mathfrak{n}-\mathfrak{n}'|,
\quad \mathrm{d}(\mathfrak{n},\mathfrak{U}):=\inf_{\mathfrak{n}'\in\mathfrak{U}}|\mathfrak{n}-\mathfrak{n}'|.\nonumber
\end{align}
\begin{rema}
We set $\mathfrak{n}-\mathfrak{n}'=0$ if $\mathfrak{n}-\mathfrak{n}'\in\mathbf{Z}^{\nu}\times(\Gamma\setminus \Gamma_{+}(\boldsymbol M))$.
\end{rema}
For some constants $c_2>c_1>0$, the following  holds:
\begin{align}\label{E1.4}
c_1|\mathfrak{n}|\leq \sqrt{\|l\|^2+\|j+\rho\|^2}\leq c_2|\mathfrak{n}|, \quad\forall \mathfrak{n}=(l,j)\in\mathfrak{N}.
\end{align}
Decomposing
\begin{align*}
u(\varphi,\boldsymbol x)=\sum\limits_{\mathfrak{n}\in\mathfrak{N}}\textbf{u}_\mathfrak{n}e^{\mathrm{i}l\cdot\varphi}
\textbf{e}_{j}(\boldsymbol x)=\sum\limits_{(l,j)\in\mathfrak{N}}e^{\mathrm{i}l\cdot\varphi}\sum\limits_{p=1}^{\mathfrak{d}_j}u_{l,j,p}\textbf{e}_{j,p}(\boldsymbol x),
\end{align*}
the Sobolev space $H^s$ is defined by
\begin{align}\label{E1.3}
H^s:=H^s(\mathfrak{N};\mathbf{R}):=\left\{u=\sum\limits_{\mathfrak{n}\in\mathfrak{N}}\textbf{u}_\mathfrak{n}e^{\mathrm{i}l\cdot\varphi}\textbf{e}_{j}(\boldsymbol x):\textbf{u}_\mathfrak{n}\in\mathbf{C}^{\mathfrak{d}_j},~\|u\|^2_s=\sum\limits_{\mathfrak{n}\in\mathfrak{N}}\langle w_{\mathfrak{n}}\rangle^{2s}\|\textbf{u}_{\mathfrak{n}}\|^2_0<+\infty\right\}
\end{align}
with $\langle w_\mathfrak{n}\rangle:=\max\{c_1,1,~(\|l\|^2+\|j+\rho\|^2)^{1/2}\}$, where $c_1$ is seen in \eqref{E1.4}, $\|\textbf{u}_\mathfrak{n}\|^2_0:=2\pi\sum_{p=1}^{\mathfrak{d}_j}|u_{l,j,p}|^2$.
There also exist ${b}_2>{b}_1>0$ such that
\begin{equation}\label{E1.50}
{b}_1|j|\leq\|j\|\leq {b}_2|j|,\quad\forall j\in\Gamma_{+}(\boldsymbol M).
\end{equation}

For $s\geq s_0>(\nu+d)/2$, the Sobolev space ${H}^s$ has the following properties:
\begin{align*}
(\mathrm{1})\quad&\|uv\|_{{s}}\leq C(s)\|u\|_{s}\|v\|_{s}, \quad\forall u,v\in{H}^s;\\
(\mathrm{2})\quad&\|u\|_{L^{\infty}}\leq C(s)\|u\|_{s},\quad\forall u\in H^s;\\
(\mathrm{3})\quad&\|uv\|_s\leq C(s)(\|u\|_s\|v\|_{s_0}+\|u\|_{s_0}\|v\|_s),\quad\forall u,v\in{H}^s.
\end{align*}
The above properties $(\mathrm{1}),(\mathrm{2})\text{ and }(\mathrm{3})$ are also seen in {\cite[Lemma 2.13]{Berti2011Duke}}.

Let $V(\boldsymbol x)=m+\bar{V}(\boldsymbol x)$, where $m$ is the average of $V(\boldsymbol x)$ and $\bar{V}$ has zero average. Define the composition operator on Sobolev spaces
\begin{align}\label{E1.102}
F:\quad H^s&\rightarrow H^s,\quad u\mapsto f(\varphi,\boldsymbol x,u),
\end{align}
where  $f\in C^q(\mathbf{T}^{\nu}\times\boldsymbol M\times\mathbf{R};\mathbf{R})$. The core of a Nash-Moser iteration is the invertibility of the following linearized operator
\begin{align}\label{E1.73}
{\mathfrak{L}}(\epsilon,\lambda,u):=L_{\lambda}-\epsilon(\mathrm{D}F)(u)=&L_\lambda-\epsilon (\partial_{u}f)(\varphi, \boldsymbol x,u)
=D_\lambda+\bar{V}(\boldsymbol x)-\epsilon (\partial_{u}f)(\varphi, \boldsymbol x,u),
\end{align}
where
\begin{align}\label{E1.75}
&L_\lambda:=(\lambda{\omega}_0\cdot\partial_{\varphi})^2+\Delta^2+V(\boldsymbol x),\quad D_\lambda:=(\lambda{\omega}_0\cdot\partial_{\varphi})^2+\Delta^2+m.
\end{align}
In the Fourier basis $e^{\mathrm{i}l\cdot\varphi}\textbf{e}_{j}(\boldsymbol x)$, the operator ${\mathfrak{L}}(\epsilon,\lambda,u)$ (see \eqref{E1.73}) is represented by the infinite-dimensional self-adjoint matrix
\begin{align}\label{E1.72}
\mathcal{A}(\epsilon,\lambda,u):=\mathcal{D}(\lambda)+\mathcal{T}(\epsilon,u)=\mathcal{D}(\lambda)+\mathcal{T}'-\epsilon\mathcal{T}''(u),
\end{align}
where $\mathcal{D}(\lambda):=\mathrm{diag}_{\mathfrak{n}\in\mathfrak{N}}(\mu_{\mathfrak{n}}(\lambda)\mathrm{I}_{\mathfrak{d}_j})$, with $\mu_{\mathfrak{n}}(\lambda)=-{(\lambda{\omega}_0\cdot l)}^2+\lambda^2_j+m$, and
\begin{align}
{(\mathcal{T}')}^{\mathfrak{n}'}_{\mathfrak{n}}:=(\bar{V})_{j-j'},\quad {(\mathcal{T}'')}^{\mathfrak{n}'}_{\mathfrak{n}}:=(a)_{\mathfrak{n}-\mathfrak{n}'}=a_{l-l'}(j-j')\label{E1.100}
\end{align}
with $a(\varphi, \boldsymbol x):=(\partial_{u}f)(\varphi,\boldsymbol x,u(\varphi, \boldsymbol x))$. Similarly, we also define
\begin{align}\label{E1.101}
{\mathfrak{L}}(\epsilon,\lambda,u,\theta):=&L_{\lambda}(\theta)-\epsilon(\mathrm{D}F)(u)=L_\lambda(\theta)-\epsilon (\partial_{u}f)(\varphi, \boldsymbol x,u)\nonumber\\
=&D_\lambda(\theta)+\bar{V}(\boldsymbol x)-\epsilon (\partial_{u}f)(\varphi, \boldsymbol x,u),
\end{align}
where $F$ is seen in \eqref{E1.102}, and
\begin{align}\label{E1.85}
&L_\lambda(\theta)=(\lambda{\omega}_0\cdot\partial_{\varphi}+\mathrm{i}\theta)^2+\Delta^2+V(\boldsymbol x),\quad D_\lambda(\theta)=(\lambda{\omega}_0\cdot\partial_{\varphi}+\mathrm{i}\theta)^2+\Delta^2+m.
\end{align}
For all $\theta\in\mathbf{R}$, the operator ${\mathfrak{L}}(\epsilon,\lambda,u,\theta)$ (see \eqref{E1.101}) is represented by the infinite-dimensional self-adjoint matrix depending on $\theta$
\begin{align}\label{E1.97}
\mathcal{A}(\epsilon,\lambda,u,\theta):=\mathcal{D}(\lambda,\theta)+\mathcal{T}(\epsilon,u)=\mathcal{D}(\lambda,\theta)+\mathcal{T}'
-\epsilon\mathcal{T}''(u),
\end{align}
where $\mathcal{D}(\lambda,\theta):=\mathrm{diag}_{\mathfrak{n}\in\mathfrak{N}}(\mu_{\mathfrak{n}}(\lambda,\theta)\mathrm{I}_{\mathfrak{d}_j})$, with $\mu_{\mathfrak{n}}(\lambda,\theta)=-{(\lambda{\omega}_0\cdot l+\theta)}^2+\lambda^2_j+m$, and $\mathcal{T}',\mathcal{T}''$ are given in \eqref{E1.100}. In addition denote by $\mathcal{A}_{N,l_0,j_0}(\epsilon,\lambda,u,\theta)$ the submatrices of $\mathcal{A}(\epsilon,\lambda,u,\theta)$ centered at ($l_0,j_0$), where
\begin{align}\label{E1.94}
\mathcal{A}_{N,l_0,j_0}(\epsilon,\lambda,u,\theta):=\mathcal{A}_{|l-l_0|\leq N,|j-j_0|\leq N}(\epsilon,\lambda,u,\theta).
\end{align}
We use the simpler notations
\begin{align}
&\mathcal{A}_{N,j_0}(\epsilon,\lambda,u,\theta):=\mathcal{A}_{N,0,j_0}(\epsilon,\lambda,u,\theta)\quad\text{if}\quad l_0=0;\label{E2.48}\\
&\mathcal{A}_{N}(\epsilon,\lambda,u,\theta):=\mathcal{A}_{N,0,0}(\epsilon,\lambda,u,\theta)\quad\text{if}\quad(l_0,j_0)=(0,0);\label{E2.49}\\
&\mathcal{A}_{N,j_0}(\epsilon,\lambda,u):=\mathcal{A}_{N,0,j_0}(\epsilon,\lambda,u,0)\quad\text{if}\quad l_0=0,\theta=0;\label{E2.50}\\
&\mathcal{A}_{N}(\epsilon,\lambda,u):=\mathcal{A}_{N,0,0}(\epsilon,\lambda,u,0)\quad\text{if}\quad l_0=0,j_0=0,\theta=0.\label{E2.51}
\end{align}
Clearly, the following crucial covariance property holds:
\begin{align}\label{E1.93}
\mathcal{A}_{N,l_0,j_0}(\epsilon,\lambda,u,\theta)=\mathcal{A}_{N,j_0}(\epsilon,\lambda,u,\theta+\lambda{\omega}_0\cdot l_0).
\end{align}

The main result of this paper is
\begin{theo}\label{theo2}
Let $\bf{\boldsymbol M}$ be any simply connected compact Lie group with dimension $d$ and rank $r$. Assume  \eqref{E1.96} holds,  then there exist $s:=s(\nu,d,r)$, $q:=q(\nu,d,r)\in\mathbf{N}$,$\epsilon_0>0$, a map
\begin{align*}
u(\epsilon,\cdot)\in C^1(\Lambda;H^s) \quad\text{with}\quad u(0,\lambda)=0,
\end{align*}
and a Cantor-like set $\mathscr{D}_{\epsilon}\subset\Lambda$ of asymptotically full Lebesgue measure, namely
\begin{align*}
meas(\mathscr{D}_{\epsilon})\rightarrow1 \quad\text{as}\quad \epsilon\rightarrow 0,
\end{align*}
such that, for all $V\in C^ q$ satisfies \eqref{E1.84}, $f\in C^q$ and $\lambda\in\mathscr{D}_{\epsilon}$, $u(\epsilon,\lambda)$ is a solution of \eqref{E1.2} with $\omega=\lambda\omega_0$.
\end{theo}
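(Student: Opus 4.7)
The plan is to construct $u(\epsilon,\lambda)$ as the limit of a differentiable Nash--Moser iteration. I would rewrite \eqref{E1.2} as the equation $F(\epsilon,\lambda,u):=L_\lambda u+\bar V(\boldsymbol x)u-\epsilon f(\varphi,\boldsymbol x,u)=0$ in $H^s$, fix an increasing sequence of truncation scales $N_n=N_0^{\chi^n}$ with $\chi\in(1,2)$, introduce the projections $\Pi_{N_n}$ onto modes $|\mathfrak{n}|\leq N_n$, and run the approximate Newton scheme
\begin{align*}
u_{n+1}=u_n-\Pi_{N_{n+1}}\mathfrak{L}_{N_{n+1}}(\epsilon,\lambda,u_n)^{-1}\Pi_{N_{n+1}}F(\epsilon,\lambda,u_n),\qquad u_0=0.
\end{align*}
Since $f$ is only $C^q$, the smoothing effect of $\Pi_{N_{n+1}}$ is essential to compensate the loss of derivatives produced by the inverse of the linearized operator.

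\textbf{Linearized inversion and measure of good parameters.} The central technical input is a tame estimate of the form
\begin{align*}
\|\mathfrak{L}_{N}(\epsilon,\lambda,u)^{-1}h\|_s\leq N^{\tau'}\bigl(\|h\|_s+\|u\|_s\|h\|_{s_0}\bigr),
\end{align*}
valid on a shrinking good set of parameters $\lambda$. This will follow from a multiscale analysis of the self-adjoint matrix $\mathcal{A}(\epsilon,\lambda,u,\theta)$ in \eqref{E1.97} (Proposition \ref{pro1}), whose key hypothesis --- the separation of singular sites on $\mathfrak{N}$ --- must be verified using harmonic analysis on $\boldsymbol M$ and the product structure \eqref{E1.95} of $\Gamma_+(\boldsymbol M)$. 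The parameters to be removed at step $n$ correspond to failures of first- and second-order Melnikov conditions on the submatrices \eqref{E1.94}; I would bound the Lebesgue measure of the excluded set at stage $n$ by $O(N_n^{-c})$ with $c>1$, exploiting that $\lambda\mapsto \mu_{\mathfrak{n}}(\lambda,\theta)=-(\lambda\omega_0\cdot l+\theta)^2+\lambda_j^2+m$ is nondegenerate quadratic in $\lambda$, together with the covariance identity \eqref{E1.93} and the Diophantine hypothesis \eqref{E1.96}. Summing in $n$ gives $\mathrm{meas}(\Lambda\setminus\mathscr{D}_\epsilon)\to 0$ as $\epsilon\to 0$.

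\textbf{Convergence, regularity, and main obstacle.} Combining the tame inverse estimate with the algebra/interpolation inequalities for $H^s$ on $\mathfrak{N}$ (cf.\ \eqref{E1.3}) and standard tame bounds on the Nemytskii operator \eqref{E1.102}, I would prove a super-exponential decay $\|u_{n+1}-u_n\|_{\bar s}\lesssim N_n^{-\beta}$ for some fixed $\bar s>s_0$ and $\beta>0$, provided $q=q(\nu,d,r)$ is chosen sufficiently large; the limit $u(\epsilon,\lambda)\in H^{\bar s}$ then solves \eqref{E1.2} for $\lambda\in\mathscr{D}_\epsilon$, and the $C^1$ dependence on $\lambda$ follows by differentiating the iteration and re-invoking the same linearized inversion machinery. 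The hardest step will be verifying the separation hypothesis for the singular sites: the quartic dispersion $\lambda_j^2+m\sim\|j+\rho\|^4$ of the beam operator produces denser clusters of near-resonances on $\mathfrak{N}$ than in the NLW/NLS settings of \cite{Berti2011Duke,berti2015abstract}, and the non-localisation of the eigenfunctions $\mathbf{e}_{j,p}$ on a general compact Lie group forces the multiscale argument to be carried out with block-diagonal matrices indexed by irreducible representations, so that only a weakened off-diagonal decay for the Fourier blocks $(\bar V)_{j-j'}$ is available. Pushing the geometric analysis through on $\boldsymbol M$, rather than on $\mathbf{T}^d$, is where the bulk of the technical weight will sit.
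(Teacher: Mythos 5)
Your proposal follows essentially the same route as the paper: a Galerkin-truncated Nash--Moser scheme whose linearized operators are inverted via the multiscale analysis of Proposition \ref{pro1}, with the separation hypothesis $(\mathrm{A3})$ verified by the chain argument on $\Gamma_{+}(\boldsymbol M)$ using \eqref{E1.56} and \eqref{E1.95}, and the excluded parameters controlled by eigenvalue-variation estimates summed over the super-exponentially growing scales $N_{n+1}=N_n^2$. The only cosmetic deviations are that the paper solves each projected equation $(P_{N_{n+1}})$ exactly by a contraction map rather than a single Newton step, that only first-order Melnikov-type conditions are imposed, and that the quartic dispersion is in fact no harder than the NLW/NLS case, since factoring $-\vartheta^{2}+\lambda_j^{2}$ reduces the separation of singular sites to the same quadratic-form computation in $\|j+\rho\|^{2}$.
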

\begin{rema}
 If $V,f\in C^{\infty}$, then $u(\epsilon,\lambda)\in C^{\infty}(\mathbf{T}^{\nu}\times \boldsymbol M;\mathbf{R})$, which can be completed  just by  making small modifications in the proofs of lemmas \ref{lemma22} and \ref{lemma23}, and formulae \eqref{E3.83}-\eqref{E3.84}.
 \end{rema}
\begin{rema} In fact,  here
$\boldsymbol M$ may be a  homogeneous manifold   with respect to a compact Lie group, namely
\begin{align*}
\boldsymbol M:=G/G_0,\quad G:=\mathrm{G}\times\mathbf{T}^{r'},
\end{align*}
where $G_0$ is a closed subgroup of $G$, $\mathrm{G}$ is a simple connected compact Lie group, $\mathbf{T}^{r'}$ is a tori. The eigenvalues of the Laplace-Beltrami operator $\Delta$ on $\boldsymbol M$ are
\begin{align*}
\lambda'_{\vec{j}}:=-\|\vec{j}+\vec{\rho}\|^2+\|\vec{\rho}\|^2=-\|j^{(1)}+\rho\|^2+\|\rho\|^2-\|j^{(2)}\|^2
\end{align*}
with respect to the the eigenfunctions
\[\textbf{e}_{j^{(1)},p}(\boldsymbol x^{(1)})e^{\mathrm{i}j^{(2)}\cdot \boldsymbol x^{(2)}},\quad \vec{\boldsymbol x}=(\boldsymbol x^{(1)},\boldsymbol x^{(2)})\in\mathrm{G}\times\mathbf{T}^{r'},\quad p=1,\cdots,\mathfrak{d}'_j,\]
where $\vec{\rho}=(\rho,0)$, $\vec{j}=(j^{(1)},j^{(2)})$ belongs to a subset of $\Gamma_{+}(\mathrm{G})\times\mathbf{Z}^{r'}$, and $\mathfrak{d}'_{\vec{j}}\leq \mathfrak{d}_{j^{(1)}}$ (recall {\cite[Theorem 2.9]{Berti2011Duke}}).
\end{rema}
\subsection{Matrices with off-diagonal decay}\label{sec:2.2}
For $\mathfrak{B},\mathfrak{C}\subset \mathfrak{N}$, a bounded linear operator
 $\mathfrak{L}:H^s_{\mathfrak{B}}\rightarrow H^s_{\mathfrak{C}}$ is represented by a matrix in
\begin{equation*}
\mathcal {M}_{\mathfrak{C}}^{\mathfrak{B}}=\left\{(M_{\mathfrak{n}'}^{\mathfrak{n}''})_{\mathfrak{n}'\in \mathfrak{C},\mathfrak{n}''\in \mathfrak{B}}  \quad\text{with}\quad M_{\mathfrak{n}'}^{\mathfrak{n}''}\in \mathrm{Mat}(\mathfrak{d}_{j'}\times \mathfrak{d}_{j''},\mathbf{C})\right\},
\end{equation*}
where $H^{s}_{\mathfrak{B}}:=\left\{u=\sum\limits_{\mathfrak{n}\in\mathfrak{N}}\textbf{u}_\mathfrak{n}e^{\mathrm{i}l\cdot\varphi}\textbf{e}_{j}(\boldsymbol x)\in H^s: \textbf{u}_\mathfrak{n}=0~\text{if}~\mathfrak{n}\notin \mathfrak{B} \right\}$. Define the $L^{2}$-operator norm
\begin{equation*}
\|M_{\mathfrak{B}}^{\mathfrak{C}}\|_{0}=\sup_{h\in H_\mathfrak{B}}\frac{\|M_\mathfrak{C}^{\mathfrak{B}}h\|_{0}}{\|h\|_0}.
 \end{equation*}
Moreover we introduce the strong $s$-norm of a matrix $M\in\mathcal {M}_{\mathfrak{C}}^{\mathfrak{B}}$ as follows:
\begin{defi}
The $s$-norm of any matrix $M\in\mathcal {M}_{\mathfrak{C}}^{\mathfrak{B}}$ is defined by
 \begin{align}\label{E1.82}
|M|^2_{s}:=K_0\sum\limits_{\mathfrak{n}\in\mathfrak{N}}[M(\mathfrak{n})]^2\langle \mathfrak{n}\rangle^{2s}
\end{align}
where $K_{0}>4\sum_{\mathfrak{n}\in \mathrm{Z}^{\nu}\times\Gamma}\langle
\mathfrak{n}\rangle^{-2s_{0}}$, $\langle \mathfrak{n}\rangle=\max(1,|\mathfrak{n}|)$, and
\begin{equation*}
 [M(\mathfrak{n})]=
\begin{cases}
\sup_{\mathfrak{n}'-\mathfrak{n}''=\mathfrak{n},~\mathfrak{n}\in \mathfrak{C},\mathfrak{n}'\in \mathfrak{B}}\| M_{\mathfrak{n}'}^{\mathfrak{n}''}\|_{0}\quad &\text{if }\quad \mathfrak{n}\in \mathfrak{C}-\mathfrak{B},  \\
0 &\text{if }\quad \mathfrak{n}\notin \mathfrak{C}-\mathfrak{B}.
 \end{cases}
 \end{equation*}
 \end{defi}
It is obvious that the $s$-norm in \eqref{E1.82} satisfies that $|\cdot|_s\leq|\cdot|_{s'}$ for all $0<s\leq s'$. The following properties (see lemmas \ref{lemma11}-\ref{lemma2}) on the strong $s$-norm are given in \cite{berti2015abstract}.

\begin{lemm}\label{lemma11}(\cite[Corollary3.3]{berti2015abstract})
Let $\mathfrak{L}:u(\varphi,\boldsymbol x)\mapsto g(\varphi,\boldsymbol x)u(\varphi,\boldsymbol x)$ be a linear operator in $L^2$, self-adjoint. Then, $\forall s>(\nu+d)/2$, the following holds:
\begin{align}\label{E1.22}
|\mathfrak{L}|_s\leq C(s)\|g\|_{s+\varrho}\quad\text{with}\quad\varrho=(2\nu+d+r+1)/2.
\end{align}
\end{lemm}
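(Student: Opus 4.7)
The plan is to compute the matrix of $\mathfrak{L}$ explicitly in the joint Fourier / Peter--Weyl basis $\{e^{\mathrm{i}l\cdot\varphi}\,\mathbf{e}_{j,p}(\boldsymbol x)\}$, bound the operator norm of each block by Fourier coefficients of $g$, and then sum against the weight $\langle\mathfrak{n}\rangle^{2s}$ with a single Cauchy--Schwarz at the end. First I would write, for $\mathfrak{n}'=(l',j')$, $\mathfrak{n}''=(l'',j'')$, the entries of the block $M_{\mathfrak{n}'}^{\mathfrak{n}''}\in\mathrm{Mat}(\mathfrak{d}_{j'}\times\mathfrak{d}_{j''},\mathbf{C})$ as
\[
(M_{\mathfrak{n}'}^{\mathfrak{n}''})_{p',p''} \;=\; \int_{\mathbf{T}^\nu\times\boldsymbol M} g(\varphi,\boldsymbol x)\,e^{-\mathrm{i}(l'-l'')\cdot\varphi}\,\overline{\mathbf{e}_{j',p'}(\boldsymbol x)}\,\mathbf{e}_{j'',p''}(\boldsymbol x)\,\mathrm{d}\varphi\,\mathrm{d}\boldsymbol x,
\]
and observe that, by Peter--Weyl and Clebsch--Gordan, the product $\overline{\mathbf{e}_{j',p'}}\mathbf{e}_{j'',p''}$ lies in a finite direct sum $\bigoplus_k \mathcal{N}_{j_k}$ determined by the decomposition of $V_{j'}^{\,*}\otimes V_{j''}$. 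Hence only Fourier coefficients of $g$ at time mode $l'-l''$ and at spatial indices in this finite Clebsch--Gordan support enter into the block.

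Next I would bound $\|M_{\mathfrak{n}'}^{\mathfrak{n}''}\|_0$ via its Frobenius norm. Summing squares of the entries and using Parseval in the Peter--Weyl basis, together with the unitarity of the representations $\mathbf{e}_j$, one obtains a bound of the schematic form
\[
\|M_{\mathfrak{n}'}^{\mathfrak{n}''}\|_0^{\,2} \;\lesssim\; \bigl(\mathfrak{d}_{j'}\mathfrak{d}_{j''}\bigr)^{1/2}\sum_{k}\bigl\|\Pi_{\mathcal{N}_{j_k}}g_{l'-l''}\bigr\|_0^{\,2},
\]
where $g_{l'-l''}$ is the $l'-l''$ time-Fourier coefficient of $g$. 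Using the degeneracy bound $\mathfrak{d}_j\lesssim\|j+\rho\|^{d-r}$ and taking the supremum over $\mathfrak{n}'-\mathfrak{n}''=\mathfrak{n}$ gives
\[
[\mathfrak{L}(\mathfrak{n})]^{\,2} \;\lesssim\; \langle\mathfrak{n}\rangle^{d-r}\,\|g_{\mathfrak{n}}\|_0^{\,2}
\]
up to bounded Clebsch--Gordan multiplicity.

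Finally, I would plug into the definition \eqref{E1.82} and estimate
\[
|\mathfrak{L}|_s^{\,2} \;=\; K_0\sum_{\mathfrak{n}\in\mathfrak{N}}[\mathfrak{L}(\mathfrak{n})]^{\,2}\langle\mathfrak{n}\rangle^{2s} \;\lesssim\; \sum_{\mathfrak{n}}\|g_{\mathfrak{n}}\|_0^{\,2}\,\langle\mathfrak{n}\rangle^{2s+d-r}.
\]
To convert the pointwise $\sup$ underlying $[\mathfrak{L}(\mathfrak{n})]$ into a genuine $\ell^2$ statement I would insert an extra weight $\langle\mathfrak{n}\rangle^{-(2\nu+2r+1)}$ and apply Cauchy--Schwarz, relying on the convergence of $\sum_{\mathfrak{n}\in\mathbf{Z}^\nu\times\Gamma}\langle\mathfrak{n}\rangle^{-(2\nu+2r+1)}<\infty$ (here $\Gamma$ is an $r$-dimensional lattice in $\mathbf{R}^r$). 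This costs exactly $(2\nu+2r+1)/2$ further derivatives on $g$, and combined with the $(d-r)/2$ derivatives already produced by the dimension factor gives the claimed loss $\varrho=(2\nu+d+r+1)/2$ and Parseval then identifies the right-hand side with $\|g\|_{s+\varrho}^{\,2}$.

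The main obstacle is the Clebsch--Gordan bookkeeping on a non-abelian $\boldsymbol M$: unlike the torus, where $\mathbf{e}_j\mathbf{e}_{j'}=\mathbf{e}_{j+j'}$ and the block structure is diagonal in the spatial difference, here a single mode $g_{l,j}$ contributes to several blocks and each block receives contributions from a Clebsch--Gordan packet of modes. Controlling this requires (i) the finiteness of the support of $V_{j'}^{\,*}\otimes V_{j''}$, contained in a bounded neighborhood of $j'-j''$ in $\Gamma$, and (ii) a uniform bound on the structure constants $\int \overline{\mathbf{e}_{j',p'}}\mathbf{e}_{j'',p''}\mathbf{e}_{j_k,p_k}\,\mathrm{d}\boldsymbol x$ coming from the orthonormality and unitarity of the matrix coefficients, both of which are standard on compact Lie groups and are the reason the derivative loss $\varrho$ is only polynomial. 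Once these two inputs are in place, all remaining steps reduce to the scheme outlined above and produce the estimate \eqref{E1.22}.
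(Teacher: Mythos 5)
This lemma is not proved in the paper at all: it is quoted verbatim from \cite[Corollary 3.3]{berti2015abstract}, so the comparison must be with the argument there. Your overall scheme (block matrix in the basis $e^{\mathrm{i}l\cdot\varphi}\mathbf{e}_{j,p}$, bound each block, sum against $\langle\mathfrak{n}\rangle^{2s}$ with a Cauchy--Schwarz to absorb the $\sup$ in the definition of $[\,M(\mathfrak{n})\,]$) is indeed the shape of that proof, and your exponent bookkeeping $(d-r)/2+(2\nu+2r+1)/2=\varrho$ is consistent. But there is a genuine gap in the central spatial estimate.

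The gap is your claim that the Clebsch--Gordan support of $\overline{\mathbf{e}_{j',p'}}\,\mathbf{e}_{j'',p''}$ is ``contained in a bounded neighborhood of $j'-j''$ in $\Gamma$'', and the resulting bound $[\mathfrak{L}(\mathfrak{n})]^{2}\lesssim\langle\mathfrak{n}\rangle^{d-r}\|g_{\mathfrak{n}}\|_{0}^{2}$ in terms of the \emph{single} Fourier coefficient of $g$ at the difference index. This is false on a non-abelian $\boldsymbol M$: already for $SU(2)$, $V_{j'}^{*}\otimes V_{j''}$ decomposes into components with highest weights ranging from $|j'-j''|$ all the way up to $j'+j''$, so the packet is not bounded in terms of $j'-j''$. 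The correct localization property (Theorem 2.10 and Lemma 2.14 of Berti--Procesi, Duke 2011, which is what Corollary 3.3 of \cite{berti2015abstract} rests on) goes in the opposite direction: if $V_{j'}$ occurs in $V_{j}\otimes V_{j''}$ then $\|j'-j''\|\leq C(\|j\|+1)$. Consequently the block $M_{\mathfrak{n}'}^{\mathfrak{n}''}$ is \emph{not} controlled by $\|g_{\mathfrak{n}'-\mathfrak{n}''}\|_{0}$ but by the $\ell^{1}$-tail $\sum_{\|j\|\geq c\|j'-j''\|}\|g_{l'-l'',j}\|_{0}$ (times dimension factors); the off-diagonal decay $\langle\mathfrak{n}'-\mathfrak{n}''\rangle^{-s}\|g\|_{s+\varrho}$ is then extracted from this tail by Cauchy--Schwarz, and it is exactly here --- not in converting a $\sup$ into an $\ell^{2}$ sum --- that the extra $\nu+r+\tfrac12$ derivatives are spent. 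As written, your intermediate estimate would make the subsequent summation vacuously easy and would even yield a smaller loss than $\varrho$, which is a sign that the hard step has been bypassed. To repair the proof you need to (i) replace the ``bounded neighborhood'' claim by the correct statement that the modes $g_{j}$ contributing to the block $(j',j'')$ all satisfy $\|j\|\geq c\|j'-j''\|$, and (ii) carry the resulting tail sum through the Cauchy--Schwarz step; the time variable part of your argument (exact selection of the mode $l'-l''$) is fine as is.
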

\begin{lemm}\label{lemma7} ({\rm Interpolation} \cite[Lemma2.6]{berti2015abstract})
There exists $C(s)\geq1$, with $C(s_{0})=1$, such that for all subset $\mathfrak{B},\mathfrak{C},\mathfrak{D}\subset\mathfrak{N}$, one has:
\begin{align}\label{E1.23}
|M_1M_2|_s\leq \frac{1}{2}|M_1|_{s_0}|M_2|_s+\frac{C(s)}{2}|M_1|_s|M_2|_{s_0},\quad\forall s\geq s_0,\forall M_{1}\in\mathcal {M}_{\mathfrak{C}}^{\mathfrak{B}},M_{2}\in
 \mathcal {M}_{\mathfrak{B}}^{\mathfrak{D}},
\end{align}
in particular,
\begin{align}\label{E1.24}
|M_1M_2|_s\leq C(s)|M_1|_{s}|M_2|_s,\quad\forall s\geq s_0,\forall M_{1}\in\mathcal {M}_{\mathfrak{C}}^{\mathfrak{B}},M_{2}\in
 \mathcal {M}_{\mathfrak{B}}^{\mathfrak{D}}.
\end{align}
\end{lemm}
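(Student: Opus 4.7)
The plan is to reduce the interpolation estimate for the strong $s$-norm to a weighted convolution inequality on the lattice $\mathfrak{N}$. First, expanding the matrix product block-wise,
\begin{align*}
(M_1M_2)^{\mathfrak{n}''}_{\mathfrak{n}'} = \sum_{\mathfrak{k}\in\mathfrak{B}} (M_1)^{\mathfrak{k}}_{\mathfrak{n}'}(M_2)^{\mathfrak{n}''}_{\mathfrak{k}},
\end{align*}
and using submultiplicativity of the operator norm $\|\cdot\|_0$ on each block together with the splitting $\mathfrak{n}'-\mathfrak{n}'' = (\mathfrak{n}'-\mathfrak{k})+(\mathfrak{k}-\mathfrak{n}'')$, taking the supremum over pairs with prescribed difference $\mathfrak{n}$ yields the pointwise convolution bound
\begin{align*}
[(M_1M_2)(\mathfrak{n})] \leq (m_1 \ast m_2)(\mathfrak{n}) := \sum_{\mathfrak{n}_1+\mathfrak{n}_2=\mathfrak{n}} m_1(\mathfrak{n}_1)\,m_2(\mathfrak{n}_2),
\end{align*}
where $m_i(\mathfrak{n}) := [M_i(\mathfrak{n})]$ is extended by zero outside the relevant difference set.

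Next, I would distribute the Sobolev weight $\langle\mathfrak{n}\rangle^s$. Since for any decomposition $\mathfrak{n}=\mathfrak{n}_1+\mathfrak{n}_2$ at least one of $\langle\mathfrak{n}_1\rangle,\langle\mathfrak{n}_2\rangle$ exceeds $\langle\mathfrak{n}\rangle/2$, one obtains an inequality of the form
\begin{align*}
\langle\mathfrak{n}\rangle^s [(M_1M_2)(\mathfrak{n})] \leq C'(s)\Bigl[ \bigl((m_1\langle\cdot\rangle^s)\ast m_2\bigr)(\mathfrak{n}) + \bigl(m_1\ast (m_2\langle\cdot\rangle^s)\bigr)(\mathfrak{n})\Bigr],
\end{align*}
with $C'(s)$ chosen so that $C'(s_0)$ remains sharp (treating the two regimes $\langle\mathfrak{n}_1\rangle\geq\langle\mathfrak{n}_2\rangle$ and its opposite separately, rather than invoking the wasteful $(a+b)^{s_0}\leq 2^{s_0-1}(a^{s_0}+b^{s_0})$).

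I would then take $\ell^2$-norms in $\mathfrak{n}$ and apply Young's convolution inequality $\|f\ast g\|_{\ell^2}\leq \|f\|_{\ell^2}\|g\|_{\ell^1}$, which reduces matters to bounding $\|m_i\|_{\ell^1}$. By Cauchy-Schwarz,
\begin{align*}
\|m_i\|_{\ell^1} \leq \Bigl(\sum_{\mathfrak{n}\in\mathbf{Z}^\nu\times\Gamma}\langle\mathfrak{n}\rangle^{-2s_0}\Bigr)^{1/2}\Bigl(\sum_{\mathfrak{n}} m_i(\mathfrak{n})^2\langle\mathfrak{n}\rangle^{2s_0}\Bigr)^{1/2},
\end{align*}
where the first factor is finite thanks to $s_0>(\nu+d)/2$. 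The normalization $K_0 > 4\sum_\mathfrak{n}\langle\mathfrak{n}\rangle^{-2s_0}$ built into \eqref{E1.82} then produces the strict inequality $\sqrt{K_0}\,\|m_i\|_{\ell^1} < \tfrac{1}{2}|M_i|_{s_0}$. Combined with $|M_1M_2|_s = \sqrt{K_0}\,\|[(M_1M_2)(\cdot)]\langle\cdot\rangle^s\|_{\ell^2}$, this delivers \eqref{E1.23}; the estimate \eqref{E1.24} follows at once from \eqref{E1.23} together with the monotonicity $|M_i|_{s_0}\leq |M_i|_s$.

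The only genuinely delicate point is arranging $C(s_0)=1$: the multiplicative constant produced by splitting the weight must be balanced precisely against the factor $1/2$ gained from the choice of $K_0$. I would handle the endpoint $s=s_0$ by a separate, slightly sharper direct estimate of $\|[(M_1M_2)(\cdot)]\langle\cdot\rangle^{s_0}\|_{\ell^2}$ via a single application of Young plus Cauchy-Schwarz, without intermediate triangle-type splittings. Beyond this bookkeeping the argument is a standard off-diagonal matrix-algebra calculation, and I anticipate no conceptual obstacle; the only other ingredient is the careful use of $\|\cdot\|_0$-submultiplicativity on the genuine $\mathfrak{d}_{j}\times \mathfrak{d}_{j'}$ matrix blocks that constitute each $(M_i)^{\mathfrak{k}}_{\mathfrak{n}'}$.
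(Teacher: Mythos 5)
The paper offers no proof of this lemma: it is quoted verbatim from \cite[Lemma 2.6]{berti2015abstract}, so there is nothing in the text to compare against. Your sketch reconstructs the standard argument behind that cited result — block-wise submultiplicativity of $\|\cdot\|_0$ giving the convolution bound $[(M_1M_2)(\mathfrak{n})]\le (m_1\ast m_2)(\mathfrak{n})$, splitting of the weight $\langle\mathfrak{n}\rangle^s$ according to which of $\langle\mathfrak{n}_1\rangle,\langle\mathfrak{n}_2\rangle$ dominates, Young's inequality, and Cauchy--Schwarz against $\sum\langle\mathfrak{n}\rangle^{-2s_0}<\infty$ with the factor $\tfrac12$ extracted from the normalization $K_0$ — and this is exactly the right route.

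The one place where your write-up does not yet deliver the statement as claimed is the constant bookkeeping, and it is worth being precise about why. Your intermediate inequality places a factor $C'(s)$ in front of \emph{both} convolution terms; after Young, Cauchy--Schwarz and the $K_0$-normalization this produces $\tfrac{C(s)}{2}$ on both terms of \eqref{E1.23}, whereas the lemma asserts the coefficient of $|M_1|_{s_0}|M_2|_s$ is exactly $\tfrac12$ for \emph{every} $s\ge s_0$, not only at the endpoint. Moreover, at $s=s_0$ the weight splitting inevitably produces a factor $2^{s_0}$ (hence $4^{s_0}$ after squaring), and with $K_0$ only required to exceed $4\sum\langle\mathfrak{n}\rangle^{-2s_0}$ the Cauchy--Schwarz step yields $\|m_i\|_{\ell^1}<\tfrac12 K_0^{-1/2}|M_i|_{s_0}\cdot K_0^{1/2}$, which absorbs a factor $2$ but not $2^{s_0}$; so the ``separate, slightly sharper direct estimate'' you defer to is actually where the whole content of $C(s_0)=1$ lives, and as described it is not yet supplied. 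This is a genuine gap in the proof of the stated constants, though not in the overall strategy; since the result is imported from the reference rather than proved here, the resolution is to follow the normalization and case-splitting of \cite{berti2015abstract} exactly rather than to improvise the endpoint.
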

\begin{lemm}\label{lemma8}  (\cite[Lemma2.7]{berti2015abstract})
For all subset $\mathfrak{B},\mathfrak{C}\subset\mathfrak{N}$, we have
\begin{align}\label{E1.25}
\|Mh\|_s\leq C(s)(|M|_{s_0}\|h\|_s+|M|_s\|h\|_{s_0})\quad \forall M\in\mathcal {M}_{\mathfrak{C}}^{\mathfrak{B}},\forall h\in H^s_\mathfrak{B}.
\end{align}
\end{lemm}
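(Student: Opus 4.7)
My plan is to bound $\|Mh\|_s$ block by block in Fourier space and then exploit the convexity of the weight $\langle\mathfrak{n}\rangle^s$. Since $(Mh)_{\mathfrak{n}}=\sum_{\mathfrak{n}'} M_{\mathfrak{n}}^{\mathfrak{n}'} h_{\mathfrak{n}'}$, the definition of $[M(\cdot)]$ gives at once
\[
\|(Mh)_{\mathfrak{n}}\|_0 \leq \sum_{\mathfrak{n}'} [M(\mathfrak{n}-\mathfrak{n}')]\,\|h_{\mathfrak{n}'}\|_0.
\]
Using the elementary inequality $\langle\mathfrak{n}\rangle^s \leq 2^s(\langle\mathfrak{n}-\mathfrak{n}'\rangle^s + \langle\mathfrak{n}'\rangle^s)$, I split $\langle\mathfrak{n}\rangle^s\|(Mh)_{\mathfrak{n}}\|_0 \leq C(s)(A_1(\mathfrak{n})+A_2(\mathfrak{n}))$, where $A_1$ carries the weight on $M$ and $A_2$ carries it on $h$. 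The goal is then to prove $\|A_1\|_{\ell^2}\leq C(s)|M|_s\|h\|_{s_0}$ and $\|A_2\|_{\ell^2}\leq C(s)|M|_{s_0}\|h\|_s$, and add them. Throughout, the equivalence of $\langle w_{\mathfrak{n}}\rangle$ and $\langle\mathfrak{n}\rangle$ from \eqref{E1.4} lets me identify the $H^s$-norm with the obvious weighted $\ell^2$ norm, so there is no mismatch between Sobolev and matrix weights.

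For the $A_2$ term I view $A_2=u'\ast v'$ with $u'(x):=[M(x)]$ and $v'(y):=\langle y\rangle^s\|h_y\|_0$. Young's convolution inequality yields $\|A_2\|_{\ell^2}\leq \|u'\|_{\ell^1}\|v'\|_{\ell^2}$; the second factor is exactly $\|h\|_s$, and one Cauchy-Schwarz application with the weight $\langle x\rangle^{\pm s_0}$ gives $\|u'\|_{\ell^1}\leq(\sum_x\langle x\rangle^{-2s_0})^{1/2}(\sum_x\langle x\rangle^{2s_0}[M(x)]^2)^{1/2}\leq C(s_0)|M|_{s_0}$, since $s_0>(\nu+d)/2$ makes the first sum finite.

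For the $A_1$ term the naive Young route fails, because $\|\langle\cdot\rangle^s[M(\cdot)]\|_{\ell^1}$ is only controlled by $|M|_{s+s_0}$, which would cost derivatives. Instead I apply Cauchy-Schwarz inside the sum, against the weights $\langle\mathfrak{n}-x\rangle^{\pm s_0}$:
\[
A_1(\mathfrak{n})^2 \leq \Big(\sum_{x}\langle\mathfrak{n}-x\rangle^{-2s_0}\Big)\Big(\sum_{x}\langle x\rangle^{2s}[M(x)]^2\langle\mathfrak{n}-x\rangle^{2s_0}\|h_{\mathfrak{n}-x}\|_0^2\Big).
\]
The first factor is a universal constant depending only on $s_0$; after summing in $\mathfrak{n}$ and interchanging the order of summation by Fubini, the second factor becomes $|M|_s^2\|h\|_{s_0}^2$ exactly, giving $\|A_1\|_{\ell^2}\leq C(s)|M|_s\|h\|_{s_0}$. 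This Cauchy-Schwarz manoeuvre, which shifts the $s_0$-weight onto the translate of $h$ rather than onto $M$, is the only genuinely delicate step; once it is in place the two estimates combine to yield the claimed tame inequality \eqref{E1.25}.
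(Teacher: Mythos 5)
Your proof is correct: the block-wise bound $\|(Mh)_{\mathfrak{n}}\|_0\le\sum_{\mathfrak{n}'}[M(\mathfrak{n}-\mathfrak{n}')]\|h_{\mathfrak{n}'}\|_0$, the weight splitting $\langle\mathfrak{n}\rangle^s\le 2^s(\langle\mathfrak{n}-\mathfrak{n}'\rangle^s+\langle\mathfrak{n}'\rangle^s)$, and the two convolution estimates (Young for $A_2$, Cauchy--Schwarz against $\langle\cdot\rangle^{\pm s_0}$ for $A_1$, both using $\sum\langle x\rangle^{-2s_0}<\infty$ since $s_0>(\nu+d)/2$) are all sound, and the identification of $\langle w_{\mathfrak{n}}\rangle$ with $\langle\mathfrak{n}\rangle$ via \eqref{E1.4} is handled correctly. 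The paper itself gives no proof, simply citing \cite[Lemma 2.7]{berti2015abstract}, and your argument is essentially the standard one used there.
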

\begin{lemm}\label{lemma9} ({\rm Smoothing}  \cite[Lemma2.8]{berti2015abstract})
For all subset $\mathfrak{B},\mathfrak{C}\subset\mathfrak{N}$ and all $s'\geq s\geq0$, one has that, for $N\geq 2$,
\\
$\mathrm{(1)}$ If $M^{\mathfrak{n}'}_{\mathfrak{n}}=0$ for all $|\mathfrak{n}'-\mathfrak{n}|\leq N$, then
\begin{align}\label{E1.61}
|M|_{s}\leq N^{-(s'-s)}|M|_{s'},\quad \forall M\in\mathcal{M}^{\mathfrak{B}}_{\mathfrak{C}}.
\end{align}
\\
$\mathrm{(2)}$ If $M^{\mathfrak{n}'}_{\mathfrak{n}}=0$ for all $|\mathfrak{n}'-\mathfrak{n}|>N$, then
\begin{align}\label{E1.62}
|M|_{s'}\leq N^{s'-s}|M|_{s},\quad|M|_{s}\leq N^{s+\nu+r}\|M\|_{0},\quad \forall M\in\mathcal{M}^{\mathfrak{B}}_{\mathfrak{C}}.
\end{align}
\end{lemm}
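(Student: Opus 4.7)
The plan is to derive all three bounds directly from the definition \eqref{E1.82} of the strong $s$-norm, using only monotonicity of the weight $\langle\mathfrak{n}\rangle$ and a straightforward lattice-point count in the ball of radius $N$ inside $\mathbf{Z}^{\nu}\times\Gamma$. No serious obstacle is expected; the entire argument is a bookkeeping exercise, and the slight looseness in the exponent of the last inequality (an $N^{s+\nu+r}$ instead of the sharper $N^{s+(\nu+r)/2}$) gives plenty of room to absorb absolute constants for $N\geq 2$.

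For part (1), the hypothesis $M^{\mathfrak{n}''}_{\mathfrak{n}'}=0$ whenever $|\mathfrak{n}'-\mathfrak{n}''|\leq N$ forces $[M(\mathfrak{n})]=0$ for every $\mathfrak{n}$ with $|\mathfrak{n}|\leq N$. Thus the sum defining $|M|_s^2$ is supported on $\{|\mathfrak{n}|>N\}$, where $\langle\mathfrak{n}\rangle\geq N$, so that
\begin{equation*}
\langle\mathfrak{n}\rangle^{2s}=\langle\mathfrak{n}\rangle^{2s'}\langle\mathfrak{n}\rangle^{-2(s'-s)}\leq N^{-2(s'-s)}\langle\mathfrak{n}\rangle^{2s'}.
\end{equation*}
Plugging this into \eqref{E1.82} and pulling $N^{-2(s'-s)}$ outside the sum yields $|M|_s^2\leq N^{-2(s'-s)}|M|_{s'}^2$, which is \eqref{E1.61}.

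For the first half of part (2), the support condition $M^{\mathfrak{n}''}_{\mathfrak{n}'}=0$ for $|\mathfrak{n}'-\mathfrak{n}''|>N$ localises the sum to $\{|\mathfrak{n}|\leq N\}$, on which $\langle\mathfrak{n}\rangle\leq N$ and therefore $\langle\mathfrak{n}\rangle^{2s'}\leq N^{2(s'-s)}\langle\mathfrak{n}\rangle^{2s}$. This gives the first inequality in \eqref{E1.62}. For the second inequality, I would first observe the trivial $L^{2}$-block bound $[M(\mathfrak{n})]\leq\|M\|_0$, obtained because each block norm $\|M^{\mathfrak{n}''}_{\mathfrak{n}'}\|_0$ is at most $\|M\|_0$ (testing against functions supported in $\mathcal{N}_{j''}$ and projecting to $\mathcal{N}_{j'}$). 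Combining this with the lattice-point estimate
\begin{equation*}
\#\bigl\{\mathfrak{n}\in\mathbf{Z}^{\nu}\times\Gamma:\ |\mathfrak{n}|\leq N\bigr\}\leq C(2N+1)^{\nu+r},
\end{equation*}
together with $\langle\mathfrak{n}\rangle^{2s}\leq N^{2s}$ on the support, I obtain $|M|_s^2\leq K_0 C(2N+1)^{\nu+r}N^{2s}\|M\|_0^2$. For $N\geq 2$ the absolute prefactor $\sqrt{K_0 C\cdot 3^{\nu+r}}$ is dominated by $N^{(\nu+r)/2}$ after enlarging the exponent, so I conclude $|M|_s\leq N^{s+\nu+r}\|M\|_0$, completing \eqref{E1.62}.

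The only conceptual point requiring care is the block-norm inequality $[M(\mathfrak{n})]\leq\|M\|_0$; everything else is direct manipulation of the weighted sum. Once this is in place, the rest is a one-line application of monotonicity of $\langle\mathfrak{n}\rangle^{2s}$ in $s$ on a specified range of $|\mathfrak{n}|$.
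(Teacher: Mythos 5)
The paper does not prove this lemma: it is imported verbatim from \cite[Lemma 2.8]{berti2015abstract} (see the sentence preceding Lemma \ref{lemma11}), so there is no in-paper argument to compare against. Your direct derivation from the definition \eqref{E1.82} is the standard one and is essentially correct: the support localisation of $[M(\mathfrak{n})]$ to $\{|\mathfrak{n}|>N\}$ resp.\ $\{|\mathfrak{n}|\leq N\}$, the monotonicity of $\langle\mathfrak{n}\rangle^{2s}$ in $s$ on those ranges, the block bound $\|M^{\mathfrak{n}''}_{\mathfrak{n}'}\|_0\leq\|M\|_0$ by testing on functions supported in a single $(l'',j'')$-block, and the count $\#\{\mathfrak{n}\in\mathbf{Z}^{\nu}\times\Gamma:|\mathfrak{n}|\leq N\}\leq(2N+1)^{\nu+r}$ are all valid. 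The one soft spot is the final constant absorption: your bound is $|M|_s\leq\sqrt{K_0}\,(2N+1)^{(\nu+r)/2}N^{s}\|M\|_0$, and the claimed domination $\sqrt{K_0\,3^{\nu+r}}\leq N^{(\nu+r)/2}$ is false at $N=2$ (already $5^{(\nu+r)/2}>4^{(\nu+r)/2}$ before the factor $K_0$), so the inequality with the literal exponent $N^{s+\nu+r}$ and no multiplicative constant is not established for the smallest admissible $N$. This is a constant-chasing issue rather than a conceptual gap --- every application in the paper takes $N\geq N_0$ large, where your estimate gives the stated bound with room to spare --- but as written the last step should either restrict to $N\geq N(\nu,r,K_0)$ or retain an explicit constant.
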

\begin{lemm}\label{lemma10} ({\rm Decay along lines} \cite[Lemma2.9-2.10]{berti2015abstract})
For all subset $\mathfrak{B},\mathfrak{C}\subseteq\mathfrak{N}$ and all $s\geq0$, one has that, for some constant $K_1>0$,
\begin{align}\label{E1.21}
|M|_s\leq K_1|M_\mathfrak{n}|_{s+\nu+r},\quad \forall M\in\mathcal{M}^{\mathfrak{B}}_{\mathfrak{C}},
\end{align}
where $M_\mathfrak{n},\mathfrak{n}\in \mathfrak{C}$ denote its $\mathfrak{n}$-th line. Moreover
\begin{align}\label{E1.66}
\|M\|_{0}\leq|M|_{s_0},\quad \forall M\in\mathcal{M}^{\mathfrak{B}}_{\mathfrak{C}}.
\end{align}
\end{lemm}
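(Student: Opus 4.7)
The statement combines two essentially independent estimates; I would treat them in sequence.

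For the operator-norm bound $\|M\|_0\leq|M|_{s_0}$, the natural plan is a Schur-type test. Given $h\in H^s_\mathfrak{B}$, expand
$$\|Mh\|_0^2=\sum_{\mathfrak{n}'\in\mathfrak{C}}\Big\|\sum_{\mathfrak{n}''\in\mathfrak{B}}M^{\mathfrak{n}''}_{\mathfrak{n}'}h_{\mathfrak{n}''}\Big\|_0^2,$$
apply Cauchy--Schwarz with the splitting $\langle\mathfrak{n}'-\mathfrak{n}''\rangle^{-s_0}\cdot\langle\mathfrak{n}'-\mathfrak{n}''\rangle^{s_0}$, and observe that the first factor produces the convergent sum $\sum_{\mathfrak{n}}\langle\mathfrak{n}\rangle^{-2s_0}$ (finite since $s_0>(\nu+d)/2$ and the index lattice $\mathbf{Z}^{\nu}\times\Gamma$ has polynomial volume growth of order $\nu+d$). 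Interchanging the order of summation in the remaining factor and bounding $\|M^{\mathfrak{n}''}_{\mathfrak{n}'}\|_0\leq[M(\mathfrak{n}'-\mathfrak{n}'')]$ reduces the estimate to a multiple of $|M|_{s_0}^2\|h\|_0^2$. The explicit threshold $K_0>4\sum_{\mathfrak{n}}\langle\mathfrak{n}\rangle^{-2s_0}$ built into the definition of the $s$-norm in \eqref{E1.82} is precisely what absorbs the Cauchy--Schwarz prefactor and yields the clean bound.

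For the decay-along-lines estimate $|M|_s\leq K_1|M_\mathfrak{n}|_{s+\nu+r}$, I would unwind the definitions. By \eqref{E1.82}, $|M|_s^2=K_0\sum_{\mathfrak{n}}[M(\mathfrak{n})]^2\langle\mathfrak{n}\rangle^{2s}$, and for the single-line matrix $M_\mathfrak{n}$ the corresponding weighted sum runs over the displacements $k=\mathfrak{n}-\mathfrak{n}''$. The key observation is that for the matrices to which the lemma is applied (convolution operators in both $l$ and $j$, arising from multiplication operators as in \eqref{E1.100} and Lemma \ref{lemma11}), the block $M^{\mathfrak{n}''}_{\mathfrak{n}'}$ depends only on $\mathfrak{n}'-\mathfrak{n}''$, so the sup $[M(\mathfrak{n})]$ is actually realised on every line. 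The passage to the $(s+\nu+r)$-norm then arises from the weight identity
$$\langle\mathfrak{n}\rangle^{2s}=\langle\mathfrak{n}\rangle^{-2(\nu+r)}\cdot\langle\mathfrak{n}\rangle^{2(s+\nu+r)},$$
together with the convergence of $\sum_{\mathfrak{n}}\langle\mathfrak{n}\rangle^{-2(\nu+r)}$, which is finite because $\mathfrak{n}$ ranges in a lattice of dimension $\nu+r$. The constant $K_1$ absorbs this series.

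The main obstacle is in the first estimate: a priori, the supremum $[M(\mathfrak{n})]$ for different displacements $\mathfrak{n}$ could be attained at different row/column pairs, and then no single line would see all the suprema simultaneously. Overcoming this requires exploiting the Toeplitz (or asymptotically Toeplitz) structure of the relevant operators, which is automatic for multiplication and convolution operators and for the blocks produced by the multiscale scheme, but has to be tracked carefully through the iteration. In writing out the details I would first verify the Toeplitz reduction for $\mathcal{T}'$, $\mathcal{T}''$ in \eqref{E1.100} and then show that the exponent $\nu+r$ corresponds exactly to the intrinsic dimension of the displacement lattice $\mathbf{Z}^\nu\times\Gamma$, so that summability is achieved with the minimal loss of regularity.
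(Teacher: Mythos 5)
Your treatment of \eqref{E1.66} is essentially the standard Schur/Young argument and is correct: bounding $\|M^{\mathfrak{n}''}_{\mathfrak{n}'}\|_0\leq[M(\mathfrak{n}'-\mathfrak{n}'')]$ reduces $\|Mh\|_0$ to the $\ell^1$-norm of the decay profile $[M(\cdot)]$, which Cauchy--Schwarz with the splitting $\langle\mathfrak{m}\rangle^{-s_0}\langle\mathfrak{m}\rangle^{s_0}$ controls by $K_0^{-1/2}\bigl(\sum_{\mathfrak{m}}\langle\mathfrak{m}\rangle^{-2s_0}\bigr)^{1/2}|M|_{s_0}\leq|M|_{s_0}$, exactly as the normalisation of $K_0$ in \eqref{E1.82} is designed to ensure. (Minor point: the displacement sum runs over $\mathbf{Z}^{\nu}\times\Gamma$, a lattice of rank $\nu+r$, not $\nu+d$; convergence holds since $s_0>(\nu+d)/2\geq(\nu+r)/2$.)

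The second half, however, takes a genuinely wrong turn. Estimate \eqref{E1.21} holds for \emph{every} $M\in\mathcal{M}^{\mathfrak{B}}_{\mathfrak{C}}$, with $|M_{\mathfrak{n}}|_{s+\nu+r}$ read as $\sup_{\mathfrak{n}\in\mathfrak{C}}|M_{\mathfrak{n}}|_{s+\nu+r}$, and no Toeplitz structure is available where the paper uses it: \eqref{E1.21} is applied to $M^{E}_{R}$, $N^{E}_{R}$ in the Appendix and to $\mathcal{P}^{\mathfrak{A}}_{\mathfrak{G}}$, $\mathcal{S}^{\mathfrak{A}}_{\mathfrak{G}}$ in the proof of Proposition \ref{pro1}, all built from the non-convolution inverses $(\mathcal{A}^{\mathfrak{F}}_{\mathfrak{F}})^{-1}$, so a Toeplitz reduction would not cover the lemma's actual uses. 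More importantly, the ``main obstacle'' you identify --- that $[M(\mathfrak{m})]$ may be attained on different lines for different $\mathfrak{m}$ --- is precisely what the loss of $\nu+r$ derivatives absorbs, via the weight identity you already wrote. Since $[M(\mathfrak{m})]=\sup_{\mathfrak{n}}[M_{\mathfrak{n}}(\mathfrak{m})]$, one has
\[
[M(\mathfrak{m})]^2\langle\mathfrak{m}\rangle^{2s}\leq\langle\mathfrak{m}\rangle^{-2(\nu+r)}\sup_{\mathfrak{n}}\Big(\sum_{\mathfrak{m}'}[M_{\mathfrak{n}}(\mathfrak{m}')]^2\langle\mathfrak{m}'\rangle^{2(s+\nu+r)}\Big)=K_0^{-1}\langle\mathfrak{m}\rangle^{-2(\nu+r)}\sup_{\mathfrak{n}}|M_{\mathfrak{n}}|^2_{s+\nu+r},
\]
and summing over $\mathfrak{m}$ gives \eqref{E1.21} with $K_1=\bigl(\sum_{\mathfrak{m}\in\mathbf{Z}^{\nu}\times\Gamma}\langle\mathfrak{m}\rangle^{-2(\nu+r)}\bigr)^{1/2}<\infty$, for arbitrary $M$. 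You do not need a single line to realise all the suprema simultaneously: for each fixed displacement the line that (nearly) attains the sup is dominated by the uniform sup of line norms, and the summability in $\mathfrak{m}$ is bought by the extra weight $\langle\mathfrak{m}\rangle^{-2(\nu+r)}$. This is the argument of \cite[Lemma 2.9]{berti2015abstract}; the Toeplitz detour is both unnecessary and insufficient.
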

Denote by $^{[-1]}M$ any left inverse of $M$.
\begin{lemm}\label{lemma2} ({\rm Perturbation of left-invertible matrices} \cite[Lemma2.12]{berti2015abstract})
If $M\in\mathcal{M}^{\mathfrak{B}}_{\mathfrak{C}}$ has a left invertible matrix $^{[-1]}M$, then, $\forall P\in\mathcal{M}^{\mathfrak{B}}_{\mathfrak{C}}$, with $|^{[-1]}M|_{s_0}|P|_{s_0}\leq1/2$,  the matrix $M+P$ has a left inverse with
\begin{align}\label{E1.33}
|^{[-1]}(M+P)|_{s_0}\leq 2|^{[-1]}M|_{s_0},
\end{align}
and, for all $s\geq s_0$,
\begin{align}\label{E1.92}
\quad |^{[-1]}(M+P)|_{s}\leq C(s)(|^{[-1]}M|_{s}+|^{[-1]}M|^{2}_{s_0}|P|_s).
\end{align}
Moreover, if $\|^{[-1]}M\|_{0}\|P\|_{0}\leq1/2$, then there exists a left inverse $^{[-1]}(M+P)$ of $M+P$ with
\begin{align}\label{E1.34}
\|^{[-1]}(M+P)\|_{0}\leq 2\|^{[-1]}M\|_{0}.
\end{align}
\end{lemm}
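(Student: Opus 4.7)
The plan is to apply the standard Neumann series construction in the strong $s$-norm, exploiting the refined interpolation inequality of Lemma \ref{lemma7} (the version with $C(s_0)=1$) rather than the coarser form \eqref{E1.24}.

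I would begin by setting $A:={^{[-1]}M}$, so that $AM=\mathrm{I}_{\mathfrak{B}}$, and observe that $A(M+P)=\mathrm{I}+AP$. Consequently, if $\mathrm{I}+AP$ admits an inverse $B$ on $H^s_{\mathfrak{B}}$, then $BA\in\mathcal{M}^{\mathfrak{C}}_{\mathfrak{B}}$ is automatically a left inverse of $M+P$. Thus the whole problem reduces to inverting $\mathrm{I}+AP$ and controlling that inverse in the strong norms.

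For \eqref{E1.33}, applying \eqref{E1.23} at $s=s_0$ (where $C(s_0)=1$) yields $|AP|_{s_0}\leq|A|_{s_0}|P|_{s_0}\leq 1/2$, so the Neumann series $B=\sum_{k\geq 0}(-AP)^k$ converges in $|\cdot|_{s_0}$ and the formula ${^{[-1]}(M+P)}=BA=\sum_{k\geq 0}(-AP)^k A$ gives
\begin{align*}
|{^{[-1]}(M+P)}|_{s_0}\leq\sum_{k\geq 0}(1/2)^k|A|_{s_0}=2|A|_{s_0}.
\end{align*}
For \eqref{E1.92} I would set $b_k:=|(AP)^kA|_s$, then apply \eqref{E1.23} to the outer product $(AP)\cdot(AP)^{k-1}A$ and use $|(AP)^{k-1}A|_{s_0}\leq(1/2)^{k-1}|A|_{s_0}$ from the previous step to derive the recursion
\begin{align*}
b_k\leq\tfrac{1}{4}b_{k-1}+\tfrac{C(s)}{2}|AP|_s(1/2)^{k-1}|A|_{s_0},\qquad b_0=|A|_s.
\end{align*}
Solving it gives $b_k\leq 4^{-k}|A|_s+2C(s)|AP|_s|A|_{s_0}\,2^{-k}$, hence $\sum_k b_k\leq C'(s)\bigl(|A|_s+|AP|_s|A|_{s_0}\bigr)$. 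Finally, the tame estimate $|AP|_s\leq C(s)(|A|_s|P|_{s_0}+|A|_{s_0}|P|_s)$ combined with $|A|_{s_0}|P|_{s_0}\leq1/2$ absorbs the term $|A|_s|P|_{s_0}|A|_{s_0}$ into $|A|_s$ and produces exactly \eqref{E1.92}.

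The last claim \eqref{E1.34} is the simplest: ordinary submultiplicativity $\|M_1M_2\|_0\leq\|M_1\|_0\|M_2\|_0$ together with the hypothesis yields $\|AP\|_0\leq 1/2$, and the same Neumann series argument delivers $\|{^{[-1]}(M+P)}\|_0\leq 2\|A\|_0$. The main technical hurdle is the $s$-norm bound: since $C(s)\geq1$ can be large, using the coarse interpolation \eqref{E1.24} in every term of the Neumann series would cause divergence. The key is to feed the refined split \eqref{E1.23} into the iteration so that each term carries a single high-regularity factor and all remaining factors stay at the $s_0$-level, letting the geometric factor $(1/2)^{k-1}$ coming from $|AP|_{s_0}\leq 1/2$ control the $k$-sum.
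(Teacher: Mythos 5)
Your proof is correct and is essentially the standard argument for this lemma (which the paper does not reprove but imports from \cite[Lemma 2.12]{berti2015abstract}): reduce to inverting $\mathrm{I}+{}^{[-1]}M\,P$ by a Neumann series, use $C(s_0)=1$ in \eqref{E1.23} to get the geometric decay at the $s_0$-level, and feed the refined interpolation inequality into the recursion for the high-norm terms so that each summand carries exactly one $s$-regular factor. The recursion $b_k\leq\tfrac14 b_{k-1}+\tfrac{C(s)}{2}|AP|_s(1/2)^{k-1}|A|_{s_0}$ and the final absorption of $|A|_s|P|_{s_0}|A|_{s_0}$ via $|A|_{s_0}|P|_{s_0}\leq 1/2$ are exactly the right steps, and the $L^2$-operator-norm case is handled correctly by ordinary submultiplicativity.
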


\section{Nash-Moser iterative scheme}\label{sec:3}
Consider the orthogonal splitting $H^s=H_{N_n}\oplus H_{N_n}^{\bot}$, where $H^s$ is defined in \eqref{E1.3} and
\begin{align}\label{E1.5}
H_{N_n}~&:=~\Big\{u\in H^s~:~u=\sum\limits_{|\mathfrak{n}|\leq N_n}\textbf{u}_{\mathfrak{n}}e^{\mathrm{i}l\cdot\varphi}\textbf{e}_{j}(\boldsymbol x)\Big\},\\
H^{\bot}_{N_n}~&:=~\Big\{u\in H^s~:~u=\sum\limits_{|\mathfrak{n}|> N_n}\textbf{u}_{\mathfrak{n}}e^{\mathrm{i}l\cdot \varphi}\textbf{e}_{j}(\boldsymbol x)\nonumber\Big\},
\end{align}
with $\textbf{u}_{\mathfrak{n}}\in\mathbf{C}^{\mathfrak{d}_j}$ and
\begin{align}\label{E3.41}
N_{n+1}:=N^2_n,\quad \text{namely}\quad N_{n+1}=N^{2^{n+1}}_0.
\end{align}
Furthermore $P_{N_n},P^{\bot}_{N_n}$ denote the orthogonal projectors onto $H_{N_n}$ and $H^{\bot}_{N_n}$ respectively, namely
\begin{align}\label{E1.7}
P_{N_n}:H^s\rightarrow H_{N_n},\quad P^{\bot}_{N_n}:H^s\rightarrow H^{\bot}_{N_n}.
\end{align}
Then, by means of \eqref{E1.4}, $\forall n\in\mathbf{N},\forall s\geq0,\forall \kappa\geq0$, the following hold:
\begin{align}
&\|P_{N_n}u\|_{s+\kappa}\leq c^\kappa_2N_n^\kappa\|u\|_{s},\quad\forall u\in H^{s},\label{E1.10}\\
&\|P_{N_n}^{\bot}u\|_{s}\leq c^{-\kappa}_1N_n^{-\kappa}\|u\|_{s+\kappa},\quad \forall u\in H^{s+\kappa}.\label{E1.11}
\end{align}
In addition, for all $j_0\in\Gamma^{+}(\boldsymbol M)$, denote by $P_{N,j_0}$ the orthogonal projector from $H^s$ onto the subspace
\begin{align*}
H_{N,j_0}:=\Big\{u\in H^s~:~u=\sum_{|(l,j-j_0)|\leq N}\textbf{u}_{l,j}e^{\mathrm{i}l\cdot\varphi}\textbf{e}_{j}(\boldsymbol x),~\textbf{u}_{l,j}\in\mathbf{C}^{\mathfrak{d}_j}\Big\}.
\end{align*}
This shows that $H_{{N_n},0}=H_{N_n}$ (see \eqref{E1.5}), $P_{N_n,0}=P_{N_n}$ (see \eqref{E1.7}).

 Moreover let $\check{P}_{N,j_0}$ denote the orthogonal projector from $H^{s_0}(\boldsymbol M)$ onto the space
\begin{align*}
\check{H}_{N,j_0}:=\Big\{u\in H^{s_0}~:~u=\sum_{|j-j_0|\leq N}\textbf{u}_{j}\textbf{e}_{j}(\boldsymbol x),~\textbf{u}_{j}\in\mathbf{C}^{\mathfrak{d}_j}\Big\}.
\end{align*}
Remark that the functions on $H^{s_0}(\boldsymbol M)$ depend only on $\boldsymbol x$.

For all $s\geq s_1$ with $s_1\geq s_0>\frac{\nu+d}{2}$, if $f\in C^q(\mathbf{T}^{\nu}\times\boldsymbol M\times\mathbf{R};\mathbf{R})$ with
\begin{align}\label{E1.74}
q\geq s_2+\varrho+2,
\end{align}
then the composition operator $F$ (recall \eqref{E1.102})
has the following standard properties $(P1)$-$(P3)$ (see \cite{berti2015abstract}):\\
$(P1)$(\text{Regularity})~$F\in C^2(H^s,H^s)$.\\
$(P2)$(\text{Tame~estimates})~$\forall u,h\in H^s$ with $\|u\|_{s_1}\leq1$,
\begin{align}
\|F(u)\|_s&\leq C(s)(1+\|u\|_s),\label{E1.14}\\
\|(\mathrm{D}F)(u)h\|_s&\leq C(s)(\|h\|_s+\|u\|_s\|h\|_{s_1}),\label{E1.86}\\
\|\mathrm{D}^2F(u)[h,v]\|_s&\leq C(s)(\|u\|_s\|h\|_{s_1}\|v\|_{s_1}+\|v\|_{s}\|h\|_{s_1}+\|v\|_{s_1}\|h\|_{s}).\label{E1.15}
\end{align}
$(P3)$(\text{Taylor~tame~estimate})~$\forall u,h\in H^s$ with $\|u\|_{s_1}\leq1$, $\|h\|_{s_1}\leq1$,
\begin{align}
&\quad\|F(u+h)-F(u)-(\mathrm{D}F)(u)h\|_{s_1}\leq C(s_1)\|h\|^2_{s_1},\label{E103}\\
&\|F(u+h)-F(u)-(\mathrm{D}F)(u)h\|_{s}\leq C(s)(\|u\|_s\|h\|^2_{s_1}+\|h\|_{s_1}\|h\|_s).\label{E1.16}
\end{align}
In addition the potential $V$ satisfies that, for some fixed constant $C$,
\begin{align}\label{E1.99}
\|V\|_{C^q}\leq C,
\end{align}
where $q$ is defined in \eqref{E1.74}.

\subsection{The multiscale analysis}\label{sec:3.1}
Let
\begin{align}\label{E3.25}
\mathfrak{L}_{N}(\epsilon,\lambda,u):=P_{N}\mathfrak{L}(\epsilon,\lambda,u)_{|H_{N}},
\end{align}
where $\mathfrak{L}(\epsilon,\lambda,u)$ given by \eqref{E1.73}. To guarantee the convergence of the iteration, we need sharper estimates on inversion of the linearized operators $\mathfrak{L}_{N}(\epsilon,\lambda,u)$:
 \begin{align*}
|\mathfrak{L}^{-1}_{N}(\epsilon,\lambda,u)|_{s_1}=O(N^{\tau_2+\delta s}),\quad\delta\in(0,1),\tau_2>0,\forall s>0,
\end{align*}
Hence, for fixed $l\in\mathbf{Z}^{\nu}, \theta\in\mathbf{R}$, some extra properties on the following linear operator
\begin{equation*}
(-{(\lambda{\omega}_0\cdot l+\theta)}^2)\mathrm{I}_{\mathfrak{d}_j}+\check{P}_{N,j_0}(\Delta^2+V(\boldsymbol x))_{|\check{H}_{N,j_0}}
\end{equation*}
are required.
\begin{lemm}\label{lemma3}
For fixed $l\in\mathbf{Z}^{\nu}$, $\theta\in\mathbf{R}$, provided
\begin{align*}
\|((-{(\lambda{\omega}_0\cdot l+\theta)}^2)\mathrm{I}_{\mathfrak{d}_j}+\check{P}_{N,j_0}(\Delta^2+V(\boldsymbol x))_{|\check{H}_{N,j_0}})^{-1}\|_{L^2_{x}}\leq N^{\tau},
\end{align*}
for $N\geq\tilde{N}(s_2,V)$ large enough, one has:
\begin{align*}
|((-{(\lambda{\omega}_0\cdot l+\theta)}^2)\mathrm{I}_{\mathfrak{d}_j}+\check{P}_{N,j_0}(\Delta^2+V(\boldsymbol x))_{|\check{H}_{N,j_0}})^{-1}|_{s}\leq({1}/{2})N^{\tau_2+\delta s},\quad \forall s\in[s_0,s_2].
\end{align*}
\end{lemm}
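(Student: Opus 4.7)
Set $A := (-(\lambda\omega_0\cdot l+\theta)^2)\mathrm{I}_{\mathfrak{d}_j}+\check{P}_{N,j_0}(\Delta^2+V(\boldsymbol x))_{|\check{H}_{N,j_0}}$ and $B := A^{-1}$, so that the hypothesis reads $\|B\|_0\le N^{\tau}$ and the target is the quantitative off-diagonal decay $|B|_{s}\le \tfrac12 N^{\tau_2+\delta s}$ for all $s\in[s_0,s_2]$. Since $A$ differs from a diagonal operator in the Peter-Weyl basis only by the multiplication by $V$, Lemma~\ref{lemma11} together with \eqref{E1.99} yields a uniform high-norm bound $|A|_{s_2+\varrho}\le C$, which supplies the Sobolev regularity we can trade against the $L^2$ information on $B$. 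The plan is to follow the bootstrap developed in \cite{Berti2012nonlinearity,berti2015abstract}: split $V$ with a Fourier cut-off, run the perturbation Lemma~\ref{lemma2} twice, and in between exploit the band structure of the truncated operator.

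\textbf{Smooth splitting of $V$.} Pick $M:=N^{\alpha}$ with a small $\alpha>0$ to be calibrated. Let $V^{(1)}:=\check{P}_{M,0}V$ and $V^{(2)}:=V-V^{(1)}$. Since $V\in C^{q}$ with $q\ge s_2+\varrho+2$, Sobolev embedding and the spectral decomposition on $\boldsymbol M$ give
\begin{equation*}
\|V^{(2)}\|_{L^\infty(\boldsymbol M)}\le CM^{-(q-d/2)},\qquad |V^{(2)}|_{s}\le C(s)\|V^{(2)}\|_{s+\varrho}\le C(s)M^{-(q-s-\varrho-d/2)}.
\end{equation*}
Provided $\alpha(q-d/2)>\tau$, the condition $\|V^{(2)}\|_0\,\|B\|_0\le 1/2$ holds for $N\ge \tilde N(s_2,V)$, so Lemma~\ref{lemma2} applied with $M:=A$ and $P:=-V^{(2)}$ gives that $A^{(1)}:=A-V^{(2)}$ is invertible with $\|(A^{(1)})^{-1}\|_0\le 2N^{\tau}$.

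\textbf{Band-matrix step for $A^{(1)}$.} In the Peter-Weyl basis the multiplication by $V^{(1)}$ has matrix entries vanishing for $|j-j'|>M$ (by the product structure \eqref{E1.95}), so $A^{(1)}$ is the sum of a diagonal operator and a band matrix of width $\le M=N^{\alpha}$. Using Lemma~\ref{lemma9}(2) on the band part and the smoothing estimate on $(A^{(1)})^{-1}$ restricted to the $N$-localized subspace $\check H_{N,j_0}$, and then interpolating via Lemma~\ref{lemma7} between the cheap bound $|(A^{(1)})^{-1}|_{s_0}\le C N^{s_0+\nu+r+\tau}$ and the high-regularity bound $|A^{(1)}|_{s_2+\varrho}\le C$, one obtains
\begin{equation*}
|(A^{(1)})^{-1}|_{s}\le \tfrac14 N^{\tau_2+\delta s},\qquad s\in[s_0,s_2],
\end{equation*}
provided $\alpha$ is chosen so that the $\alpha$-weighted band cost plus $\tau$ fits under $\tau_2+\delta s$ uniformly in $s\in[s_0,s_2]$; this is the step where the $\delta<1$ saving is produced.

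\textbf{Returning to $A$, and main obstacle.} A second application of Lemma~\ref{lemma2}, now perturbing $A^{(1)}$ by $V^{(2)}$ back to $A$, yields
\begin{equation*}
|B|_{s}=|(A^{(1)}+V^{(2)})^{-1}|_{s}\le C(s)\bigl(|(A^{(1)})^{-1}|_{s}+|(A^{(1)})^{-1}|_{s_0}^{2}|V^{(2)}|_{s}\bigr)\le \tfrac12 N^{\tau_2+\delta s},
\end{equation*}
where the second term is absorbed by the smallness of $|V^{(2)}|_{s}$ ensured by the regularity index $q$. The main obstacle is the middle step: obtaining slope $\delta<1$ rather than the naive slope $1$ that Lemma~\ref{lemma9}(2) would produce on an $N$-localized matrix. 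Extracting this gain requires a careful simultaneous tuning of the Fourier cut-off $M=N^{\alpha}$, the band-width cost, and the finite-differentiability decay of $V^{(2)}$, and this is where the lower bound $q\ge s_2+\varrho+2$ in \eqref{E1.74} is used in an essential way.
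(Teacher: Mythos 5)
There is a genuine gap, and it sits exactly where you flagged ``the main obstacle'': the band-matrix step for $A^{(1)}$ cannot produce the slope $\delta<1$. Knowing that $A^{(1)}$ is diagonal plus a band of width $M=N^{\alpha}$ tells you nothing useful about the off-diagonal decay of $(A^{(1)})^{-1}$: the inverse of a band matrix on a block of size $N$ is in general full, and the only estimate available from the hypotheses is Lemma \ref{lemma9}(2) applied to the full $N$-localized inverse, which gives $|(A^{(1)})^{-1}|_{s}\leq N^{s+\nu+r}\|(A^{(1)})^{-1}\|_{0}\leq CN^{s+\nu+r+\tau}$ --- slope $1$, not $\delta$. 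The proposed ``interpolation between the cheap bound on $|(A^{(1)})^{-1}|_{s_0}$ and the high-regularity bound $|A^{(1)}|_{s_2+\varrho}\leq C$'' is not a legitimate operation: Lemma \ref{lemma7} interpolates norms of \emph{products} of matrices, and there is no mechanism that converts regularity of $A^{(1)}$ into decay of its inverse with a sublinear exponent. (Quantitative inverse-of-band-matrix results of Demko--Moss--Smith type also fail here, because the relevant condition number is of size $N^{\tau+4}$, so the exponential decay of the inverse only becomes effective at distances far larger than $N$.) Consequently the Fourier cut-off $V=V^{(1)}+V^{(2)}$, while harmless, is not the decomposition that drives the proof, and no tuning of $\alpha$ against $q$ can rescue the middle step.

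The mechanism the paper actually uses is a dichotomy in the \emph{index set}, not in the potential. One splits $E=R\cup S$ into regular sites, where the diagonal entry $-(\lambda\omega_0\cdot l+\theta)^2+\lambda_j^2+m$ has modulus $\geq\Theta$, and singular sites. On $R$ the diagonal dominates and three successive reductions (inverting the diagonal, then $\mathrm{I}+M^{R}_{R}$ via Lemma \ref{lemma2}, then substituting back) eliminate the regular unknowns with constants controlled by $\Theta^{-1}\|V\|$. The $\delta$-gain then comes from a separation property of the singular sites: a chain argument exploiting the lattice structure \eqref{E1.56} and the quadratic form of $\lambda_j^2$ shows that $S$ splits into clusters $\Omega_\alpha$ with $\mathrm{diam}(\Omega_\alpha)\leq N^{\delta/2}$ and mutual distance $>N^{\delta/(2(1+C(r)))}$. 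Truncating the reduced system to these clusters produces a matrix whose left inverse is supported in a band of width $O(N^{\delta/2})$, and it is Lemma \ref{lemma9}(2) applied to \emph{that} band-width --- not to the original block of size $N$ --- which yields the factor $N^{\delta(s+\nu+r)/2}$ and hence the bound $\tfrac12 N^{\tau_2+\delta s}$. Your write-up contains no analogue of the regular/singular splitting or of the cluster-separation estimate, and without them the claimed conclusion of the middle step is unsupported.
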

\begin{proof}
The proof is given in the Appendix.
\end{proof}
Lemma \ref{lemma3} shows that there exists $N_0:=N_0(s_2,\kappa_0,V)\in\mathbf{N}$ (see the first step in the proof of Theorem \ref{theo1}) such that for fixed $l\in[-{N},{N}]^{\nu}\cap\mathbf{Z}^{\nu},\theta\in\mathbf{R}$ with $ \tilde{N}(s_2,V)\leq {N}_0^{{1}/{\chi_0}}\leq N\leq N_0$, if
\begin{align}\label{E1.83}
\|((-{(\lambda{\omega}_0\cdot l+\theta)}^2)\mathrm{I}_{\mathfrak{d}_j}+\check{P}_{{N},j_0}(\Delta^2+V(\boldsymbol x))_{|\check{H}_{{N},j_0}})^{-1}\|_{L^2_{x}}\leq N^{\tau}
\end{align}
holds, then we have
\begin{align}\label{E1.76}
|((-{(\lambda{\omega}_0\cdot l+\theta)}^2)\mathrm{I}_{\mathfrak{d}_j}+\check{P}_{N,j_0}(\Delta^2+V(\boldsymbol x))_{|\check{H}_{N,j_0}})^{-1}|_{s}\leq({1}/{2})N^{(\tau_2+\delta s)},\quad\forall s\in[s_0,s_2].
\end{align}
In addition, in the Fourier basis $e^{\mathrm{i}l\cdot \varphi}$, definition \eqref{E1.85} implies
\[P_{{N},j_0}({L}_\lambda(\theta))_{|H_{{N},j_0}}=\sum\limits_{|l|\leq {N}}((-{(\lambda{\omega}_0\cdot l+\theta)}^2)\mathrm{I}_{\mathfrak{d}_j}+\check{P}_{{N},j_0}(\Delta^2+V(\boldsymbol x))_{|\check{H}_{{N},j_0}})e^{\mathrm{i}l\cdot \varphi},\]
which gives rise to
\begin{align}\label{E1.77}
|(P_{{N},j_0}({L}_\lambda(\theta))_{|H_{{N},j_0}})^{-1}|_s\stackrel{\eqref{E1.82},~\eqref{E1.76}}{\leq}\frac{1}{2}{N}^{(\tau_2+\delta s)},\quad\forall s\in[s_0,s_2].
\end{align}
Then,  from \eqref{E1.22}, \eqref{E1.14}, \eqref{E1.77} and $\|u\|_{s_1}\leq1$, we deduce
\begin{align*}
|(P_{{N},j_0}({L}_\lambda(\theta))_{|H_{{N},j_0}})^{-1}|_{s_1-\varrho}|\epsilon P_{{N},j_0}((\mathrm{D}F)(u))|_{s_1-\varrho}&\leq
\frac{\epsilon }{2}{N}^{\tau_2+\delta( s_1-\varrho)}\| P_{{N},j_0}((\mathrm{D}F)(u))\|_{s_1}\leq1/2
\end{align*}
for $\epsilon {N}^{\tau_2+\delta( s_1-\varrho)}\leq \tilde{\mathfrak{c}}(s_1)$ small enough. Hence it follows from \eqref{E1.77} and Lemma \ref{lemma2} that
\begin{align}\label{E1.78}
|\mathcal{A}^{-1}_{N,j_0}(\epsilon,\lambda,u,\theta)|_s\stackrel{\eqref{E1.33}}{\leq}N^{\tau_2+\delta s}, \quad\forall s\in[s_0,s_1-\varrho].
\end{align}

Based on the fact \eqref{E1.78}, we have the following definitions.
\begin{defi}[$N$-good/$N$-bad matrix]\label{def1}
The matrix $\mathcal{A}\in\mathcal{M}^{\mathfrak{F}}_{\mathfrak{F}}$ with $\mathfrak{F}\subset\mathfrak{N}$ and $\mathrm{diam}(\mathfrak{F})\leq 4N$ is $N$-good if $\mathcal{A}$ is invertible with
\begin{align}\label{E2.2}
|\mathcal{A}^{-1}|_s\leq N^{\tau_2+\delta s}, \quad\forall s\in[s_0,s_1-\varrho].
\end{align}
Otherwise $\mathcal{A}$ is $N$-bad.
\end{defi}
\begin{defi}[Regular/Singular sites]\label{def2}
The index $\mathfrak{n}=(l,j)\in\mathfrak{N}$ is regular for $\mathcal{A}$ if $|\tilde{\mu}_\mathfrak{n}|\geq\tilde{\Theta}$, where $\mathcal{A}^{\mathfrak{n}}_{\mathfrak{n}}:=\tilde{\mu}_\mathfrak{n}\mathrm{I}_{\mathfrak{d}_j}$ with $\tilde{\mu}_\mathfrak{n}:=\tilde{\mu}_\mathfrak{n}(\epsilon,\lambda,\theta):=-(\lambda{\omega}_0\cdot l+\theta)^2+\lambda^2_j+m-\epsilon \bar{m}$.  Otherwise $\mathfrak{n}$ is singular.
\end{defi}
Note that $\bar{m}$ denotes the average of $a(\varphi,\boldsymbol x)$ on $\mathbf{T}^{\nu}\times \boldsymbol M$, where $a(\varphi,\boldsymbol x):=(\partial_{u}f)(\varphi,\boldsymbol x,u(\varphi,\boldsymbol x))$, and that $\tilde{\Theta}$ is given in Proposition \ref{pro1}.
\begin{defi}[($\mathcal{A},N$)-regular/($\mathcal{A},N$)-singular site]\label{def3}
For $\mathcal{A}\in\mathcal{M}^{\mathfrak{A}}_{\mathfrak{A}}$, we say that $\mathfrak{n}\in \mathfrak{A}\subset\mathfrak{N}$ is
$(\mathcal{A},N)$-regular if there exists $\mathfrak{F}\subset \mathfrak{A} $ with $\mathrm{diam}(\mathfrak{F})\leq 4N$, $\mathrm{d}(\mathfrak{n},\mathfrak{A}\backslash \mathfrak{F})\geq N$ such that $\mathcal{A}^{\mathfrak{F}}_{\mathfrak{F}} $ is $N$-good.
\end{defi}
\begin{defi}[($\mathcal{A},N$)-good/($\mathcal{A},N$)-bad site]\label{def4}
The index $\mathfrak{n}=(l,j)\in\mathfrak{N}$ is $(\mathcal{A},N)$-good if it is regular for $\mathcal{A}$ or $(\mathcal{A},N)$-regular. Otherwise we say that $\mathfrak{n}$ is $(\mathcal{A},N)$-bad.
\end{defi}

Define
\begin{align}\label{E1.80}
\mathscr{B}_{N}(j_0):=\mathscr{B}_{N}(j_0;\epsilon,\lambda,u):=\left\{\theta\in\mathbf{R}:\mathcal{A}_{N,j_0}(\epsilon,\lambda,u,\theta)~
\text{is}~N\text{-bad}\right\}.
\end{align}
\begin{defi}[$N$-good/$N$-bad parameters]\label{def7}
A parameter $\lambda\in \Lambda$ is $N$-good for $\mathcal{A}$ if one has that, for all $j_0\in \Gamma_{+}(\boldsymbol M)$,
\begin{align}\label{E2.6}
\mathscr{B}_{N}(j_0)\subset\bigcup_{q=1}^{N^{\nu+d+r+5}}I_q,~\text{where}~ I_q=I_q(j_0)~\text{intervals~with}~\mathrm{meas}(I_q)\leq N^{-\tau}.
\end{align}
Otherwise we say that $\lambda$ is $N$-bad.
\end{defi}
In addition denote
\begin{align}\label{E1.81}
\mathscr{G}_{N}(u):=\{\lambda\in \Lambda:\lambda ~\text{is}~N\text{-good~for}~\mathcal{A}\}.
\end{align}
As a result we have the following lemma:
\begin{lemm}\label{lemma19}
There exist $N_0:=N_0(s_2,\kappa_0,V)\in\mathbf{N}$ and $\tilde{\mathfrak{c}}(s_1)>0$ such that if $\epsilon N_0^{\tau_2+\delta (s_1-\varrho)}\leq \tilde{\mathfrak{c}}(s_1)$,
then  $\forall \kappa_0^{-\frac{1}{\tau}}<N_0^{1/\chi_0}\leq N\leq N_0$, $\forall\|u\|_{s_1}\leq1$, $\forall\epsilon\in[0,\epsilon_0]$, we have $\mathscr{G}_{N}(u)=\Lambda$, 
where $\mathscr{G}_{N}(u)$ is defined in \eqref{E1.81}.
\end{lemm}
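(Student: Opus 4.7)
The goal is to prove that every $\lambda\in\Lambda$ is $N$-good in the sense of Definition \ref{def7}: for each $j_0\in\Gamma_{+}(\boldsymbol M)$, the bad set $\mathscr{B}_{N}(j_0)$ lies in a union of at most $N^{\nu+d+r+5}$ intervals of length $\le N^{-\tau}$.

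The first step is to reduce the question to a spectral separation estimate for the unperturbed operator. Exactly as in the passage from \eqref{E1.83} through \eqref{E1.78}, if for every $l\in\mathbf{Z}^{\nu}$ with $|l|\le N$ the $L^{2}_{x}$ bound
\[
\bigl\|\bigl(-(\lambda{\omega}_0\cdot l+\theta)^{2}\mathrm{I}+\check{P}_{N,j_0}(\Delta^{2}+V(\boldsymbol x))_{|\check{H}_{N,j_0}}\bigr)^{-1}\bigr\|_{L^{2}_{x}}\le N^{\tau}
\]
holds, then Lemma \ref{lemma3} upgrades it to the $s$-norm bound \eqref{E1.77} for $(P_{N,j_0}L_\lambda(\theta)_{|H_{N,j_0}})^{-1}$, and the smallness hypothesis $\epsilon N_{0}^{\tau_2+\delta(s_1-\varrho)}\le\tilde{\mathfrak{c}}(s_1)$ combined with Lemma \ref{lemma2} transfers it to the $N$-good estimate $|\mathcal{A}_{N,j_0}(\epsilon,\lambda,u,\theta)^{-1}|_{s}\le N^{\tau_2+\delta s}$ on $s\in[s_0,s_1-\varrho]$. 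Therefore $\theta\in\mathscr{B}_{N}(j_0)$ only if the $L^{2}_{x}$ bound fails for some $l$.

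Next I would use the spectral theorem: $\check{P}_{N,j_0}(\Delta^{2}+V)_{|\check{H}_{N,j_0}}$ is self-adjoint on a finite-dimensional space, with real eigenvalues $\mu_{1},\dots,\mu_{K}$ (counted with multiplicity) satisfying $\mu_{k}\ge\kappa_{0}$ by hypothesis \eqref{E1.84}, where $K=\dim\check{H}_{N,j_0}$. The $L^{2}_{x}$-inverse norm exceeds $N^{\tau}$ precisely when some $\mu_{k}$ lies within $N^{-\tau}$ of $(\lambda{\omega}_0\cdot l+\theta)^{2}$, so
\[
\mathscr{B}_{N}(j_0)\subset\bigcup_{|l|\le N}\,\bigcup_{k=1}^{K}\bigl\{\theta\in\mathbf{R}:\,(\lambda{\omega}_0\cdot l+\theta)^{2}\in(\mu_{k}-N^{-\tau},\mu_{k}+N^{-\tau})\bigr\}.
\]
Each inner set is at most two intervals of length $\sqrt{\mu_{k}+N^{-\tau}}-\sqrt{\mu_{k}-N^{-\tau}}\le \sqrt{2}\,N^{-\tau}/\sqrt{\mu_{k}}$; the hypothesis $\kappa_{0}^{-1/\tau}<N^{1/\chi_{0}}$ gives $\mu_{k}\ge\kappa_{0}>N^{-\tau/\chi_{0}}\ge 2N^{-\tau}$ (for $\chi_{0}>1$ and $N\ge N_{0}^{1/\chi_{0}}$ large enough), justifying this bound.

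Finally I would count. Each interval of length $\le C N^{-\tau+\tau/(2\chi_{0})}$ is covered by at most $\lceil C N^{\tau/(2\chi_{0})}\rceil$ sub-intervals of length $N^{-\tau}$. There are $\le(2N+1)^{\nu}\le 3^{\nu}N^{\nu}$ values of $l$, and $K\le CN^{r}\cdot N^{d-r}=CN^{d}$ when $|j_0|$ is of polynomial size in $N$, using $\mathfrak{d}_{j}\le\|j+\rho\|^{d-r}$. Multiplying yields at most $C N^{\nu+d+\tau/(2\chi_{0})}$ intervals of length $N^{-\tau}$, which is $\le N^{\nu+d+r+5}$ once $\chi_{0}$ is chosen with $\tau/(2\chi_{0})\le r+5$, verifying Definition \ref{def7} and hence $\mathscr{G}_{N}(u)=\Lambda$.

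The main obstacle is keeping the count uniform in $j_0\in\Gamma_{+}(\boldsymbol M)$: when $|j_0|\gg N$, $K$ grows like $N^{r}|j_0|^{d-r}$, but the corresponding eigenvalues $\mu_{k}\sim|j_0|^{4}$ yield intervals of length $\ll N^{-\tau}$ that require no subdivision. A careful bookkeeping — essentially a Weyl-type counting of the eigenvalues of $\Delta^{2}+V$ inside $((\lambda\omega_{0}\cdot l+\theta)^{2}-N^{-\tau},(\lambda\omega_{0}\cdot l+\theta)^{2}+N^{-\tau})$, so that narrow intervals clustering in a common length-$N^{-\tau}$ window are counted only once — pins the total interval count at $O(N^{\nu+d+r+5})$ for all $j_0$. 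Taking $N_{0}=N_{0}(s_{2},\kappa_{0},V)$ large enough to absorb the implicit constants and to satisfy the threshold from Lemma \ref{lemma3} then completes the proof.
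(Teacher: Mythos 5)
Your proposal follows essentially the same route as the paper: reduce matters via Lemma \ref{lemma3} and the chain \eqref{E1.83}--\eqref{E1.78} to the statement that $\theta\in\mathscr{B}_N(j_0)$ forces some eigenvalue $\hat{\lambda}_{j,p}$ of $\check{P}_{N,j_0}(\Delta^2+V)_{|\check{H}_{N,j_0}}$ to lie within $N^{-\tau}$ of $(\lambda\omega_0\cdot l+\theta)^2$, then use $\hat{\lambda}_{j,p}\geq\kappa_0$ from \eqref{E1.84} to turn each such condition into two $\theta$-intervals of length at most $2N^{-\tau}/\sqrt{\kappa_0}$ and count.

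One quantitative slip: you lower-bound $\mu_k\geq\kappa_0>N^{-\tau/\chi_0}$ and thereby pay a factor $N^{\tau/(2\chi_0)}$ in the subdivision count, which you then need to be at most $N^{r+5}$ by ``choosing $\chi_0$.'' You are not free to choose $\chi_0$ here, and with the paper's parameters ($\tau\geq 2\chi_0\nu+1$, so $\tau/(2\chi_0)>\nu$) this inequality can fail. The fix is to use $\kappa_0$ as the fixed constant it is: each interval has length at most $2N^{-\tau}/\sqrt{\kappa_0}$ and so splits into at most $[2/\sqrt{\kappa_0}]+1=O(1)$ subintervals of length $N^{-\tau}$, a constant absorbed by taking $N_0=N_0(s_2,\kappa_0,V)$ large --- which is exactly what the paper does. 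Your closing remark about uniformity in $j_0$ (the count of eigenvalues $\mathfrak{d}_j\leq\|j+\rho\|^{d-r}$ growing with $|j_0|$, compensated by the shrinking of the intervals and the clustering of the $\hat{\lambda}_{j,p}$ around $\lambda_j^2$) identifies a point the paper's own proof passes over in silence; your sketch of the resolution is the right idea, though neither you nor the paper carries it out in detail.
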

\begin{proof}
Denote by $\hat{\lambda}_{j,p},p=1,\cdots,\mathfrak{d}_j$ the eigenvalues of $\check{P}_{N_0,j_0}(\Delta^2+V(\boldsymbol x))_{|\check{H}_{N_0,j_0}}$. It follows from  the fact of  \eqref{E1.78} deduced by \eqref{E1.83} and Definition \ref{def1} that, $\forall|(l,j-j_0)|\leq N,\forall 1\leq p\leq \mathfrak{d}_j$,
\begin{align*}
|-(\lambda{\omega}_0\cdot l+\theta)^2+\hat{\lambda}_{j,p}|> N^{-\tau}\Rightarrow\mathcal{A}_{N,j_0}(\lambda,\epsilon,\theta)~\text{is}~N\text{-good},
\end{align*}
which carries out
\begin{align*}
\mathscr{B}_{N}(j_0)\subset\bigcup_{{|(l,j-j_0)|\leq N,1\leq p\leq \mathfrak{d}_j}}\{\theta\in\mathbf{R}:|-(\lambda{\omega}_0\cdot l+\theta)^2+\hat{\lambda}_{j,p}|\leq N^{-\tau}\}.
\end{align*}
Assumption \eqref{E1.84} implies $\hat{\lambda}_{j,p}\geq\kappa_0>0$. Hence, for all $N>\kappa_0^{-\frac{1}{\tau}}$, we get
\begin{align*}
\{\theta\in\mathbf{R}:|-(\lambda{\omega}_0\cdot l+\theta)^2+\hat{\lambda}_{j,p}|\leq N^{-\tau}\}\subset \mathscr{I}_1\cup \mathscr{I}_2,
\end{align*}
where
\begin{align*}
\mathscr{I}_1&=\left\{\theta\in\mathbf{R}:-\sqrt{\hat{\lambda}_{j,p}+N^{-\tau}}\leq \theta+\lambda{\omega}_0\cdot l\leq-\sqrt{\hat{\lambda}_{j,p}-N^{-\tau}}\right\},\\
\mathscr{I}_2&=\left\{\theta\in\mathbf{R}:\sqrt{\hat{\lambda}_{j,p}-N^{-\tau}}\leq \theta+\lambda\omega_0\cdot l\leq\sqrt{\hat{\lambda}_{j,p}+N^{-\tau}}\right\}.
\end{align*}

It is easy that, for $q=1,2$,
\begin{align*}
\mathrm{meas}(\mathscr{I}_q)=&\sqrt{\hat{\lambda}_{j,p}+N^{-\tau}}-\sqrt{\hat{\lambda}_{j,p}-N^{-\tau}}=\frac{2N^{-\tau}}
{\sqrt{\hat{\lambda}_{j,p}+N^{-\tau}}+\sqrt{\hat{\lambda}_{j,p}-N^{-\tau}}}\\
\leq &\frac{2N^{-\tau}}{\sqrt{\hat{\lambda}_{j,p}}}\leq\frac{2N^{-\tau}}{\sqrt{\kappa_0}}.
\end{align*}
Since $|(l,j-j_0)|\leq N,1\leq p\leq \mathfrak{d}_j$ with $\mathfrak{d}_j\leq\|j+\rho\|^{d-r}$, we obtain
\begin{align*}
\mathscr{B}_{N}(j_0)\subset\bigcup_{q=1}^{K_1\mathcal{C}N^{\nu+d}}I_q,\quad I_q~ \text{intervals~with}~\mathrm{meas}(I_q)\leq N^{-\tau},
\end{align*}
where $K_1=[2/\sqrt{\kappa_0}]+1$. The symbol $[~\cdot~]$ denotes the integer part.
\end{proof}

Our goal is to show that a matrix $\mathcal{A}$ at the larger scale $N'$, where
\begin{align}\label{E2.1}
N'=N^{\chi} \quad\text{with}\quad \chi>1
\end{align}
is $N'$-good under some conditions, see \eqref{E2.44}-\eqref{E2.46} and $(\mathrm{A1})$-$(\mathrm{A3})$ in proposition \ref{pro1}.
\begin{prop}\label{pro1}
Assume
\begin{align}\label{E2.44}
&\delta\in(0,1/2),\quad\tau_2>2\tau+\nu+r+1,\quad C_1:=C_1(\nu,d,r)\geq2,\quad \chi\in[\chi_0,2\chi_0],\\
&\chi_0(\tau_2-2\tau-\nu-r)>3(\mathfrak{e}+C_1(s_0+\nu+r)),\quad \chi_0\delta> C_1,\label{E2.45}\\
&3\mathfrak{e}+2\chi_0(\tau+\nu+r)+C_1s_0<s_1-\varrho\leq s_2,\label{E2.46}
\end{align}
where $\mathfrak{e}:=\tau_2+\nu+r+s_0$. For all given $\Upsilon>0$, there exist $\tilde{\Theta}:=\tilde{\Theta}(\Upsilon,s_1)>0$ large enough and $\bar{N}(\Upsilon,\tilde{\Theta},s_2)\in\mathbf{N}$ such that: for all $N\geq\bar{N}(\Upsilon,\tilde{\Theta},s_2)$ and $\mathfrak{A}\in\mathfrak{N}$ with $\mathrm{diam}(\mathfrak{A})\leq4N'$, if $\mathcal{A}\in\mathcal{M}^{\mathfrak{A}}_{\mathfrak{A}}$ satisfies

$(\mathrm{A1})$ $|\mathcal{Q|}_{s_1-\varrho}\leq\Upsilon$ with $\mathcal{Q}=\mathcal{A}-\mathrm{Diag}(\mathcal{A})$,

$(\mathrm{A2})$ $\|\mathcal{A}^{-1}\|_{0}\leq(N')^{\tau}$,

$(\mathrm{A3})$ The set of ($\mathcal{A},N$)-bad sites $\mathfrak{B}$ admits a partition $\bigcup_{\alpha}\mathfrak{O}_{\alpha}$ into disjoint clusters with
\begin{align}\label{E2.30}
\mathrm{diam}(\mathfrak{O}_{\alpha})\leq N^{C_1},\quad \mathrm{d}(\mathfrak{O}_\alpha,\mathfrak{O}_\beta)\geq N^2, \quad
\forall\alpha\neq\beta,
\end{align}
then $\mathcal{A}$ is $N'$-good with
\begin{align}\label{E2.43}
|\mathcal{A}^{-1}|_{s}\leq\frac{1}{4}(N')^{\tau_2}((N')^{\delta s}+|\mathcal{Q}|_{s}),\quad \forall s\in[s_0,s_2].
\end{align}
\end{prop}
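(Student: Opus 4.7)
\emph{Proof outline.} The proof follows the multiscale (Green-function) analysis of \cite{Berti2012nonlinearity,berti2015abstract}, adapted to the cluster structure provided by $(\mathrm{A3})$. The strategy is to build $\mathcal{A}^{-1}$ out of the local inverses on $N$-good boxes from Definition \ref{def3} and to absorb the bad sites using the crude $L^{2}$ bound $(\mathrm{A2})$ together with the smoothing estimate of Lemma \ref{lemma9}.

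First I would partition $\mathfrak{A}=\mathfrak{G}\sqcup\mathfrak{B}$, where $\mathfrak{B}=\bigsqcup_{\alpha}\mathfrak{O}_{\alpha}$ is the set of $(\mathcal{A},N)$-bad sites. For each $\mathfrak{n}\in\mathfrak{G}$, Definition \ref{def3} supplies a box $\mathfrak{F}(\mathfrak{n})\subset\mathfrak{A}$ with $\mathrm{diam}(\mathfrak{F}(\mathfrak{n}))\leq4N$ and $\mathrm{d}(\mathfrak{n},\mathfrak{A}\setminus\mathfrak{F}(\mathfrak{n}))\geq N$ on which $\mathcal{A}$ is $N$-good, so $|(\mathcal{A}|_{\mathfrak{F}(\mathfrak{n})})^{-1}|_{s}\leq N^{\tau_{2}+\delta s}$ by Definition \ref{def1}. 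The core step is to iterate a block resolvent identity: restricting $\mathcal{A}$ to $\mathfrak{F}(\mathfrak{n})$ and treating the off-block coupling (a piece of $\mathcal{Q}$ supported at distance $\geq N$ from $\mathfrak{n}$) as a perturbation moves the active index by at least $N$ at each step, costing a local factor $N^{\tau_{2}+\delta s}$ and a coupling factor $|\mathcal{Q}|$. Since $\mathrm{diam}(\mathfrak{A})\leq 4N'=4N^{\chi}$, at most $\sim N^{\chi-1}$ good-step iterations suffice to traverse the whole index set. When a path enters a bad cluster $\mathfrak{O}_{\alpha}$ of diameter $\leq N^{C_{1}}$, I would invoke $(\mathrm{A2})$ and Lemma \ref{lemma9}$(2)$ to estimate the corresponding block of $\mathcal{A}^{-1}$ by $N^{C_{1}(s+\nu+r)}(N')^{\tau}$. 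The separation $\mathrm{d}(\mathfrak{O}_{\alpha},\mathfrak{O}_{\beta})\geq N^{2}$ ensures that only few cluster-crossings occur along any path.

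Summing the path contributions and using Lemma \ref{lemma7} (interpolation) together with Lemma \ref{lemma10} (decay along lines) reassembles an $s$-norm bound on $\mathcal{A}^{-1}$. The splitting $(N')^{\delta s}+|\mathcal{Q}|_{s}$ in \eqref{E2.43} reflects a natural dichotomy in the resolvent expansion: paths whose couplings are all estimated by $|\mathcal{Q}|_{s_{0}}\leq|\mathcal{Q}|_{s_{1}-\varrho}\leq\Upsilon$ give the first term, while paths with a single ``tame'' factor of $|\mathcal{Q}|_{s}$ pulled out give the second. The arithmetic conditions \eqref{E2.45}--\eqref{E2.46}, in particular $\chi_{0}(\tau_{2}-2\tau-\nu-r)>3(\mathfrak{e}+C_{1}(s_{0}+\nu+r))$ and $\chi_{0}\delta>C_{1}$, are precisely the budget inequalities that force the gains $N^{\tau_{2}+\delta s}$ accumulated over $\sim N^{\chi-1}$ good-box inversions to beat the losses $(N')^{\tau}$ from bad clusters together with the smoothing overhead $N^{C_{1}(s+\nu+r)}$, yielding the target exponent $\tau_{2}$ (rather than $\chi\tau_{2}$) on the final scale $N'$.

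The main obstacle is keeping the $s$-norm algebra tame through many iterations: Lemma \ref{lemma7} is only sub-multiplicative up to an $s$-dependent constant, so one must track inductively which single factor carries the $s$-norm while all the others sit in the $s_{0}$-norm. The cumulative constants $C(s)$ must be absorbed into the threshold $\bar{N}(\Upsilon,\tilde{\Theta},s_{2})$, and the prefactor $1/4$ in \eqref{E2.43}---crucial for propagating the bound through the subsequent Nash-Moser scheme---requires the strict margins built into \eqref{E2.45}--\eqref{E2.46}.
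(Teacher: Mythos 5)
Your outline assembles the right ingredients (local $N$-good inverses, the smoothing gain from couplings supported at distance $\geq N$, the crude bound $(\mathrm{A2})$ for the bad part, the role of \eqref{E2.45}--\eqref{E2.46} as exponent bookkeeping), but the step that actually carries the proof is missing. You propose to handle a bad cluster by ``invoking $(\mathrm{A2})$ and Lemma \ref{lemma9}(2) to estimate the corresponding block of $\mathcal{A}^{-1}$ by $N^{C_1(s+\nu+r)}(N')^{\tau}$.'' This does not follow: $(\mathrm{A2})$ is a purely global $L^2$ bound with no off-diagonal localization, and Lemma \ref{lemma9}(2) converts $\|\cdot\|_0$ into $|\cdot|_s$ only for matrices vanishing at distance $>N$ from the diagonal --- a property that neither $\mathcal{A}^{-1}$ nor any of its sub-blocks is known to have a priori. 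Recovering polynomial off-diagonal decay across the bad sites from the global $L^2$ bound is precisely the hard point in the Sobolev setting, and the proposal does not supply a mechanism for it. Relatedly, the path/resolvent-expansion framing leaves the combinatorics of paths and the threading of the interpolation inequality \eqref{E1.23} through $\sim N^{\chi-1}$ products unaddressed; with only polynomial decay this entropy problem is exactly why the paper avoids a path expansion.

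The paper's route is algebraic rather than path-based: (i) eliminate the good sites via the local inverses and a Neumann series for $\mathrm{I}+\mathcal{P}^{\mathfrak{G}}_{\mathfrak{G}}$ (the smallness $|\mathcal{P}^{\mathfrak{G}}_{\mathfrak{G}}|_{s_0}\leq C(s_1)\tilde{\Theta}^{-1}\Upsilon\leq 1/2$ coming from the distance-$\geq N$ support and the largeness of $s_1$), obtaining a reduced system $\hat{\mathcal{P}}^{\mathfrak{B}}_{\mathfrak{A}}u_{\mathfrak{B}}=\hat{\mathcal{S}}^{\mathfrak{A}}_{\mathfrak{A}}h_{\mathfrak{A}}$ on the bad sites whose \emph{exact} left inverse is $((\mathcal{A}^{\mathfrak{A}}_{\mathfrak{A}})^{-1})^{\mathfrak{A}}_{\mathfrak{B}}$, so that $(\mathrm{A2})$ transfers to $\hat{\mathcal{P}}$ in operator norm; (ii) truncate $\hat{\mathcal{P}}$ to the blocks $\bigcup_{\alpha}\mathfrak{O}_{\alpha}\times\hat{\mathfrak{O}}_{\alpha}$, the truncation error being $s_0$-small by smoothing thanks to the $N^2$ separation in \eqref{E2.30}; (iii) check by direct computation that the resulting left inverse may itself be taken supported on $\bigcup_{\alpha}\mathfrak{O}_{\alpha}\times\hat{\mathfrak{O}}_{\alpha}$, i.e.\ within distance $2N^{C_1}$ of the diagonal --- only at this point does Lemma \ref{lemma9}(2) apply and produce the $s$-norm bound $C(s)N^{C_1(s+\nu+r)+\chi\tau}$; (iv) perturb back and reassemble $(\mathcal{A}^{-1})^{\mathfrak{A}}_{\mathfrak{B}}$ and $(\mathcal{A}^{-1})^{\mathfrak{A}}_{\mathfrak{G}}$. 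Steps (i) and (iii) are the ideas your proposal lacks, and without them the bound \eqref{E2.43} cannot be reached from $(\mathrm{A1})$--$(\mathrm{A3})$ as described.
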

\begin{proof}
The proof of the proposition is shown in the Appendix.
\end{proof}

\subsection{Separation properties of bad sites}\label{sec:3.2}
Let us check that the assumption $(\mathrm{A3})$ in Proposition \ref{pro1} holds.
\begin{defi}[($\mathcal{A}(u,\theta),N$)-strongly-regular/($\mathcal{A}(u,\theta),N$)-weakly-singular site]\label{def5}
The index $\mathfrak{n}=(l,j)\in\mathfrak{N}$ is ($\mathcal{A}(u,\theta),N$)-strongly-regular if $\mathcal{A}_{N,l,j}(\epsilon,\lambda,u,\theta)$ (see \eqref{E1.94}) is $N$-good, where $\mathcal{A}(\epsilon,\lambda,u,\theta)$ is defined in \eqref{E1.97}. Otherwise $\mathfrak{n}$ is $(\mathcal{A}(u,\theta),N)$-weakly-singular.
\end{defi}
\begin{defi}[($\mathcal{A}(u,\theta),N$)-strongly-good/($\mathcal{A}(u,\theta),N$)-weakly-bad site]\label{def6}
The index $\mathfrak{n}=(l,j)\in\mathfrak{N}$ is ($\mathcal{A}(u,\theta),N$)-strongly-good if it is regular for $\mathcal{A}(\epsilon,\lambda,u,\theta)$ or all the sites $\mathfrak{n}'=(l',j')$ with $\mathrm{d}(\mathfrak{n},\mathfrak{n}')\leq N$ are ($\mathcal{A}(u,\theta),N$)-strongly-regular. Otherwise $\mathfrak{n}$ is $(\mathcal{A}(u,\theta),N)$-weakly-bad.
\end{defi}
\begin{lemm}\label{lemma5}
For $j_0\in \Gamma_{+}(\boldsymbol M), \chi\in[\chi_0,2\chi_0]$, if the site $\mathfrak{n}=(l,j)\in\mathfrak{N}$ with $|l|\leq N',|j-j_0|\leq N'$ is ($\mathcal{A}(u,\theta),N$)-strongly-good, then it is $(\mathcal{A}_{N',j_0}(\epsilon,\lambda,u,\theta),N)$-good.
\end{lemm}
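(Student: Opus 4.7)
The plan is to split on the two cases of Definition~\ref{def6}. If $\mathfrak{n}=(l,j)$ is regular for $\mathcal{A}(\epsilon,\lambda,u,\theta)$ in the sense of Definition~\ref{def2}, then the diagonal entry $\tilde\mu_\mathfrak{n}$ is unchanged by restriction to $\mathfrak{A}:=\{(l',j')\in\mathfrak{N}:|l'|\leq N',\,|j'-j_0|\leq N'\}$, so $\mathfrak{n}$ stays regular for $\mathcal{A}_{N',j_0}$ and is therefore $(\mathcal{A}_{N',j_0},N)$-good by Definition~\ref{def4}. In the remaining case, every $\mathfrak{n}'$ with $d(\mathfrak{n},\mathfrak{n}')\leq N$ is strongly-regular, i.e.\ $\mathcal{A}_{N,l',j'}(\epsilon,\lambda,u,\theta)$ is $N$-good; the goal is to exhibit $\mathfrak{F}\subset\mathfrak{A}$ fulfilling Definition~\ref{def3}.

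I would construct $\mathfrak{F}$ by first introducing an auxiliary ``interior'' site $\mathfrak{n}^*=(l^*,j^*)$ via coordinatewise projection of $\mathfrak{n}$ onto the interior strip of $\mathfrak{A}$:
\begin{align*}
l^*_k&:=\max\bigl(-(N'-N),\,\min(N'-N,\,l_k)\bigr),\qquad k=1,\dots,\nu,\\
j^*_k&:=\max\bigl(\max(0,\,j_{0,k}-(N'-N)),\,\min(j_{0,k}+(N'-N),\,j_k)\bigr),\qquad k=1,\dots,r.
\end{align*}
The product structure \eqref{E1.95} of $\Gamma_+(\boldsymbol M)$ is essential precisely here, and is the reason the remark just after \eqref{E1.95} flags it as being used only in this lemma: each $j^*_k$ is a non-negative integer lying between $\min(j_k,j_{0,k})$ and $\max(j_k,j_{0,k})$, so $j^*\in\Gamma_+(\boldsymbol M)$. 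The hypotheses $|l|\leq N'$ and $|j-j_0|\leq N'$ give $|l^*_k-l_k|\leq N$ and $|j^*_k-j_k|\leq N$, hence $|\mathfrak{n}-\mathfrak{n}^*|\leq N$, so $\mathfrak{n}^*$ is itself strongly-regular and $\mathcal{A}_{N,l^*,j^*}$ is $N$-good. I would then set $\mathfrak{F}:=B_N(\mathfrak{n}^*)\cap\mathfrak{N}$, for which $\mathcal{A}^\mathfrak{F}_\mathfrak{F}=\mathcal{A}_{N,l^*,j^*}$ by \eqref{E1.94}.

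Verifying Definition~\ref{def3} then boils down to three checks. The bound $\mathrm{diam}(\mathfrak{F})\leq 2N\leq 4N$ is trivial. Using $|l^*|\leq N'-N$, $|j^*-j_0|\leq N'-N$ together with the triangle inequality, one gets $\mathfrak{F}\subset\mathfrak{A}$. The delicate point---and the main obstacle---is $d(\mathfrak{n},\mathfrak{A}\setminus\mathfrak{F})\geq N$: for $\mathfrak{n}'\in\mathfrak{A}\setminus\mathfrak{F}$ some coordinate $k$ must satisfy $|\mathfrak{n}'_k-\mathfrak{n}^*_k|\geq N+1$, and one has to inspect the three possible values of $\mathfrak{n}^*_k$ (namely $\mathfrak{n}_k$ itself or one of the boundary endpoints $\pm(N'-N)$, $j_{0,k}\pm(N'-N)$). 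For instance, when $l^*_k=N'-N$ because $l_k>N'-N$, the alternative $l'_k\geq l^*_k+N+1=N'+1$ is ruled out by $\mathfrak{n}'\in\mathfrak{A}$, forcing $l'_k\leq N'-2N-1$ and hence
\[
|l_k-l'_k|\geq(N'-N+1)-(N'-2N-1)=N+2.
\]
Symmetric computations (for $-(N'-N)$ on the $l$ side, and for $j_{0,k}\pm(N'-N)$ on the $j$ side, where the constraint $0\leq j'_k\leq j_{0,k}+N'$ coming from $\mathfrak{n}'\in\mathfrak{A}$ disposes of the ``outside'' alternative) cover every configuration, giving $|\mathfrak{n}-\mathfrak{n}'|\geq N+1\geq N$. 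This exhibits $\mathfrak{n}$ as $(\mathcal{A}_{N',j_0},N)$-regular and the lemma follows from Definition~\ref{def4}.
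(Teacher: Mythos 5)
Your proof is correct and follows essentially the same route as the paper's: the regular case is dispatched by Definition \ref{def4}, and in the strongly-regular case you build the same boundary-shifted $N$-ball (the paper defines the intervals $W_k,J_k$ case by case, which is exactly your clipped center $\mathfrak{n}^*$), invoke \eqref{E1.95} to keep the center in $\Gamma_+(\boldsymbol M)$, and use strong regularity of the nearby center to conclude. Your explicit verification of $\mathrm{d}(\mathfrak{n},\mathfrak{A}\setminus\mathfrak{F})\geq N$ is a detail the paper declares ``obvious,'' so if anything your write-up is slightly more complete.
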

\begin{proof}
Let $\mathfrak{K}=\mathfrak{W}\times\mathfrak{J}$ with
\begin{align*}
\mathfrak{W}:=[-N',N']^{\nu}\cap\mathbf{Z}^\nu,
\quad\mathfrak{J}:=\Big(j_0+\Big\{\sum\limits_{p=1}^{r}j_{k}\textbf{w}_k:j_k\in[-N',N']\Big\}\Big)\cap\Gamma_{+}(\boldsymbol M).
\end{align*}

If $\mathfrak{n}=(l,j)\in\mathfrak{N}$ with $|l|\leq N',|j-j_0|\leq N'$ is regular for $\mathcal{A}(\epsilon,\lambda,u,\theta)$, then Definition \ref{def4} and \eqref{E2.50} verify that it is  $(\mathcal{A}_{N',j_0}(\epsilon,\lambda,u,\theta),N)$-good.

If $\mathfrak{n}=(l,j)\in\mathfrak{N}$ with $|l|\leq N',|j-j_0|\leq N'$ is ($\mathcal{A}(u,\theta),N$)-strongly-regular, and  set $l:=(l_k)_{1\leq k\leq\nu}$, $\mathfrak{r}_k:=(j_0)_{k}-N',\mathfrak{t}_k:=(j_0)_{k}+N',\bar{\mathfrak{r}}_k:=-N',\bar{\mathfrak{t}}_k:=N'$, then the $N$-ball of $\mathfrak{n}$ is  defined as $\mathfrak{F}_N:=\mathfrak{F}(\mathfrak{n},N):=\mathfrak{W}_{N}\times\mathfrak{J}_{N}$, where
\begin{align*}
\mathfrak{W}_{N}:=\Big(\prod_{k=1}^{\nu}W_k\Big)\cap \mathbf{Z}^{\nu},\quad \mathfrak{J}_{N}:=\Big\{\sum\limits_{k=1}^{r}\mathfrak{j}_{k}\textbf{w}_k:\mathfrak{j}_{k}\in{J}_k \Big\}\cap \Gamma_{+}(\boldsymbol M),\quad \text{with}
\end{align*}
\begin{align*}
l_k-\bar{\mathfrak{r}}_k>N,\quad\bar{\mathfrak{t}}_k-l_k>N\Rightarrow W_{k}:=&[l_k-N,l_k+N],\\
l_k-\bar{\mathfrak{r}}_k\leq N,\quad\bar{\mathfrak{t}}_k-l_k>N\Rightarrow W_{k}:=&[\bar{\mathfrak{r}}_k,\bar{\mathfrak{r}}_k+2N],\\
l_k-\bar{\mathfrak{r}}_k>N,\quad\bar{\mathfrak{t}}_k-l_k\leq N\Rightarrow W_{k}:=&[\bar{\mathfrak{t}}_k-2N,\bar{\mathfrak{t}}_k];
\end{align*}
and
\begin{align*}
j_k-{\mathfrak{r}}_k>N,\quad{\mathfrak{t}}_k-j_k>N\Rightarrow J_{k}:=&[j_k-N,j_k+N],\\
j_k-{\mathfrak{r}}_k\leq N,\quad{\mathfrak{t}}_k-j_k>N\Rightarrow J_{k}:=&[{\mathfrak{r}}_k,{\mathfrak{r}}_k+2N],\\
j_k-{\mathfrak{r}}_k>N,\quad{\mathfrak{t}}_k-j_k\leq N\Rightarrow J_{k}:=&[\mathfrak{t}_k-2N,\mathfrak{t}_k].\\
\end{align*}
It is obvious that $\mathrm{d}(\mathfrak{n},\mathfrak{K})>N$ and $\mathrm{diam}(\mathfrak{F}_N)\leq2N<4N$. With the help of \eqref{E1.95}, there exists $\mathfrak{n}'=(l',j')\in\mathfrak{K}$ with $\mathrm{d}(\mathfrak{n},\mathfrak{n}')\leq N$ such that
\begin{align*}
\mathfrak{F}_N=\Big((l'+[-N,N]^{\nu})\times
\Big(j'+\Big\{\sum\limits_{k=1}^{r}\mathfrak{l}_{k}\textbf{w}_k:\mathfrak{l}_k\in[-N,N]\Big\}\Big)\Big)\cap\mathfrak{N}.
\end{align*}
Since $\mathfrak{n}$ is ($\mathcal{A}(u,\theta),N$)-strongly-regular, by Definition \ref{def6}, then
\begin{align*}
|(\mathcal{A}^{\mathfrak{F}_N}_{\mathfrak{F}_N})^{-1}(\epsilon,\lambda,u,\theta)|_s=|(\mathcal{A}_{N,l',j'})^{-1}(\epsilon,\lambda,u,\theta)|_s\leq N^{\tau_2+\delta s}.
\end{align*}
\end{proof}
Lemma \ref{lemma5} establishes that if $\mathfrak{n}_{0}=(l_0,j_0)\in\mathfrak{N}$ is $(\mathcal{A}_{N',l_0,j_0}(\epsilon,\lambda,u,\theta),N)$-bad, then it it $(\mathcal{A}(u,\theta),N)$-weakly-bad for $\mathcal{A}(\epsilon,\lambda,u,\theta)$ with  $|l-l_0|\leq N'$, $|j-j_0|\leq N'$. Our goal is to get the upper bound of the number of ($\mathcal{A}(u,\theta),N$)-weakly-bad sites $(l_0,j_0)$ with $|l_0|\leq N'$.
\begin{lemm}\label{lemma6}
Let $\lambda$ be $N$-good for $\mathcal{A}(\epsilon,\lambda,u,\theta)$. For $j_{0}\in \Gamma_{+}(\boldsymbol M), \chi\in[\chi_0,2\chi_0]$, the number of ($\mathcal{A}(u,\theta),N$)-weakly-singular sites $(l_0,j_0)$ with $|l_0|\leq2N'$ does not exceed $N^{\nu+d+r+5}$.
\end{lemm}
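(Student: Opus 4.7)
The plan is to reduce the weakly-singular condition at $(l_0,j_0)$ to a condition saying that a translated parameter lies in $\mathscr{B}_N(j_0)$, exploit $N$-goodness of $\lambda$ to cover $\mathscr{B}_N(j_0)$ by very short intervals, and finally invoke the Diophantine condition on $\omega_0$ to count how many integer translates $l_0$ can land in any one such interval.

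First, by Definition \ref{def5}, the site $(l_0,j_0)$ is $(\mathcal{A}(u,\theta),N)$-weakly-singular if and only if $\mathcal{A}_{N,l_0,j_0}(\epsilon,\lambda,u,\theta)$ is $N$-bad. The covariance property \eqref{E1.93} identifies this matrix with $\mathcal{A}_{N,j_0}(\epsilon,\lambda,u,\theta+\lambda\omega_0\cdot l_0)$, so the weakly-singular condition is precisely
\[
\theta+\lambda\omega_0\cdot l_0\in\mathscr{B}_N(j_0),
\]
in the notation of \eqref{E1.80}. Since $\lambda\in\mathscr{G}_N(u)$, Definition \ref{def7} applied to the fixed $j_0$ provides a covering $\mathscr{B}_N(j_0)\subset\bigcup_{q=1}^{Q}I_q$ with $Q\leq N^{\nu+d+r+5}$ and $\mathrm{meas}(I_q)\leq N^{-\tau}$. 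Thus it suffices to estimate, for each fixed $q$, the cardinality of
\[
S_q:=\bigl\{l_0\in[-2N',2N']^{\nu}\cap\mathbf{Z}^{\nu}\,:\,\theta+\lambda\omega_0\cdot l_0\in I_q\bigr\}.
\]

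For two distinct elements $l_0,l_0'\in S_q$, one has $|\lambda\omega_0\cdot(l_0-l_0')|\leq\mathrm{meas}(I_q)\leq N^{-\tau}$; combined with $\lambda\geq 1/2$ and the Diophantine hypothesis \eqref{E1.96}, this forces $|l_0-l_0'|^{\nu}\geq\gamma_0 N^{\tau}$, i.e.\ a uniform separation $|l_0-l_0'|\geq (\gamma_0 N^{\tau})^{1/\nu}$. A standard volume-packing argument in the cube of side $4N'$ then gives
\[
\#S_q\leq C_\nu\bigl(1+4N'/(\gamma_0 N^{\tau})^{1/\nu}\bigr)^{\nu}\leq C_\nu\bigl(N^{\chi\nu-\tau}+1\bigr).
\]
Summing over $q$, the total number of weakly-singular sites with $|l_0|\leq 2N'$ is bounded by $Q\cdot C_\nu(N^{\chi\nu-\tau}+1)$. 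Under the parameter regime (with $\chi\in[\chi_0,2\chi_0]$ and $\tau$ chosen large enough that $2\chi_0\nu<\tau$, which is consistent with \eqref{E2.44}--\eqref{E2.46}), the factor $N^{\chi\nu-\tau}$ is $o(1)$, absorbing the constant $C_\nu$ for $N$ large and yielding the claimed bound $N^{\nu+d+r+5}$.

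The main technical point is the per-interval counting: one must convert the very thin slab condition $|\omega_0\cdot(l_0-l_0')|\lesssim N^{-\tau}$ into a genuine pointwise separation via the Diophantine inequality, and then check that the resulting exponent $\chi\nu-\tau$ is non-positive so that the bound from $\mathscr{B}_N(j_0)$ propagates without loss to the set of weakly-singular $l_0$. Everything else in the proof is a mechanical application of the covariance identity \eqref{E1.93} and of Definition \ref{def7}.
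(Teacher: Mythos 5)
Your proof follows essentially the same route as the paper's: reduce the weakly-singular condition to $\theta+\lambda\omega_0\cdot l_0\in\mathscr{B}_N(j_0)$ via the covariance identity \eqref{E1.93}, cover $\mathscr{B}_N(j_0)$ by the $N^{\nu+d+r+5}$ intervals of length $\le N^{-\tau}$ furnished by Definition \ref{def7}, and use the Diophantine condition \eqref{E1.96} to separate the translates $\lambda\omega_0\cdot l_0$. The only place you deviate is the final count: the volume-packing bound $\#S_q\le C_\nu(N^{\chi\nu-\tau}+1)$ leaves a multiplicative constant $C_\nu$ that cannot literally be ``absorbed'' into the stated bound $N^{\nu+d+r+5}$, since the total would then be $C_\nu N^{\nu+d+r+5}$. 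The clean conclusion — and the one the paper draws — is that each interval contains \emph{at most one} translate: the forced separation $(\gamma_0 N^{\tau})^{1/\nu}$ exceeds the diameter $4N'=4N^{\chi}$ of the cube of admissible $l_0$ once $\tau>\chi\nu$ and $N$ is large, so two distinct $l_0,l_0'$ in the same $I_q$ are impossible. With that one-line sharpening your argument gives exactly the claimed bound.
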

\begin{proof}
Definition \ref{def5} implies that $\mathcal{A}_{N,l_0,j_0}(\epsilon,\lambda,u,\theta)$ is $N$-bad if  $(l_0,j_0)$  is ($\mathcal{A}(u,\theta),N$)-weakly-singular. Using the covariance property \eqref{E1.93}, we obtain that $\mathcal{A}_{N,j_0}(\epsilon,\lambda,u,\theta+\lambda{\omega}_0\cdot l_0)$ is $N$-bad,  which leads to $\theta+\lambda{\omega}_0\cdot l_0\in\mathscr{B}_{N}(j_0)$ (recall \eqref{E1.80}). By assumption that $\lambda$ is $N$-good, we have that \eqref{E2.6} holds. For $\tau>2\chi_0\nu$,  we claim that there exists at most one element $\theta+\lambda{\omega}_0\cdot l_0 $ with $|l_0|\leq2N'$ in each interval $I_q$, which implies the conclusion of the lemma by \eqref{E2.6}.

Let us check the claim. Suppose that there exists $l_0\neq l'_0$ with $|l_0|,|l'_0|\leq 2N'$ such that $\theta+\lambda{\omega}_0\cdot l_0,\theta+\lambda{\omega}_0\cdot l'_0\in I_q$. Then
\begin{align}\label{E2.7}
|\lambda{\omega}_0\cdot( l_0-l'_0)|=|(\theta+\lambda{\omega}_0\cdot l_0)-(\theta+\lambda{\omega}_0\cdot l'_0)|\leq \mathrm{meas}(I_q)\leq N^{-\tau}.
\end{align}
Moreover  \eqref{E1.96} gives that, for $\lambda\in\Lambda=[1/2,3/2]$,
\begin{align*}
|\lambda{\omega}_0\cdot l|\geq\gamma_0|l|^{-\nu},\quad\forall l\in\mathbf{Z}^\nu \backslash \{0\},
\end{align*}
which carries out
\begin{align*}
|\lambda{\omega}_0\cdot(l_0-l'_0)|\geq\frac{\gamma_0}{|l_0-l'_0|^{\nu}}\geq\frac{\gamma_0}{(4N')^{\nu}}
\stackrel{\eqref{E2.1}}{=}\frac{\gamma_0}{4^\nu}N^{-\chi\nu}.
\end{align*}
If $\tau>2\chi_0\nu$, then this leads to a contradiction to \eqref{E2.7} for $N\geq\bar{N}(\gamma_0,\nu)$ large enough.
\end{proof}
\begin{coro}\label{coro1}
Let $\lambda$ be $N$-good for $\mathcal{A}(\epsilon,\lambda,u,\theta)$. For $j_{0}\in \Gamma_{+}(\boldsymbol M)$, the number of ($\mathcal{A}(u,\theta),N$)-weakly-bad sites $(l_0,j_0)$ with $|l_0|\leq N'$ does not exceed $N^{2
\nu+d+2r+6}$.
\end{coro}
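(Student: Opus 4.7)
The plan is to reduce the counting of weakly-bad sites to the counting of weakly-singular sites provided by Lemma \ref{lemma6}, at the cost of a uniform local multiplicity factor. By Definition \ref{def6}, if $\mathfrak{n}_0=(l_0,j_0)$ is $(\mathcal{A}(u,\theta),N)$-weakly-bad, then $\mathfrak{n}_0$ is singular and there exists at least one site $\mathfrak{n}'=(l',j')\in\mathfrak{N}$ with $\mathrm{d}(\mathfrak{n}_0,\mathfrak{n}')\leq N$, i.e.\ $|l_0-l'|\leq N$ and $|j_0-j'|\leq N$, which is $(\mathcal{A}(u,\theta),N)$-weakly-singular. Hence to bound the number of weakly-bad $(l_0,j_0)$ with $|l_0|\leq N'$ (and $j_0$ fixed), it suffices to count the weakly-singular neighbors and then multiply by the number of possible $l_0$ attached to each such neighbor.

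First I would fix the range. Since $N'=N^{\chi}$ with $\chi\geq\chi_0>1$, any weakly-singular $(l',j')$ associated to $(l_0,j_0)$ with $|l_0|\leq N'$ satisfies $|l'|\leq N'+N\leq 2N'$ for $N$ large. The second coordinate $j'$ ranges over points of $\Gamma_{+}(\boldsymbol M)$ with $|j'-j_0|\leq N$; since $\Gamma$ is generated by the fundamental weights $\mathrm{\textbf{w}}_1,\ldots,\mathrm{\textbf{w}}_r$ and $|\cdot|$ is the $\ell^{\infty}$-norm on the coefficients (see \eqref{E1.6}), the number of such $j'$ is at most $(2N+1)^{r}$.

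Next, for each such fixed $j'$, I would apply Lemma \ref{lemma6} with $j_0$ replaced by $j'$: the number of $(\mathcal{A}(u,\theta),N)$-weakly-singular sites $(l',j')$ with $|l'|\leq 2N'$ is at most $N^{\nu+d+r+5}$. Summing over the $(2N+1)^{r}$ admissible values of $j'$ yields at most $(2N+1)^{r}\,N^{\nu+d+r+5}$ weakly-singular sites within the relevant slab. Finally, each weakly-singular $(l',j')$ can be paired with at most $(2N+1)^{\nu}$ values of $l_0$ satisfying $|l_0-l'|\leq N$, so the total count of weakly-bad sites $(l_0,j_0)$ with $|l_0|\leq N'$ is bounded by
\begin{equation*}
(2N+1)^{\nu}\cdot(2N+1)^{r}\cdot N^{\nu+d+r+5}\leq N^{2\nu+d+2r+6}
\end{equation*}
for $N\geq\bar{N}(\nu,d,r)$ large enough, which is the desired estimate.

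There is no genuinely hard step here; the result is a direct combinatorial consequence of Lemma \ref{lemma6}. The only points requiring mild care are (i) confirming that enlarging the $l$-window from $N'$ to $2N'$ in the hypothesis of Lemma \ref{lemma6} absorbs the $N$-neighborhood shift, which works because $\chi>1$, and (ii) controlling the cardinality of the $j$-neighborhood by $(2N+1)^{r}$, which follows from the product lattice description of $\Gamma_{+}(\boldsymbol M)$ in \eqref{E1.95} together with the definition of $|\cdot|$ in \eqref{E1.6}.
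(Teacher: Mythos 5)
Your proposal is correct and follows essentially the same route as the paper's proof: reduce to counting weakly-singular sites in the $N$-neighborhood via Definition \ref{def6}, apply Lemma \ref{lemma6} for each of the at most $(2N+1)^{r}$ spatial components $j'$ with $|j'-j_0|\leq N$ (after enlarging the $l$-window to $2N'$), and multiply by the at most $(2N+1)^{\nu}$ choices of $l_0$ attached to each weakly-singular neighbor. The constants match the paper's $4^{\nu+r}N^{2\nu+d+2r+5}\leq N^{2\nu+d+2r+6}$ for $N$ large, so nothing further is needed.
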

\begin{proof}
Since $|l-l_0|\leq N\Rightarrow|l|\leq N'+N$,  Lemma  \ref{lemma6} establishes that the number of ($\mathcal{A}(u,\theta),N$)-weakly-singular sites $(l,j)$ with $|l-l_0|\leq N,|j-j_0|\leq N$ is bounded from above by $N^{\nu+d+r+5}\times 4^rN^r$.
By Definition \ref{def6}, each $(l_0,j_0)$, which is ($\mathcal{A}(u,\theta),N$)-weakly-bad, is included in some $N$-ball centered at an ($\mathcal{A}(u,\theta),N$)-weakly-singular site. Moreover each of these balls contain at most $4^\nu N^{\nu}$ sites of the form $(l,j_0)$. Hence the number of ($\mathcal{A}(u,\theta),N$)-weakly-bad sites is at most $4^{\nu+r}N^{{\nu+d+2r+5}}\times N^{\nu}$.
\end{proof}
Let us estimate the spatial components of the ($\mathcal{A}(u,\theta),N$)-weakly-bad sites for $\mathcal{A}(\epsilon,\lambda,u,\theta)$ with $|(l,j-j_0)|\leq N'$.
\begin{defi}
Denote by $\{\mathfrak{n}_k,k\in[0,L]\cap\mathbf{N}\}$ a sequence of site with $\mathfrak{n}_k\neq \mathfrak{n}_{k'}$, $\forall k\neq k'$. For $\tilde{B}\geq2$, we call $\{\mathfrak{n}_k,k\in[0,\tilde{L}]\cap\mathbf{N}\}$ a $\tilde{B}$-chain of length $\tilde{L}$ with $|\mathfrak{n}_{k+1}-\mathfrak{n}_{k}|\leq {\tilde{B}}$, $\forall k=0,\cdots,\tilde{L}-1.$
\end{defi}
\begin{lemm}
For $\epsilon$ small enough, there exists $\mathcal{C}(\nu,d,r)>0$ such that, $\forall\theta\in\mathbf{R}$ and $\forall N\in\mathbf{N}^{+}$, any ${\tilde{B}}$-chain of the ($\mathcal{A}(u,\theta),N$)-weakly-bad sites for $\mathcal{A}(\epsilon,\lambda,u,\theta)$ with $|(l,j-j_0)|\leq N'$ has length
$\tilde{L}\leq {(\tilde{B}N)}^{\mathcal{C}(\nu,d,r)}$.
\end{lemm}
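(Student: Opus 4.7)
The plan is to convert the ``weakly-bad'' condition into an explicit quasi-resonance for the beam dispersion relation, and then to exploit the Diophantine property of $\omega_{0}$ together with the algebraic structure of $\lambda_{j}^{2}$ to count the maximum number of such sites that can appear along a $\tilde{B}$-chain.

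First, I would extract a resonance from each site in the chain. For $\mathfrak{n}_{k}=(l_{k},j_{k})$ weakly-bad, Definition \ref{def6} and the argument used in Corollary \ref{coro1} yield a weakly-singular site $(l_{k}^{\ast},j_{k}^{\ast})$ at distance $\leq N$, i.e.\ the submatrix $\mathcal{A}_{N,l_{k}^{\ast},j_{k}^{\ast}}(\epsilon,\lambda,u,\theta)$ is $N$-bad. Combining Lemma \ref{lemma3} with the $L^{2}$-to-$s$-norm passage that yields \eqref{E1.78} (valid for $\epsilon$ small), $N$-badness can only come from a genuine $L^{2}$ small divisor: there exist $|\tilde l|\leq N$, $|\tilde j-j_{k}^{\ast}|\leq N$ and an eigenvalue $\hat\lambda_{\tilde j,p}$ of $\check P_{N,\tilde j}(\Delta^{2}+V)_{|\check H_{N,\tilde j}}$ with
\[
\bigl|-(\lambda\omega_{0}\cdot(l_{k}^{\ast}+\tilde l)+\theta)^{2}+\hat\lambda_{\tilde j,p}\bigr|\leq N^{-\tau}.
\]
Since $\hat\lambda_{\tilde j,p}=\lambda_{\tilde j}^{2}+m+O(\|V\|_{L^{\infty}})$, the beam dispersion is approximately satisfied at an index within distance $\sim N$ of $(l_{k},j_{k})$.

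Next, for two weakly-bad sites $\mathfrak{n}_{a},\mathfrak{n}_{b}$ in the chain, subtracting the two resonances and factoring yields
\[
(\lambda\omega_{0}\cdot\Delta_{ab}l)\bigl(\lambda\omega_{0}\cdot\Sigma_{ab}l+2\theta\bigr)=(\lambda_{\tilde j_{a}}-\lambda_{\tilde j_{b}})(\lambda_{\tilde j_{a}}+\lambda_{\tilde j_{b}})+O(N^{-\tau}),
\]
where $\Delta_{ab}l,\Sigma_{ab}l$ are of size $\lesssim N'=N^{\chi}$. By \eqref{E1.50} and the explicit formula $\lambda_{j}=-\|j+\rho\|^{2}+\|\rho\|^{2}$, the right-hand side is a polynomial of degree $\leq 4$ in the weight coordinates of $(\tilde j_{a},\tilde j_{b})$, of magnitude $\lesssim (N')^{4}$. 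When $\Delta_{ab}l\neq 0$, the Diophantine condition \eqref{E1.96} gives $|\lambda\omega_{0}\cdot\Delta_{ab}l|\geq\gamma_{0}(4N')^{-\nu}$, confining the admissible frequency/weight data to an algebraic tube. Dividing through by the large factor $\lambda_{\tilde j_{a}}+\lambda_{\tilde j_{b}}\lesssim (N')^{2}$ reduces the quartic resonance to an effectively quadratic one, after which the product structure \eqref{E1.95} and the lattice property \eqref{E1.56} allow a standard counting of $\tilde B$-connected integer points in the tube, producing the desired bound $\tilde L\leq(\tilde BN)^{\mathcal{C}(\nu,d,r)}$.

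The principal difficulty is that the beam dispersion $\lambda_{j}^{2}$ is quartic rather than the quadratic dispersion encountered for NLW in Berti--Corsi--Procesi, so the resonant hypersurface has higher degree and the naive thin-annulus counting gives a worse exponent. I expect to overcome this via the factorization $\lambda_{\tilde j_{a}}^{2}-\lambda_{\tilde j_{b}}^{2}=(\lambda_{\tilde j_{a}}-\lambda_{\tilde j_{b}})(\lambda_{\tilde j_{a}}+\lambda_{\tilde j_{b}})$ indicated above, absorbing the $(N')^{2}$ inflation into a larger but still polynomial exponent $\mathcal{C}$. A secondary technicality is the non-orthogonality of the fundamental weights $\mathrm{\textbf{w}}_{k}$, to be handled by passing between the $|\cdot|$ and $\|\cdot\|$ norms via \eqref{E1.4} and \eqref{E1.50}.
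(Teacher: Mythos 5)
Your proposal has genuine gaps, both in how the resonance is extracted and in how $\theta$ and the chain length are ultimately controlled.

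First, the route to the resonance is more complicated than needed and is not justified at all scales. The paper's proof rests on the elementary observation that every $(\mathcal{A}(u,\theta),N)$-weakly-bad site is \emph{singular} in the sense of Definition \ref{def2} (immediate from Definition \ref{def6}), so the $O(1)$ bound $|-(\lambda\omega_0\cdot l_k+\theta)^2+\lambda_{j_k}^2+m-\epsilon\bar{m}|\leq\tilde{\Theta}$ holds at the site itself; no passage to a nearby weakly-singular site and no $N^{-\tau}$ small divisor are required. Your step ``$N$-badness can only come from a genuine $L^2$ small divisor'' is only available in the regime where \eqref{E1.78} was derived (scales up to $N_0$, via Lemma \ref{lemma3}); for arbitrary $N\in\mathbf{N}^{+}$ no such converse is established in the paper, so this step is unsupported.

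Second, and more seriously, your factored relation $(\lambda\omega_{0}\cdot\Delta_{ab}l)(\lambda\omega_{0}\cdot\Sigma_{ab}l+2\theta)=(\lambda_{\tilde j_{a}}-\lambda_{\tilde j_{b}})(\lambda_{\tilde j_{a}}+\lambda_{\tilde j_{b}})+O(N^{-\tau})$ still contains the free parameter $\theta$, and the lemma must hold for \emph{every} $\theta\in\mathbf{R}$; the Diophantine lower bound on the first factor does not remove the $\theta$-dependence of the second, so for an adversarial $\theta$ this relation imposes no constraint on the spatial indices. The paper eliminates $\theta$ differently: it takes square roots of the quadratic divisor, obtaining $|\pm(\lambda\omega_0\cdot l_k+\theta)+\|j_k+\rho\|^2-\|\rho\|^2|<\sqrt{\tilde{\Theta}+1+m}$ for one choice of sign, and then differences \emph{consecutive} chain sites, where $\lambda\omega_0\cdot(l_{k+1}-l_k)=O(\tilde{B})$ is harmless; this produces the $\theta$-independent estimate \eqref{E2.22}. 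Finally, your counting is incomplete in two respects: bounding the set of distinct spatial components $j_k$ requires the Gram-matrix/Cramer-Hadamard argument on $\Pi_{\mathscr{F}}(j_{k_0}+\rho)$ together with the integrality \eqref{E1.56} and the case analysis on $\dim\mathscr{F}_{k_0}$ (a vague ``tube counting'' does not handle a chain whose differences span a proper subspace of $\mathbf{R}^r$); and even a bound on the number of distinct $j_k$ does not bound $\tilde{L}$, since many chain sites may share the same spatial component. The paper closes this with Corollary \ref{coro1}, which caps the number of weakly-bad sites on a fixed spatial line by $N^{2\nu+d+2r+6}$; this ingredient is absent from your argument.
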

\begin{proof}
Here we exploit that $\mathfrak{n}\in\mathfrak{N}$ is singular if it is ($\mathcal{A}(u,\theta),N$)-weakly-bad. Denote by $\{\mathfrak{n}_k,k\in[0,\tilde{L}]\cap\mathbf{N}\}$ a ${\tilde{B}}$-chain of singular sites. Then
\begin{align}\label{E2.21}
\max\{|l_{k+1}-l_k|,|j_{k+1}-j_k|\}\leq {\tilde{B}},\quad\forall k=0,\cdots,\tilde{L}-1.
\end{align}
It follows from  Definition \ref{def2} and the definition of $\lambda_j$ that
\begin{align}\label{E2.28}
&|-(\lambda{\omega}_0\cdot l_k+\theta)^2+(\|j_{k}+\rho\|^2-\|\rho\|^2)^2+m|<\tilde{\Theta}+1.
\end{align}
In fact, Definition \ref{def2} shows that
\[|-(\lambda{\omega}_0\cdot l+\theta)^2+\lambda^2_j+m-\epsilon \bar{m}|\leq \tilde{\Theta}.\]
 Clearly, we obtain that $|\epsilon \bar{m}|\leq1$ if $\epsilon$ is small enough, which lead to \eqref{E2.28}. With the help of \eqref{E2.28}, we deduce
\begin{align*}
&|-(\lambda{\omega}_0\cdot l_k+\theta)^2+(\|j_{k}+\rho\|^2-\|\rho\|^2)^2|<\tilde{\Theta}+1+m\\
\Rightarrow&|-\lambda{\omega}_0\cdot l_k-\theta+\|j_{k}+\rho\|^2-\|\rho\|^2|<\sqrt{\tilde{\Theta}+1+m}\\
&\text{or}\quad|\lambda{\omega}_0\cdot l_k+\theta+\|j_{k}+\rho\|^2-\|\rho\|^2|<\sqrt{\tilde{\Theta}+1+m}.
\end{align*}
Then one of the following $\theta$-independent inequalities holds:
\begin{align*}
&\left|\pm(\lambda{\omega}_0\cdot( l_{k+1}-l_k))+(\|j_{k+1}+\rho\|^2\pm\|j_{k}+\rho\|^2)\right|<2\left(\sqrt{\tilde{\Theta}+1+m}+\|\rho\|^2\right),
\end{align*}
which leads to
\begin{align*}
\left|\|j_{k+1}+\rho\|^2 \pm\|j_{k}+\rho\|^2 \right|<2\left(\sqrt{\tilde{\Theta}+1+m}+\|\rho\|^2+\tilde{B}\right).
\end{align*}
Combining this with the inequality
\[\left|\|j_{k+1}+\rho\|^2-\|j_{k}+\rho\|^2\right|\leq\|j_{k+1}+\rho\|^2 +\|j_{k}+\rho\|^2\]
yields
\begin{align}\label{E2.22}
\left|\|j_k+\rho\|^2-\|j_{k_0}+\rho\|^2\right|\leq2\left(\sqrt{\tilde{\Theta}+1+m}+\|\rho\|^2+\tilde{B}\right)|k-k_0|.
\end{align}
Due to \eqref{E1.50}, \eqref{E2.21}, \eqref{E2.22} and  the equality
\[(j_{k_0}+\rho)\cdot(j_k-j_{k_0})=\frac{1}{2}\left(\|j_k+\rho\|^2-\|j_{k_0}+\rho\|^2-\|j_k-j_{k_0}\|^2\right),\]
we obtain
\begin{align}\label{E2.23}
\left|(j_{k_0}+\rho)\cdot(j_k-j_{k_0})\right|&\leq\left(\sqrt{\tilde{\Theta}+1+m}+\|\rho\|^2+\tilde{B}\right)|k-k_0|+(b^2_2/2)|k-k_0|^2{\tilde{B}}^2\nonumber\\
&\leq\left(\sqrt{\tilde{\Theta}+1+m}+\|\rho\|^2+b^2_2+1\right)|k-k_0|^2{\tilde{B}}^2.
\end{align}

Define the following subspace of $\mathbf{R}^{r}$ by
\begin{equation*}
\mathscr{F}:=\mathrm{span}_{\mathbf{R}}\{j_{k}-j_{k'}:~k,k'=0,\cdots,\tilde{L}\}=\mathrm{span}_{\mathbf{R}}\{j_{k}-j_{k_0}:~k=0,\cdots,\tilde{L}\}.
\end{equation*}
Let $\mathfrak{t}$ be the dimension of $\mathscr{F}$. Denote by $\zeta_1,\cdots,\zeta_{\mathfrak{t}}$ a basis of $\mathscr{F}$. It is clear that $\mathfrak{t}\leq r$.

Case1. For all $k_0\in[0,\tilde{L}]\cap\mathbf{N}$, we have
\begin{equation*}
\mathscr{F}_{k_0}:=\mathrm{span}_{\mathbf{R}}\{j_{k}-j_{k_0}:~|k-k_0|\leq \tilde{L}^{\upsilon},~ k=0,\cdots,\tilde{L}\}=\mathscr{F}.
\end{equation*}
Formula \eqref{E2.21} indicates that
\begin{align}\label{E2.24}
|\zeta_{p}|=|j_p-j_{k_0}|\leq|p-k_0|{\tilde{B}}\leq \tilde{L}^{\upsilon}{\tilde{B}},\quad p=0,\cdots,\tilde{L}.
\end{align}
Let $\Pi_{\mathscr{F}}$ denote the orthogonal projection on $\mathscr{F}$. Then
\begin{equation}\label{E2.27}
\Pi_{\mathscr{F}}(j_{k_0}+\rho)=\sum\limits_{p=1}^{\mathfrak{t}}z_p\zeta_p
\end{equation}
for some $z_p\in\mathbf{R},p=1,\cdots,\mathfrak{t}$. Hence we get
\begin{equation*}
\Pi_{\mathscr{F}}(j_{k_0}+\rho)\cdot \zeta_{p'}=\sum\limits_{p=1}^{\mathfrak{t}}z_p\zeta_p\cdot \zeta_{p'}.
\end{equation*}
Based on above fact, we consider the linear system $Qz=y$, where
\begin{align*}
Q=(Q_{pp'})_{p,p'=1,\cdots,\mathfrak{t}}\quad \text{with}\quad Q_{pp'}=\zeta_p\cdot\zeta_{p'},\quad y_{p'}=\Pi_{\mathscr{F}}(j_{k_0}+\rho)\cdot \zeta_{p'}=(j_{k_0}+\rho)\cdot \zeta_{p'}.
\end{align*}
It follows from \eqref{E2.23}-\eqref{E2.24} that
\begin{align}\label{E2.25}
|y_{p'}|\leq\left(\sqrt{\tilde{\Theta}+1+m}+\|\rho\|^2+b^2_2+1\right)(\tilde{L}^{\upsilon}\tilde{B})^2, \quad|Q_{pp'}|\leq(\tilde{L}^{\upsilon}{\tilde{B}})^2.
\end{align}
In addition, by formula \eqref{E1.56}, we verify
\begin{align}\label{E2.26}
\mathfrak{z}^{\mathfrak{t}}\det(Q)\in\mathbf{Z},\quad\text{namely}\quad\mathfrak{z}^{\mathfrak{t}}|\det(Q)|\geq1.
\end{align}
Let $Q^*$ be the adjoint matrix of $Q$. It follows from  Hadamard inequality that
\begin{equation*}
\left|(Q^*)_{pp'}\right|\leq\prod\limits_{\mathfrak{p}\neq p,1\leq\mathfrak{p}\leq \mathfrak{t}}\Big(\sum\limits_{\mathfrak{p}'\neq p',1\leq\mathfrak{p}'\leq \mathfrak{t}}|Q_{\mathfrak{p}\mathfrak{p}'}|^2\Big)^{1/2},
\end{equation*}
which leads to
\begin{align*}
\left|(Q^*)_{pp'}\right|\stackrel{\eqref{E2.25}}{\leq}(\mathfrak{t}-1)^{\frac{\mathfrak{t}-1}{2}}{(\tilde{L}^{\upsilon}{\tilde{B}})}^{2(\mathfrak{t}-1)}.
\end{align*}
Based on above inequality, \eqref{E2.25}-\eqref{E2.26} and Cramer's rule,   we can obtain
\begin{align*}
|z_p|\leq\sum\limits_{p'=1}^{\mathfrak{t}}|Q^{-1}_{pp'}y_{p'}|\leq \mathfrak{z}^{\mathfrak{t}}{\mathfrak{t}}^{\mathfrak{t}}\left(\sqrt{\tilde{\Theta}+1+m}+\|\rho\|^2+b^2_2+1\right)
(\tilde{L}^{\upsilon}{\tilde{B}})^{2\mathfrak{t}}.
\end{align*}
Combining this with formulae \eqref{E2.24}-\eqref{E2.27} derives
\begin{align*}
\Pi_{\mathscr{F}}(j_{k_0}+\rho)\leq \mathfrak{t}|z_p||\zeta_{p}|\leq \mathfrak{z}^{\mathfrak{t}}{\mathfrak{t}}^{\mathfrak{t}+1}\left(\sqrt{\tilde{\Theta}+1+m}+\|\rho\|^2+b^2_2+1\right)
(\tilde{L}^{\upsilon}{\tilde{B}})^{2\mathfrak{t}+1}.
\end{align*}
As a consequence
\begin{align*}
|j_{k_1}-j_{k_2}|&=|(j_{k_1}-j_{k_0})-(j_{k_2}-j_{k_0})|=|\Pi_{\mathscr{F}}(j_{k_1}+\rho)-\Pi_{\mathscr{F}}(j_{k_2}+\rho)|\\
&\leq2\mathfrak{z}^{r}r^{r+1}(\sqrt{\tilde{\Theta}+1+m}+\|\rho\|^2+b^2_2+1)(\tilde{L}^{\upsilon}{B})^{2r+1}.
\end{align*}
Counted without multiplicity,  the number of $j_k\in\Gamma^{+}(\boldsymbol M)$  is bounded from above by
\[4^r\left(2\mathfrak{z}^{r}r^{r+1}\left(\sqrt{\tilde{\Theta}+1+m}+\|\rho\|^2+b^2_2+1\right)(\tilde{L}^{\upsilon}{\tilde{B}})^{2r+1}\right)^r,\]
namely
\begin{align}\label{E2.29}
\sharp\left\{j_k,0\leq k\leq \tilde{L}\right\}\leq 2^{3r}\mathfrak{z}^{r^2}r^{r(r+1)}\left(\sqrt{\tilde{\Theta}+1+m}+\|\rho\|^2+b^2_2+1\right)^{r}(\tilde{L}^{\upsilon}{\tilde{B}})^{r(2r+1)}.
\end{align}
For each $k_0\in[0,\tilde{L}]$, Corollary \ref{coro1} shows that the number of $k\in[0,\tilde{L}]$ such that $j_k=j_{k_0}$ does not exceed $N^{2
\nu+d+2r+6}$. Hence, in view of \eqref{E2.29},  the following  holds:
\begin{align}\label{E2.47}
\tilde{L}\leq2^{3r}\mathfrak{z}^{r^2}r^{r(r+1)}\left(\sqrt{\tilde{\Theta}+1+m}+\|\rho\|^2+b^2_2+1\right)^{r}(\tilde{L}^{\upsilon}{\tilde{B}})^{r(2r+1)}N^{2\nu+d+2r+6}.
\end{align}
If $\upsilon<\frac{1}{2r(2r+1)}$, then \eqref{E2.47}  derives
\begin{align*}
\tilde{L}^{\frac{1}{2}}&\leq2^{3r}\mathfrak{z}^{r^2}r^{r(r+1)}\left(\sqrt{\tilde{\Theta}+1+m}+\|\rho\|^2+b^2_2\right)^{r}{\tilde{B}}^{r(2r+1)}N^{2\nu+d+2r+6}\\
&\Rightarrow
\tilde{L}\leq 2^{6r}\mathfrak{z}^{2r^2}r^{2r(r+1)}\left(\sqrt{\tilde{\Theta}+1+m}+\|\rho\|^2+b^2_2\right)^{2r}{\tilde{B}}^{2r(2r+1)}N^{2(2\nu+d+2r+6)}.
\end{align*}

Case2. If there exist some $k'_0\in[0,\tilde{L}]\cap\mathbf{N}$ such that $\dim\mathscr{F}_{k'_0}\leq r-1$, for $k_0\in \tilde{\mathfrak{I}}$, then we consider
\begin{align*}
\mathscr{F}^1_{k_0}:=\mathrm{span}_{\mathbf{R}}\left\{j_{k}-j_{k_0}:|k-k_0|<\tilde{L}^\upsilon_1,~k \in \tilde{\mathfrak{I}}\right\}=\mathrm{span}_{\mathbf{R}}\left\{j_{k}-j_{k_0}:~k \in \tilde{\mathfrak{I}}\right\}
\end{align*}
where
\[\tilde{L}_1=\tilde{L}^{\upsilon}, \quad \tilde{\mathfrak{I}}:=\{k:|k-k'_0|<\tilde{L}^{\upsilon}\}\cap([0,\tilde{L}]\cap\mathbf{N}).\]
The upper bound of $\tilde{L}_1$ can be proved by the same method as employed on $\tilde{L}$, namely
\begin{align*}
\tilde{L}_1=\tilde{L}^\upsilon\leq2^{6r}\mathfrak{z}^{2r^2}r^{2r(r+1)}\left(\sqrt{\tilde{\Theta}+1+m}+\|\rho\|^2+b^2_2\right)^{2r}
{\tilde{B}}^{2r(2r+1)}N^{2(2\nu+d+2r+6)}.
\end{align*}
In addition the iteration is carried out at most $r$ steps owing to the fact $\mathfrak{t}\leq r$. Hence
\begin{align*}
\tilde{L}_r=\tilde{L}^{r\upsilon}\leq2^{6r}\mathfrak{z}^{2r^2}r^{2r(r+1)}\left(\sqrt{\tilde{\Theta}+m}+\|\rho\|^2+b^2_2\right)^{2r}
{\tilde{B}}^{2r(2r+1)}N^{2(2\nu+d+2r+6)}.
\end{align*}
Hence there exists some constant $\mathcal{C}(\nu,d,r)>0$ such that $\tilde{L}\leq {(\tilde{B}N)}^{\mathcal{C}(\nu,d,r)}.$
\end{proof}
In addition, the following equivalence relation is defined.
\begin{defi}\label{def8}
We say that $\tilde{\mathfrak{x}}\equiv \tilde{\mathfrak{y}}$ if there is a $N^2$-chain $\{\mathfrak{n}_k,k\in[0,\tilde{L}]\cap\mathbf{N}\}$ connecting $\tilde{\mathfrak{x}}$ to $\tilde{\mathfrak{y}}$, namely
$\mathfrak{n}_0=\tilde{\mathfrak{x}},\mathfrak{n}_{\tilde{L}}=\tilde{\mathfrak{y}}$.
\end{defi}
 Let us state the following proposition.
\begin{prop}\label{pro3}
If we suppose
\begin{align*}
\mathrm{(1)}~ \lambda~\text{is}~N\text{-good for}~\mathcal{A}~, \quad\mathrm{(2)}~\tau>2\chi_{0}\nu,
\end{align*}
then there exist $C_1:=C_1(\nu,d,r)\geq2$ and $\hat{N}:=\hat{N}(\nu,d,r,\gamma_0,m,\mathfrak{z},\tilde{\Theta},\rho,b_2)$ such that, $\forall N\geq \hat{N}$, $\forall\theta\in\mathbf{R}$,  the ($\mathcal{A}(u,\theta),N$)-weakly-bad sites for $\mathcal{A}(\epsilon,\lambda,u,\theta)$ with $|l|\leq N',|j-j_0|\leq N'$ admits a partition $\cup_{\alpha}\mathfrak{O}_{\alpha}$, where
\begin{equation*}
\mathrm{diam}(\mathfrak{O}_{\alpha})\leq N^{C_1},\quad\mathrm{d}(\mathfrak{O}_{\alpha},\mathfrak{O}_{\beta})>N^2, \quad \forall\alpha\neq\beta.
\end{equation*}
\end{prop}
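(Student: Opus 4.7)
The plan is to take $\mathfrak{O}_\alpha$ to be the equivalence classes, under the relation $\equiv$ of Definition \ref{def8}, of the set of $(\mathcal{A}(u,\theta),N)$-weakly-bad sites lying in the box $|l|\leq N'$, $|j-j_0|\leq N'$. With this choice the separation property is essentially tautological: if $\tilde{\mathfrak{x}}\in\mathfrak{O}_\alpha$ and $\tilde{\mathfrak{y}}\in\mathfrak{O}_\beta$ with $\alpha\neq\beta$, then by definition no $N^2$-chain of weakly-bad sites connects them. In particular the one-step chain $\{\tilde{\mathfrak{x}},\tilde{\mathfrak{y}}\}$ cannot be admissible, which forces $|\tilde{\mathfrak{x}}-\tilde{\mathfrak{y}}|>N^2$, and hence $\mathrm{d}(\mathfrak{O}_\alpha,\mathfrak{O}_\beta)>N^2$.

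The diameter bound is where the preceding chain-length estimate is put to work. Given any two points $\tilde{\mathfrak{x}},\tilde{\mathfrak{y}}$ of the same cluster $\mathfrak{O}_\alpha$, Definition \ref{def8} supplies an $N^2$-chain $\mathfrak{n}_0=\tilde{\mathfrak{x}},\mathfrak{n}_1,\ldots,\mathfrak{n}_{\tilde{L}}=\tilde{\mathfrak{y}}$ consisting of $(\mathcal{A}(u,\theta),N)$-weakly-bad sites. Applying the previous lemma with $\tilde{B}=N^2$ yields
\[
\tilde{L}\;\leq\;(\tilde{B}N)^{\mathcal{C}(\nu,d,r)}\;=\;N^{3\mathcal{C}(\nu,d,r)},
\]
so that by the triangle inequality and $|\mathfrak{n}_{k+1}-\mathfrak{n}_k|\leq N^2$ one gets
\[
|\tilde{\mathfrak{x}}-\tilde{\mathfrak{y}}|\;\leq\;\sum_{k=0}^{\tilde{L}-1}|\mathfrak{n}_{k+1}-\mathfrak{n}_k|\;\leq\;\tilde{L}\cdot N^2\;\leq\;N^{3\mathcal{C}(\nu,d,r)+2}.
\]
Setting $C_1:=3\mathcal{C}(\nu,d,r)+2$ (which is automatically $\geq 2$) gives $\mathrm{diam}(\mathfrak{O}_\alpha)\leq N^{C_1}$, as required.

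The hard work has already been done in the preceding chain-length lemma, whose proof combined Corollary \ref{coro1} (the site count on a fixed slice $j_k=j_{k_0}$) with the dimension-reduction argument on $\mathrm{span}_{\mathbf R}\{j_k-j_{k_0}\}$ using the lattice quantization \eqref{E1.56}. The two standing hypotheses of the proposition enter only through that lemma: $\lambda$ being $N$-good is used via Corollary \ref{coro1}, which in turn relies on $\tau>2\chi_0\nu$ through Lemma \ref{lemma6}. The threshold $\hat{N}(\nu,d,r,\gamma_0,m,\mathfrak{z},\tilde{\Theta},\rho,b_2)$ is chosen so that the inequality \eqref{E2.47} can be solved for $\tilde{L}$ with $\upsilon<1/(2r(2r+1))$ and so that Lemma \ref{lemma6}'s application of the Diophantine estimate \eqref{E1.96} is valid; all of this is already quantified in the previous lemma. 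Once the chain-length bound is available, the passage to clusters via equivalence classes is purely graph-theoretic and presents no further obstacle, so I do not anticipate any additional difficulty in the write-up.
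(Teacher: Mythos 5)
Your proposal is correct and follows the paper's own argument essentially verbatim: the paper also takes the $\mathfrak{O}_\alpha$ to be the equivalence classes of Definition \ref{def8} with $\tilde{B}=N^2$, obtains $\mathrm{diam}(\mathfrak{O}_\alpha)\leq\tilde{L}\tilde{B}\leq N^{C_1}$ with $C_1=3\mathcal{C}(\nu,d,r)+2$ from the chain-length lemma, and gets the separation $\mathrm{d}(\mathfrak{O}_\alpha,\mathfrak{O}_\beta)>N^2$ directly from the definition of the equivalence relation. No gaps.
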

\begin{proof}
Let ${\tilde{B}}=N^2$. By Definition \ref{def8}, the equivalence relation induces that a partition of the ($\mathcal{A}(u,\theta),N$)-weakly-bad sites for $\mathcal{A}(\epsilon,\lambda,u,\theta)$ with $|l|\leq N',|j-j_0|\leq N'$ satisfies
\begin{equation*}
\mathrm{diam}(\mathfrak{O}_{\alpha})\leq \tilde{L}\tilde{{B}}\leq N^{C_1},\quad\mathrm{d}(\mathfrak{O}_{\alpha},\mathfrak{O}_{\beta})>N^2, \quad\forall\alpha\neq\beta,
\end{equation*}
where $C_1:=C_1(\nu,d,r)=3\mathcal{C}(\nu,d,r)+2$.
\end{proof}
Thus the assumption $(\mathrm{A3})$ in Proposition \ref{pro1} holds by Proposition \ref{pro3} for $j_0=0,\theta=0$.
\subsection{Measure and ``complexity'' estimates}\label{sec:3.3}
We define
\begin{align}\label{E2.53}
\mathscr{B}_{N}^{0}(j_0):=&\mathscr{B}_{N}^{0}(j_{0};\epsilon,\lambda,u):=\left\{\theta\in\mathbf{R}:\|
\mathcal{A}_{N,j_{0}}^{-1}(\epsilon,\lambda,u,\theta)\|_{0}>N^{\tau}\right\}\nonumber\\
=&\left\{\theta\in\mathbf{R}:\exists~\text{an~eigenvalue~of}~\mathcal{A}_{N,j_{0}}(\epsilon,\lambda,u,\theta)~\text{with~
 modulus~less~than}~N^{-\tau}\right\},
 \end{align}
where $\parallel\cdot\parallel_{0}$ is the operator $L^{2}$-norm.
Moreover we also define
\begin{align}
\mathscr{G}_{N}^{0}(u):=\Bigg\{&\lambda\in\Lambda:\forall
j_{0}\in\Gamma_{+}(\boldsymbol M),~\mathscr{B}_{N}^{0}(j_0)\subset
\bigcup_{q=1}^{N^{\nu+d+r+5}}I_{q},~
\text{where}~I_{q}=I_q(j_0)~\nonumber\\
&\text{are~disjoint~intervals~with~measure}~\mathrm{meas}{(I_{q})}\leq
 N^{-\tau}\Bigg\}.\label{E3.13}
\end{align}
It follows from \eqref{E1.101}, \eqref{E1.97}, \eqref{E2.48}, \eqref{E1.14}, \eqref{E1.99}, $\|u\|_{s_1}\leq1$ and the definitions of $P_{N,j_0}, H_{N,j_0}$ that
\begin{align*}
|\mathcal{A}_{N,j_{0}}(\epsilon,\lambda,u,\theta)-\mathrm{Diag}(\mathcal{A}_{N,j_{0}}(\epsilon,\lambda,u,\theta))|_{s_1-\varrho}
\leq C(s_1)\|P_{N,j_0}(\bar{V}+(\mathrm{D}F)(u))_{|H_{N,j_0}}\|_{s_1}\leq {C'(s_1)}.
\end{align*}
Using the above fact and Proposition \ref{pro1} for $N':=N,\mathcal{A}:=\mathcal{A}_{N,j_{0}}(\epsilon,\lambda,u,\theta)$, if $\|\mathcal{A}^{-1}_{N,j_{0}}(\lambda,\epsilon,\theta)\|_0\leq N^{\tau}$, then one has
\begin{align*}
|\mathcal{A}^{-1}_{N,j_{0}}(\epsilon,\lambda,u,\theta)|_{s}\leq N^{\tau_2+\delta s},\quad\forall s\in[s_0,s_1-\varrho].
\end{align*}
This implies that
\begin{align*}
\forall\theta\notin\mathscr{B}_{N}^{0}(j_0)\Rightarrow \mathcal{A}_{N,j_{0}}(\epsilon,\lambda,u,\theta)~\text{is~N-good},
\end{align*}
which leads to $\mathscr{B}_{N}(j_0)\subset\mathscr{B}_{N}^{0}(j_0)$.
\begin{lemm}
Let $\hat{\mu}_{k}(M_1),\hat{\mu}_{k}(M_2)$ be eigenvalues of $M_1,M_2$ respectively, where $M_p,p=1,2$ are self-adjoint matrices of the same dimension, then $\hat{\mu}_{k}(M_p),p=1,2$ are ranked in nondecreasing order with
\begin{align}\label{E3.1}
|\hat{\mu}_{k}(M_1)-\hat{\mu}_{k}(M_2)|\leq\|M_1-M_2\|_0.
\end{align}
\end{lemm}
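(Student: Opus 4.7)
The plan is to establish this classical Weyl-type perturbation inequality via the Courant-Fischer min-max characterization of eigenvalues of self-adjoint matrices. Concretely, if $M$ is self-adjoint of dimension $n$ and its eigenvalues are arranged as $\hat{\mu}_1(M) \leq \hat{\mu}_2(M) \leq \cdots \leq \hat{\mu}_n(M)$, then
\begin{equation*}
\hat{\mu}_k(M) = \min_{\substack{V \subset \mathbf{C}^n \\ \dim V = k}} \max_{\substack{x \in V \\ \|x\|=1}} \langle Mx, x \rangle.
\end{equation*}
This characterization is what I would invoke as the starting point.

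Next, I would write $M_1 = M_2 + (M_1 - M_2)$ and observe that for every unit vector $x \in \mathbf{C}^n$,
\begin{equation*}
\langle M_1 x, x \rangle = \langle M_2 x, x \rangle + \langle (M_1 - M_2)x, x \rangle,
\end{equation*}
and by Cauchy-Schwarz together with the definition of the $L^2$-operator norm $\|\cdot\|_0$,
\begin{equation*}
|\langle (M_1 - M_2)x, x \rangle| \leq \|M_1 - M_2\|_0 \|x\|^2 = \|M_1 - M_2\|_0.
\end{equation*}
Consequently $\langle M_1 x, x \rangle \leq \langle M_2 x, x \rangle + \|M_1 - M_2\|_0$ and symmetrically $\langle M_1 x, x \rangle \geq \langle M_2 x, x \rangle - \|M_1 - M_2\|_0$.

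Taking the supremum over unit vectors $x$ inside a fixed $k$-dimensional subspace $V$, and then the infimum over all such $V$, the additive constant $\pm \|M_1 - M_2\|_0$ passes through both the $\max$ and the $\min$, yielding
\begin{equation*}
\hat{\mu}_k(M_2) - \|M_1 - M_2\|_0 \leq \hat{\mu}_k(M_1) \leq \hat{\mu}_k(M_2) + \|M_1 - M_2\|_0,
\end{equation*}
which is exactly \eqref{E3.1}. There is no real obstacle here: the only subtlety is keeping track that the min-max formula requires self-adjointness to guarantee that the ordered eigenvalues coincide with the Rayleigh-quotient extrema, which is given in the hypothesis. The whole argument is essentially a one-paragraph application of Courant-Fischer and is included for completeness so that it may be invoked later when comparing $\mathcal{A}_{N,j_0}(\epsilon,\lambda,u,\theta)$ at slightly perturbed values of the parameters.
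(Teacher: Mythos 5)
Your proof is correct and complete: this is the classical Weyl perturbation inequality, and the Courant--Fischer min-max argument you give is the standard way to establish it. The paper in fact states this lemma without any proof at all, treating it as a known fact, so your write-up supplies exactly the argument the authors implicitly rely on; the only cosmetic remark is that the matrices in the paper are block matrices indexed by $(l,j,p)$, but as finite-dimensional self-adjoint operators they fall squarely under your treatment over $\mathbf{C}^{n}$.
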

\begin{lemm}\label{lemma12}
Let $M(\eta)$ be a family of self-adjoint matrices in $\mathcal{M}_{\mathfrak{E}}^{\mathfrak{E}}$ with
$C^{1}$  depending on the parameter $\eta\in\mathscr{H}\subset\mathbf{R}$. Assume $\partial_{\eta}M(\eta)\geq \mathfrak{b}\mathrm{I}
$ for some $\mathfrak{b}>0$, then, for all $\mathfrak{a}>0$,
the Lebesgue measure
\begin{align*}
\mathrm{meas}\left(\{\eta\in\mathscr{H}:\|M^{-1}(\eta)\|_{0}\geq\mathfrak{a}^{-1}\}\right)\leq2\sharp\mathfrak{E}\mathfrak{a}\mathfrak{b}^{-1},
\end{align*}
where $\sharp\mathfrak{E}$ denotes the cardinality of the set $\mathfrak{E}$. Furthermore one has
\begin{align*}
\left\{\eta\in\mathscr{H}:\|M^{-1}(\eta)\|_{0}\geq\mathfrak{a}^{-1}\right\}\subset\bigcup_{1\leq q\leq\sharp\mathfrak{E}}I_{q},\quad\text{with}\quad\mathrm{meas}(I_{q})\leq2\mathfrak{ab}^{-1}.
\end{align*}
$\|M^{-1}(\eta)\|_0:=\infty$ if $M(\eta)$ is not invertible.
\end{lemm}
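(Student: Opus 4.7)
The plan is to reduce the operator-norm condition to an eigenvalue condition and then exploit the monotonicity of eigenvalues forced by the hypothesis $\partial_\eta M(\eta)\geq\mathfrak{b}\mathrm{I}$. First I would observe that since $M(\eta)$ is self-adjoint, $\|M^{-1}(\eta)\|_0\geq\mathfrak{a}^{-1}$ is equivalent to the existence of an eigenvalue $\hat\mu_k(\eta)$ of $M(\eta)$ with $|\hat\mu_k(\eta)|\leq\mathfrak{a}$ (with the usual convention when $M(\eta)$ is singular). Enumerate the eigenvalues in nondecreasing order as $\hat\mu_1(\eta)\leq\hat\mu_2(\eta)\leq\cdots\leq\hat\mu_{\sharp\mathfrak{E}}(\eta)$; these are continuous functions of $\eta$ by \eqref{E3.1}. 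Thus
\begin{equation*}
\{\eta\in\mathscr{H}:\|M^{-1}(\eta)\|_0\geq\mathfrak{a}^{-1}\}=\bigcup_{k=1}^{\sharp\mathfrak{E}}\{\eta\in\mathscr{H}:|\hat\mu_k(\eta)|\leq\mathfrak{a}\}.
\end{equation*}

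Next, the key step is to show that each $\hat\mu_k(\eta)$ grows at least linearly in $\eta$ with slope $\mathfrak{b}$. From $C^1$ regularity and $\partial_\eta M\geq\mathfrak{b}\mathrm{I}$, for any $\eta_1<\eta_2$ in $\mathscr{H}$,
\begin{equation*}
M(\eta_2)-M(\eta_1)=\int_{\eta_1}^{\eta_2}\partial_\eta M(\eta)\,\mathrm{d}\eta\geq\mathfrak{b}(\eta_2-\eta_1)\mathrm{I}.
\end{equation*}
By the Weyl/Courant--Fischer min-max characterization of eigenvalues of self-adjoint matrices, the ordered eigenvalues of a sum dominate termwise the ordered eigenvalues of each summand when one summand is nonnegative. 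Applied here this gives
\begin{equation*}
\hat\mu_k(\eta_2)\geq\hat\mu_k(\eta_1)+\mathfrak{b}(\eta_2-\eta_1),\qquad k=1,\dots,\sharp\mathfrak{E}.
\end{equation*}
Consequently $\hat\mu_k(\cdot)$ is strictly increasing and $(\hat\mu_k(\eta_2)-\hat\mu_k(\eta_1))/(\eta_2-\eta_1)\geq\mathfrak{b}$, so the preimage $I_k:=\{\eta\in\mathscr{H}:|\hat\mu_k(\eta)|\leq\mathfrak{a}\}$ is an interval (a connected subset of $\mathscr{H}$) of length at most $2\mathfrak{a}/\mathfrak{b}$.

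The conclusions now follow: the exceptional set is contained in $\bigcup_{k=1}^{\sharp\mathfrak{E}}I_k$ with each $\mathrm{meas}(I_k)\leq 2\mathfrak{a}\mathfrak{b}^{-1}$, which yields both the covering by at most $\sharp\mathfrak{E}$ intervals and the total Lebesgue bound $2\sharp\mathfrak{E}\mathfrak{a}\mathfrak{b}^{-1}$ by subadditivity. The only potentially delicate point in the argument is the monotonicity of the ordered eigenvalues: despite the fact that $\hat\mu_k$ need not be $C^1$ at eigenvalue crossings, the min-max principle provides the pointwise inequality above directly, so no smoothing or perturbation theory of individual eigenvalues is required. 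This is the main technical observation; the rest is bookkeeping.
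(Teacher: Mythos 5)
Your proof is correct and is essentially the argument the paper relies on: the paper simply cites Lemma 5.1 of Berti--Bolle, whose proof is exactly this reduction to the ordered eigenvalues $\hat\mu_k(\eta)$, the Weyl/min-max monotonicity $\hat\mu_k(\eta_2)\geq\hat\mu_k(\eta_1)+\mathfrak{b}(\eta_2-\eta_1)$, and the resulting length bound $2\mathfrak{a}\mathfrak{b}^{-1}$ for each sublevel interval. The only convention to keep in mind is that $\sharp\mathfrak{E}$ must be read as the full dimension of the matrix (in the applications $\mathfrak{E}$ already includes the multiplicity index $p\leq\mathfrak{d}_j$), so your enumeration of $\sharp\mathfrak{E}$ eigenvalues is consistent with the statement.
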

\begin{proof}
The proof is given by Lemma 5.1 of \cite{Berti2012nonlinearity}.
\end{proof}
Letting $\mathfrak{v}\geq1$ and
\begin{align}\label{E2.52}
N\geq2\|\rho\|,
\end{align}
 by \eqref{E1.50} and the definition of $\lambda_j$,  simple calculation yields
\begin{align}
&\lambda_{j}^{2}>\mathfrak{v}^2(\mathfrak{v}-1)^{2}N^{4}\quad\text{if}\quad|j|>{\mathfrak{v}b^{-1}_1 N}{},\label{E3.4}\\
&\lambda_{j}^{2}\leq4\mathfrak{v}^2(\mathfrak{v}+1)^{2}(b_2/b_1)^4N^{4}\quad\text{if}\quad|j|\leq{\mathfrak{v}b^{-1}_1 N},\label{E3.5}
\end{align}
where $b_1,b_2$ are given in \eqref{E1.50}. In addition, we  assume that
\begin{align}\label{E3.6}
N\geq\bar{N}(V,\nu,d,\rho)>0~\text{large~enough, \quad and }\epsilon\kappa^{-1}_{0}(\|\mathcal{T}''\|_0+\|\partial_{\lambda}\mathcal{T}''\|_0)\leq c
\end{align}
for some constant $c>0$.
\begin{lemm}\label{lemma16}
$\forall j_{0}\in\Gamma_{+}(\boldsymbol M)$, with $|j_{0}|>\frac{b_1+3}{b_{1}}N$, $\forall\lambda\in\Lambda$, we have
\begin{align*}
\mathscr{B}_{N}^{0}(j_0)\subset\bigcup_{q=1}^{N^{\nu+d+2}}I_{q}, ~\text{where}~I_{q}=I_q(j_0)~\text{~are~intervals~with}~\mathrm{meas}(I_{q})\leq
 N^{-\tau}.
 \end{align*}
\end{lemm}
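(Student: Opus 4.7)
The plan is to first localize the bad set $\mathscr{B}_{N}^{0}(j_0)$ into a controlled number of short intervals by a Weyl-type perturbation argument, and then to apply the monotonicity estimate of Lemma~\ref{lemma12} on each such interval.

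Write $\mathcal{A}_{N,j_0}(\epsilon,\lambda,u,\theta)=\mathcal{D}(\lambda,\theta)+\mathcal{T}$ with $\mathcal{T}=\mathcal{T}'-\epsilon\mathcal{T}''(u)$. By Lemma~\ref{lemma11} together with \eqref{E1.14}, \eqref{E1.99}, the smallness of $\epsilon$, and the assumption $\|u\|_{s_1}\leq 1$, there exists a constant $C_0$, independent of $N,\theta,\lambda,j_0$, such that $\|\mathcal{T}\|_{0}\le C_0$. Weyl's inequality then shows: if $|\mu_{\mathfrak{n}}(\lambda,\theta)|>C_0+1$ for every $\mathfrak{n}=(l,j)$ in the block, then $\|\mathcal{A}_{N,j_0}^{-1}(\theta)\|_{0}\leq 1<N^{\tau}$, so $\theta\notin\mathscr{B}_{N}^{0}(j_0)$. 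Consequently
\[
\mathscr{B}_{N}^{0}(j_0)\;\subset\;\bigcup_{|l|\leq N,\;|j-j_0|\leq N,\;\sigma=\pm 1}J_{l,j,\sigma},
\]
where $J_{l,j,\sigma}$ is the preimage of $\bigl[\sqrt{\lambda_j^{2}+m-C_0-1},\sqrt{\lambda_j^{2}+m+C_0+1}\,\bigr]$ under the map $\theta\mapsto\sigma(\lambda\omega_0\cdot l+\theta)$. The hypothesis $|j_0|>(b_1+3)N/b_1$ together with $|j-j_0|\leq N$ gives $|j|>3N/b_1$, whence \eqref{E3.4} with $\mathfrak{v}=3$ yields $\lambda_j^{2}>36N^{4}$, so $\lambda_j>6N^{2}$ and $|J_{l,j,\sigma}|\leq(C_0+1)/(5N^{2})$ for $N$ large. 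The number of these intervals is at most $2(2N+1)^{\nu+r}$.

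On each $J_{l,j,\sigma}$ the family $\theta\mapsto\mathcal{A}_{N,j_0}(\theta)$ is strictly monotone with large derivative. Since $\mathcal{T}$ is independent of $\theta$,
\[
\partial_\theta\mathcal{A}_{N,j_0}(\theta)\;=\;-2\,\mathrm{Diag}_{\mathfrak{n}'=(l',j')}\bigl((\lambda\omega_0\cdot l'+\theta)\,\mathrm{I}_{\mathfrak{d}_{j'}}\bigr).
\]
For $\theta\in J_{l,j,+}$ and any $|l'|\leq N$ one has $\lambda\omega_0\cdot l'+\theta\geq \sqrt{\lambda_j^{2}+m-C_0-1}-2\lambda N\geq 4N^{2}$ for $N$ large, so $-\partial_\theta\mathcal{A}_{N,j_0}(\theta)\geq 8N^{2}\mathrm{I}$ on $J_{l,j,+}$, and analogously $\partial_\theta\mathcal{A}_{N,j_0}(\theta)\geq 8N^{2}\mathrm{I}$ on $J_{l,j,-}$. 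Applying Lemma~\ref{lemma12} with $M=\mp\mathcal{A}_{N,j_0}$, $\mathfrak{a}=N^{-\tau}$ and $\mathfrak{b}=8N^{2}$ then covers $\{\theta\in J_{l,j,\sigma}:\|\mathcal{A}_{N,j_0}^{-1}(\theta)\|_{0}\geq N^{\tau}\}$ by intervals of measure at most $N^{-\tau}/(4N^{2})\leq N^{-\tau}$.

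The main obstacle is to cap the total number of resulting sub-intervals by $N^{\nu+d+2}$, since the crude bound $\sharp\mathfrak{E}_{l,j,\sigma}=\dim H_{N,j_0}$ from Lemma~\ref{lemma12} is too weak when $|j_0|$ is large. I would sharpen it via the following observation: on $J_{l,j,\sigma}$ each ordered eigenvalue $\hat\lambda_k(\theta)$ is monotone with $|\partial_\theta\hat\lambda_k|\geq 8N^{2}$, while $|J_{l,j,\sigma}|\leq (C_0+1)/(5N^{2})$, so each $\hat\lambda_k$ varies by at most $O(1)$ across $J_{l,j,\sigma}$. Hence only eigenvalues that lie in an $O(1)$-window at one endpoint of $J_{l,j,\sigma}$ can reach $[-N^{-\tau},N^{-\tau}]$; by Weyl once more this count is bounded by the number of diagonal entries $\mu_{\mathfrak{n}'}(\theta)$ in a fixed $O(1)$-neighborhood of zero. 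A spherical-shell lattice estimate for $\lambda_{j'}=\|j'+\rho\|^{2}-\|\rho\|^{2}$ combined with $\mathfrak{d}_{j'}\leq \|j'+\rho\|^{d-r}$ then yields a count that, multiplied by the $O(N^{\nu+r})$ choices of $J_{l,j,\sigma}$, fits within $N^{\nu+d+2}$ for $N$ large, as required.
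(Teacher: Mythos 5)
Your localization and monotonicity steps are correct and broadly consistent with the paper, but the paper's route is more direct than yours: it first shows $\mathscr{B}_{N}^{0}(j_0)\cap[-3N,3N]=\emptyset$ (using \eqref{E3.4} to get $\lambda_j^2>36N^4$ and absorbing $V$ and $\epsilon\mathcal{T}''$ via \eqref{E3.6}), and then applies Lemma \ref{lemma12} just once on each half-line $\pm(3N,+\infty)$, where $\partial_{\theta}(\mp\mathcal{A}_{N,j_{0}})\geq 3N\,\mathrm{I}$, taking $\mathfrak{a}=N^{-\tau}$, $\mathfrak{b}=3N$ and $\sharp\mathfrak{E}\leq\mathcal{C}N^{\nu+d}$. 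Your decomposition into the short intervals $J_{l,j,\sigma}$ is therefore not needed for the paper's argument; the only counting input it uses is the cardinality bound $\sharp\mathfrak{E}\leq\mathcal{C}N^{\nu+d}$, quoted without further ado.

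The genuine gap in your proposal is exactly the step you flag as ``the main obstacle'': the per-interval eigenvalue count is asserted, not proved, and as sketched it does not close. First, a logical slip: from $|\partial_\theta\hat\lambda_k|\geq 8N^2$ and $|J_{l,j,\sigma}|\leq (C_0+1)/(5N^2)$ you cannot conclude that $\hat\lambda_k$ varies by $O(1)$ across $J_{l,j,\sigma}$ --- a lower bound on the derivative gives a lower bound on the variation. What you need is the upper bound $|\partial_\theta\hat\lambda_k|\leq\|\partial_\theta\mathcal{A}_{N,j_0}\|_0\approx 2\lambda_j$ combined with the sharper length estimate $|J_{l,j,\sigma}|\approx (C_0+1)/\lambda_j$; this is repairable. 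Second, and fatal as written: the count of diagonal entries $\mu_{\mathfrak{n}'}$ in an $O(1)$-window around zero carries the multiplicity factor $\mathfrak{d}_{j'}\leq\|j'+\rho\|^{d-r}$, and for $|j'-j_0|\leq N$ with $|j_0|$ arbitrarily large this is of order $|j_0|^{d-r}$, which is not controlled by any power of $N$. So even if the spherical-shell argument (using \eqref{E1.56} to pin down $\|j'+\rho\|^2$ to essentially one admissible value per $l'$) confines the admissible $j'$ to $O(1)$ lattice points, the per-$J$ count is still $\gtrsim |j_0|^{d-r}$, and multiplying by the $O(N^{\nu+r})$ intervals $J_{l,j,\sigma}$ cannot be forced below $N^{\nu+d+2}$ uniformly in $j_0$. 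You would need an additional idea to handle the $\mathfrak{d}_{j'}$ nearly coincident eigenvalue branches attached to a single $(l',j')$ (e.g., showing their zero crossings fall into very few intervals of length $N^{-\tau}$), and no such argument is supplied.
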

\begin{proof}
It follows from \eqref{E3.1} that all the eigenvalues $\hat{\mu}_{l,j,p}(\theta),p=1,\cdots,\mathfrak{d}_j$ of  $\mathcal{A}_{N,j_{0}}(\lambda,\epsilon,\theta)$ satisfy
\begin{align*}
\hat {\mu}_{l,j,p}(\theta)=-(\lambda\omega_0\cdot l+\theta)^{2}+\lambda_{j}^{2}+O(\|V\|_0+\epsilon\|\mathcal{T}''\|_0).
\end{align*}
The fact $|j-j_0|\leq N$ gives that $|j|>\frac{3N}{b_1}$, which leads to $\lambda_{j}^{2}>36N^{4}$ owing to \eqref{E3.4}. Combining this with $|\lambda\omega_0|\leq\frac{3}{2}$ and $|l|\leq N$ yields
\begin{align*}
-(\lambda\omega_0\cdot l+\theta)^{2}+\lambda_{j}^{2}>-(\frac{3}{2}+|\theta|)^{2}+36N^{4}>15N^{2},\quad\forall|\theta|\leq3N.
 \end{align*}
Therefore, by means of \eqref{E3.6} and \eqref{E1.99}, we deduce $\hat{\mu}_{l,j,p}(\theta)\geq N^{2}$, which implies
\begin{align*}
\mathscr{B}_{N}^{0}(j_0)\cap[-3N,3N]=\emptyset.
\end{align*}
Based on above fact, denote by
\begin{align*}
\mathscr{B}_{N}^{0,+}(j_0)=\mathscr{B}_{N}^{0}(j_0)\cap(3N,+\infty),\quad \mathscr{B}_{N}^{0,-}(j_0)=\mathscr{B}_{N}^{0}(j_0)\cap(-\infty,-3N).
\end{align*}
We restrict our attention for $\theta>3N$. Define
\begin{align*}
\mathfrak{E}:=\left\{{(l,j,p)}\in\mathfrak{N}\times\mathbf{N}:|l|\leq N, |j-j_0|\leq N, p\leq \mathfrak{d}_j\right\}.
\end{align*}
It follows from the inequality $\mathfrak{d}_j\leq \|j+\rho\|^{d-r}$ that $\sharp\mathfrak{E}\leq \mathcal{C}N^{\nu+d}$. In addition,
\begin{align*}
\partial_{\theta}(-\mathcal{A}_{N,j_{0}}(\varepsilon,\lambda,\theta))=\mathrm{diag}_{|l|\leq N,|j-j_{0}|\leq N}2(\lambda\omega_0\cdot l+\theta)\mathrm{I}_{\mathfrak{d}_{j}}\geq (6N-3N)\mathrm{I}=3N\mathrm{I}.
 \end{align*}
Applying Lemma \ref{lemma12},  for $\mathfrak{a}=N^{-\tau},\mathfrak{b}=3N$ and $\sharp\mathfrak{E}\leq \mathcal{C}N^{\nu+d}$, we have
\begin{align*}
\mathscr{B}_{N}^{0,+}(j_0)\subset\bigcup_{q=1}^{N^{\nu+d+1}}I_{q},
\end{align*}
where the intervals $I_{q}=I_q(j_0)$  satisfy  $\mathrm{meas}(I_{q})\leq2N^{-\tau}(3N)^{-1}\leq
N^{-\tau}$.

The proof on $\mathscr{B}_{N}^{0,-}$ can apply  the similar step as above. For the sake of convenience, we omit the process.

Therefore, we have
\begin{align*}
\mathscr{B}_{N}^{0}(j_0)\subset\bigcup_{q=1}^{N^{\nu+d+2}}I_{q},~\text{where}~I_{q}=I_q(j_0)~\text{are~intervals~with}~\mathrm{meas}(I_{q})\leq
 N^{-\tau}.
\end{align*}
\end{proof}
Now consider the case $j_0\leq\frac{b_1+3}{b_{1}}N$. We have to study the measure of the set
\begin{align*}
\mathscr{B}_{2,N}^{0}(j_0):=\mathscr{B}_{2,N}^{0}(j_{0};\epsilon,\lambda,u):=\left\{\theta\in\mathbf{R}:\|\mathcal{A}_{N,j_{0}}^{-1}(\lambda,\epsilon,u,\theta)\|_{0}>{N^{\tau}}/{2}\right\}.
 \end{align*}
With the help of  the upper bound of $\mathrm{meas}(\mathscr{B}_{2,N}^{0}(j_0))$, a complexity estimate for $\mathscr{B}_{N}^{0}(j_0)$ can be  obtained.
\begin{lemm}\label{lemma13}
$\forall j_{0}\in\Gamma_{+}(\boldsymbol M)$, with $|j_{0}|\leq\frac{b_1+3}{b_{1}}N$, $\forall\lambda\in\Lambda$, we have
\begin{align*}
\mathscr{B}_{2,N}^{0}(j_0)\subset[-\mathfrak{s}N^2,\mathfrak{s}N^2]\quad \text{with}\quad\mathfrak{s}=2((2b_1+4)^2+1)(b_2/b_1)^2.
\end{align*}
\end{lemm}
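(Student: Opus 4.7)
The plan is to prove the contrapositive: if $|\theta|>\mathfrak{s}N^{2}$, then $\|\mathcal{A}_{N,j_{0}}^{-1}(\lambda,\epsilon,u,\theta)\|_{0}\le N^{\tau}/2$, so that $\theta\notin\mathscr{B}_{2,N}^{0}(j_{0})$. The strategy is to make the diagonal piece $\mathcal{D}(\lambda,\theta)$ of $\mathcal{A}_{N,j_{0}}=\mathcal{D}(\lambda,\theta)+\mathcal{T}(\epsilon,u)$ so dominant that Weyl's perturbation bound \eqref{E3.1} transfers the size of the $\mu_{\mathfrak{n}}$'s directly onto the eigenvalues of $\mathcal{A}_{N,j_{0}}$.

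First I would estimate $|\mu_{\mathfrak{n}}(\lambda,\theta)|$ from below for every $\mathfrak{n}=(l,j)$ with $|l|\le N$ and $|j-j_{0}|\le N$. Since $\lambda\in[1/2,3/2]$, $|\omega_{0}|\le 1$ and $|l|\le N$, we have $|\lambda\omega_{0}\cdot l|\le\tfrac{3}{2}N$, hence $(\lambda\omega_{0}\cdot l+\theta)^{2}\ge(\mathfrak{s}N^{2}-\tfrac{3}{2}N)^{2}$ whenever $|\theta|>\mathfrak{s}N^{2}$. On the other hand, $|j|\le|j_{0}|+N\le\tfrac{2b_{1}+3}{b_{1}}N$, so \eqref{E3.5} applies with $\mathfrak{v}=2b_{1}+3$ and gives
\begin{align*}
\lambda_{j}^{2}\le 4(2b_{1}+3)^{2}(2b_{1}+4)^{2}(b_{2}/b_{1})^{4}N^{4}.
\end{align*}
Substituting the stated value $\mathfrak{s}=2((2b_{1}+4)^{2}+1)(b_{2}/b_{1})^{2}$, the quantity
\begin{align*}
(\mathfrak{s}N^{2}-\tfrac{3}{2}N)^{2}-\lambda_{j}^{2}-m\ge 4(b_{2}/b_{1})^{4}\big[(2b_{1}+4)^{2}(4b_{1}+9)+1\big]N^{4}-O(N^{3})
\end{align*}
is positive and at least $N^{2}$ for $N\ge\bar{N}(V,\nu,d,\rho)$ as in \eqref{E3.6} and \eqref{E2.52}. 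Consequently $\mu_{\mathfrak{n}}(\lambda,\theta)\le -N^{2}$ for all admissible sites.

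Next I invoke the eigenvalue perturbation inequality \eqref{E3.1} with $M_{1}=\mathcal{A}_{N,j_{0}}$ and $M_{2}=\mathcal{D}(\lambda,\theta)$ (both self-adjoint). By \eqref{E1.99}, \eqref{E3.6} and the representation $\mathcal{T}=\mathcal{T}'-\epsilon\mathcal{T}''$, the perturbation $\|\mathcal{T}\|_{0}\le\|V\|_{0}+\epsilon\|\mathcal{T}''\|_{0}$ is bounded by a fixed constant $C$. Therefore every eigenvalue of $\mathcal{A}_{N,j_{0}}$ lies within $C$ of some $\mu_{\mathfrak{n}}$, hence is $\le -N^{2}+C\le -N^{2}/2$ for $N$ large. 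In particular $\mathcal{A}_{N,j_{0}}$ is invertible and
\begin{align*}
\|\mathcal{A}_{N,j_{0}}^{-1}(\lambda,\epsilon,u,\theta)\|_{0}\le 2N^{-2}\le N^{\tau}/2,
\end{align*}
which shows $\theta\notin\mathscr{B}_{2,N}^{0}(j_{0})$ and completes the inclusion.

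The main technical obstacle is purely algebraic: one must check that the specific choice of $\mathfrak{s}$ dominates the worst-case value of $|\lambda_{j}|$ permitted by $|j_{0}|\le\tfrac{b_{1}+3}{b_{1}}N$, which forces the bookkeeping value $\mathfrak{v}=2b_{1}+3$ in \eqref{E3.5} (smaller $\mathfrak{v}$ would fail). Once that numerical inequality is secured, the rest is a routine application of Weyl's bound together with the smallness assumption on $\epsilon$ coming from \eqref{E3.6}.
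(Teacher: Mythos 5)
Your proposal is correct and follows essentially the same route as the paper: for $|\theta|>\mathfrak{s}N^{2}$ you show the diagonal entries $\mu_{\mathfrak{n}}(\lambda,\theta)$ are uniformly large and negative (using $|\lambda\omega_{0}\cdot l|\le\tfrac{3}{2}N$, the bound \eqref{E3.5} with $\mathfrak{v}=2b_{1}+3$, and the choice of $\mathfrak{s}$), then transfer this to the eigenvalues of $\mathcal{A}_{N,j_{0}}$ via the Weyl bound \eqref{E3.1} together with \eqref{E1.99} and \eqref{E3.6}, concluding $\|\mathcal{A}_{N,j_{0}}^{-1}\|_{0}\le N^{\tau}/2$. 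The paper's proof is the same argument (it even obtains the sharper eigenvalue bound $\le-(b_{2}/b_{1})^{4}N^{4}$), so no further comment is needed.
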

\begin{proof}
If $|\theta|>\mathfrak{s}N^2$, then one has
\begin{align*}
|\lambda\omega_0\cdot l+\theta|\geq|\theta|-|\lambda\omega_0\cdot l|>\mathfrak{s}N^2-\frac{3}{2}N>2(2b_1+4)^2(b_2/b_1)^2N^2.
\end{align*}
Since $|j-j_0|\leq N\Rightarrow |j|\leq\frac{2b_1+3}{b_1}N$, by \eqref{E3.5}, it gives
\begin{align*}
\lambda_{j}^{2}\leq(2(2b_1+3)(2b_1+4))^2(b_2/b_1)^4N^{4}.
\end{align*}
Then, due to \eqref{E1.99} and \eqref{E3.6} , all the eigenvalues $\hat{\mu}_{l,j,p}(\theta),p=1,\cdots,\mathfrak{d}_j$ of  $\mathcal{A}_{N,j_{0}}(\epsilon,\lambda,u,\theta)$ satisfy that, for all $|\theta|>\mathfrak{s}N^2$,
\begin{align*}
\hat{\mu}_{l,j,p}(\theta)=-(\lambda\omega_0\cdot l+\theta)^{2}+\lambda_{j}^{2}+O(\|V\|_0+\epsilon\|\mathcal{T}''\|_0)\leq-(b_2/b_1)^4N^{4},
\end{align*}
which implies the conclution of the lemma.
\end{proof}
\begin{lemm}\label{lemma15}
Let $\mathfrak{h}=\mathrm{meas}(\mathscr{B}_{2,N}^{0}(j_0))$. $\forall j_{0}\in\Gamma_{+}(\boldsymbol M)$, with $|j_{0}|\leq\frac{b_1+3}{b_{1}}N$, $\forall\lambda\in\Lambda$, the following holds
\begin{align*}
\mathscr{B}_{N}^{0}(j_0)\subset\bigcup_{q=1}^{\mathfrak{C}\mathfrak{h}N^{\tau+1}}I_{q},
\end{align*}
where $I_{q}=I_q(j_0)$ are intervals with $\mathrm{meas}(I_{q})\leq N^{-\tau}.$
\end{lemm}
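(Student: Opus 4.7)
The plan hinges on the trivial inclusion $\mathscr{B}_{N}^{0}(j_0)\subset\mathscr{B}_{2,N}^{0}(j_0)$ (since $N^{\tau}/2<N^{\tau}$) and on Lemma \ref{lemma13}, which confines $\mathscr{B}_{2,N}^{0}(j_0)\subset[-\mathfrak{s}N^2,\mathfrak{s}N^2]$. The strategy is to establish a Lipschitz estimate of $\mathcal{A}_{N,j_{0}}(\epsilon,\lambda,u,\theta)$ in $\theta$, use it to assign to every point of $\mathscr{B}_{N}^{0}(j_0)$ a full $\delta$-neighborhood inside $\mathscr{B}_{2,N}^{0}(j_0)$, and finally to pack these disjoint neighborhoods into $\mathscr{B}_{2,N}^{0}(j_0)$, thereby converting the measure $\mathfrak{h}$ into a count of short intervals covering $\mathscr{B}_{N}^{0}(j_0)$.

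For the Lipschitz step, observe that $\mathcal{A}_{N,j_{0}}(\epsilon,\lambda,u,\theta)$ depends on $\theta$ only through its diagonal entries $\mu_{\mathfrak{n}}(\lambda,\theta)=-(\lambda\omega_0\cdot l+\theta)^2+\lambda_j^2+m$, so $\mathcal{A}_{N,j_{0}}(\cdot,\theta)-\mathcal{A}_{N,j_{0}}(\cdot,\theta_0)$ is diagonal with entries $-(\theta-\theta_0)(2\lambda\omega_0\cdot l+\theta+\theta_0)$. For $\theta,\theta_0\in[-\mathfrak{s}N^2,\mathfrak{s}N^2]$ and $|l|\leq N$, its operator $L^{2}$-norm is thus bounded by $CN^2|\theta-\theta_0|$. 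Applying \eqref{E3.1} to the smallest-modulus eigenvalue $\hat{\mu}_{\min}(\theta)$ (which by self-adjointness satisfies $|\hat{\mu}_{\min}(\theta)|=\|\mathcal{A}_{N,j_0}^{-1}(\theta)\|_0^{-1}$) gives $|\hat{\mu}_{\min}(\theta)-\hat{\mu}_{\min}(\theta_0)|\leq CN^2|\theta-\theta_0|$. Hence, for $\theta_0\in\mathscr{B}_{N}^{0}(j_0)$ (so that $|\hat{\mu}_{\min}(\theta_0)|<N^{-\tau}$) and $|\theta-\theta_0|<\delta:=(2C)^{-1}N^{-\tau-2}$, one finds $|\hat{\mu}_{\min}(\theta)|<(3/2)N^{-\tau}<2N^{-\tau}$, i.e.\ $\theta\in\mathscr{B}_{2,N}^{0}(j_0)$.

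For the packing/covering step, I would partition $[-\mathfrak{s}N^2,\mathfrak{s}N^2]$ into consecutive disjoint intervals $I_q$ of length $N^{-\tau}$ and retain only those intersecting $\mathscr{B}_{N}^{0}(j_0)$; these automatically cover $\mathscr{B}_{N}^{0}(j_0)$. To count the $M$ retained intervals, pick a representative $\theta_q\in I_q\cap\mathscr{B}_{N}^{0}(j_0)$ in each and keep every second representative in the natural ordering, so that consecutive selected $\theta_q,\theta_{q'}$ satisfy $|\theta_q-\theta_{q'}|\geq N^{-\tau}>2\delta$ for $N$ large; the corresponding $\delta$-neighborhoods $[\theta_q-\delta,\theta_q+\delta]\subset\mathscr{B}_{2,N}^{0}(j_0)$ are then pairwise disjoint, and summing their measures gives $M\leq\mathfrak{h}/\delta+O(1)$. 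The main obstacle is the exact power of $N$: the crude bound $\|\partial_\theta\mathcal{A}\|_0=O(N^2)$ yields $\delta\sim N^{-\tau-2}$ and a count of $O(\mathfrak{h}N^{\tau+2})$, a factor of $N$ above the asserted $\mathfrak{C}\mathfrak{h}N^{\tau+1}$. Recovering that extra factor requires sharpening the Lipschitz constant on $\hat{\mu}_{\min}$ itself, e.g.\ via the Hellmann-Feynman identity $\partial_\theta\hat{\mu}_{\min}=-2\langle v_{\min},(\lambda\omega_0\cdot l+\theta)v_{\min}\rangle$ combined with the localization of the minimizing eigenvector $v_{\min}$ on indices where $|\mu_{l,j,p}|\lesssim 1$, or by splitting $[-\mathfrak{s}N^2,\mathfrak{s}N^2]$ into zones with different effective Lipschitz scales and summing, as in \cite{Berti2012nonlinearity}.
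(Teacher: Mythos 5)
Your strategy coincides with the paper's: it too passes to $\mathscr{B}_{2,N}^{0}(j_0)$ via the trivial inclusion, invokes Lemma \ref{lemma13} for confinement, computes the diagonal difference $\|\mathcal{A}_{N,j_{0}}(\theta+\Delta\theta)-\mathcal{A}_{N,j_{0}}(\theta)\|_{0}\leq(4N+2|\theta|+1)|\Delta\theta|$, applies the eigenvalue-perturbation bound \eqref{E3.1} to the index realizing the minimal modulus so as to place a full $\Delta\theta$-neighbourhood of every point of $\mathscr{B}_{N}^{0}(j_0)$ inside $\mathscr{B}_{2,N}^{0}(j_0)$, and then converts $\mathfrak{h}$ into a count of short covering intervals; your ``every second representative'' packing is an equivalent variant of the paper's decomposition into maximal intervals $\mathscr{J}_{p}\subset\mathscr{B}_{2,N}^{0}(j_0)$ subdivided into pieces of comparable length.

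The obstacle you flag is real, and the paper does not actually overcome it: to reach $N^{\tau+1}$ the paper substitutes $2|\theta|\leq2\mathfrak{s}N$ into the Lipschitz bound, whereas Lemma \ref{lemma13} only gives $|\theta|\leq\mathfrak{s}N^{2}$, and this order is sharp for the beam operator (a near-zero eigenvalue forces $(\lambda\omega_0\cdot l+\theta)^{2}\approx\lambda_j^{2}\lesssim N^{4}$, so the derivative of the small eigenvalue genuinely can be of size $N^{2}$; a Hellmann--Feynman refinement therefore cannot recover the lost power, since the minimizing eigenvector concentrates precisely on sites with $|\lambda\omega_0\cdot l+\theta|\sim\lambda_j\sim N^{2}$). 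The honest output of this method is thus $\mathfrak{C}\mathfrak{h}N^{\tau+2}$ intervals, exactly as you computed. The discrepancy is harmless downstream: the count enters only through \eqref{E3.12}, where with $\mathfrak{h}\leq\mathfrak{C}^{-1}N^{-\tau+\nu+d+r+3}$ one obtains $N^{\nu+d+r+5}$ instead of $N^{\nu+d+r+4}$ intervals, still compatible with the complexity required in \eqref{E3.13} and \eqref{E2.6} (alternatively one shrinks the threshold in \eqref{E3.11} by one power of $N$). So your proposal is correct as an execution of the paper's argument; the extra factor of $N$ should simply be accepted and propagated rather than repaired.
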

\begin{proof}
For brevity, we write $\mathcal{A}_{N,j_{0}}(\theta):=\mathcal{A}_{N,j_{0}}(\epsilon,\lambda,u,\theta)$. If $\theta\in\mathscr{B}_{N}^{0}(j_0)$, then there exists an eigenvalue $\hat{\mu}_{l_1,j_1,p_1}(\mathcal{A}_{N,j_{0}}(\theta))$ of $\mathcal{A}_{N,j_{0}}(\theta)$ with
\begin{align*}
|\hat{\mu}_{l_1,j_1,p_1}(\mathcal{A}_{N,j_{0}}(\theta))|\leq N^{-\tau}.
\end{align*}
Consequently, we have that, for $|\Delta \theta|\leq 1$,
\begin{align*}
\|\mathcal{A}_{N,j_{0}}(\theta+\Delta \theta)-\mathcal{A}_{N,j_{0}}(\theta)\|_0
=&\|\mathrm{Diag}_{|l|\leq N,|j-j_0|\leq N}((\lambda\omega_0\cdot l+\theta)^{2}-(\lambda\omega_0\cdot l+\theta+\Delta\theta)^{2})\mathrm{I}_{\mathfrak{d}_j}\|_0\\
=&|(2\lambda\omega_{0}+2\theta+\Delta\theta)(\Delta\theta)|
\leq(4N+2|\theta|+1)|\Delta\theta|.
\end{align*}
If $(4N+2|\theta|+1)|\Delta\theta|\leq N^{-\tau}$, then it follows from \eqref{E3.1} that
\begin{align*}
|\hat{\mu}_{l_1,j_1,p_1}(\mathcal{A}_{N,j_{0}}(\theta+\Delta\theta))-\hat{\mu}_{l_1,j_1,p_1}(\mathcal{A}_{N,j_{0}}(\theta))|\leq N^{-\tau},
\end{align*}
which gives rise to
\begin{align*}
\hat{\mu}_{l_1,j_1,p_1}(\mathcal{A}_{N,j_{0}}(\theta+\Delta\theta))\leq2N^{-\tau}.
\end{align*}
Hence $\theta+\Delta\theta\in \mathscr{B}_{2,N}^{0}(j_0)$, that is
\begin{align*}
(4N+2|\theta|+1)|\Delta\theta|\leq N^{-\tau}\Rightarrow\theta+\Delta\theta\in \mathscr{B}_{2,N}^{0}(j_0).
\end{align*}
By Lemma \ref{lemma13}, we have to guarantee
\begin{align*}
(4N+2\mathfrak{s}N+1)|\Delta\theta|)\leq N^{-\tau}\quad \text{with}\quad\mathfrak{s}=2((2b_1+4)^2+1)(b_2/b_1)^2.
\end{align*}
This indicates $|\Delta\theta|\leq c(\mathfrak{s}) N^{-(\tau+1)}$, which carries out
\begin{align*}
[\theta-c(\mathfrak{s})N^{-(\tau+1)},\theta+c(\mathfrak{s})N^{-(\tau+1)}]\subset \mathscr{B}_{2,N}^{0}(j_0).
\end{align*}
Therefore $\mathscr{B}_{N}^{0}(j_0)$ is included in an union of intervals $\mathscr{J}_{p}$ with disjoint interiors, namely
\begin{align*}
\mathscr{B}_{N}^{0}(j_0)\subset\bigcup_{p}\mathscr{J}_{p}\subset\mathscr{B}_{2,N}^{0}(j_0)\quad\text{with}\quad\mathrm{meas}(\mathscr{J}_p)\geq2c(\mathfrak{s})N^{-(\tau+1)}.
\end{align*}
Now we decompose each $\mathscr{J}_{p}$ as an union of non-overlapping intervals with
\begin{align}\label{E3.7}
\frac{1}{2}c(\mathfrak{s})N^{-(\tau+1)}\leq\mathrm{ meas}(I_q)\leq c(\mathfrak{s})N^{-(\tau+1)}.
\end{align}
Thus we get
\begin{align*}
\mathscr{B}_{N}^{0}(j_0)\subset\bigcup_{q=1,\cdots,q_0}I_{q}\subset\mathscr{B}_{2,N}^{0}(j_0),
\end{align*}
where $I_q$ satisfies \eqref{E3.7}. Since $I_q$ does not overlap, we deduce
\begin{align*}
\frac{1}{2}c(\mathfrak{s})N^{-(\tau+1)}q_0\leq\sum\limits_{q=1}^{q_0}\mathrm{meas}(I_q)\leq \mathrm{meas}(\mathscr{B}_{2,N}^{0}(j_0))=:\mathfrak{h},
\end{align*}
which gives $q_0\leq \mathfrak{C}\mathfrak{h}N^{\tau+1}$.
\end{proof}
Define
\begin{align*}
\tilde{\mathscr{B}}_{2,N}^{0}(j_0):=\tilde{\mathscr{B}}_{2,N}^{0}(j_{0};\epsilon,u):=\left\{(\lambda,\theta)\in
\Lambda\times\mathbf{R}:\|\mathcal{A}_{N,j_{0}}^{-1}(\epsilon,\lambda,u,\theta)\|_{0}>{N^{\tau}}/{2}\right\}.
\end{align*}
\begin{lemm}\label{lemma14}
$\forall j_{0}\in\Gamma_{+}(\boldsymbol M)$, with $|j_{0}|\leq\frac{b_1+3}{b_{1}}N$, $\forall\lambda\in\Lambda$, there exists some constant $\mathfrak{C}_0>0$ such that
\begin{align}\label{E3.8}
\mathrm{meas}(\tilde{\mathscr{B}}_{2,N}^{0}(j_0))\leq\mathfrak{C}_0N^{-\tau+\nu+d+2}.
\end{align}
\end{lemm}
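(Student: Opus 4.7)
The strategy is to localize $\tilde{\mathscr{B}}_{2,N}^0(j_0)$ via Lemma \ref{lemma13} and then bound the resulting two-dimensional Lebesgue measure by combining a Fubini argument, the monotonicity Lemma \ref{lemma12} in the region where $|\theta|$ is large, and a gradient-of-eigenvalue estimate in the region where $|\theta|$ is small. Since $\mathrm{meas}(\Lambda)=1$, Lemma \ref{lemma13} gives $\tilde{\mathscr{B}}_{2,N}^0(j_0)\subseteq\Lambda\times[-\mathfrak{s}N^2,\mathfrak{s}N^2]$. Split the $\theta$-range into an outer region $\{|\theta|\geq 2N\}$ and an inner region $\{|\theta|\leq 2N\}$, and set $M(\theta):=-\mathcal{A}_{N,j_0}(\epsilon,\lambda,u,\theta)$ acting on the index set $\mathfrak{E}$ with $\sharp\mathfrak{E}\leq CN^{\nu+d}$ (using $\mathfrak{d}_j\leq\|j+\rho\|^{d-r}$). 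A direct computation gives
$$\partial_\theta M=\mathrm{diag}_{|l|\leq N,|j-j_0|\leq N}\,2(\lambda\omega_0\cdot l+\theta)\,\mathrm{I}_{\mathfrak{d}_j}.$$

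On the outer region, $\pm\partial_\theta M\geq N\mathrm{I}$ since $|\lambda\omega_0\cdot l|\leq 3N/2$. Lemma \ref{lemma12} applied with $\mathfrak{a}=2N^{-\tau}$ and $\mathfrak{b}=N$, integrated over $\lambda\in\Lambda$, contributes $\leq CN^{\nu+d-\tau-1}\leq CN^{\nu+d+2-\tau}$ to the total measure.

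The inner region is the main obstacle: $\partial_\theta M$ is not uniformly definite. To handle it, exploit the quadratic structure
$$-\mathcal{A}(\lambda,\theta)=(\Omega+\theta)^2-(\Delta^2+V)+\epsilon\mathcal{T}'',\qquad \Omega:=\mathrm{diag}(\lambda\omega_0\cdot l).$$
For any normalised eigenvector $v_k$ with eigenvalue $\hat{\mu}_k$, the identity
$$\|(\Omega+\theta)v_k\|_{\ell^2}^2=-\hat{\mu}_k+\langle v_k,(\Delta^2+V-\epsilon\mathcal{T}'')v_k\rangle$$
together with \eqref{E1.84} (which yields $\Delta^2+V-\epsilon\mathcal{T}''\geq(\kappa_0/2)\mathrm{I}$ for small $\epsilon$) gives $\|(\Omega+\theta)v_k\|^2\geq\kappa_0/4$ whenever $|\hat{\mu}_k|<2N^{-\tau}$ and $N$ is large. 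Setting $A:=\sum_\mathfrak{n}|v_k(\mathfrak{n})|^2(\lambda\omega_0\cdot l+\theta)$ and $B:=\sum_\mathfrak{n}|v_k(\mathfrak{n})|^2(\lambda\omega_0\cdot l+\theta)(\omega_0\cdot l)$, the Hellmann--Feynman formula yields $\partial_\theta\hat{\mu}_k=-2A$, $\partial_\lambda\hat{\mu}_k=-2B+O(\epsilon)$, and a direct calculation gives $\theta A+\lambda B=\|(\Omega+\theta)v_k\|^2\geq\kappa_0/4$. By Cauchy--Schwarz,
$$|\nabla\hat{\mu}_k|=2\sqrt{A^2+B^2}\geq\frac{\kappa_0/2}{\sqrt{\theta^2+\lambda^2}}\geq\frac{c}{N}$$
on the inner region, where $\sqrt{\theta^2+\lambda^2}\leq 3N$.

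With the gradient bound $|\nabla\hat{\mu}_k|\geq c/N$ in hand, decompose the inner part of $\tilde{\mathscr{B}}_{2,N}^0(j_0)$ as the union of $\{|\partial_\theta\hat{\mu}_k|\geq c/(\sqrt{2}N)\}$ and $\{|\partial_\lambda\hat{\mu}_k|\geq c/(\sqrt{2}N)\}$ (which cover by the gradient estimate). On the former, Fubini in $\theta$ for fixed $\lambda$ gives $\mathrm{meas}\{\theta:|\hat{\mu}_k|<2N^{-\tau}\}\leq CN^{1-\tau}$; integrating over $\lambda$ of unit measure and summing over the $N^{\nu+d}$ eigenvalues yields a contribution $\leq CN^{\nu+d+1-\tau}$. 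On the latter, Fubini in $\lambda$ for fixed $\theta\in[-2N,2N]$ gives $\mathrm{meas}\{\lambda:|\hat{\mu}_k|<2N^{-\tau}\}\leq CN^{1-\tau}$; integrating over $\theta$ of length $4N$ and summing gives $\leq CN^{\nu+d+2-\tau}$. Combining the outer and inner contributions yields $\mathrm{meas}(\tilde{\mathscr{B}}_{2,N}^0(j_0))\leq\mathfrak{C}_0N^{-\tau+\nu+d+2}$. The principal difficulty lies in establishing the gradient lower bound on the inner region, which relies on \eqref{E1.84} via Cauchy--Schwarz; near eigenvalue crossings $\hat{\mu}_k$ is only Lipschitz, which suffices for the Fubini arguments via Rademacher's theorem.
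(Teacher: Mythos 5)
Your outer-region argument (monotonicity of $\theta\mapsto-\mathcal{A}_{N,j_0}(\theta)$ for $|\theta|\geq 2N$ via Lemma \ref{lemma12}) is sound, but the inner region $|\theta|\leq 2N$ — which is exactly the hard part — contains two genuine gaps. First, the gradient lower bound: you write $\partial_\lambda\hat{\mu}_k=-2B+O(\epsilon)$ and then silently drop the $O(\epsilon)$ when asserting $|\nabla\hat{\mu}_k|=2\sqrt{A^2+B^2}\geq c/N$. That correction is $-\epsilon\langle v_k,(\partial_\lambda\mathcal{T}'')v_k\rangle$, and by \eqref{E3.6} (together with the bound $\|\partial_\lambda\mathcal{T}''\|_0\leq C N_0^{\tau_1+s_1+1}\gamma^{-1}$ verified in subsection \ref{sec:3.5}) it is only controlled by a fixed constant independent of $N$; since the lemma must be applied along the whole sequence $N=N_k\to\infty$, this constant eventually dominates $c/N$, and your covering by $\{|\partial_\theta\hat{\mu}_k|\geq c/(\sqrt{2}N)\}\cup\{|\partial_\lambda\hat{\mu}_k|\geq c/(\sqrt{2}N)\}$ breaks down. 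Second, even granting the gradient bound, a two-sided estimate $|\partial_\theta\hat{\mu}_k|\geq b$ on the sublevel set does not yield $\mathrm{meas}\{\theta:|\hat{\mu}_k|<a\}\leq Ca/b$: a function oscillating between $\pm a$ with slope $\pm b$ satisfies the derivative hypothesis on a set of full measure. Lemma \ref{lemma12} relies on the one-signed operator bound $\partial_\eta M\geq\mathfrak{b}\mathrm{I}$, which makes every eigenvalue branch globally monotone, hence one interval per eigenvalue; with only $|\partial\hat{\mu}_k|\geq b$ you would additionally need to bound the number of monotonicity intervals, which you do not do.

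The paper avoids both issues by a change of variables rather than a gradient estimate: setting $\xi=1/\lambda^2$, $\zeta=\theta/\lambda$ and $L(\xi,\zeta)=\lambda^{-2}\mathcal{A}_{N,j_0}(\epsilon,\lambda,u,\theta)$, one gets $\partial_\xi L=\check{P}_{N,j_0}(\Delta^2+V)_{|\check{H}_{N,j_0}}-\epsilon\mathcal{T}''-\tfrac{\epsilon}{2}\xi^{-1/2}\partial_\lambda\mathcal{T}''\geq\kappa_0/2$ by \eqref{E1.84} and \eqref{E3.6} — an $O(1)$, operator-level, one-signed monotonicity that absorbs the $\epsilon$-corrections uniformly in $N$. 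Lemma \ref{lemma12} applied in $\xi$ for each fixed $\zeta$, followed by integration over $\zeta\in[-2\mathfrak{s}N^2,2\mathfrak{s}N^2]$ (where Lemma \ref{lemma13} supplies the localization and hence the factor $N^2$), gives \eqref{E3.8}. If you wish to keep your structure, the fix is to replace the gradient argument on the inner region by a directional derivative along a direction in which the full matrix is uniformly monotone; $\partial_\xi$ above is precisely such a direction.
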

\begin{proof}
Let
\begin{align*}
\xi=\frac{1}{\lambda^{2}},\quad\zeta=\frac{\theta}{\lambda},\quad(\xi,\zeta)\in[{4}/{9},4]\times[-2\mathfrak{s}N^2,2\mathfrak{s}N^2]
\end{align*}
with $\mathfrak{s}=2((2b_1+4)^2+1)(b_2/b_1)^2$. Thus we consider the following self-adjoint matrices
\begin{align*}
L(\xi,\zeta):=\lambda^{-2} \mathcal{A}_{N,j_{0}}(\epsilon,\lambda,u,\theta)
=&\mathrm{diag}_{|l|\leq N,|j-j_{0}|\leq N}\left((-({\omega}_0\cdot l+\zeta)^{2})\mathrm{I}_{\mathfrak{d}_{j}}\right)\\
&+\xi\check{P}_{N_0,j_0}(\Delta^2+V(\boldsymbol x))_{|\check{H}_{N_0,j_0}}-\epsilon\xi \mathcal{T}''(\epsilon,{1}/{\sqrt{\xi}}).
\end{align*}
Formula \eqref{E1.84} implies
$\check{P}_{N_0,j_0}(\Delta^2+V(\boldsymbol x))_{|\check{H}_{N_0,j_0}}\geq \kappa_0\mathrm{I}$, which leads to
\begin{align*}
\partial_{\xi}L(\xi,\zeta)&=\check{P}_{N_0,j_0}(\Delta^2+V(\boldsymbol x))_{|\check{H}_{N_0,j_0}}-\epsilon\mathcal{T}''-\frac{\epsilon}{2}\xi^{-1/2}
\partial_{\lambda}\mathcal{T}''\stackrel{\eqref{E3.6}}{\geq}{\kappa_0}/{2}.
\end{align*}
Combining this with Lemma \ref{lemma12} for $\mathfrak{a}=8N^{-\tau},\mathfrak{b}=\kappa_0/2,\sharp\mathfrak{E}\leq \mathcal{C}N^{\nu+d}$, for each $\zeta\in[-2\mathfrak{s}N^2,2\mathfrak{s}N^2]$, we obtain
\begin{align*}
\mathrm{meas}(\{\xi\in[{4}/{9},4]:\|L^{-1}(\xi,\zeta)\|_{0}>{N^{\tau}}/{8}\})\leq 2\mathcal{C}N^{\nu+d}8N^{-\tau}{2}/{\kappa_0}\leq \mathcal{C}'N^{-\tau+\nu+d}.
\end{align*}
Integrating in $\zeta\in[-2\mathfrak{s}N^2,2\mathfrak{s}N^2]$ yields $\|L^{-1}(\xi,\zeta)\|_{0}\leq N^{\tau}/{8}$
except for $\xi$ in a set of measure $O(N^{-\tau+\nu+d+2})$. In addition
\begin{align*}
\left|\det\left(\frac{\partial(\xi,\zeta)}{\partial(\lambda,\theta)}\right)\right|=\frac{1}{\lambda^4}\geq\frac{1}{6}.
\end{align*}
Combining this with $\mathcal{A}_{N,j_{0}}(\epsilon,\lambda,u,\theta)=\lambda^{2}L(\xi,\eta)$ yields
\begin{align*}
\|\mathcal{A}_{N,j_{0}}^{-1}(\lambda,\epsilon,u,\theta)\|_{0}\leq\lambda^{-2}\|L^{-1}(\xi,\zeta)\|_{0}\leq\lambda^{-2}N^{\tau/8}\leq N^{\tau}/2,
\end{align*}
for all $(\lambda,\theta)\in\Lambda\times\mathbf{R}$ except for $\lambda$ in a set of measure $O(N^{-\tau+\nu+d+2})$.
\end{proof}
Define
\begin{align}\label{E3.9}
\mathscr{U}_{N}(u):=\left\{\|\mathcal{A}^{-1}_{N}(\epsilon,\lambda,u)\|_0\leq N^{\tau}\right\},
\end{align}
where $\mathcal{A}^{-1}(\epsilon,\lambda,u)$, $\mathcal{A}^{-1}_{N}(\epsilon,\lambda,u)$ are defined in \eqref{E1.72}, \eqref{E2.51} respectively. A similar computation as the proof of  Lemma \ref{lemma14} verifies
\begin{align}\label{E3.10}
\mathrm{meas}(\Lambda\setminus\mathscr{U}_{N})\leq N^{-\tau+\nu+d+3}\leq N^{-1}
\end{align}
for $\tau\geq\nu+d+4$. Moreover define the following set
\begin{align}\label{E3.11}
\mathscr{C}_{N}(j_{0})=\left\{\lambda\in\Lambda: \mathrm{meas}({\mathscr{B}}_{2,N}^{0}(j_0))\geq{\mathfrak{C}^{-1}}N^{-\tau+\nu+d+r+3}\right\},
\end{align}
where $\mathfrak{C}$ is given in Lemma \ref{lemma15}.
\begin{lemm}\label{lemma17}
$\forall j_{0}\in\Gamma_{+}(\boldsymbol M)$, with $|j_{0}|\leq\frac{b_1+3}{b_{1}}N$, $\forall\lambda\in\Lambda$, one has $\mathrm{meas}(\mathscr{C}_{N}(j_{0}))=O(N^{-r-1})$.
\end{lemm}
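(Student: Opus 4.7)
The plan is a textbook Fubini plus Chebyshev (Markov) argument, exploiting the fact that $\mathscr{B}_{2,N}^{0}(j_0;\epsilon,\lambda,u)$ is precisely the $\lambda$-slice of the two-dimensional ``bad'' set $\tilde{\mathscr{B}}_{2,N}^{0}(j_0;\epsilon,u)\subset\Lambda\times\mathbf{R}$, for which Lemma \ref{lemma14} already furnishes a total measure bound. The set $\mathscr{C}_{N}(j_0)$ in \eqref{E3.11} records the ``bad'' parameters $\lambda$ whose slice is too fat, and Chebyshev converts the integrated bound into a measure bound on such $\lambda$.

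First, I would observe that by the definitions of $\tilde{\mathscr{B}}_{2,N}^{0}$ and $\mathscr{B}_{2,N}^{0}$,
\begin{equation*}
\tilde{\mathscr{B}}_{2,N}^{0}(j_0)\cap(\{\lambda\}\times\mathbf{R})=\{\lambda\}\times \mathscr{B}_{2,N}^{0}(j_0;\epsilon,\lambda,u),
\end{equation*}
so Fubini's theorem yields
\begin{equation*}
\int_{\Lambda}\mathrm{meas}(\mathscr{B}_{2,N}^{0}(j_0;\epsilon,\lambda,u))\,\mathrm{d}\lambda=\mathrm{meas}(\tilde{\mathscr{B}}_{2,N}^{0}(j_0))\leq \mathfrak{C}_{0}N^{-\tau+\nu+d+2},
\end{equation*}
where the last inequality is exactly \eqref{E3.8} of Lemma \ref{lemma14}.

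Next, by the Chebyshev/Markov inequality applied to the nonnegative function $\lambda\mapsto \mathrm{meas}(\mathscr{B}_{2,N}^{0}(j_0;\epsilon,\lambda,u))$ with threshold $t=\mathfrak{C}^{-1}N^{-\tau+\nu+d+r+3}$, I get
\begin{equation*}
\mathrm{meas}(\mathscr{C}_{N}(j_0))\leq \frac{1}{t}\int_{\Lambda}\mathrm{meas}(\mathscr{B}_{2,N}^{0}(j_0;\epsilon,\lambda,u))\,\mathrm{d}\lambda\leq \mathfrak{C}\mathfrak{C}_{0}N^{-r-1},
\end{equation*}
which is the desired $O(N^{-r-1})$ bound. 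The exponent match is immediate: the Fubini bound carries $N^{-\tau+\nu+d+2}$ and the threshold in the definition \eqref{E3.11} of $\mathscr{C}_{N}(j_0)$ carries $N^{-\tau+\nu+d+r+3}$; dividing leaves exactly $N^{-(r+1)}$.

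There is no real obstacle here, the only thing to verify with care is the measurability needed for Fubini (the mapping $(\lambda,\theta)\mapsto\|\mathcal{A}_{N,j_0}^{-1}(\epsilon,\lambda,u,\theta)\|_{0}$ is continuous wherever finite, so the sublevel sets are Borel, which is standard). The substantive work has already been done in Lemma \ref{lemma14}, where the monotonicity $\partial_{\xi}L(\xi,\zeta)\geq\kappa_{0}/2$ combined with Lemma \ref{lemma12} was used to integrate out the $\zeta$-variable; here one simply reinterprets that same total measure estimate slicewise in $\lambda$ to obtain the complexity bound on the set of $\lambda$ with fat bad sets.
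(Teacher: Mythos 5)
Your proposal is correct and is essentially identical to the paper's own proof: the paper likewise writes $\mathrm{meas}(\tilde{\mathscr{B}}_{2,N}^{0}(j_0))$ as the $\lambda$-integral of $\mathrm{meas}(\mathscr{B}_{2,N}^{0}(j_0))$ via Fubini, bounds it by Lemma \ref{lemma14}, and applies the Chebyshev/Markov inequality at the threshold $\mathfrak{x}=\mathfrak{C}^{-1}N^{-\tau+\nu+d+r+3}$ to get $\mathfrak{C}_0\mathfrak{C}N^{-r-1}$. The exponent bookkeeping in your argument matches the paper's exactly.
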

\begin{proof}
Fubini Theorem yeilds that, for $\mathfrak{x}=\mathfrak{C}^{-1}N^{-\tau+\nu+d+r+3}$,
\begin{align*}
\mathrm{meas}(\tilde{\mathscr{B}}_{2,N}^{0}(j_0))=&\int_{\frac{1}{2}}^{\frac{3}{2}}\mathrm{d}\lambda~
\mathrm{meas}(\mathscr{B}_{2,N}^{0}(j_0))\\
\geq &\mathfrak{x}~\mathrm{meas}\{\lambda\in\Lambda:\mathrm{meas}(\mathscr{B}_{2,N}^{0}(j_0))\geq\mathfrak{x}\},
\end{align*}
which leads to
\begin{align*}
\mathrm{meas}\left\{\lambda\in\Lambda:\mathrm{meas}(\mathscr{B}_{2,N}^{0}(j_0))\geq\mathfrak{x}\right\}\leq\frac{1}{\mathfrak{x}}
\mathrm{meas}(\tilde{\mathscr{B}}_{2,N}^{0}(j_0))
\stackrel{\eqref{E3.8}}{\leq}\frac{1}{\mathfrak{x}}\mathfrak{C}_0N^{-\tau+\nu+d+2}
={\mathfrak{C}_0}{\mathfrak{C}}{N^{-r-1}}.
\end{align*}
\end{proof}
\begin{prop}\label{pro2}
Assume  \eqref{E3.6} holds. There exists some constant ${\mathfrak{C}}_1>0$ such that the set $\Lambda\setminus\mathcal {G}_{N}^{0}$ has measure
\begin{align}\label{E3.77}
\mathrm{meas}(\Lambda\setminus\mathscr{G}_{N}^{0}(u))\leq{\mathfrak{C}}_1N^{-1},
\end{align}
where $\Lambda,\mathscr{G}_{N}^{0}(u)$ are defined in \eqref{E1.8}, \eqref{E3.13} respectively.
\end{prop}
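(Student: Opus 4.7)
The plan is to decompose the bad parameter set according to the ``spatial center'' $j_0\in\Gamma_+(\boldsymbol M)$ and handle two regimes of $|j_0|$ separately. Fix $u$ with $\|u\|_{s_1}\le1$. Recalling the definition \eqref{E3.13} of $\mathscr{G}_N^0(u)$, a parameter $\lambda\in\Lambda$ lies in $\Lambda\setminus\mathscr{G}_N^0(u)$ if and only if there is some $j_0\in\Gamma_+(\boldsymbol M)$ for which $\mathscr{B}_N^0(j_0)$ fails to admit a covering by $N^{\nu+d+r+5}$ intervals of length $\le N^{-\tau}$. First, for $|j_0|>\tfrac{b_1+3}{b_1}N$, Lemma \ref{lemma16} already produces unconditionally (for every $\lambda\in\Lambda$) a covering of $\mathscr{B}_N^0(j_0)$ by $N^{\nu+d+2}\le N^{\nu+d+r+5}$ intervals of measure $\le N^{-\tau}$, so no $\lambda\in\Lambda$ is excluded by such $j_0$. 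Hence
\[
\Lambda\setminus\mathscr{G}_N^0(u)\;\subseteq\;\bigcup_{|j_0|\le\frac{b_1+3}{b_1}N}\big\{\lambda\in\Lambda:\mathscr{B}_N^0(j_0)\text{ is not covered by }N^{\nu+d+r+5}\text{ intervals of length }N^{-\tau}\big\}.
\]

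For the remaining range $|j_0|\le\tfrac{b_1+3}{b_1}N$, I would use Lemma \ref{lemma15} to transfer the covering problem for $\mathscr{B}_N^0(j_0)$ into a one-dimensional measure bound on the slightly enlarged set $\mathscr{B}_{2,N}^0(j_0)$. Setting $\mathfrak h:=\mathrm{meas}(\mathscr{B}_{2,N}^0(j_0))$, Lemma \ref{lemma15} yields a covering of $\mathscr{B}_N^0(j_0)$ by at most $\mathfrak{C}\mathfrak{h}N^{\tau+1}$ intervals, each of length $\le N^{-\tau}$. If $\lambda\notin\mathscr{C}_N(j_0)$ (recall \eqref{E3.11}), that is $\mathfrak h<\mathfrak{C}^{-1}N^{-\tau+\nu+d+r+3}$, then $\mathfrak{C}\mathfrak h N^{\tau+1}<N^{\nu+d+r+4}\le N^{\nu+d+r+5}$, so $\lambda$ satisfies the defining requirement for $j_0$. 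Consequently
\[
\Lambda\setminus\mathscr{G}_N^0(u)\;\subseteq\;\bigcup_{|j_0|\le\frac{b_1+3}{b_1}N}\mathscr{C}_N(j_0).
\]

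It remains to estimate the measure of this union. The set of $j_0\in\Gamma_+(\boldsymbol M)$ with $|j_0|\le\tfrac{b_1+3}{b_1}N$ has cardinality at most $\mathcal{C}(b_1)N^r$ because $\Gamma_+(\boldsymbol M)\subset\Gamma$ is generated by the $r$ fundamental weights $\mathbf w_1,\dots,\mathbf w_r$. By Lemma \ref{lemma17}, each individual set $\mathscr{C}_N(j_0)$ has measure $O(N^{-r-1})$. Summing the bound over the $O(N^r)$ admissible $j_0$ produces
\[
\mathrm{meas}\big(\Lambda\setminus\mathscr{G}_N^0(u)\big)\;\le\;\sum_{|j_0|\le\frac{b_1+3}{b_1}N}\mathrm{meas}\big(\mathscr{C}_N(j_0)\big)\;\le\;\mathcal{C}(b_1)N^r\cdot O(N^{-r-1})\;=\;\mathfrak{C}_1N^{-1},
\]
which is the desired estimate \eqref{E3.77}.

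The key subtlety—and the main place where one must be careful—is the bookkeeping of exponents: the threshold $\mathfrak{C}^{-1}N^{-\tau+\nu+d+r+3}$ built into the definition of $\mathscr{C}_N(j_0)$ is precisely calibrated so that, via Lemma \ref{lemma15}, excluded $\lambda$ produce at most $N^{\nu+d+r+4}$ covering intervals (giving some slack with respect to $N^{\nu+d+r+5}$), while the Fubini argument of Lemma \ref{lemma17} still yields $\mathrm{meas}(\mathscr{C}_N(j_0))=O(N^{-r-1})$. Everything else in the proof is essentially counting: the factor $N^r$ gained from the lattice count of admissible $j_0$'s is exactly cancelled by the $N^{-r}$ in the pointwise measure bound, leaving the advertised $N^{-1}$ saving.
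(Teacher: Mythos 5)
Your proposal is correct and follows essentially the same route as the paper: Lemma \ref{lemma16} disposes of the range $|j_0|>\tfrac{b_1+3}{b_1}N$ unconditionally, Lemma \ref{lemma15} reduces the remaining range to the exclusion of $\mathscr{C}_N(j_0)$, and Lemma \ref{lemma17} combined with the $O(N^r)$ lattice count of admissible $j_0$ gives the $O(N^{-1})$ bound. Your write-up is in fact slightly more explicit than the paper's about the cardinality count of the $j_0$'s, but the argument is the same.
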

\begin{proof}
By definition \eqref{E3.11}, $\forall\lambda\notin\mathscr{C}_{N}(j_{0})$, we have
\begin{align*}
\mathrm{meas}(\mathscr{B}_{2,N}^{0}(j_0))\leq \mathfrak{C}^{-1}N^{-\tau+\nu+d+r+3}.
\end{align*}
Combining this with Lemma \ref{lemma15} deduces that, $\forall\lambda\notin\mathscr{C}_{N}(j_{0})$, $\forall j_{0}\in\Lambda_{+}(\boldsymbol M)$ with $|j_{0}|\leq\frac{b_1+3}{b_{1}}N$,
\begin{align}\label{E3.12}
\mathscr{B}_{N}^{0}(j_0)\subset\bigcup_{q=1}^{\mathfrak{{C}}\mathfrak{h}N^{\tau+1}}I_{q}
 \subset\bigcup_{q=1}^{N^{\nu+d+r+4}}I_{q}\quad\text{with}\quad\mathrm{meas}({I_q})\leq N^{-\tau}.
\end{align}
It follows from \eqref{E3.12} and Lemma \ref{lemma16} that, $\forall\lambda\notin\mathscr{C}_{N}(j_{0})$, $\forall j_{0}\in\Gamma_{+}(\boldsymbol M)$,
\begin{align*}
\mathscr{B}_{N}^{0}(j_0)\subset\bigcup_{q=1}^{N^{\nu+d+r+5}}I_{q},~\text{where}~I_{q}=I_q(j_0)~\text{~are~intervals~with}~\mathrm{meas}({I_q})\leq N^{-\tau}.
\end{align*}
Hence, for all $ j_{0}\in\Gamma_{+}(\boldsymbol M)$ with $|j_{0}|\leq\frac{b_1+3}{b_{1}}N$, we have
\begin{align*}
\Lambda\setminus\mathscr{G}_{N}^{0}(u)\subset\bigcup_{|j_{0}|\leq{(b_1+3)}{b^{-1}_{1}}N}\mathscr{C}_{N}(j_{0}).
\end{align*}
Applying Lemma \ref{lemma17} yields
\begin{align*}
\mathrm{ meas}(\Lambda\setminus\mathscr {G}_{N}^{0}(u))\leq\sum_{|j_{0}|\leq{(b_1+3)}{b^{-1}_{1}}N}\mathrm{meas}(\mathscr{C}_{N}(j_{0}))=O(N^{-1}).
\end{align*}
\end{proof}

\subsection{Nash-Moser iteration}\label{sec:3.4}
We first give the following iterative theorem. From now on, we fix
\begin{align}
&\delta:={1}/{4}, \quad\tau_1:=3\nu+d+1,\quad\chi_0:=3C_1+9, \label{E3.22}\\
&\tau:=\max\{\tau_1+3,2\chi_0\nu+1\}=\max\{3\nu+d+4,2\chi_0\nu+1\},\quad\tau_2:=3\tau+2(\nu+r)+(\nu+d),\label{E3.33}\\
&s_0:=\nu+d,\quad s_1:=12\chi_0(\tau+(\nu+r)+(\nu+d)),\quad s_2:=12\tau_2+8s_1+12,\label{E3.34}\\
&\text{and}\quad \sigma =\tau_2+3\delta s_1+3.\label{E3.42}
\end{align}
\begin{rema}\label{rema1}
Formulae \eqref{E3.22}-\eqref{E3.34} satisfy $\tau\geq\max\{\tau_1+1,\nu+d+4\}$ (see \eqref{E3.10}), assumptions \eqref{E2.44}-\eqref{E2.46} and $\mathrm{(2)}$ in Propositions \ref{pro1}  and  \ref{pro3} respectively. The choice of $\tau_1$ is seen in Lemma \ref{lemma18}. Moreover the choices of $\delta,s_1$ satisfy $\delta s_1\geq\varrho/2$, where $\varrho$ is given in Lemma \ref{lemma11}.
\end{rema}
Setting $\gamma>0$, we restrict $\lambda$ to the set
\begin{align}
\mathscr{U}:=&\left\{\lambda\in\Lambda:\|((-{(\lambda{\omega}_0\cdot l)}^2)\mathrm{I}_{\mathfrak{d}_j}+\check{P}_{N_0,0}(\Delta^2+V(\boldsymbol x))_{|\check{H}_{N_0,0}})^{-1}\|_{L^2_{x}}\leq \gamma^{-1}N^{\tau_1}_0,\quad\forall|l|\leq N_0\right\}\nonumber\\
=&\left\{\lambda\in\Lambda:|-(\lambda{\omega}_0\cdot l)^2+\hat{\lambda}_{j,p}|\geq\gamma N^{-\tau_1}_0,\quad\forall|l|\leq N_0,\forall|j|\leq N_0,1\leq p\leq\mathfrak{d}_j\right\},\label{E1.9}
\end{align}
where $\hat{\lambda}_{j,p},p=1,\cdots,\mathfrak{d}_j$ are eigenvalues of $\check{P}_{N_0,0}(\Delta^2+V(\boldsymbol x))_{|\check{H}_{N_0,0}}$.
\begin{theo}\label{theo1}
There exist $\bar{\mathfrak{c}},\bar{\gamma}$ (depending on $\nu,d,r,V,\gamma_0,\kappa_0$) such that if
\begin{align}\label{E3.15}
N_0\geq16\gamma^{-1},\quad \gamma\in(0,\bar{\gamma}),\quad \epsilon_0N_0^{s_2}\leq \bar{\mathfrak{c}},
\end{align}
then there is a sequence $(u_n)_{n\geq0}$ of $C^1$ maps $u_n:[0,\epsilon_0)\times\Lambda\rightarrow H^{s_1}$ satisfying
\\
$(F1)_{n\geq0}$ $u_n\in H_{N_n}$, $u_{n}(0,\lambda)=0$, $\|u_n\|_{s_1}\leq1$, $\|\partial_{\lambda}u_n\|_{s_1}\leq \bar{C}N_{0}^{\tau_1+s_1+1}\gamma^{-1}$.
\\
$(F2)_{n\geq1}$ $\|u_k-u_{k-1}\|_{s_1}\leq N^{-\sigma-1}_{k}$, $\|\partial_{\lambda}(u_k-u_{k-1})\|_{s_1}\leq N_{k}^{-\frac{1}{2}}$, $\forall 1\leq k\leq n$,.
\\
$(F3)_{n\geq1}$ one has
\begin{align*}
\|u-u_{n-1}\|_{s_1}\leq N_{n}^{-\sigma}\Rightarrow\bigcap_{k=1}^{n}\mathscr{G}_{N_{k}}^{0}(u_{k-1})\subset\mathscr{G}_{N_{n}}(u),
\end{align*}
where $\mathscr{G}^0_{N}(u),\mathscr{G}_{N}(u)$ are defined in \eqref{E3.13}, \eqref{E1.81} respectively.
\\
$(F4)_{n\geq0}$ Set
\begin{align}\label{E3.17}
\mathscr{D}_{n}:=\bigcap_{k=1}^{n}\mathscr{U}_{N_{k}}(u_{k-1})\cap\bigcap_{k=1}^{n}\mathscr{G}_{N_{k}}^{0}(u_{k-1})\cap\mathscr{U},
\end{align}
where $\mathscr{U}_{N}(u),\mathscr{U}$ are defined in \eqref{E3.9}, \eqref{E1.9} respectively. If $\lambda\in \mathcal{B}(\mathscr{D}_{n},N^{-\sigma}_n)$, then $u_{n}(\epsilon,\lambda)$ solves the equation
\begin{equation*}\label{E3.21}
P_{N_n}(L_{\lambda}u-\epsilon F(u))=0.\tag{$P_{N_n}$}
\end{equation*}
\\
$(F5)_{n\geq0}$ Letting $\mathscr{A}_{n}:=\|u_n\|_{s_2}$ and $\mathscr{A}'_{n}:=\|\partial_{\lambda}u_n\|_{s_2}$,  we have
\begin{align*}
(1)~\mathscr{A}_{n}\leq N^{2\tau_2+2\delta s_1+2}_n, \quad\quad\quad\quad (2)~\mathscr{A}'_{n}\leq N_{n}^{4\tau_2+2s_1+6}.
\end{align*}
The sequence $(u_n)_{n\geq0}$ converges in ${s_1}$-norm to a map
\begin{align*}
u(\epsilon,\cdot)\in C^{1}(\Lambda;H^{s_1}) \quad\text{with}\quad u(0,\lambda)=0.
\end{align*}
Moreover
\begin{align}\label{E3.20}
\mathscr{D}_{\epsilon}:=\bigcap_{n\geq0}\mathscr{D}_{n}
\end{align}
is the Cantor-like set, and for all $\lambda\in\mathscr{D}_{\epsilon}$, $u(\epsilon,\lambda)$ is a solution of \eqref{E1.2} with $\omega=\lambda\omega_0$.
\end{theo}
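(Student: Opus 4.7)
The plan is a Nash--Moser/Newton induction on $n$, in the spirit of the scheme used in \cite{berti2015abstract,Berti2012nonlinearity}. For the base case $n=0$ we simply put $u_0\equiv 0$; then $(F1)_0$, $(F4)_0$ (which is vacuous modulo the definition of $\mathscr{D}_0=\mathscr{U}$) and $(F5)_0$ are immediate. For the inductive step, assuming $u_0,\dots,u_{n-1}$ have been built, I would define $u_n:=u_{n-1}+h_n$ by the truncated Newton correction
\begin{align*}
h_n:=-\psi_n(\lambda)\,\mathfrak{L}_{N_n}^{-1}(\epsilon,\lambda,u_{n-1})\,P_{N_n}\bigl(L_\lambda u_{n-1}-\epsilon F(u_{n-1})\bigr),
\end{align*}
where $\psi_n\in C^\infty(\Lambda;[0,1])$ is a cutoff equal to $1$ on $\mathcal{B}(\mathscr{D}_n,N_n^{-\sigma}/2)$ and vanishing outside $\mathcal{B}(\mathscr{D}_n,N_n^{-\sigma})$, and $\mathfrak{L}_{N_n}^{-1}$ is extended to all $\lambda\in\Lambda$ using the analytic-perturbation formula from Lemma \ref{lemma2} once it has been defined on $\mathscr{D}_n$. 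This way $u_n\in H_{N_n}$ is well-defined on all of $\Lambda$, vanishes at $\epsilon=0$, and solves $(P_{N_n})$ on $\mathcal{B}(\mathscr{D}_n,N_n^{-\sigma})$, which is $(F4)_n$.

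The heart of the induction is the inversion step. For $\lambda\in\mathscr{D}_n$, I would verify hypotheses $(\mathrm{A1})$--$(\mathrm{A3})$ of Proposition \ref{pro1} applied to $\mathcal{A}:=\mathcal{A}_{N_n}(\epsilon,\lambda,u_{n-1})$ with $N':=N_n=N_{n-1}^{\chi}$, $\chi=2$ and the choices \eqref{E3.22}--\eqref{E3.34} (which by Remark \ref{rema1} satisfy \eqref{E2.44}--\eqref{E2.46}): assumption $(\mathrm{A1})$ is handed to us by Lemma \ref{lemma11}, $(P2)$ and \eqref{E1.99} together with $\|u_{n-1}\|_{s_1}\le 1$; $(\mathrm{A2})$ is exactly $\lambda\in\mathscr{U}_{N_n}(u_{n-1})$ from the definition \eqref{E3.9} of $\mathscr{D}_n$; and $(\mathrm{A3})$ is provided by Proposition \ref{pro3} once we know that $\lambda$ is $N_{n-1}$-good for $\mathcal{A}(\epsilon,\lambda,u_{n-1})$. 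This last fact is precisely $(F3)_{n-1}$ applied to $u_{n-1}$ (viewing $u_{n-1}$ as a candidate $u$ with $\|u-u_{n-2}\|_{s_1}\leq N_{n-1}^{-\sigma}$ by $(F2)_{n-1}$), which gives $\mathscr{D}_n\subset\mathscr{G}_{N_{n-1}}(u_{n-1})$. Proposition \ref{pro1} then yields the tame bound \eqref{E2.43} for $\mathfrak{L}_{N_n}^{-1}$ in $s$-norm on $[s_0,s_2]$.

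Next I would carry out the quantitative tame estimates on $h_n$. Writing $r_{n-1}:=L_\lambda u_{n-1}-\epsilon F(u_{n-1})$ and combining the Taylor estimate $(P3)$ with the telescoping identity $r_{n-1}=r_{n-2}+\mathfrak{L}(u_{n-2})h_{n-1}+O(\|h_{n-1}\|_{s_1}^2)$ and the fact $P_{N_{n-1}}r_{n-2}=0$, a standard computation gives $\|r_{n-1}\|_{s_1}\lesssim N_{n-1}^{-\alpha}$ and $\|r_{n-1}\|_{s_2}\lesssim N_{n-1}^{\beta}$ for appropriate $\alpha,\beta$; combined with \eqref{E2.43} and the smoothing estimates \eqref{E1.10}--\eqref{E1.11}, \eqref{E1.61}--\eqref{E1.62}, one gets $\|h_n\|_{s_1}\leq N_n^{-\sigma-1}$ and the $s_2$-bound in $(F5)_n(1)$. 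The $C^1$ bounds in $(F1)_n$, $(F2)_n$, $(F5)_n(2)$ follow from differentiating the definition of $h_n$ with respect to $\lambda$: one picks up the terms $(\partial_\lambda\mathfrak{L}_{N_n}^{-1})r_{n-1}$ and $\mathfrak{L}_{N_n}^{-1}(\partial_\lambda r_{n-1})$, each controlled by the already-proved inductive bounds on $u_{n-1}$ and $\partial_\lambda u_{n-1}$, losing the factor $N_n^{\tau_1+s_1+1}\gamma^{-1}$ that originates from differentiating the small-divisor matrix using the definition \eqref{E1.9} of $\mathscr{U}$.

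Finally, to propagate $(F3)$: if $u$ satisfies $\|u-u_{n-1}\|_{s_1}\le N_n^{-\sigma}$, then the matrix $\mathcal{A}(\epsilon,\lambda,u,\theta)$ differs from $\mathcal{A}(\epsilon,\lambda,u_{n-1},\theta)$ by a perturbation whose $L^2$ operator norm is bounded via $(P2)$ by $C\epsilon N_n^{-\sigma}$, so Lemma \ref{lemma2} \eqref{E1.34} and Definition \ref{def7} show that $N_k$-goodness at $u_{k-1}$ for $k\le n$ transfers to $N_n$-goodness at $u$. Then the main obstacle I expect is juggling the many exponents $\tau,\tau_1,\tau_2,\delta,\sigma,s_0,s_1,s_2$ so that at every scale the tame loss $N_n^{\tau_2+\delta s}$ from Proposition \ref{pro1} is dominated by the smoothing gains $N_{n-1}^{s_1-s_0}$ and by the super-exponential decay $N_n=N_{n-1}^2$; the choices in \eqref{E3.22}--\eqref{E3.42} and Remark \ref{rema1} are tailor-made for this, but verifying in detail that $(F2)_n$, $(F5)_n$ close up (particularly the derivative bound, where the factor $\gamma^{-1}N_0^{\tau_1+s_1+1}$ must be reabsorbed into a geometric series) is the delicate bookkeeping step. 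Once $(F1)$--$(F5)$ are established, Cauchy summability in $s_1$ yields a $C^1$ limit $u(\epsilon,\cdot)$, and passing to the limit in $(P_{N_n})$ on $\mathscr{D}_\epsilon:=\bigcap_n\mathscr{D}_n$ gives the desired solution on the Cantor-like set \eqref{E3.20}.
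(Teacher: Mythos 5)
Your overall architecture (induction on $(F1)$--$(F5)$, inversion via Proposition \ref{pro1}, cutoffs in $\lambda$, Cauchy summation in $s_1$) matches the paper, but there are three concrete gaps. First, the base case $u_0\equiv 0$ does not work: equation \eqref{E1.1} is forced, so $F(0)=f(\varphi,\boldsymbol x,0)\neq 0$ in general and $u=0$ does not solve $(P_{N_0})$, so $(F4)_0$ fails. The paper must genuinely solve $(P_{N_0})$, which it does by showing $\mathfrak{L}_{N_0}=P_{N_0}(L_\lambda)_{|H_{N_0}}$ is invertible with $\|\mathfrak{L}_{N_0}^{-1}\|_{s_1}\leq 2c_2^{s_1}\gamma^{-1}N_0^{\tau_1+s_1}$ on $\mathcal{B}(\mathscr{U},2N_0^{-\sigma})$ (this is where the Melnikov set $\mathscr{U}$ of \eqref{E1.9} enters) and running a contraction for $u=\epsilon\mathfrak{L}_{N_0}^{-1}P_{N_0}F(u)$ in the ball $\|u\|_{s_1}\leq N_0^{-\sigma}$; the factor $\bar{C}\gamma^{-1}N_0^{\tau_1+s_1+1}$ in $(F1)_n$ is inherited from $\|\partial_\lambda u_0\|_{s_1}$ at this step, not from differentiating the small divisors at later scales. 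Second, you apply Proposition \ref{pro1} with $N'=N_n$, $N=N_{n-1}$ and $\chi=2$, but the proposition requires $\chi\in[\chi_0,2\chi_0]$ with $\chi_0=3C_1+9\geq 15$, and the separation of bad sites in Proposition \ref{pro3} is likewise established only for $N'=N^{\chi}$ with $\chi\geq\chi_0$. The paper's two-case analysis is not cosmetic: when $2^{n+1}\leq\chi_0$ one takes $N:=\tilde N=[N_{n+1}^{1/\chi_0}]$ and uses Lemma \ref{lemma19} to get $\mathscr{G}_{\tilde N}(u_n)=\Lambda$ for free, and when $2^{n+1}>\chi_0$ one takes $N:=N_p$ with $N_{n+1}=N_p^{\chi}$, $\chi=2^{n+1-p}\in[\chi_0,2\chi_0)$, and uses $(F3)_p$ together with the telescoping bound $\|u_n-u_{p-1}\|_{s_1}\leq N_p^{-\sigma}$ to conclude $\bigcap_{k\le n+1}\mathscr{G}^0_{N_k}(u_{k-1})\subset\mathscr{G}_{N_p}(u_n)$. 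Your "$\lambda$ is $N_{n-1}$-good" is not the hypothesis actually needed.

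Third, the single Newton correction $h_n=-\mathfrak{L}_{N_n}^{-1}P_{N_n}(L_\lambda u_{n-1}-\epsilon F(u_{n-1}))$ only kills the linear part of the residual: by \eqref{E3.38} one is left with $R_{n-1}(h_n)+$ (the tail term $r_{n-1}$ handled by $P_{N_{n-1}}$-solvability), so $u_n$ does not exactly solve $(P_{N_n})$ and $(F4)_n$ as stated fails. The paper instead solves the fixed point problem $h=U_{n}(h)=-\mathfrak{L}_{N_{n}}^{-1}(R_{n-1}(h)+r_{n-1})$ exactly by a contraction in $\mathcal{B}(0,N_{n}^{-\sigma-1})$, which is what makes $(F4)_n$ literally true and lets the limit equation follow by passing to the limit in the exact projected equations. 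Relatedly, your argument for $(F3)$ via Lemma \ref{lemma2} and an $L^2$ perturbation bound is too weak: $N$-goodness in Definition \ref{def1} is an $s$-norm bound $|\mathcal{A}^{-1}|_s\leq N^{\tau_2+\delta s}$ on $[s_0,s_1-\varrho]$, and the paper's claim \eqref{E3.79} upgrades the $L^2$ bound $\|\mathcal{A}^{-1}_{N_{n+1},j_0}(u_n,\theta)\|_0\leq N_{n+1}^{\tau}$ to $N_{n+1}$-goodness of $\mathcal{A}_{N_{n+1},j_0}(u,\theta)$ only by re-running the full multiscale Proposition \ref{pro1} at $u_n$ and then perturbing in the $|\cdot|_{s_1-\varrho}$ norm; a pure operator-norm perturbation does not give the tame $s$-dependence required by Definition \ref{def1}.
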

\begin{rema}
The case $(F2)_{n=0}$ for $u_{-1}:=0$ is seen as $(F1)_{0}$.
\end{rema}
Step1: Initialization. Let us check that $(F1)_{0},(F4)_{0},(F5)_{0}$ hold.
In the first step of iteration, the equation $(P_{N_0})$ is written as the following form
\begin{align}\label{E1.12}
\mathfrak{L}_{N_0}u=\epsilon P_{N_0}F(u)\quad\text{with}\quad\mathfrak{L}_{N_0}:=P_{N_0}(L_\lambda)_{|{H}_{N_0}},\quad \forall u\in{H}_{N_0},
\end{align}
where $L_\lambda$ is given by \eqref{E1.75}.
\begin{lemm}\label{lemma4}
For all $\lambda\in\mathcal{B}(\mathscr{U},2N^{-\sigma}_0)$, the operator $\mathfrak{L}_{N_0}$ is invertible with
\begin{align}\label{E1.13}
\|\mathfrak{L}^{-1}_{N_0}\|_{s_1}\leq2c^{s_1}_2\gamma^{-1}N^{\tau_1+s_1}_0.
\end{align}
\end{lemm}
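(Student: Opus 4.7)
The plan is to exploit the fact that in the Fourier basis $e^{\mathrm{i}l\cdot\varphi}$, the unperturbed operator $L_\lambda$ (see \eqref{E1.75}) is block-diagonal in $l$, so that
\begin{align*}
\mathfrak{L}_{N_0}=\bigoplus_{|l|\leq N_0}\Bigl[(-(\lambda\omega_0\cdot l)^2)\mathrm{I}+\check{P}_{N_0,0}(\Delta^2+V(\boldsymbol x))_{|\check{H}_{N_0,0}}\Bigr].
\end{align*}
The eigenvalues of each block are simply $-(\lambda\omega_0\cdot l)^2+\hat{\lambda}_{j,p}$ where $\hat{\lambda}_{j,p}$ are the eigenvalues of $\check{P}_{N_0,0}(\Delta^2+V)_{|\check{H}_{N_0,0}}$, so the block is self-adjoint and the $L^2$-operator norm of its inverse is the reciprocal of the smallest eigenvalue modulus. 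Hence, for $\lambda\in\mathscr{U}$, the second formulation of \eqref{E1.9} gives immediately that each block-inverse has $L^2$-norm at most $\gamma^{-1}N_0^{\tau_1}$, and hence so does $\mathfrak{L}_{N_0}^{-1}$.

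To extend this estimate from $\mathscr{U}$ to its $2N_0^{-\sigma}$-neighborhood, I would use a direct perturbation: if $\lambda'=\lambda+\Delta$ with $\lambda\in\mathscr{U}$ and $|\Delta|\leq 2N_0^{-\sigma}$, then
\begin{align*}
\bigl|(\lambda'\omega_0\cdot l)^2-(\lambda\omega_0\cdot l)^2\bigr|\leq(2|\lambda||\Delta|+|\Delta|^2)(\omega_0\cdot l)^2\leq C(\nu)N_0^{-\sigma+2},\qquad |l|\leq N_0,
\end{align*}
using $\lambda,\lambda'\in[0,2]$ and $|\omega_0\cdot l|\leq \nu|\omega_0||l|\leq\nu N_0$. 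Since by \eqref{E3.42} and \eqref{E3.33}--\eqref{E3.34} one has $\sigma-\tau_1-2\gg 0$, for $N_0\geq 16\gamma^{-1}$ this perturbation is $\leq(\gamma/2)N_0^{-\tau_1}$, so by \eqref{E3.1} the eigenvalues of each block satisfy
\begin{align*}
\bigl|-(\lambda'\omega_0\cdot l)^2+\hat{\lambda}_{j,p}\bigr|\geq\gamma N_0^{-\tau_1}-(\gamma/2)N_0^{-\tau_1}=(\gamma/2)N_0^{-\tau_1},
\end{align*}
giving $\|\mathfrak{L}_{N_0}^{-1}\|_0\leq 2\gamma^{-1}N_0^{\tau_1}$ for all $\lambda'\in\mathcal{B}(\mathscr{U},2N_0^{-\sigma})$.

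Finally, to pass from the $L^2$-bound to the $H^{s_1}$-bound, I would use the fact that for any $v\in H_{N_0}$, every Fourier weight satisfies $\langle w_{\mathfrak n}\rangle\leq c_2 N_0$ by \eqref{E1.4}, so that $\|v\|_{s_1}\leq(c_2 N_0)^{s_1}\|v\|_0$ and, conversely, $\|v\|_0\leq\|v\|_{s_1}$. Applying this to $v=\mathfrak{L}_{N_0}^{-1}u$ (which lies in $H_{N_0}$) yields
\begin{align*}
\|\mathfrak{L}_{N_0}^{-1}u\|_{s_1}\leq(c_2N_0)^{s_1}\|\mathfrak{L}_{N_0}^{-1}u\|_0\leq 2c_2^{s_1}\gamma^{-1}N_0^{\tau_1+s_1}\|u\|_0\leq 2c_2^{s_1}\gamma^{-1}N_0^{\tau_1+s_1}\|u\|_{s_1},
\end{align*}
which is precisely \eqref{E1.13}. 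The only real subtlety in the argument is the bookkeeping for the perturbation step, i.e.\ checking that the exponents chosen in \eqref{E3.22}--\eqref{E3.42} do give $\sigma-\tau_1-2>0$ with enough room so that the smallness condition $6\nu^2 N_0^{-\sigma+2}\leq(\gamma/2)N_0^{-\tau_1}$ is absorbed into the assumption $N_0\geq 16\gamma^{-1}$; everything else is a direct computation in the Fourier basis.
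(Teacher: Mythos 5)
Your proposal is correct and follows essentially the same route as the paper: the paper likewise picks $\lambda_1\in\mathscr{U}$ with $|\lambda_1-\lambda|\leq 2N_0^{-\sigma}$, bounds $|(\lambda_1\omega_0\cdot l)^2-(\lambda\omega_0\cdot l)^2|\leq 8|\omega_0|^2N_0^{-\sigma+2}$ to conclude $|-(\lambda\omega_0\cdot l)^2+\hat\lambda_{j,p}|\geq(\gamma/2)N_0^{-\tau_1}$ using $\sigma\geq\tau_1+3$ and $N_0\geq16\gamma^{-1}$, and then upgrades the resulting $L^2$-bound $\|\mathfrak{L}_{N_0}^{-1}\|_0\leq2\gamma^{-1}N_0^{\tau_1}$ to the $s_1$-bound via the projector smoothing estimate \eqref{E1.10}, which is exactly your weight argument $\langle w_{\mathfrak n}\rangle\leq c_2N_0$ on $H_{N_0}$.
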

\begin{proof}
For all $\lambda\in\mathcal{B}(\mathscr{U},2N^{-\sigma}_0)$, there exists $\lambda_1\in\mathscr{U}$ such that $|\lambda_1-\lambda|\leq2N^{-\sigma}_0$. If $N_0\geq16\gamma^{-1}\geq2$, then it follows \eqref{E1.8} and \eqref{E1.9} that, for all $\sigma\geq\tau_1+3$,
\begin{align}
|-(\lambda{\omega}_0\cdot l)^2+\hat{\lambda}_{j,k}|\geq&\gamma N^{-\tau_1}_0-|(\lambda_1{\omega}_0\cdot l)^2-(\lambda{\omega}_0\cdot l)^2|\nonumber\\
\geq&\gamma N^{-\tau_1}_0-|\lambda_1-\lambda||\lambda_1+\lambda||{\omega}_0|^2|l|^2
\geq\gamma N^{-\tau_1}_0-2N^{-\sigma}_04|{\omega}_0|^2N^2_0\nonumber\\
\geq&\frac{\gamma}{2}N_0^{-\tau_1}.\label{E1.90}
\end{align}
This gives that
$\|\mathfrak{L}^{-1}_{N_0}\|_0\leq2\gamma^{-1}N^{\tau_1}_0$.
Combining this with formula \eqref{E1.10}, we get the conclusion of the lemma.
\end{proof}
%
Then solving equation \eqref{E1.12} is reduced to the fixed point problem $u=U_0(u)$, where
\begin{align}\label{E1.17}
U_0:{H}_{N_0}\rightarrow {H}_{N_0},\quad u\mapsto\epsilon\mathfrak{L}^{-1}_{N_0}P_{N_0}F(u).
\end{align}
\begin{lemm}\label{lemma1}
If $\epsilon\gamma^{-1}N_0^{\tau_1+s_1+\sigma}\leq \mathfrak{c}_0(s_1)$ is small enough, $\forall\lambda\in\mathcal{B}(\mathscr{U},2N^{-\sigma}_0)$, the map $U_0$ is a contraction in $\mathcal{B}(0,\rho_0):=\left\{u\in H_{N_0}:\|u\|_{s_1}\leq\rho_0:=N_0^{-\sigma}\right\}$.
\end{lemm}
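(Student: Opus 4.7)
The plan is a standard Banach fixed-point argument: I would verify that $U_0$ maps the closed ball $\mathcal{B}(0,\rho_0)\subset H_{N_0}$ into itself, and that it is a contraction there. All the necessary ingredients have already been prepared — the invertibility bound from Lemma \ref{lemma4}, the tame estimates for $F$ in $(P2)$, and the smallness of $\rho_0 = N_0^{-\sigma}$.

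For the self-mapping step, I would estimate
\begin{align*}
\|U_0(u)\|_{s_1} \;\leq\; \epsilon\,\|\mathfrak{L}_{N_0}^{-1}\|_{s_1}\,\|P_{N_0}F(u)\|_{s_1}
\;\stackrel{\eqref{E1.13}}{\leq}\; 2c_2^{s_1}\gamma^{-1}N_0^{\tau_1+s_1}\,\epsilon\,\|F(u)\|_{s_1}.
\end{align*}
For $u\in\mathcal{B}(0,\rho_0)$, since $\rho_0 = N_0^{-\sigma}\leq 1$ provided $N_0$ is large (which is guaranteed by $N_0\geq 16\gamma^{-1}$), property \eqref{E1.14} gives $\|F(u)\|_{s_1}\leq C(s_1)(1+\|u\|_{s_1})\leq 2C(s_1)$. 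Consequently
\begin{align*}
\|U_0(u)\|_{s_1} \;\leq\; 4C(s_1)c_2^{s_1}\,\epsilon\gamma^{-1}N_0^{\tau_1+s_1} \;\leq\; N_0^{-\sigma} \;=\; \rho_0,
\end{align*}
where the last inequality holds as soon as $\epsilon\gamma^{-1}N_0^{\tau_1+s_1+\sigma}\leq \mathfrak{c}_0(s_1)$ for a suitable constant $\mathfrak{c}_0(s_1) := (4C(s_1)c_2^{s_1})^{-1}$.

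For the contraction step, given $u,v\in\mathcal{B}(0,\rho_0)$, I would write
\begin{align*}
F(u)-F(v) \;=\; \int_0^1 (\mathrm{D}F)(v+t(u-v))(u-v)\,\mathrm{d}t,
\end{align*}
and apply \eqref{E1.86} at each $t\in[0,1]$, using $\|v+t(u-v)\|_{s_1}\leq\rho_0\leq 1$, to obtain $\|F(u)-F(v)\|_{s_1}\leq 2C(s_1)\|u-v\|_{s_1}$. Combining with Lemma \ref{lemma4} yields
\begin{align*}
\|U_0(u)-U_0(v)\|_{s_1} \;\leq\; 4C(s_1)c_2^{s_1}\,\epsilon\gamma^{-1}N_0^{\tau_1+s_1}\,\|u-v\|_{s_1} \;\leq\; \tfrac{1}{2}\|u-v\|_{s_1},
\end{align*}
upon possibly shrinking $\mathfrak{c}_0(s_1)$ (the condition $\epsilon\gamma^{-1}N_0^{\tau_1+s_1+\sigma}\leq\mathfrak{c}_0(s_1)$ already implies this since $\sigma>0$).

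There is no real obstacle: once Lemma \ref{lemma4} supplies the operator-norm bound on $\mathfrak{L}_{N_0}^{-1}$ and the tame estimates of $(P2)$ control $F$ and $\mathrm{D}F$, the contraction and self-map properties follow by routine estimates. The only bookkeeping item is to make sure the constant $\mathfrak{c}_0(s_1)$ is chosen small enough to absorb the product $4C(s_1)c_2^{s_1}$ in both steps, which is done by taking the minimum of the two thresholds.
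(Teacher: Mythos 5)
Your proposal is correct and follows essentially the same route as the paper: both steps rest on the inverse bound \eqref{E1.13} from Lemma \ref{lemma4} together with the tame estimates \eqref{E1.14} and \eqref{E1.86}, with the smallness condition on $\epsilon\gamma^{-1}N_0^{\tau_1+s_1+\sigma}$ absorbing the constants. The only cosmetic difference is that you verify the Lipschitz bound by integrating $\mathrm{D}F$ along the segment, whereas the paper bounds $\|\mathrm{D}U_0(u)\|_{s_1}\leq 1/2$ directly; these are equivalent.
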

\begin{proof}
If $\epsilon\gamma^{-1}N_0^{\tau_1+s_1+\sigma}\leq \mathfrak{c}_0(s_1)$ is small enough, owing to \eqref{E1.14}, \eqref{E1.13} and $\|u\|_{s_1}\leq1$, then it shows
\begin{align*}
\|U_0(u)\|_{s_1}\leq&2\epsilon c^{s_1}_2\gamma^{-1}N_0^{\tau_1+s_1}\|F(u)\|_{s_1}\leq2\epsilon c^{s_1}_2\gamma^{-1}N_0^{\tau_1+s_1}C(s_1)(1+\|u\|_{s_1})\leq N^{-\sigma}_0.
\end{align*}
In addition, by \eqref{E1.14}, \eqref{E1.13} and $\|u\|_{s_1}\leq1$, we have that, for $\epsilon\gamma^{-1}N_0^{\tau_1+s_1+\sigma}\leq \mathfrak{c}_0(s_1)$ small enough,
\begin{align}\label{E1.18}
\|(\mathrm{D}U_0)(u)\|_{s_1}&=\epsilon\|\mathfrak{L}^{-1}_{N_0}P_{N_0}(\mathrm{D}F)(u)_{|H_{N_0}}\|_{s_1}
\leq2\epsilon c^{s_1}_2\gamma^{-1}N_0^{\tau_1+s_1}\|P_{N_0}(\mathrm{D}{F})(u)_{|H_{N_0}}\|_{s_1}\nonumber\\
&\leq2\epsilon c^{s_1}_2\gamma^{-1}N_0^{\tau_1+s_1}C(s_1)(1+\|u\|_{s_1})\leq1/2.
\end{align}
Thus the map $U_0$ is a contraction in $\mathcal{B}(0,\rho_0)$.
\end{proof}
Denote by $\tilde{u}_0$ the unique solution of equation \eqref{E1.12} in $\mathcal{B}(0,\rho_0)$. The map $U_0$ (see \eqref{E1.17}) has $u=0$ as a fixed point for $\epsilon=0$ . By uniqueness we deduce $\tilde{u}_0(0,\lambda)=0$. The implicit function theorem implies that $\tilde{u}_0(\epsilon,\cdot)\in C^1(\mathcal{B}(\mathscr{U},2N_0^{-\sigma});H_{N_0})$ with
\begin{align}\label{E1.87}
\partial_{\lambda}\tilde{u}_0=-\mathfrak{L}^{-1}_{N_0}(\epsilon,\lambda,\tilde{u}_0)(\partial_\lambda\mathfrak{L}_{N_0})\tilde{u}_0,
\end{align}
where
\begin{align}\label{E1.88}
\mathfrak{L}_{N_0}(\epsilon,\lambda,\tilde{u}_0):=\mathfrak{L}_{N_0}-\epsilon P_{N_0}((\mathrm{D}F)(\tilde{u}_0))_{|H_{N_0}},
\quad\partial_\lambda\mathfrak{L}_{N_0}=P_{N_0}(2\lambda({\omega}_0\cdot\partial_{\varphi})^2)_{|{H}_{N_0}}.
\end{align}
Formulae \eqref{E1.18} and \eqref{E1.13} give that $\mathfrak{L}_{N_0}(\epsilon,\lambda,\tilde{u}_0)$ is invertible with
\begin{align*}
\|\mathfrak{L}^{-1}_{N_0}(\epsilon,\lambda,\tilde{u}_0)\|_{s_1}=\|(\mathrm{I}-\epsilon \mathfrak{L}^{-1}_{N_0}P_{N_0}((\mathrm{D}F)(\tilde{u}_0))_{|H_{N_0}})^{-1}\mathcal{L}^{-1}_{N_0}\|_{s_1}\leq2\|\mathfrak{L}^{-1}_{N_0}\|_{s_1}
\leq4c^{s_1}_2\gamma^{-1}N^{\tau_1+s_1}_0.
\end{align*}
Consequently, applying $\|\tilde{u}_0\|_{s_1+2}\stackrel{\eqref{E1.10}}{\leq} c^2_2N^2_0\|\tilde{u}_0\|_{s_1}\leq c^2_2N^{2-\sigma}_0$, we derive that, for all $\sigma\geq2$,
\begin{align}\label{E1.20}
\|\partial_{\lambda}\tilde{u}_0\|_{s_1}\leq&4c^{s_1}_2\gamma^{-1}N^{\tau_1+s_1}_0\|(\partial_\lambda\mathfrak{L}_{N_0})\tilde{u}_0\|_{s_1}
\leq12c^{s_1}_2|{\omega}_0|^2\gamma^{-1}N^{\tau_1+s_1}_0\|\tilde{u}_0\|_{s_1+2}\nonumber\\
\leq&12c^{s_1+2}_2\gamma^{-1}N^{\tau_1+s_1+2-\sigma}_0\leq12c^{s_1+2}_2\gamma^{-1}N^{\tau_1+s_1}_0.
\end{align}
Define a $C^{\infty}$ cut-off function $\psi_0:\Lambda\rightarrow[0,1]$ as
\begin{equation}\label{E1.19}
\psi_0:=
\begin{cases}
1\quad\quad\text{if}\quad\lambda\in\mathcal{B}(\mathscr{U},N_0^{-\sigma}),\\
0\quad\quad\text{if}\quad\lambda\in\mathcal{B}(\mathscr{U},2N_0^{-\sigma})
\end{cases}
\quad\text{with}\quad
|\partial_{\lambda}\psi_0|\leq12c^{s_1+2}_2N^{\sigma}_0.
\end{equation}
Then ${u}_0:=\psi_0\tilde{u}_0\in C^1(\Lambda;H_{N_0})$. It follows from $\|\tilde{u}_0\|_{s_1}\leq\rho_0$ (see Lemma \ref{lemma1}), \eqref{E1.20}-\eqref{E1.19} that, for all $\sigma\geq2$,
\begin{align}
\|{u}_0\|_{s_1}&\leq|\psi_0|\|\tilde{u}_0\|_{s_1}\leq N^{-\sigma}_0,\label{E104}\\
\|\partial_{\lambda}{u}_0\|_{s_1}&\leq|\partial_{\lambda}\psi_0|\|\tilde{u}_0\|_{s_1}+|\psi_0|\|\partial_{\lambda}\tilde{u}_0\|_{s_1}\leq
12c^{s_1+2}_2N^{\sigma}_0N^{-\sigma}_0+12c^{s_1+2}_2\gamma^{-1}N^{\tau_1+s_1}_0\nonumber\\
&\leq12c^{s_1+2}_2\gamma^{-1}N^{\tau_1+s_1+1}_0\label{E105}.
\end{align}
%
Formula \eqref{E104}-\eqref{E105} show that $(F1)_{0}$ is satisfied.  Next, we verify the property $(F4)_{0}$. Since $\mathscr{D}_{0}=\mathscr{U}$, by the definition of $\psi_0$ (see \eqref{E1.19}), the $u_0(\epsilon,\lambda)$ solves the equation $(P_0)$ for all $\lambda\in \mathcal{B}(\mathscr{D}_{0},2N^{-\sigma}_0)$.

Let us show that the upper bounds of $u_0$, $\partial_{\lambda}u_0$ on ${s_2}$-norm. Applying the equality $u_0=U_0(u_0)$, \eqref{E1.10}, \eqref{E1.17}, \eqref{E1.13}, \eqref{E1.14}, \eqref{E3.15}, \eqref{E3.33} and the inequality $\|u_0\|_{s_1}\leq1$, we derive
\begin{align}\label{E1.91}
\|u_0\|_{s_2}=&\|U_0(u_0)\|_{s_2}\leq c_2^{s_2-s_1}N_0^{s_2-s_1}\|U_0(u_0)\|_{s_1}\leq\epsilon c_2^{s_2}N_0^{s_2-s_1}2\gamma^{-1}N_0^{\tau_1+s_1}C(s_1)(1+\|u_0\|_{s_1})\nonumber\\
\leq& N_0^{\tau_1+2}\leq N_0^{2\tau_2+2\delta s_1+2}.
\end{align}
For all $\lambda\in\mathcal{B}(\mathscr{D}_{0},2N^{-\sigma}_0)$, if $\tau\geq\tau_1+1$ ($N_0\geq2\gamma^{-1}$), then \eqref{E1.90} infers
\begin{align*}
|-(\lambda{\omega}_0\cdot l)^2+\hat{\lambda}_{j,p}|\geq\frac{\gamma}{2}N_0^{-\tau_1}\geq N_{0}^{-\tau},
\end{align*}
which indicates that, for $N=N_0,\theta=0,j_0=0$,
\begin{align}\label{E2.0}
|\mathfrak{L}^{-1}_{N_0}|_s=|(P_{N_0}(L_\lambda)_{|{H}_{N_0}})^{-1}|_{s}\stackrel{\eqref{E1.83},\eqref{E1.77}}{\leq}\frac{1}{2}N^{\tau_2+\delta s}_0,\quad\forall s\in[s_0,s_2].
\end{align}
Consequently, for all $s_1\geq s_0+\varrho$ and $\|u_0\|_{s_1}\leq1$, we verify
\begin{align*}
|\mathfrak{L}_{N_0}^{-1}|_{s_0}|\epsilon P_{N_0}((\mathrm{D}F)(u_0))_{|H_{N_0}} |_{s_0}\stackrel{\eqref{E3.15}}{\leq}\frac{\epsilon}{2}N^{\tau_2+\delta s_0}_0C(s_0)(1+\|u_0\|_{s_0+\varrho})\leq\frac{1}{2}.
\end{align*}
Hence, for all $s_1\geq s_0+\varrho$ and $\varrho/2\leq\delta s_1$, it follows from Lemma \ref{lemma2}, \eqref{E1.22}, \eqref{E1.10}, \eqref{E2.0}, \eqref{E1.14}, \eqref{E3.34}, \eqref{E1.91}, \eqref{E3.15}, that, for all $s\in[s_1,s_2]$,
\begin{align*}
|\mathfrak{L}^{-1}_{N_0}(\epsilon,\lambda,u_0)|_{s}\stackrel{\eqref{E1.92}}{\leq}&C(s)(|\mathfrak{L}_{N_0}^{-1}|_{s}+|\mathfrak{L}_{N_0}^{-1}|^2_{s_0}|\epsilon P_{N_0}((\mathrm{D}F)(u_0))_{|H_{N_0}} |_{s})\\
{\leq}& C(s)(|\mathfrak{L}_{N_0}^{-1}|_{s}+|\mathfrak{L}_{N_0}^{-1}|^2_{s_0}\|\epsilon P_{N_0}((\mathrm{D}F)(u_0))_{|H_{N_0}} \|_{s+\varrho})\\
{\leq}& C(s)(\frac{1}{2}N^{\tau_2+\delta s}_0+\epsilon\frac{1}{4} N^{2(\tau_2+\delta s_0)}_0c^{\varrho}_2N^{\varrho}_0\|P_{N_0}((\mathrm{D}F)(u_0))_{|H_{N_0}} \|_{s})\\
{\leq}&C(s)(\frac{1}{2}N_0^{\tau_2+\delta s}+\epsilon \frac{1}{4}N_0^{2(\tau_2+\delta s_0)}c^{\varrho}_2N^{2\delta s_1}_0C(s_2)(1+\|u_0\|_{s_2}))\\
{\leq}&N_0^{\tau_2+\delta s}.
\end{align*}
Combining this with \eqref{E1.87}, \eqref{E1.25}, \eqref{E1.88}, \eqref{E1.10}, \eqref{E1.91}, \eqref{E3.34}, $\delta=\frac{1}{4}$ and $\|u_0\|_{s_1}\leq1$ yields
\begin{align}\label{E1.105}
\|\partial_{\lambda}u_0\|_{s_2}&\leq C(s_2)\left(|\mathfrak{L}^{-1}_{N_0}(\epsilon,\lambda,u_0)|_{s_1}\|(\partial_\lambda\mathfrak{L}_{N_0})u_0\|_{s_2}
+|\mathfrak{L}^{-1}_{N_0}(\epsilon,\lambda,u_0)|_{s_2}\|(\partial_\lambda\mathfrak{L}_{N_0})u_0\|_{s_1}\right)\nonumber\\
&\leq 3C(s_2)c^2_2\left(N_0^{\tau_2+\delta s_1}N^2_0\|u_0\|_{s_2}+N_{0}^{\tau_2+\delta s_2}N^2_0\|u_0\|_{s_1}\right)\nonumber\\
&\leq N_{0}^{4\tau_2+2s_1+6}.
\end{align}
Formulae \eqref{E1.91} and \eqref{E1.105} give that $(F5)_{0}$ holds.

Step 2: assumption. Assume that we have get a solution $u_{n}\in C^1(\Lambda;H_{N_{n}})$ of \eqref{E3.21} and that properties  $(F1)_{k}$-$(F5)_{k}$ hold for all $k\leq n$.

Step 3: iteration. Our goal is to find a solution $u_{n+1}\in C^1(\Lambda;H_{N_{n+1}})$ of $(P_{N_{n+1}})$ and to prove the statements $(F1)_{n+1}$-$(F5)_{n+1}$. Denote by
\begin{equation*}
\tilde{u}_{n+1}=u_{n}+h\quad\text{with}\quad h\in H_{N_{n+1}}
\end{equation*}
a solution of $(P_{N_{n+1}})$. In addition it follows from \eqref{E3.41}, $\sigma\geq2$ (see \eqref{E3.42}) and $N_0\geq2$ that
\begin{align}\label{E3.68}
\mathcal{B}(\mathscr{D}_{n+1},2N^{-\sigma}_{n+1})\subset\mathcal{B}(\mathscr{D}_{n},2N^{-\sigma}_{n}),
\end{align}
which derives $P_{N_{n}}(L_{\lambda}u_{n}-\epsilon F(u_{n}))=0$ by $(F4)_{n}$. This implies
\begin{align}
P_{N_{n+1}}(L_{\lambda}(u_{n}+h)-\epsilon {F}(u_{n}+h))=&P_{N_{n+1}}(L_{\lambda}u_{n}-\epsilon {F}(u_{n}))+P_{N_{n+1}}(L_{\lambda}h-\epsilon ({F}(u_{n}+h)-{F}(u_{n})))\nonumber\\
=&\mathfrak{L}_{N_{n+1}}(\epsilon,\lambda,u_n)h+R_{n}(h)+r_{n},\label{E3.38}
\end{align}
where $\mathfrak{L}_{N_{n+1}}(\epsilon,\lambda,u_n)$ is defined in \eqref{E3.25} and
\begin{align}
&R_{n}(h):=-\epsilon P_{N_{n+1}}(F(u_n+h)
-{F}(u_n)-(\mathrm{D}F)(u_n)h),\label{E3.39}\\
&r_{n}:=P_{N_{n+1}}P^{\bot}_{N_{n}}(L_{\lambda}u_{n}-\epsilon F(u_{n}))=P_{N_{n+1}}P^{\bot}_{N_{n}}(\bar{V}u_n-\epsilon F(u_n)).\label{E3.40}
\end{align}
Remark that $P_{N_{n+1}}P^{\bot}_{N_{n}}(D_{\lambda}u_n)=0$ according to \eqref{E1.75}.  Our aim is to prove the linearized operators $\mathfrak{L}_{N_{n+1}}(\epsilon,\lambda,u_{n})$ (recall \eqref{E3.25}) is invertible and to give
 the tame estimates of its inverse using Proposition \ref{pro1}. In addition formulae \eqref{E1.73}, \eqref{E1.72} and \eqref{E2.51} deduce that $\mathfrak{L}_{N_{n+1}}(\epsilon,\lambda,u_{n})$ may be represented by the matrix $\mathcal{A}_{N_{n+1}}(\epsilon,\lambda,u_n)$.
We distinguish two cases.

If $2^{n+1}\leq\chi_0$, then there exists $\chi\in[\chi_0,2\chi_0]$ such that
\begin{align*}
N_{n+1}=\tilde{N}^{\chi},\quad\tilde{N}:=[N^{1/\chi_0}_{n+1}]\in(N^{1/\chi}_0,N_0).
\end{align*}

If $2^{n+1}>\chi_0$, then there exists a unique $p\in[0,n]$  such that
\begin{align*}
N_{n+1}={N}^{\chi}_{p},\quad\chi=2^{n+1-p}\in[\chi_0,2\chi_0).
\end{align*}
\begin{lemm}\label{lemma20}
Let $\mathcal{A}(\epsilon,\lambda,u,\theta)$ be defined in \eqref{E1.97}. $\forall \theta\in\mathbf{R},\forall j_0\in \Gamma_{+}(\boldsymbol M),$
$
\forall \lambda\in\bigcap_{k=1}^{n+1}\mathscr{G}^0_{N_{k}}(u_{k-1}),$  the assumption $\mathrm{(A3)}$ of Proposition \ref{pro1} may be applied to $\mathcal{A}_{N_{n+1},j_0}(\epsilon,\lambda,u_n,\theta)$.
\end{lemm}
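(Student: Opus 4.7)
The plan is to apply Proposition \ref{pro3} to the matrix family $\mathcal{A}(\epsilon,\lambda,u_n,\theta)$ at an auxiliary scale $N$ chosen so that $N_{n+1}=N^{\chi}$ with $\chi\in[\chi_0,2\chi_0]$, according to the dichotomy stated immediately before the lemma: $N=\tilde N=[N_{n+1}^{1/\chi_0}]$ when $2^{n+1}\leq\chi_0$, and $N=N_p$ for the unique $p\in[0,n]$ with $N_{n+1}=N_p^\chi$ when $2^{n+1}>\chi_0$. Proposition \ref{pro3} will then produce a partition $\bigcup_\alpha\mathfrak{O}_\alpha$ of the $(\mathcal{A}(u_n,\theta),N)$-weakly-bad sites $(l,j)$ with $|l|\leq N_{n+1}$, $|j-j_0|\leq N_{n+1}$, satisfying $\mathrm{diam}(\mathfrak{O}_\alpha)\leq N^{C_1}$ and $\mathrm{d}(\mathfrak{O}_\alpha,\mathfrak{O}_\beta)>N^2$. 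The contrapositive of Lemma \ref{lemma5} identifies the $(\mathcal{A}_{N_{n+1},j_0}(u_n,\theta),N)$-bad sites in this box with a subset of those weakly-bad sites, so the induced partition is precisely assumption $(\mathrm{A3})$ of Proposition \ref{pro1}. The side condition $\tau>2\chi_0\nu$ required by Proposition \ref{pro3} is built into \eqref{E3.33}, so the only substantive task is to verify the remaining hypothesis: $\lambda$ is $N$-good for $\mathcal{A}(\epsilon,\lambda,u_n,\theta)$.

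When $2^{n+1}\leq\chi_0$, and in the subcase $p=0$ of the second regime, the chosen scale $N$ lies in $[N_0^{1/\chi_0},N_0]$ (with $N_0$ taken large enough so that $\kappa_0^{-1/\tau}<N_0^{1/\chi_0}$). Lemma \ref{lemma19} applied with $u=u_n$ (the bound $\|u_n\|_{s_1}\leq 1$ is $(F1)_n$) then delivers $\mathscr{G}_N(u_n)=\Lambda$ unconditionally. The real content of the lemma therefore concentrates in the regime $2^{n+1}>\chi_0$ with $p\geq 1$, where the hypothesis supplies only $\lambda\in\mathscr{G}_{N_p}^{0}(u_{p-1})$, i.e.\ $N_p$-goodness relative to $u_{p-1}$ rather than $u_n$.

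The plan in this remaining subcase is a perturbative transfer from $u_{p-1}$ to $u_n$. From $(F2)_k$ for $p\leq k\leq n$ combined with the superexponential rule $N_{k+1}=N_k^2$, telescoping yields $\|u_n-u_{p-1}\|_{s_1}\leq 2N_p^{-\sigma-1}$. Combining Lemma \ref{lemma11} (applied to the multiplication operator with symbol $a(\varphi,\boldsymbol x;u_n)-a(\varphi,\boldsymbol x;u_{p-1})$) with the tame estimate $(P2)$ for $\mathrm{D}F$ and $\|u_k\|_{s_1}\leq 1$ converts this into the operator-norm bound $\epsilon\|\mathcal{T}''(u_n)-\mathcal{T}''(u_{p-1})\|_0\leq C\epsilon N_p^{-\sigma-1}$, which by $\sigma=\tau_2+3\delta s_1+3>\tau$ and the smallness $\epsilon_0N_0^{s_2}\leq\bar{\mathfrak{c}}$ of \eqref{E3.15} is $\ll N_p^{-\tau}$. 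A Neumann-series argument via Lemma \ref{lemma2} then yields the set-theoretic inclusion $\mathscr{B}_{N_p}^{0}(j_0;u_n)\subset\mathscr{B}_{2,N_p}^{0}(j_0;u_{p-1})$, and an entirely analogous bound for the doubled-threshold set gives $\mathscr{B}_{2,N_p}^{0}(j_0;u_n)$ comparable in measure to $\mathscr{B}_{2,N_p}^{0}(j_0;u_{p-1})$.

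The main obstacle is promoting this containment to the quantitative covering required by Definition \ref{def7}, since $\lambda\in\mathscr{G}_{N_p}^{0}(u_{p-1})$ directly covers only $\mathscr{B}_{N_p}^{0}(j_0;u_{p-1})$ and not the larger $\mathscr{B}_{2,N_p}^{0}$. The plan is to rerun the Lipschitz-in-$\theta$ argument of Lemma \ref{lemma15} on the family $\mathcal{A}_{N_p,j_0}(u_n,\cdot)$ itself, producing a cover of $\mathscr{B}_{N_p}^{0}(j_0;u_n)$ by at most $\mathfrak{C}\,\mathrm{meas}(\mathscr{B}_{2,N_p}^{0}(j_0;u_n))\,N_p^{\tau+1}$ intervals of length $\leq N_p^{-\tau}$, and then to bound this measure through the perturbation by $\mathrm{meas}(\mathscr{B}_{2,N_p}^{0}(j_0;u_{p-1}))$, which in turn is controlled by Lemma \ref{lemma17}/Proposition \ref{pro2} on the complement of the set $\mathscr{C}_{N_p}(j_0)$ of \eqref{E3.11}. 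Completing the argument requires absorbing the exclusion $\bigcup_{j_0}\mathscr{C}_{N_p}(j_0)$ into the Cantor-like set \eqref{E3.17}--\eqref{E3.20}, whose total measure remains $O(\sum_p N_p^{-1})$ and does not affect the asymptotic fullness $\mathrm{meas}(\mathscr{D}_\epsilon)\to 1$. This finally delivers the required covering, hence $\lambda\in\mathscr{G}_{N_p}(u_n)$, and closes the verification of Proposition \ref{pro3}'s hypothesis.
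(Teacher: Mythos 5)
Your overall skeleton is the same as the paper's: reduce to Proposition \ref{pro3} at the auxiliary scale $N=\tilde N$ or $N=N_p$, use Lemma \ref{lemma5} to identify the $(\mathcal{A}_{N_{n+1},j_0},N)$-bad sites with a subset of the $(\mathcal{A}(u_n,\theta),N)$-weakly-bad sites, invoke Lemma \ref{lemma19} to get $\mathscr{G}_N(u_n)=\Lambda$ when $N\in[N_0^{1/\chi_0},N_0]$, and note that $\tau>2\chi_0\nu$ is built into \eqref{E3.33}. Up to that point the proposal matches the paper.

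The case $p\ge 1$ is where you diverge, and there is a genuine gap. The paper disposes of this case in two lines: telescoping $(F2)_k$ gives $\|u_n-u_{p-1}\|_{s_1}\le N_p^{-\sigma}$, and then the \emph{induction hypothesis} $(F3)_p$ — which is exactly the statement ``$\|u-u_{p-1}\|_{s_1}\le N_p^{-\sigma}\Rightarrow\bigcap_{k=1}^{p}\mathscr{G}^0_{N_k}(u_{k-1})\subset\mathscr{G}_{N_p}(u)$'' — applied with $u=u_n$ yields $\lambda\in\mathscr{G}_{N_p}(u_n)$. You do not use $(F3)_p$ at all and instead try to re-derive the transfer from $u_{p-1}$ to $u_n$ by a measure/covering argument. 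That route fails as written: to cover $\mathscr{B}^0_{N_p}(j_0;u_n)$ via the Lipschitz-in-$\theta$ argument of Lemma \ref{lemma15} you need $\mathrm{meas}(\mathscr{B}^0_{2,N_p}(j_0))$ to be small, which holds only for $\lambda\notin\mathscr{C}_{N_p}(j_0)$; this is \emph{not} implied by the hypothesis $\lambda\in\bigcap_{k=1}^{n+1}\mathscr{G}^0_{N_k}(u_{k-1})$, and ``absorbing the exclusion into $\mathscr{D}_\epsilon$'' amounts to proving a different lemma with a strictly smaller parameter set, which would break the quantifier structure of the iteration (the lemma must hold for every $\lambda$ in the stated intersection, since it feeds Lemma \ref{lemma22} and the claim \eqref{E3.79}).

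The correct repair, which is also how $(F3)$ is actually proved in the paper, avoids measures entirely: the hypothesis $\lambda\in\mathscr{G}^0_{N_p}(u_{p-1})$ already furnishes a cover of $\mathscr{B}^0_{N_p}(j_0;u_{p-1})$ by $N_p^{\nu+d+r+5}$ intervals of length $\le N_p^{-\tau}$, and one only needs the set-theoretic inclusion $\mathscr{B}_{N_p}(j_0;u_n)\subset\mathscr{B}^0_{N_p}(j_0;u_{p-1})$. This follows because $\theta\notin\mathscr{B}^0_{N_p}(j_0;u_{p-1})$ gives $\|\mathcal{A}^{-1}_{N_p,j_0}(u_{p-1},\theta)\|_0\le N_p^{\tau}$, hence $N_p$-goodness in the $s$-norm by Proposition \ref{pro1} (as in the discussion following \eqref{E3.13}), and then Lemma \ref{lemma2} with the perturbation $\epsilon P_{N_p,j_0}\bigl((\mathrm{D}F)(u_n)-(\mathrm{D}F)(u_{p-1})\bigr)$, whose $s_1-\varrho$ norm is $O(N_p^{-\sigma})\ll N_p^{-\tau_2-\delta s_1}$, preserves $N_p$-goodness for $\mathcal{A}_{N_p,j_0}(u_n,\theta)$. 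The doubled-threshold sets $\mathscr{B}^0_{2,N_p}$ and the complexity machinery of Lemmas \ref{lemma13}--\ref{lemma17} are needed only for the measure estimate of Proposition \ref{pro2}, not here.
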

\begin{proof}
Define $\mathfrak{K}_{N_{n+1}}:=\mathfrak{W}_{N_{n+1}}\times\mathfrak{J}_{N_{n+1}}$ with
\begin{align*}
\mathfrak{W}_{N_{n+1}}:=[-N_{n+1},N_{n+1}]^{\nu}\cap\mathbf{Z}^\nu,
\quad\mathfrak{J}_{N_{n+1}}:=\Big\{j_0+\sum\limits_{k=1}^{r}j_{k}\textbf{w}_k:j_k\in[-N_{n+1},N_{n+1}]\Big\}\cap\Gamma_{+}(\boldsymbol M).
\end{align*}

Let us consider the case $2^{n+1}\leq\chi_0$. By Lemma \ref{lemma5}, if the site $\mathfrak{n}=(l,j)\in\mathfrak{K}_{N_{n+1}}$ is ($\mathcal{A}(u_n,\theta),\tilde{N}$)-strongly-good for $\mathcal{A}(\epsilon,\lambda,u_{n},\theta)$ with $|(l,j-j_0)|\leq N_{n+1}$, then it is $(\mathcal{A}_{N_{n+1},j_0}(\epsilon,\lambda,u_{n},\theta),\tilde{N})$-good. This implies
\begin{align}\label{E3.35}
\left\{(\mathcal{A}_{N_{n+1},j_0}(\epsilon,\lambda,u_n,\theta),\tilde{N})\text{-bad~ sites}\right\}\subset\Big\{(&\mathcal{A}(u_n,\theta),\tilde{N})\text{-weakly-bad}~\text{sites~of}\nonumber\\
&\mathcal{A}(\epsilon,\lambda,u_{n},\theta)~\text{with}~|(l,j-j_0)|\leq N_{n+1}\Big\}.
\end{align}
It follows from Lemma \ref{lemma19} and $(F1)_{n}$ that $\mathscr{G}_{\tilde{N}}(u_n)=\Lambda$, which shows that $\lambda\in\cap_{k=1}^{n+1}\mathscr{G}^0_{N_{k}}(u_{k-1})\subset\mathscr{G}_{\tilde{N}}(u_n)$ is $\tilde{N}$-good for $\mathcal{A}_{N_{n+1},j_0}(\epsilon,\lambda,u_n,\theta)$. Combining this with \eqref{E3.33}, \eqref{E3.35} and Proposition \ref{pro3} gives that the assumption $\mathrm{(A3)}$ of Proposition \ref{pro1} applies to $\mathcal{A}_{N_{n+1},j_0}(\epsilon,\lambda,u_n,\theta)$.

If $2^{n+1}>\chi_0$, then a simple discussion as above yields
\begin{align}\label{E3.36}
\left\{(\mathcal{A}_{N_{n+1},j_0}(\epsilon,\lambda,u_n,\theta),{N}_{p})\text{-bad~ sites}\right\}\subset\{(&\mathcal{A}(u_n,\theta),{N}_{p})\text{-weakly-bad}~\text{sites~of}\nonumber\\
&\mathcal{A}(\epsilon,\lambda,u_{n},\theta)~\text{with}~|(l,j-j_0)|\leq N_{n+1}\}.
\end{align}
If the following
\begin{align}\label{E3.37}
\bigcap_{k=1}^{n+1}\mathscr{G}^0_{N_{k}}(u_{k-1})\subset\mathscr{G}_{{N}_{p}}(u_n)
\end{align}
holds, by \eqref{E3.33}, \eqref{E3.36} and Proposition \ref{pro3},  we have that the assumption $\mathrm{(A3)}$ of Proposition \ref{pro1} applies to $\mathcal{A}_{N_{n+1},j_0}(\epsilon,\lambda,u_n,\theta)$.

 Let us verify formula \eqref{E3.37}. In fact, for $p=0$, it follows from Lemma \ref{lemma19} and $(F1)_{n}$ that $\mathscr{G}_{{N}_0}(u_n)=\Lambda$. Hence it is clear that \eqref{E3.37} holds for $p=0$. For $p\geq1$, one has
\begin{align*}
\|u_n-u_{p-1}\|_{s_1}\leq\sum\limits_{k=p}^{n}\|u_{k}-u_{k-1}\|_{s_1}\stackrel{(F2)_{k}}{\leq}\sum\limits_{k=p}^{n}N^{-\sigma-1}_{k}\leq N^{-\sigma}_{p}\sum\limits_{k=p}^{n}N^{-1}_{k}\leq N^{-\sigma}_{p},
\end{align*}
which leads to
\begin{align*}
\bigcap_{k=1}^{n+1}\mathscr{G}^0_{N_{k}}(u_{k-1})\subset\bigcap_{k=1}^{p}\mathscr{G}^0_{N_{k}}(u_{k-1})\stackrel{(F3)_{p}}\subset\mathscr{G}_{{N}_{p}}(u_n).
\end{align*}
\end{proof}
\begin{lemm}\label{lemma22}
For all $\lambda\in\mathcal{B}(\mathscr{D}_{n+1},2N^{-\sigma}_{n+1})$, the operator $\mathfrak{L}_{N_{n+1}}(\epsilon,\lambda,u_n)$ is invertible with
\begin{align}\label{E3.43}
|\mathfrak{L}^{-1}_{N_{n+1}}(\epsilon,\lambda,u_n)|_{s_1}\leq N^{\tau_2+\delta s_1}_{n+1},\quad|\mathfrak{L}^{-1}_{N_{n+1}}(\epsilon,\lambda,u_n)|_{s_2}\leq C''(s_2)N^{\tau_2+\delta s_2}_{n+1}.
\end{align}
\end{lemm}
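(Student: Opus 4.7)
The plan is to derive \eqref{E3.43} by applying Proposition \ref{pro1} to the self-adjoint matrix $\mathcal{A}:=\mathcal{A}_{N_{n+1}}(\epsilon,\lambda,u_n)$ that represents $\mathfrak{L}_{N_{n+1}}(\epsilon,\lambda,u_n)$ in the Fourier basis, with $\mathfrak{A}:=\{(l,j)\in\mathfrak{N}:|l|,|j|\le N_{n+1}\}$, $N':=N_{n+1}$, and (following the dichotomy preceding Lemma \ref{lemma20}) with $N:=\tilde{N}=[N_{n+1}^{1/\chi_0}]$ when $2^{n+1}\le\chi_0$ and $N:=N_p$ (for the unique $p\in[0,n]$ such that $N_{n+1}=N_p^\chi$ with $\chi\in[\chi_0,2\chi_0)$) otherwise. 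Proposition \ref{pro1} will then yield
\begin{equation*}
|\mathcal{A}^{-1}|_s\le\tfrac{1}{4} N_{n+1}^{\tau_2}\bigl(N_{n+1}^{\delta s}+|\mathcal{Q}|_s\bigr),\qquad s\in[s_0,s_2],
\end{equation*}
for $\mathcal{Q}:=\mathcal{A}-\mathrm{Diag}(\mathcal{A})$, and the two inequalities of \eqref{E3.43} will follow by bounding $|\mathcal{Q}|_s$ at $s=s_1,s_2$ and identifying the $s$-norms of $\mathcal{A}^{-1}$ with those of $\mathfrak{L}_{N_{n+1}}^{-1}$.

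For (A1), since $\mathcal{Q}=\mathcal{T}'-\epsilon\mathcal{T}''(u_n)$ represents multiplication by $\bar V(\boldsymbol x)-\epsilon(\partial_u f)(\varphi,\boldsymbol x,u_n)$, Lemma \ref{lemma11} combined with the tame bound \eqref{E1.86}, the estimate \eqref{E1.99}, and $\|u_n\|_{s_1}\le 1$ from $(F1)_n$ gives $|\mathcal{Q}|_{s_1-\varrho}\le C(s_1)(\|\bar V\|_{s_1}+\epsilon C(s_1))\le\Upsilon$ for an $n$-independent $\Upsilon$. For (A2), given $\lambda\in\mathcal{B}(\mathscr{D}_{n+1},2N_{n+1}^{-\sigma})$ I pick $\lambda'\in\mathscr{D}_{n+1}\subset\mathscr{U}_{N_{n+1}}(u_n)$ with $|\lambda-\lambda'|\le 2N_{n+1}^{-\sigma}$; definition \eqref{E3.9} gives $\|\mathcal{A}_{N_{n+1}}^{-1}(\epsilon,\lambda',u_n)\|_0\le N_{n+1}^\tau$, while
\begin{equation*}
\mathcal{A}_{N_{n+1}}(\epsilon,\lambda,u_n)-\mathcal{A}_{N_{n+1}}(\epsilon,\lambda',u_n)=\mathrm{diag}\bigl((\lambda^2-(\lambda')^2)(\omega_0\cdot l)^2\bigr)
\end{equation*}
has operator norm $O(N_{n+1}^{2-\sigma})$. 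The choice \eqref{E3.42} together with $\tau_2>\tau$ enforces $\sigma-2>\tau$, so Lemma \ref{lemma2} \eqref{E1.34} yields $\|\mathcal{A}^{-1}\|_0\le 2N_{n+1}^\tau\le N_{n+1}^\tau$ after absorbing the factor $2$ by enlarging $N_0$. Assumption (A3) is precisely Lemma \ref{lemma20} applied at $j_0=0,\theta=0$, using $\mathscr{D}_{n+1}\subset\bigcap_{k=1}^{n+1}\mathscr{G}_{N_k}^0(u_{k-1})$ by \eqref{E3.17}.

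To close the $s_1$-estimate I invoke Lemma \ref{lemma11} once more with the smoothing \eqref{E1.10}: using $\|u_n\|_{s_1+\varrho}\le c_2^\varrho N_n^{\varrho}\|u_n\|_{s_1}\le c_2^\varrho N_n^{\varrho}$, one gets $|\mathcal{Q}|_{s_1}\le C'(s_1)N_n^{\varrho}$, and Remark \ref{rema1} ensures $\varrho\le 2\delta s_1$, whence $N_n^{\varrho}=N_{n+1}^{\varrho/2}\le N_{n+1}^{\delta s_1}$. Then Proposition \ref{pro1} gives $|\mathcal{A}^{-1}|_{s_1}\le N_{n+1}^{\tau_2+\delta s_1}$. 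For $s=s_2$ the same recipe applies with the refined input $\|u_n\|_{s_2+\varrho}\le c_2^\varrho N_n^{\varrho}\|u_n\|_{s_2}\le c_2^\varrho N_n^{\varrho+2\tau_2+2\delta s_1+2}$ from $(F5)_n$; the parameter choices \eqref{E3.34} (in particular $s_2\ge 12\tau_2+8s_1+12$, so that $s_2/2\ge \varrho+2\tau_2+s_1/2+2$) yield $|\mathcal{Q}|_{s_2}\le C''(s_2)N_{n+1}^{\delta s_2}$ and hence the second inequality.

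The main obstacle is condition (A2): the set $\mathscr{U}_{N_{n+1}}(u_n)$ only supplies invertibility of $\mathcal{A}_{N_{n+1}}$ on $\mathscr{D}_{n+1}$ itself, whereas the lemma demands it throughout the $2N_{n+1}^{-\sigma}$-neighborhood. This is exactly the reason $\sigma$ is chosen so large in \eqref{E3.42}: since the $\lambda$-derivative of $\mathcal{A}$ along Fourier modes $|l|\le N_{n+1}$ grows like $N_{n+1}^2$, without the gap $\sigma-2>\tau$ the Neumann argument of Lemma \ref{lemma2} would fail and break the inductive scheme. Everything else is routine bookkeeping combining Proposition \ref{pro1}, Lemma \ref{lemma20}, and the inductive hypotheses $(F1)_n$, $(F5)_n$.
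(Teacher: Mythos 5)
Your overall computations (the bounds on $|\mathcal{Q}|_{s_1}$ and $|\mathcal{Q}|_{s_2}$ via Lemma \ref{lemma11}, \eqref{E1.10}, $(F1)_n$ and $(F5)_n$, and the parameter checks) agree with the paper, but the way you extend the argument from $\mathscr{D}_{n+1}$ to the full ball $\mathcal{B}(\mathscr{D}_{n+1},2N^{-\sigma}_{n+1})$ has a genuine gap. You propose to verify hypotheses $(\mathrm{A1})$--$(\mathrm{A3})$ of Proposition \ref{pro1} directly at a point $\lambda$ of the neighborhood. However, $(\mathrm{A3})$ is supplied by Lemma \ref{lemma20}, whose hypothesis is $\lambda\in\bigcap_{k=1}^{n+1}\mathscr{G}^0_{N_k}(u_{k-1})$; this set contains $\mathscr{D}_{n+1}$ but there is no reason it contains the $2N_{n+1}^{-\sigma}$-neighborhood. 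The separation of bad sites rests on Proposition \ref{pro3}, hence on $\lambda$ being $N$-good (Definition \ref{def7}), and $N$-goodness is not shown anywhere to be stable under $O(N_{n+1}^{-\sigma})$ perturbations of $\lambda$ (the sets of regular/singular and strongly-good sites themselves move with $\lambda$). So at a point $\lambda$ of the neighborhood that lies outside $\bigcap_k\mathscr{G}^0_{N_k}(u_{k-1})$ you cannot invoke Proposition \ref{pro1} as stated. Two smaller problems in your $(\mathrm{A2})$ step: the identity $\mathcal{A}_{N_{n+1}}(\epsilon,\lambda,u_n)-\mathcal{A}_{N_{n+1}}(\epsilon,\lambda',u_n)=\mathrm{diag}\bigl((\lambda^2-(\lambda')^2)(\omega_0\cdot l)^2\bigr)$ ignores that $u_n=u_n(\epsilon,\lambda)$ depends on $\lambda$, so the perturbation must also contain $-\epsilon P_{N_{n+1}}\bigl((\mathrm{D}F)(u_n(\epsilon,\lambda))-(\mathrm{D}F)(u_n(\epsilon,\lambda'))\bigr)$ (small by $(F1)_n$ and \eqref{E1.86}, but it must be accounted for); and the resulting bound $\|\mathcal{A}^{-1}\|_0\le 2N_{n+1}^{\tau}$ does not literally satisfy $(\mathrm{A2})$, and a multiplicative constant cannot be ``absorbed by enlarging $N_0$''.

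The paper's proof is structured precisely to avoid these issues: it applies Proposition \ref{pro1} only at $\lambda\in\mathscr{D}_{n+1}$, where \eqref{E3.9} gives $(\mathrm{A2})$ exactly and Lemma \ref{lemma20} gives $(\mathrm{A3})$, obtaining \eqref{E3.48}; it then passes to $\lambda'\in\mathcal{B}(\mathscr{D}_{n+1},2N^{-\sigma}_{n+1})$ by writing $\mathfrak{L}_{N_{n+1}}(\epsilon,\lambda',u_n(\epsilon,\lambda'))=\mathfrak{L}_{N_{n+1}}(\epsilon,\lambda,u_n(\epsilon,\lambda))+\mathscr{R}$, estimating $\mathscr{R}$ in the strong $s_1$- and $s_2$-norms (\eqref{E3.49}--\eqref{E3.51}, which include the variation of $u_n$ in $\lambda$), and invoking the left-inverse perturbation Lemma \ref{lemma2}. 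That last step needs no multiscale hypotheses at the perturbed point, only smallness of $|\mathfrak{L}^{-1}_{N_{n+1}}|_{s_1}|\mathscr{R}|_{s_1}$, which is what the size of $\sigma$ in \eqref{E3.42} guarantees. To repair your argument you should either adopt this order of operations, or add a separate lemma showing that $N$-goodness of the relevant submatrices (and hence the partition in $(\mathrm{A3})$) persists under the $O(N_{n+1}^{2-\sigma})$ perturbation; as written, the claim ``Assumption $(\mathrm{A3})$ is precisely Lemma \ref{lemma20}'' is not justified on the neighborhood.
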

\begin{proof}
The operator $\mathfrak{L}_{N_{n+1}}(\epsilon,\lambda,u_{n})$ is represented by the  matrix $\mathcal{A}_{N_{n+1}}(\epsilon,\lambda,u_n)$.
Let $\lambda\in\mathscr{D}_{n+1}$ (recall \eqref{E3.17}), which leads to $\lambda\in\mathscr{U}_{N_{n+1}}(u_{n})$. Definition \eqref{E3.9} gives that $\mathcal{A}_{N_{n+1}}(\epsilon,\lambda,u_n)$ is invertible with
\begin{align}\label{E3.44}
\|\mathcal{A}^{-1}_{N_{n+1}}(\epsilon,\lambda,u_n)\|_0\leq N^{\tau}_{n+1}.
\end{align}
In addition formulae \eqref{E1.22}, \eqref{E1.14},  \eqref{E1.99} and $(F1)_{n}$ deduce
\begin{align}\label{E3.45}
\left|\mathcal{A}_{N_{n+1}}(\epsilon,\lambda,u_n)-\mathrm{Diag}(\mathcal{A}_{N_{n+1}}(\epsilon,\lambda,u_n))\right|_{s_1-\varrho}
\leq C(s_1)(\|V\|_{s_1}+\epsilon\|(\mathrm{D}F)(u_{n})\|_{s_1})\leq C'(V).
\end{align}
Under \eqref{E3.44}-\eqref{E3.45} and Lemma \ref{lemma20} for $\theta=0,j_0=0$, the assumptions $\mathrm{(A1)}$-$\mathrm{(A3})$ in Proposition \ref{pro1} are satisfied. If $2^{n+1}\leq\chi_0$ ($\mathrm{resp}.$ $2^{n+1}>\chi_0$), combining this with Remark \ref{rema1} yields that Proposition \ref{pro1} is applied to $\mathcal{A}:=\mathcal{A}_{N_{n+1}}(\epsilon,\lambda,u_n)$ with
\begin{align*}
&\mathfrak{A}:=[-N_{n+1},N_{n+1}]^{\nu}\times\Big(\Big\{j_0+\sum\limits_{p=1}^{r}j_{k}\textbf{w}_k:j_k\in[-N_{n+1},N_{n+1}]\Big\}
\cap\Gamma_{+}(\boldsymbol M)\Big),\\
& N':=N_{n+1}, ~ N:=\tilde{N}~(\mathrm{resp.}~N:=N_p).
\end{align*}
Hence, for $s=s_1$, $\delta s_1\geq\varrho/2$, it follows from \eqref{E2.43}, \eqref{E1.22}, \eqref{E1.10}, \eqref{E1.99}, \eqref{E1.14}, \eqref{E3.41} and $(F1)_{n}$ that, for all $\lambda\in\mathscr{D}_{n+1}$,
\begin{align}
|\mathcal{A}^{-1}_{N_{n+1}}(\epsilon,\lambda,u_n)|_{s_1}\leq&\frac{1}{4}N^{\tau_2}_{n+1}(N^{\delta s_1}_{n+1}+C(s_1)(\|V\|_{s_1+\varrho}+\epsilon\|(\mathrm{D}F)(u_n)\|_{s_1+\varrho}))\nonumber\\
\leq&\frac{1}{4}N^{\tau_2}_{n+1}\left(N^{\delta s_1}_{n+1}+C'+\epsilon C'(s_1)(1+c^{\varrho}_2N^{\varrho}_{n}\|u_n\|_{s_1})\right)\nonumber\\
\leq&\frac{1}{2}N^{\tau_2+\delta s_1}_{n+1}.\label{E3.46}
\end{align}
Moreover for $s=s_2$, $\delta s_1\geq\varrho/2$, by \eqref{E2.43}, \eqref{E1.22}, \eqref{E1.10}, \eqref{E1.99}, \eqref{E1.14}, \eqref{E3.41}, $\delta=1/4$, \eqref{E3.34} and $(F5)_{n}$, we have that, for all $\lambda\in\mathscr{D}_{n+1}$,
\begin{align}
|\mathcal{A}^{-1}_{N_{n+1}}(\epsilon,\lambda,u_n)|_{s_2}\leq&\frac{1}{4}N^{\tau_2}_{n+1}(N^{\delta s_2}_{n+1}+C(s_2)(\|V\|_{s_2+\varrho}+\epsilon\|(\mathrm{D}F)(u_n)\|_{s_2+\varrho}))\nonumber\\
\leq&\frac{1}{4}N^{\tau_2}_{n+1}\left(N^{\delta s_2}_{n+1}+C'+\epsilon C'(s_2)(1+c^\varrho_2N^{\varrho}_{n}\|u_n\|_{s_2})\right)\nonumber\\
\leq&\frac{1}{2}N^{\tau_2+\delta s_2}.\label{E3.47}
\end{align}
Formulae \eqref{E3.46}-\eqref{E3.47} give that, for all $\lambda\in\mathscr{D}_{n+1}$,
\begin{align}\label{E3.48}
|\mathfrak{L}^{-1}_{N_{n+1}}(\epsilon,\lambda,u_n)|_{s}\leq\frac{1}{2} N^{\tau_2+\delta s}_{n+1},\quad s=s_1,s_2.
\end{align}
For all  $\lambda'\in\mathcal{B}(\mathscr{D}_{n+1},2N^{-\sigma}_{n+1})$, there exists some $\lambda\in\mathscr{D}_{n+1}$ such that $|\lambda'-\lambda|<2N^{-\sigma}_{n+1}$. It is obvious that
\begin{align*}
\mathfrak{L}_{N_{n+1}}(\epsilon,\lambda',u_n(\epsilon,\lambda'))=\mathfrak{L}_{N_{n+1}}(\epsilon,\lambda,u_n(\epsilon,\lambda))+\mathscr{R},
\end{align*}
where
\begin{align*}
\mathscr{R}=&\mathfrak{L}_{N_{n+1}}(\epsilon,\lambda',u_n(\epsilon,\lambda'))-\mathfrak{L}_{N_{n+1}}(\epsilon,\lambda,u_n(\epsilon,\lambda))\\
=&P_{N_{n+1}}((\lambda'+\lambda)(\lambda'-\lambda)(\omega_0\cdot \partial_\varphi)^2)_{|H_{N_{n+1}}}\\
&-\epsilon P_{N_{n+1}}((\mathrm{D}F)(u_{n}(\epsilon,\lambda'))-(\mathrm{D}F)(u_{n}(\epsilon,\lambda)))_{|H_{N_{n+1}}}.
\end{align*}
It follows from \eqref{E1.8}, \eqref{E1.62}, \eqref{E3.41}, $\sigma\geq2$ (see \eqref{E3.42}) and  $N_0\geq2$ that
\begin{align}\label{E3.49}
|P_{N_{n+1}}((\lambda'+\lambda)(\lambda'-\lambda)(\omega_0\cdot \partial_\varphi)^2)_{|H_{N_{n+1}}}|_{s}\leq 8c^2_2N^{-\sigma+2}_{n+1}\quad\text{for}\quad s=s_1,s_2.
\end{align}
In addition, applying \eqref{E1.22}, \eqref{E1.10}, \eqref{E1.86}, \eqref{E3.33}-\eqref{E3.34}, \eqref{E3.15}, $(F_1)_{n}$ and $\delta s_1\geq\varrho/2$, we deduce
\begin{align}
|\epsilon P_{N_{n+1}}((\mathrm{D}F)(u_{n}(\epsilon,\lambda'))-(\mathrm{D}F)(u_{n}(\epsilon,\lambda)))_{|H_{N_{n+1}}}|_{s_1}\leq &C(s_1)N^{-\sigma+\delta s_1}_{n+1}\label{E3.50},\\
|\epsilon P_{N_{n+1}}((\mathrm{D}F)(u_{n}(\epsilon,\lambda'))-(\mathrm{D}F)(u_{n}(\epsilon,\lambda)))_{|H_{N_{n+1}}}|_{s_2}\leq &C(s_2)N^{4\tau_2+2s_1+6}_{n}N^{-\sigma+\delta s_1}_{n+1}.\label{E3.51}
\end{align}
As a consequence
\begin{align*}
|\mathfrak{L}^{-1}_{N_{n+1}}(\epsilon,\lambda,u_n(\epsilon,\lambda))|_{s_1}|\mathscr{R}|_{s_1}\stackrel{\eqref{E3.48}\text{-}\eqref{E3.50}}{\leq}
C'(s_2)N^{\tau_2+\delta s_1}_{n+1}N^{-\sigma+\delta s_1+2}_{n+1}
\stackrel{\eqref{E3.42}}{\leq}1/2.
\end{align*}
Hence Lemma \ref{lemma2} gives that $\mathfrak{L}_{N_{n+1}}(\epsilon,\lambda',u_n(\epsilon,\lambda'))$ is invertible with
\begin{align*}
|\mathfrak{L}^{-1}_{N_{n+1}}(\epsilon,\lambda',u_n(\epsilon,\lambda'))|_{s_1}
\stackrel{\eqref{E1.33}}{\leq}&2|\mathfrak{L}^{-1}_{N_{n+1}}(\epsilon,\lambda,u_n(\epsilon,\lambda))|_{s_1}\stackrel{\eqref{E3.48}}{\leq}N^{\tau_2+\delta s_1}_{n+1},\\
|\mathfrak{L}^{-1}_{N_{n+1}}(\epsilon,\lambda',u_n(\epsilon,\lambda'))|_{s_2}
\stackrel{\eqref{E1.92}}{\leq}&C(s_2)(|\mathfrak{L}^{-1}_{N_{n+1}}(\epsilon,\lambda,u_n(\epsilon,\lambda))|_{s_2}
+|\mathfrak{L}^{-1}_{N_{n+1}}(\epsilon,\lambda,u_n(\epsilon,\lambda))|^2_{s_1}|\mathscr{R}|_{s_2})\\
\stackrel{\eqref{E3.48}\text{-}\eqref{E3.49},\eqref{E3.51}}{\leq}&C(s_2)(\frac{1}{2}N^{\tau_2+\delta s_2}_{n+1}+\frac{1}{4}N^{2(\tau_2+\delta s_1)}_{n+1}C'(s_2)N^{4\tau_2+2s_1+6}_{n}N^{-\sigma+\delta s_1}_{n+1})\\
\stackrel{\eqref{E3.41},\eqref{E3.34},\delta=\frac{1}{4}}\leq &C''(s_2)N^{\tau_2+\delta s_2}_{n+1}.
\end{align*}
The proof of the lemma is completed.
\end{proof}
Then, owing to Lemma \ref{lemma22}, solving the equation $(P_{N_{n+1}})$ (see also \eqref{E3.38}) is reduced to the fixed point problem $h=U_{n+1}(h)$ with
\begin{align}\label{E3.52}
U_{n+1}:{H}_{N_{n+1}}\rightarrow {H}_{N_{n+1}},\quad h\mapsto-\mathfrak{L}^{-1}_{N_{n+1}}(\epsilon,\lambda,u_n)(R_{n}(h)+r_{n}),
\end{align}
where $R_{n}(h),r_{n}$ are defined in \eqref{E3.39}-\eqref{E3.40}.
\begin{lemm}\label{lemma21}
For all $\lambda\in\mathcal{B}(\mathscr{D}_{n+1},2N^{-\sigma}_{n+1})$, the map $U_{n+1}$ is a contraction in
\begin{align*}
\mathcal{B}(0,\rho_{n+1}):=\left\{h\in H_{N_{n+1}}:\|h\|_{s_1}\leq\rho_{n+1}:=N_{n+1}^{-\sigma-1}\right\}.
\end{align*}
Moreover the unique fixed point $\tilde{h}_{n+1}(\epsilon,\lambda)$ of $U_{n+1}$ satisfies
\begin{align}\label{E3.54}
\|\tilde{h}_{n+1}\|_{s_1}\leq2K(s_2)N^{\tau_2+\delta s_1}_{n+1}N^{-(s_2-s_1)}_{n}\mathscr{A}_{n}.
\end{align}
where $\mathscr{A}_{n}$ is seen in $(F5)_{n}$.
\end{lemm}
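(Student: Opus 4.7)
\textbf{Proof proposal for Lemma \ref{lemma21}.}
The plan is a standard contraction mapping argument for the fixed-point formulation $h = U_{n+1}(h)$ with $U_{n+1}$ as in \eqref{E3.52}. The two ingredients to control are the ``residual'' $r_n$ defined in \eqref{E3.40} (which measures how far $u_n$ is from solving $(P_{N_{n+1}})$) and the ``quadratic remainder'' $R_n(h)$ from \eqref{E3.39}; the tame estimates on $\mathfrak{L}^{-1}_{N_{n+1}}$ provided by Lemma \ref{lemma22} will then convert these into $s_1$-norm bounds for $U_{n+1}(h)$. Throughout I will use Lemma \ref{lemma8} to turn $|\cdot|_{s_1}$-matrix bounds into operator-norm bounds on $H^{s_1}$, together with $|\cdot|_{s_0}\leq|\cdot|_{s_1}$.

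First I would bound the residual. Since $r_n = P_{N_{n+1}}P^\bot_{N_n}(\bar{V} u_n - \epsilon F(u_n))$, applying the smoothing estimate \eqref{E1.11} on $P^\bot_{N_n}$ together with the tame product estimate on $\bar{V} u_n$, the tame estimate \eqref{E1.14} on $F(u_n)$, \eqref{E1.99}, $(F1)_n$ and $(F5)_n$ gives
\begin{align*}
\|r_n\|_{s_1}\leq c_1^{-(s_2-s_1)}N_n^{-(s_2-s_1)}\|\bar V u_n-\epsilon F(u_n)\|_{s_2}\leq C(s_2)N_n^{-(s_2-s_1)}\mathscr{A}_n,
\end{align*}
where we absorb the harmless additive constant from \eqref{E1.14} using that the right-hand side of $(F5)_n$ already majorises it. For the quadratic term, the Taylor tame estimate \eqref{E103}, applied to $u=u_n$ and $h\in\mathcal{B}(0,\rho_{n+1})$ (so that $\|u_n\|_{s_1},\|h\|_{s_1}\leq 1$ by $(F1)_n$ and $\rho_{n+1}\leq 1$), gives $\|R_n(h)\|_{s_1}\leq\epsilon C(s_1)\|h\|_{s_1}^2$, and similarly $\|R_n(h)-R_n(h')\|_{s_1}\leq\epsilon C(s_1)(\|h\|_{s_1}+\|h'\|_{s_1})\|h-h'\|_{s_1}$ from the analogous Lipschitz version of \eqref{E103} (a direct consequence of (P3)).

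Combining these with the bound $|\mathfrak{L}^{-1}_{N_{n+1}}(\epsilon,\lambda,u_n)|_{s_1}\leq N_{n+1}^{\tau_2+\delta s_1}$ from \eqref{E3.43} and Lemma \ref{lemma8}, I obtain
\begin{align*}
\|U_{n+1}(h)\|_{s_1}\leq C(s_1)N_{n+1}^{\tau_2+\delta s_1}\bigl(\epsilon\|h\|_{s_1}^2+C(s_2)N_n^{-(s_2-s_1)}\mathscr{A}_n\bigr),
\end{align*}
and an analogous Lipschitz estimate for $U_{n+1}(h)-U_{n+1}(h')$. Using $(F5)_n$, $\mathscr{A}_n\leq N_n^{2\tau_2+2\delta s_1+2}$, the relation $N_{n+1}=N_n^2$ from \eqref{E3.41}, and the choices of $\tau_2,\delta,s_1,s_2,\sigma$ from \eqref{E3.22}--\eqref{E3.42}, a direct exponent comparison (which I omit as a routine calculation in the spirit of Remark \ref{rema1}) shows that both $N_{n+1}^{\tau_2+\delta s_1}\cdot N_n^{-(s_2-s_1)}\mathscr{A}_n \leq \tfrac{1}{4}\rho_{n+1}$ and $\epsilon N_{n+1}^{\tau_2+\delta s_1}\rho_{n+1}\leq 1/4$, provided $\epsilon_0 N_0^{s_2}\leq \bar{\mathfrak{c}}$ as in \eqref{E3.15}.

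These two inequalities imply that $U_{n+1}$ maps $\mathcal{B}(0,\rho_{n+1})$ into itself and is a $\tfrac{1}{2}$-contraction there. The Banach fixed-point theorem then yields a unique $\tilde h_{n+1}\in\mathcal{B}(0,\rho_{n+1})$ solving $h=U_{n+1}(h)$. Finally, since $R_n(0)=0$ one has $U_{n+1}(0)=-\mathfrak{L}^{-1}_{N_{n+1}}r_n$, so combining $\|\tilde h_{n+1}\|_{s_1}\leq 2\|U_{n+1}(0)\|_{s_1}$ (a standard consequence of contraction) with the bound on $\|r_n\|_{s_1}$ established above gives \eqref{E3.54}. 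The main obstacle is purely bookkeeping: verifying that the exponent inequalities forced by mapping into $\mathcal{B}(0,\rho_{n+1})$ are consistent with the prescribed hierarchy \eqref{E3.22}--\eqref{E3.42}; this is precisely the role of the specific choice $\sigma=\tau_2+3\delta s_1+3$ and $s_2=12\tau_2+8s_1+12$, which leaves enough room to absorb both $N_{n+1}^{\tau_2+\delta s_1}$ and the growing factor $\mathscr{A}_n$ into the smoothing gain $N_n^{-(s_2-s_1)}$.
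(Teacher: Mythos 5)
Your proposal is correct and follows essentially the same route as the paper: bound the residual $r_n$ by smoothing into the $s_2$-norm via \eqref{E1.11} and $(F5)_n$, bound $R_n(h)$ quadratically via \eqref{E103}, apply the inverse estimate \eqref{E3.43} through Lemma \ref{lemma8}, and verify the exponent inequalities (the paper's \eqref{E3.57}), which indeed close under \eqref{E3.22}--\eqref{E3.42}. The only cosmetic differences are that the paper establishes the contraction by differentiating $U_{n+1}$ and using \eqref{E1.15} rather than a Lipschitz bound on $R_n$, and obtains \eqref{E3.54} by absorbing the quadratic term in the fixed-point identity rather than via $\|\tilde h_{n+1}\|_{s_1}\leq 2\|U_{n+1}(0)\|_{s_1}$; both variants yield the same estimate.
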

\begin{proof}
For all $\lambda\in\mathcal{B}(\mathscr{D}_{n+1},2N^{-\sigma}_{n+1})$, it follows from  \eqref{E3.39}-\eqref{E3.40}, \eqref{E1.11}, \eqref{E1.14}, \eqref{E103}-\eqref{E1.99} and $(F5)_n$ that
\begin{align}
\|r_n\|_{s_1}+\|R_{n}(h)\|_{s_1}\leq& c^{-(s_2-s_1)}_1N^{-(s_2-s_1)}_{n}(\|\bar{V}u_n\|_{s_2}+\epsilon\|F(u_{n})\|_{s_2})+\epsilon C(s_1)\|h\|^2_{s_1}\nonumber\\
\leq&C(s_2)c^{-(s_2-s_1)}_1N^{-(s_2-s_1)}_{n}(1+\mathscr{A}_{n})+\epsilon C(s_1)\|h\|^2_{s_1}\label{E3.55}\\
\leq&2C(s_2)c^{-(s_2-s_1)}_1N^{-(s_2-s_1)+2\tau_2+2\delta s_1+2}_{n}+\epsilon C(s_1)\|h\|^2_{s_1}.\label{E3.56}
\end{align}
Letting $\|h\|_{s_1}\leq\rho_{n+1}$, by \eqref{E3.52}, \eqref{E1.25}, \eqref{E3.43}, \eqref{E3.56} and \eqref{E3.41}, we check that, for some constants $K(s_1),K(s_2)>0$,
\begin{align*}
\|U_{n+1}(h)\|_{s_1}\leq K(s_2)N^{2(\tau_2+\delta s_1+1)}_{n+1}N^{-(s_2-s_1)}_{n}+\epsilon K(s_1)N^{\tau_2+\delta s_1}_{n+1}\rho^2_{n+1}.
\end{align*}
Formulae \eqref{E3.34}-\eqref{E3.42} imply that, for $N\geq N_0(s_2)$ large enough,
\begin{align}\label{E3.57}
K(s_2)N^{2(\tau_2+\delta s_1+1)}_{n+1}N^{-(s_2-s_1)}_{n}\leq\rho_{n+1}/2,\quad\epsilon K(s_1)N^{\tau_2+\delta s_1}_{n+1}\rho_{n+1}\leq1/2,
\end{align}
which leads to $\|U_{n+1}(h)\|_{s_1}\leq \rho_{n+1}$.
Moreover differentiating \eqref{E3.52} with respect to $h$ yields
\begin{align*}
\mathrm{D}U_{n+1}(h)[v]\stackrel{\eqref{E3.39}\text{-}\eqref{E3.40}}=\epsilon \mathfrak{L}^{-1}_{N_{n+1}}(\epsilon,\lambda,u_n)P_{N_{n+1}}((\mathrm{D}F)(u_{n}+h)[v]-(\mathrm{D}F)(u_{n})[v]).
\end{align*}
Using \eqref{E1.25}, \eqref{E3.43}, \eqref{E1.15}, $(F1)_{n}$ and \eqref{E3.57},  we deduce
\begin{align*}
\|\mathrm{D}U_{n+1}(h)[v]\|_{s_1}\leq&\epsilon C(s_1)N^{\tau_2+\delta s_1}_{n}\|P_{N_{n+1}}((\mathrm{D}F)(u_{n}+h)[v]-(\mathrm{D}F)(u_{n})[v])\|_{s_1}\\
\leq&\epsilon K(s_1)N^{\tau_2+\delta s_1}_{n+1}\rho_{n+1}\|v\|_{s_1}\leq\frac{1}{2}\|v\|_{s_1}.
\end{align*}
Hence $U_{n+1}$ is a contraction in $\mathcal{B}(0,\rho_{n+1})$. Let $\tilde{h}_{n+1}(\epsilon,\lambda)$ denote by the unique fixed point of $U_{n+1}$. In addition, by means of  \eqref{E1.25}, \eqref{E3.43}, \eqref{E3.52}, \eqref{E3.55}, \eqref{E3.41} and $\|\tilde{h}_{n+1}\|\leq\rho_{n+1}$,  we obtain
\begin{align*}
\|\tilde{h}_{n+1}\|_{s_1}\leq K(s_2)N^{\tau_2+\delta s_1}_{n+1}N^{-(s_2-s_1)}_{n}\mathscr{A}_{n}+\epsilon K(s_1)N^{\tau_2+\delta s_1}_{n+1}\rho_{n+1}\|\tilde{h}_{n+1}\|_{s_1}.
\end{align*}
Combining this with \eqref{E3.57} gives that \eqref{E3.54} holds.
\end{proof}

Denote
\begin{align}\label{E3.58}
\mathscr{Q}_{n+1}(\epsilon,\lambda,h):=P_{N_{n+1}}(L_{\lambda}(u_{n}+h)-\epsilon {F}(u_{n}+h))=0,\quad \forall h\in H_{n+1}.
\end{align}
Lemma \ref{lemma21} indicates that for all $\lambda\in\mathcal{B}(\mathscr{D}_{n+1},2N^{-\sigma}_{n+1})$, $\tilde{h}_{n+1}$ is a solution of equation \eqref{E3.58}. Since $u_{n}(0,\lambda)\stackrel{(F1)_{n}}{=}0$, by uniqueness, we derive
\begin{align*}
\tilde{h}_{n+1}(0,\lambda)=0,\quad \forall\lambda\in\mathcal{B}(\mathscr{D}_{n+1},2N^{-\sigma}_{n+1}).
\end{align*}
Let us verify the upper bound of $\tilde{h}_{n+1}$ in high norm.
\begin{lemm}\label{lemma23}
For all $\lambda\in\mathcal{B}(\mathscr{D}_{n+1},2N^{-\sigma}_{n+1})$, one has
\begin{align}\label{E3.59}
\|\tilde{h}_{n+1}\|_{s_2}\leq K(s_2)N^{\tau_2+\delta s_1}_{n+1}\mathscr{A}_n.
\end{align}
\end{lemm}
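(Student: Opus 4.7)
The plan is to read off the high-norm estimate from the fixed-point identity $\tilde h_{n+1}=-\mathfrak{L}^{-1}_{N_{n+1}}(\epsilon,\lambda,u_n)(R_n(\tilde h_{n+1})+r_n)$ by applying the tame estimate \eqref{E1.25} of Lemma \ref{lemma8} at the level $s=s_2$. Enlarging the $s_0$ in \eqref{E1.25} to $s_1$ (both $|\cdot|_{s}$ and $\|\cdot\|_s$ are monotone in $s$) and invoking the two bounds \eqref{E3.43} on $|\mathfrak L^{-1}_{N_{n+1}}|_{s_1}$ and $|\mathfrak L^{-1}_{N_{n+1}}|_{s_2}$ provided by Lemma \ref{lemma22} yields
\[
\|\tilde h_{n+1}\|_{s_2}\le C(s_2)\Bigl(N_{n+1}^{\tau_2+\delta s_1}\|R_n(\tilde h_{n+1})+r_n\|_{s_2}+C''(s_2)N_{n+1}^{\tau_2+\delta s_2}\|R_n(\tilde h_{n+1})+r_n\|_{s_1}\Bigr).
\]

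Next I would separately control the four quantities on the right. For the tail $r_n=P_{N_{n+1}}P^{\bot}_{N_n}(\bar V u_n-\epsilon F(u_n))$, the tame product rule (property (3) of the Sobolev norms) together with \eqref{E1.14} and \eqref{E1.99} gives $\|r_n\|_{s_2}\le C(s_2)(1+\mathscr{A}_n)$, and then the smoothing \eqref{E1.11} provides the gain $\|r_n\|_{s_1}\le c_1^{-(s_2-s_1)}N_n^{-(s_2-s_1)}C(s_2)(1+\mathscr{A}_n)$. For the Taylor remainder $R_n(\tilde h_{n+1})$, the estimates \eqref{E103} and \eqref{E1.16} give
\[
\|R_n(\tilde h_{n+1})\|_{s_1}\le\epsilon C(s_1)\|\tilde h_{n+1}\|_{s_1}^2,\qquad \|R_n(\tilde h_{n+1})\|_{s_2}\le\epsilon C(s_2)\bigl(\mathscr{A}_n\|\tilde h_{n+1}\|_{s_1}^2+\|\tilde h_{n+1}\|_{s_1}\|\tilde h_{n+1}\|_{s_2}\bigr);
\]
the last self-referential factor $\|\tilde h_{n+1}\|_{s_2}$ will be handled via absorption since its coefficient is bounded by $\epsilon C(s_2)^2 N_{n+1}^{\tau_2+\delta s_1}\|\tilde h_{n+1}\|_{s_1}\le \epsilon C(s_2)^2 N_{n+1}^{\tau_2+\delta s_1}\rho_{n+1}\le 1/2$ by exactly the mechanism that gave \eqref{E3.57}, which moves it to the left hand side.

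After this absorption it remains to dominate each of the four residual contributions $N_{n+1}^{\tau_2+\delta s_1}(1+\mathscr A_n)$, $\epsilon N_{n+1}^{\tau_2+\delta s_1}\mathscr A_n\rho_{n+1}^2$, $N_{n+1}^{\tau_2+\delta s_2}N_n^{-(s_2-s_1)}(1+\mathscr A_n)$, $\epsilon N_{n+1}^{\tau_2+\delta s_2}\|\tilde h_{n+1}\|_{s_1}^2$ by $K(s_2)N_{n+1}^{\tau_2+\delta s_1}\mathscr A_n$. The first two are immediate; the third reduces, via $N_{n+1}=N_n^2$ and $\delta=1/4$, to a factor $N_n^{(2\delta-1)(s_2-s_1)}<1$. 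The main obstacle is the fourth term, because the crude $\|\tilde h_{n+1}\|_{s_1}\le\rho_{n+1}$ is not sharp enough: one must insert the refined bound \eqref{E3.54} together with the inductive hypothesis $(F5)_n$, i.e.\ $\mathscr A_n\le N_n^{2\tau_2+2\delta s_1+2}$, and then the parameter choices \eqref{E3.22}-\eqref{E3.34} force the resulting exponent of $N_n$ to be very negative (of order $-12\tau_2-9s_1-16$). This crucially exploits the largeness of $s_2=12\tau_2+8s_1+12$ relative to $s_1$, and yields the claimed inequality $\|\tilde h_{n+1}\|_{s_2}\le K(s_2)N_{n+1}^{\tau_2+\delta s_1}\mathscr A_n$ for all $\lambda\in\mathcal B(\mathscr D_{n+1},2N^{-\sigma}_{n+1})$.
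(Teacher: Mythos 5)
Your argument is correct and follows essentially the same route as the paper's proof: the fixed-point identity plus the tame estimate \eqref{E1.25} with the two bounds of \eqref{E3.43}, the splitting into $R_n$ and $r_n$ estimated in $s_1$- and $s_2$-norms via \eqref{E103}, \eqref{E1.16}, \eqref{E1.11} and \eqref{E1.14}, absorption of the self-referential $\epsilon\rho_{n+1}\|\tilde h_{n+1}\|_{s_2}$ term using the smallness condition behind \eqref{E3.57}, and the refined bound \eqref{E3.54} to make the quadratic $\|\tilde h_{n+1}\|_{s_1}^2$ contribution decay like $N_n^{-(s_2-s_1)}\mathscr{A}_n$. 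The only cosmetic difference is that you carry the harmless factor $(1+\mathscr{A}_n)$ where the paper writes $\mathscr{A}_n$; the parameter bookkeeping is otherwise the same.
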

\begin{proof}
Applying \eqref{E3.52}, \eqref{E1.25} and  \eqref{E3.43} yields
\begin{align*}
\|\tilde{h}_{n+1}\|_{s_2}=&\|\mathfrak{L}^{-1}_{N_{n+1}}(\epsilon,\lambda,u_n)(R_{n}(\tilde{h}_{n+1})+r_{n})\|_{s_2}\\
\leq &C(s_2)(N^{\tau_2+\delta s_1}_{n+1}(\|R_{n}(\tilde{h}_{n+1})\|_{s_2}+\|r_{n}\|_{s_2})+C''({s_2})N^{\tau_2+\delta s_2}_{n+1}(\|R_{n}(\tilde{h}_{n+1})\|_{s_1}+\|r_{n}\|_{s_1})).
\end{align*}
Let us check the upper bounds of $\|R_{n}(\tilde{h}_{n+1})\|_{s}+\|r_{n}\|_{s}$ for $s=s_1,s_2$. It follows from \eqref{E3.39}, \eqref{E103}, \eqref{E3.54}, $\|\tilde{h}_{n+1}\|_{s_1}\leq \rho_{n+1}$, $(F5)_{n}$, \eqref{E1.16} and \eqref{E3.57} that
\begin{align}\label{E3.60}
\|R_{n}(\tilde{h}_{n+1})\|_{s_1}\leq C(s_2)N^{-(s_2-s_1)}_{n}\mathscr{A}_n,\quad \|R_{n}(\tilde{h}_{n+1})\|_{s_2}\leq C(s_2)(\epsilon \rho^2_{n+1}\mathscr{A}_n+\epsilon\rho_{n+1}\|\tilde{h}_{n+1}\|_{s_2}).
\end{align}
Moreover, using \eqref{E3.40}, \eqref{E1.11}, \eqref{E1.14}, \eqref{E1.99} and $(F5)_{n}$, we deduce
\begin{align}\label{E3.61}
\|r_{n}\|_{s_1}\leq C(s_2)N^{-(s_2-s_1)}_{n}\mathscr{A}_{n},\quad\|r_n\|_{s_2}\leq C(s_2)\mathscr{A}_{n}.
\end{align}
Consequently, by \eqref{E3.60}-\eqref{E3.61}, \eqref{E3.41}, \eqref{E3.34}-\eqref{E3.42} and the definition of $\rho_{n+1}$, we get that, for $\epsilon$ small enough,
\begin{align*}
\|\tilde{h}_{n+1}\|_{s_2}\leq& C(s_2)(N^{\tau_2+\delta s_1}_{n+1}C(s_2)(2\mathscr{A}_{n}+\epsilon\rho_{n+1}\|\tilde{h}_{n+1}\|_{s_2})+C''(s_2)N^{\tau_2+\delta s_2}_{n+1}(2C(s_2)N^{-(s_2-s_1)}_{n}\mathscr{A}_{n}))\\
\leq&C'''(s_2)N^{\tau_2+\delta s_1}_{n+1}\mathscr{A}_{n}+\epsilon C(s_2)N^{\tau_2+\delta s_1-\sigma-1}_{n+1}\|\tilde{h}_{n+1}\|_{s_2}\\
\leq&C'''(s_2)N^{\tau_2+\delta s_1}_{n+1}\mathscr{A}_{n}+\frac{1}{2}\|\tilde{h}_{n+1}\|_{s_2},
\end{align*}
which leads to $\|\tilde{h}_{n+1}\|_{s_2}\leq 2C'''(s_2)N^{\tau_1+\delta s_1}_{n+1}\mathscr{A}_{n}$.
\end{proof}
Next, let us estimate the derivatives of $\tilde{h}_{n+1}$ with respect to $\lambda$.
\begin{lemm}
For all $\lambda\in\mathcal{B}(\mathscr{D}_{n+1},2N^{-\sigma}_{n+1})$, one has $\tilde{h}_{n+1}(\epsilon,\cdot)\in\mathcal{B}(\mathscr{D}_{n+1},2N^{-\sigma}_{n+1})$ with
\begin{align}\label{E3.62}
\|\partial_{\lambda}\tilde{h}_{n+1}\|_{s_1}\leq N^{-1}_{n+1},\quad\|\partial_{\lambda}\tilde{h}_{n+1}\|_{s_2}\leq N^{\tau_2+\delta s_1+1}_{n+1}(N^{\tau_2+\delta s_1+2}_{n+1}\mathscr{A}_{n}+\mathscr{A}'_{n}).
\end{align}
\end{lemm}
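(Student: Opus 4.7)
The plan is to differentiate the fixed-point identity satisfied by $\tilde h_{n+1}$ with respect to $\lambda$, invert the resulting linearized operator at $u_n+\tilde h_{n+1}$ by combining Lemma~\ref{lemma22} with the perturbation Lemma~\ref{lemma2}, and estimate the resulting source term in both the $s_1$- and $s_2$-norms.

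Concretely, I would start from $\Phi(\epsilon,\lambda,h):=P_{N_{n+1}}(L_\lambda(u_n+h)-\epsilon F(u_n+h))$ and use $\Phi(\epsilon,\lambda,\tilde h_{n+1})\equiv0$. Differentiating in $\lambda$ — remembering that $u_n$ itself depends on $\lambda$, and using that $\partial_\lambda u_n\in H_{N_n}\subset H_{N_{n+1}}$ while $(\partial_\lambda L_\lambda)u_n=2\lambda(\omega_0\!\cdot\!\partial_\varphi)^2 u_n$ preserves the Fourier support $H_{N_n}$ — one gets after rearrangement
\[
\mathfrak{L}_{N_{n+1}}(\epsilon,\lambda,u_n+\tilde h_{n+1})\,\partial_\lambda \tilde h_{n+1}=\mathcal{R},
\]
with
\[
\mathcal{R}:=-P_{N_{n+1}}\bigl[(\partial_\lambda L_\lambda)\tilde h_{n+1}\bigr]-\partial_\lambda r_n+\epsilon P_{N_{n+1}}\bigl[\bigl((\mathrm{D}F)(u_n+\tilde h_{n+1})-(\mathrm{D}F)(u_n)\bigr)\partial_\lambda u_n\bigr].
\]
Each of the three pieces of $\mathcal R$ is small for a distinct reason: the first is linear in the already small $\tilde h_{n+1}$, the second is the $\lambda$-derivative of a high-frequency residual, and the third carries a difference $(\mathrm{D}F)(u_n+\tilde h_{n+1})-(\mathrm{D}F)(u_n)=O(\tilde h_{n+1})$.

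Next I would invert the operator on the left. Since $\|\tilde h_{n+1}\|_{s_1}\le\rho_{n+1}=N_{n+1}^{-\sigma-1}$ by Lemma~\ref{lemma21}, the estimates \eqref{E1.22}, \eqref{E1.86} show that the perturbation $\mathfrak{L}_{N_{n+1}}(\epsilon,\lambda,u_n+\tilde h_{n+1})-\mathfrak{L}_{N_{n+1}}(\epsilon,\lambda,u_n)$ is small in $|\cdot|_{s_1-\varrho}$, so Lemma~\ref{lemma2} applied to Lemma~\ref{lemma22} preserves tame bounds of the same form
\[
|\mathfrak{L}_{N_{n+1}}^{-1}(\epsilon,\lambda,u_n+\tilde h_{n+1})|_{s_1}\le 2N_{n+1}^{\tau_2+\delta s_1},\qquad |\mathfrak{L}_{N_{n+1}}^{-1}(\epsilon,\lambda,u_n+\tilde h_{n+1})|_{s_2}\le C(s_2)N_{n+1}^{\tau_2+\delta s_2}.
\]
I would then bound $\mathcal R$ piece by piece. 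For the first piece, $\partial_\lambda L_\lambda=2\lambda(\omega_0\!\cdot\!\partial_\varphi)^2$, so \eqref{E1.10} together with \eqref{E3.54} (resp.\ \eqref{E3.59}) controls the $s_1$- (resp.\ $s_2$-)norm. For $\partial_\lambda r_n=P_{N_{n+1}}P_{N_n}^\perp(\bar V\partial_\lambda u_n-\epsilon(\mathrm{D}F)(u_n)\partial_\lambda u_n)$, I would use the high-frequency smoothing \eqref{E1.11}, the tame estimate \eqref{E1.86}, and $(F5)_n$ to trade a gain of $N_n^{-(s_2-s_1)}$ against the large factor $\|\partial_\lambda u_n\|_{s_2}\le\mathscr A'_n$. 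The third piece is handled via the Taylor-tame estimate \eqref{E1.15} combined with $\|\tilde h_{n+1}\|_{s_1}\le\rho_{n+1}$ and the bounds on $\partial_\lambda u_n$ from $(F1)_n$, $(F5)_n$. Assembling via Lemma~\ref{lemma8} yields the two estimates of \eqref{E3.62}; the $C^1$ dependence of $\tilde h_{n+1}$ on $\lambda$ on $\mathcal{B}(\mathscr{D}_{n+1},2N_{n+1}^{-\sigma})$ then follows from the implicit function theorem applied to $\Phi$.

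The main obstacle is purely arithmetic: arranging exponents so that the gain $N_n^{-(s_2-s_1)}$ coming from \eqref{E1.11} on $\partial_\lambda r_n$ and the factor $\rho_{n+1}$ from Lemma~\ref{lemma21} together defeat the large operator-norm $N_{n+1}^{\tau_2+\delta s_1}$, yielding the required $N_{n+1}^{-1}$ for the $s_1$-bound; while for the $s_2$-bound a single factor of $\mathscr{A}'_n$ must survive, matching the right-hand side of \eqref{E3.62}. The choices of $\sigma,s_1,s_2$ in \eqref{E3.22}--\eqref{E3.42} are calibrated precisely to make this bookkeeping succeed, in the same spirit as in Lemmas~\ref{lemma22} and~\ref{lemma23}.
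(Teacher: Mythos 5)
Your proposal follows essentially the same route as the paper: differentiate the identity $\mathscr{Q}_{n+1}(\epsilon,\lambda,\tilde h_{n+1})=0$ (using that $u_n$ solves $(P_{N_n})$ so that only the terms $(\partial_\lambda L_\lambda)\tilde h_{n+1}$, the high-frequency residual $\partial_\lambda r_n$, and the $(\mathrm{D}F)(u_n+\tilde h_{n+1})-(\mathrm{D}F)(u_n)$ difference survive), invert $\mathfrak{L}_{N_{n+1}}(\epsilon,\lambda,u_n+\tilde h_{n+1})$ via Lemma \ref{lemma2} applied to Lemma \ref{lemma22}, and close with the tame product estimate \eqref{E1.25}; the decomposition, the lemmas invoked, and the sources of smallness all match the paper's argument. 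The only part left implicit is the final exponent bookkeeping, which you correctly identify and which is indeed guaranteed by the choices \eqref{E3.22}--\eqref{E3.42}.
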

\begin{proof}
Lemma \ref{lemma21} shows that for all $\lambda\in\mathcal{B}(\mathscr{D}_{n+1},2N^{-\sigma}_{n+1})$, $\tilde{h}_{n+1}$ is a solution of equation \eqref{E3.58}. Applying \eqref{E3.58} and \eqref{E3.25} yields
\begin{align}\label{E3.67}
\mathrm{D}_{h}\mathscr{Q}_{n+1}(\epsilon,\lambda,\tilde{h}_{n+1})=\mathfrak{L}_{N_{n+1}}(\epsilon,\lambda,u_n+\tilde{h}_{n+1})
=\mathfrak{L}_{N_{n+1}}(\epsilon,\lambda,u_n)+\mathscr{R}',
\end{align}
where $\mathscr{R}'=-\epsilon P_{N_{n+1}}((\mathrm{D}F)(u_{n}+\tilde{h}_{n+1})-(\mathrm{D}F)(u_{n}))_{|H_{N_{n+1}}}$. It follows from  \eqref{E1.22}, \eqref{E1.86}, \eqref{E1.10}, $\delta s_1\geq\varrho/2$, $(P1)_n$, $(F1)_{n}$, $(F5)_{n}$,   $\|\tilde{h}_{n+1}\|\leq\rho_{n+1}<1$ and \eqref{E3.59} that
\begin{align}
|\mathscr{R}'|_{s_1}\leq C(s_1)N^{2\delta s_1}_{n+1}\rho_{n+1}, \quad|\mathscr{R}'|_{s_2}\leq C(s_2)N^{\tau_2+3\delta s_1}_{n+1}\mathscr{A}_{n}.\label{E3.64}
\end{align}
Using \eqref{E3.43}, \eqref{E3.64} together with \eqref{E3.42}, we deduce
\begin{align*}
|\mathfrak{L}^{-1}_{N_{n+1}}(\epsilon,\lambda,u_n)|_{s_1}|\mathscr{R}'|_{s_1}\leq1/2.
\end{align*}
Combining this with Lemma \ref{lemma2} yields
\begin{align}
|\mathfrak{L}^{-1}_{N_{n+1}}(\epsilon,\lambda,u_n+\tilde{h}_{n+1})|_{s_1}\stackrel{\eqref{E1.33}}{\leq}&
2|\mathfrak{L}^{-1}_{N_{n+1}}(\epsilon,\lambda,u_n)|_{s_1}\stackrel{\eqref{E3.43}}{\leq}2N^{\tau_2+\delta s_1}_{n+1},\label{E3.65}\\
|\mathfrak{L}^{-1}_{N_{n+1}}(\epsilon,\lambda,u_n+\tilde{h}_{n+1})|_{s_1}\stackrel{\eqref{E1.92}}{\leq}&C(s_2)
(|\mathfrak{L}^{-1}_{N_{n+1}}(\epsilon,\lambda,u_n)|_{s_2}
+|\mathfrak{L}^{-1}_{N_{n+1}}(\epsilon,\lambda,u_n)|^2_{s_1}|\mathscr{R}'|_{s_2})\nonumber\\
\stackrel{\eqref{E3.43},\eqref{E3.64},(F5)_n,\eqref{E3.34},\delta=\frac{1}{4},\eqref{E3.41}}{\leq}&\tilde{C}(s_2)N^{\tau_2+\delta s_2}_{n+1}.\label{E3.66}
\end{align}
Therefore the implicit function theorem establishes $\tilde{h}_{n+1}\in C^1(\mathcal{B}(\mathscr{D}_{n+1},2N^{-\sigma}_{n+1});H_{N_{n+1}})$,
 which then infers
\begin{equation*}
\partial_{\lambda}\mathscr{Q}_{n+1}(\epsilon,\lambda,\tilde{h}_{n+1})+
\mathrm{D}_{h}\mathscr{Q}_{n+1}(\epsilon,\lambda,\tilde{h}_{n+1})\partial_{\lambda}\tilde{h}_{n+1}=0.
\end{equation*}
Consequently, using the fact that $u_n$ solves \eqref{E3.21} deduced by \eqref{E3.68} and $(F4)_n$, we deduce
\begin{align*}
\partial_{\lambda}\tilde{h}_{n+1}\stackrel{\eqref{E3.58},\eqref{E3.67}}{=}-\mathfrak{L}^{-1}_{N_{n+1}}(\epsilon,\lambda,u_n+\tilde{h}_{n+1})
\partial_{\lambda}\mathscr{Q}_{n+1}(\epsilon,\lambda,\tilde{h}_{n+1}),
\end{align*}
where
\begin{align*}
\partial_{\lambda}\mathscr{Q}_{n+1}(\epsilon,\lambda,\tilde{h}_{n+1})=&P_{N_{n+1}}((\partial_{\lambda}L_{\lambda})\tilde{h}_{n+1})+
P_{N_{n+1}}P^{\bot}_{N_{n}}(\bar{V}\partial_{\lambda}u_{n})-\epsilon P_{N_{n+1}}P^{\bot}_{N_{n}}((\mathrm{D}F)(u_{n})\partial_{\lambda}u_{n})\\
&-\epsilon P_{N_{n+1}}((\mathrm{D}F)(u_{n}+\tilde{h}_{n+1})\partial_{\lambda}u_{n}-(\mathrm{D}F)(u_n)\partial_{\lambda}u_{n}).
\end{align*}
Remark that $P_{N_{n+1}}P^{\bot}_{N_{n}}(D_{\lambda}u_n)=0$ according to \eqref{E1.75}. To establish \eqref{E3.62}, we have to verify the upper bounds of $\partial_{\lambda}\mathscr{Q}_{n+1}(\epsilon,\lambda,\tilde{h}_{n+1})$ in $s_1,s_2$-norms. It follows from \eqref{E1.75}, \eqref{E1.10}-\eqref{E1.11}, \eqref{E3.54}, \eqref{E1.99}, $(F1)_{n}$, $(F5)_{n}$, $\|\tilde{h}_{n+1}\|_{s_1}\leq\rho_{n+1}<1$, \eqref{E1.86}-\eqref{E1.15}, \eqref{E3.15} and \eqref{E3.59} that
\begin{align*}
\|\partial_{\lambda}\mathscr{Q}_{n+1}(\epsilon,\lambda,\tilde{h}_{n+1})\|_{s_1}\leq &C(s_2)(N^{\tau_2+\delta s_1+2}_{n+1}N^{-(s_2-s_1)}_{n}\mathscr{A}_{n}+N^{-(s_2-s_1)}_{n}\mathscr{A}'_{n}),\\
\|\partial_{\lambda}\mathscr{Q}_{n+1}(\epsilon,\lambda,\tilde{h}_{n+1})\|_{s_2}\leq& C(s_2)(N^{\tau_2+\delta s_1+2}_{n+1}\mathscr{A}_{n}+\mathscr{A}'_{n}),
\end{align*}
which give rise to
\begin{align*}
\|\partial_{\lambda}\tilde{h}_{n+1}\|_{s_1}\stackrel{\eqref{E1.25},\eqref{E3.65}}{\leq}&C'(s_2)N^{\tau_2+\delta s_1}_{n+1}(N^{\tau_2+\delta s_1+2}_{n+1}N^{-(s_2-s_1)}_{n}\mathscr{A}_{n}+N^{-(s_2-s_1)}_{n}\mathscr{A}'_{n})\\
\stackrel{\eqref{E3.41},\eqref{E3.34},(F5)_{n},\delta=1/4}\leq &N^{-1}_{n+1},
\end{align*}
\begin{align*}
\|\partial_{\lambda}\tilde{h}_{n+1}\|_{s_1}\stackrel{\eqref{E1.25},\eqref{E3.65}\text{-}\eqref{E3.66},\eqref{E3.41},\eqref{E3.34},\delta=1/4}{\leq}N^{\tau_2+\delta s_1+1}_{n+1}(N^{\tau_2+\delta s_1+2}_{n+1}\mathscr{A}_{n}+\mathscr{A}'_{n}).
\end{align*}
\end{proof}
Define a $C^{\infty}$ cut-off function $\psi_{n+1}:\Lambda\rightarrow[0,1]$ as
\begin{align}\label{E3.69}
\psi_{n+1}:=
\begin{cases}
1\quad\quad&\text{if}\quad\lambda\in\mathcal{B}(\mathscr{D}_{n+1},N^{-\sigma}_{n+1}),\\
0\quad\quad&\text{if}\quad\lambda\in\mathcal{B}(\mathscr{D}_{n+1},2N^{-\sigma}_{n+1}),
\end{cases}
\quad\text{with}\quad |\partial_{\lambda}\psi_{n+1}|\leq CN^{\sigma}_{n+1}.
\end{align}
Then we define ${h}_{n+1}$ as
\begin{align}\label{E3.70}
{h}_{n+1}(\epsilon,\lambda):
=\begin{cases}
\psi_{n+1}\tilde{h}_{n+1}(\epsilon,\lambda)\quad&\text{if}\quad\lambda\in\mathcal{B}(\mathscr{D}_{n+1},2N^{-\sigma}_{n+1}),\\
0\quad&\text{if}\quad\lambda\notin\mathcal{B}(\mathscr{D}_{n+1},2N^{-\sigma}_{n+1}).
\end{cases}
\end{align}
Hence, by \eqref{E3.69}-\eqref{E3.70}, \text{Lemma }\ref{lemma21}, \eqref{E3.59}, \eqref{E3.62}, $(F5)_{n}$, \eqref{E3.42} and $\delta=1/4$, one has that $h_{n+1}\in C^1(\Lambda;H_{N_{n+1}})$ with
\begin{align}
\|h_{n+1}\|_{s_1}&\leq|\psi_{n+1}|\|\tilde{h}_{n+1}\|_{s_1}\leq\|\tilde{h}_{n+1}\|_{s_1}{\leq} N^{-\sigma-1}_{n+1},\label{E3.71}\\
\|h_{n+1}\|_{s_2}&\leq|\psi_{n+1}|\|\tilde{h}_{n+1}\|_{s_2}\leq\|\tilde{h}_{n+1}\|_{s_2}{\leq} K(s_2)N^{2\tau_2+2\delta s_1+1}_{n+1},\label{E3.72}\\
\|\partial_{\lambda}h_{n+1}\|_{s_1}&\leq|\partial_{\lambda}\psi_{n+1}|\|\tilde{h}_{n+1}\|_{s_1}+|\psi_{n+1}|\|\partial_{\lambda}\tilde{h}_{n+1}\|_{s_1}
\leq N^{-1/2}_{n+1}\label{E3.73},\\
\|\partial_{\lambda}h_{n+1}\|_{s_2}&\leq|\partial_{\lambda}\psi_{n+1}|\|\tilde{h}_{n+1}\|_{s_2}+|\psi_{n+1}|\|\partial_{\lambda}\tilde{h}_{n+1}\|_{s_2}
{\leq}K'(s_2)N_{n+1}^{3\tau_2+\frac{5}{4} s_1+4}.\label{E3.74}
\end{align}
Moreover it is clear that $h_{n+1}(0,\lambda)=0$. As a consequence, we define $u_{n+1}(\epsilon,\cdot)\in C^{1}(\Lambda;H_{N_{n+1}})$ as
\begin{align}\label{E3.75}
u_{n+1}:=u_n+h_{n+1},
\end{align}
where $h_{n+1}$ is given by \eqref{E3.70}. Let us check that properties  $(F1)_{n+1}$-$(F5)_{n+1}$ hold. It follows from \eqref{E104}-\eqref{E105}, $(F_2)_n$, \eqref{E3.71} and \eqref{E3.73} that
\begin{align}
\|u_{n+1}\|_{s_1}\leq&\|u_0\|_{s_1}+\sum\limits_{k=1}^{n+1}\|h_{k}\|_{s_1}
{\leq}N^{-\sigma}_0+\sum\limits_{k=1}^{n+1}N_{k}^{-\sigma-1}
\leq\frac{1}{2}+N^{-\sigma}_{1}\sum\limits_{k=1}^{n+1}N^{-1}_{k}\leq1,\label{E3.83}\\
\|\partial_{\lambda}u_{n+1}\|_{s_1}\leq&\|\partial_{\lambda}u_0\|_{s_1}+\sum\limits_{k=1}^{n+1}\|\partial_{\lambda}h_{k}\|_{s_1}\leq12c^{s_1+2}_2\gamma^{-1}N^{\tau_1+s_1+1}_0+\sum\limits_{k=1}^{n+1}N_{k}^{-1/2}\leq \bar{C}\gamma^{-1}N^{\tau_1+s_1+1}_0.\label{E3.84}
\end{align}
Therefore property $(F1)_{n+1}$ holds. It is straightforward that property $(F2)_{n+1}$ holds according to \eqref{E3.71}, \eqref{E3.73} and \eqref{E3.75}. By the definitions of $\psi_{n+1},h_{n+1}$ (see \eqref{E3.69}-\eqref{E3.70}), we have that $h_{n+1}=\tilde{h}_{n+1}$ on $\mathcal{B}(\mathscr{D}_{n+1},N^{-\sigma}_{n+1})$. Moreover $h_{n+1}$ solves equation $\mathscr{Q}_{n+1}(\epsilon,\lambda,h)$ (see \eqref{E3.58}), which leads to that $u_{n+1}$ solves equation $(P_{N_{n+1}})$, namely property $(F4)_{n+1}$. Let us check that property $(F5)_{n+1}$ holds. Indeed, the definitions of $\mathscr{A}_{n+1}$ and $\mathscr{A}'_{n+1}$ establish that
\begin{align*}
\mathscr{A}_{n+1}\stackrel{\eqref{E3.75}}{\leq}&\mathscr{A}_{n}+\|h_{n+1}\|_{s_2}\stackrel{(F5)_{n},\eqref{E3.72}}{\leq}N^{2\tau_2+2\delta s_1+2}_{n}+K(s_2)N^{2\tau_2+2\delta s_1+1}_{n+1}\stackrel{\eqref{E3.41}}{\leq}N^{{2\tau_2+2\delta s_1+2}}_{n+1},\\
\mathscr{A}'_{n+1}\stackrel{\eqref{E3.75}}{\leq}&\mathscr{A}'_{n}+\|\partial_{\lambda}h_{n+1}\|_{s_2}\stackrel{(F5)_{n},\eqref{E3.74}}{\leq}
N^{4\tau_2+2 s_1+6}_{n}+K'(s_2)N^{3\tau_2+\frac{5}{4} s_1+4}_{n+1}\leq N^{4\tau_2+2 s_1+6}_{n+1}.
\end{align*}
Now, we are denoted to prove property $(F3)_{n}$.
\begin{lemm}
Property $(F3)_{n+1}$ holds.
\end{lemm}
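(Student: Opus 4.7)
The plan is to adapt the proof of Lemma \ref{lemma20} with $u_n$ replaced by the nearby argument $u$, and then transfer the resulting structure back to $u_n$ via the perturbative Lemma \ref{lemma2}. Throughout, fix $\lambda\in\bigcap_{k=1}^{n+1}\mathscr{G}^{0}_{N_k}(u_{k-1})$ and $u$ with $\|u-u_n\|_{s_1}\leq N_{n+1}^{-\sigma}$. First, for every $1\leq p\leq n$ the triangle inequality together with $(F2)_k$ yields the chain estimate
\begin{align*}
\|u-u_{p-1}\|_{s_1}\leq\|u-u_n\|_{s_1}+\sum_{k=p}^{n}\|u_k-u_{k-1}\|_{s_1}\leq N_{n+1}^{-\sigma}+\sum_{k=p}^{n}N_k^{-\sigma-1}\leq N_p^{-\sigma},
\end{align*}
using $N_{n+1}=N_p^{2^{n+1-p}}\geq N_p^2$ and $\sigma\geq 2$.

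Next, one replays Lemma \ref{lemma20} verbatim with $u_n$ replaced by $u$. When $2^{n+1}\leq\chi_0$, Lemma \ref{lemma19} applied to $u$ (admissible since $\|u\|_{s_1}\leq 1$) gives $\mathscr{G}_{\tilde N}(u)=\Lambda$; when $2^{n+1}>\chi_0$, the chain estimate above combined with the inductive hypothesis $(F3)_p$ delivers
\begin{align*}
\bigcap_{k=1}^{n+1}\mathscr{G}^{0}_{N_k}(u_{k-1})\subset\bigcap_{k=1}^{p}\mathscr{G}^{0}_{N_k}(u_{k-1})\subset\mathscr{G}_{N_p}(u).
\end{align*}
Together with Proposition \ref{pro3}, this verifies assumption (A3) of Proposition \ref{pro1} for $\mathcal{A}_{N_{n+1},j_0}(\epsilon,\lambda,u,\theta)$ at every $\theta\in\mathbf{R}$. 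Assumption (A1) follows from the analogue of \eqref{E3.45} (using $\|u\|_{s_1}\leq 1$), while (A2) holds precisely when $\theta\notin\mathscr{B}^0_{N_{n+1}}(j_0;\epsilon,\lambda,u)$. Proposition \ref{pro1} then gives $|\mathcal{A}^{-1}_{N_{n+1},j_0}(\epsilon,\lambda,u,\theta)|_s\leq N_{n+1}^{\tau_2+\delta s}$ for such $\theta$, hence
\begin{align*}
\mathscr{B}_{N_{n+1}}(j_0;\epsilon,\lambda,u)\subset\mathscr{B}^{0}_{N_{n+1}}(j_0;\epsilon,\lambda,u).
\end{align*}

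To finish, one compares the $\mathscr{B}^0$-sets of $u$ and $u_n$. From \eqref{E1.86}, \eqref{E1.66} and $\|u-u_n\|_{s_1}\leq N_{n+1}^{-\sigma}$, we obtain
\begin{align*}
\|\mathcal{A}_{N_{n+1},j_0}(\epsilon,\lambda,u,\theta)-\mathcal{A}_{N_{n+1},j_0}(\epsilon,\lambda,u_n,\theta)\|_0\leq C\epsilon N_{n+1}^{-\sigma}.
\end{align*}
The eigenvalue perturbation inequality \eqref{E3.1} therefore shows that $\mathscr{B}^{0}_{N_{n+1}}(j_0;\epsilon,\lambda,u)$ lies in a $C\epsilon N_{n+1}^{-\sigma}$-neighborhood of $\mathscr{B}^{0}_{N_{n+1}}(j_0;\epsilon,\lambda,u_n)$. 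Since $\lambda\in\mathscr{G}^0_{N_{n+1}}(u_n)$, the latter is covered by $N_{n+1}^{\nu+d+r+5}$ intervals, each of which (thanks to the finer spacing \eqref{E3.7} behind Lemma \ref{lemma15}) has length dominated by $cN_{n+1}^{-(\tau+1)}$; enlarging each by $C\epsilon N_{n+1}^{-\sigma}$ preserves both the count and the measure bound $N_{n+1}^{-\tau}$. This yields $\lambda\in\mathscr{G}_{N_{n+1}}(u)$, which is $(F3)_{n+1}$.

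The main obstacle is this last perturbative transfer: one must guarantee that enlarging the intervals around $\mathscr{B}^{0}_{N_{n+1}}(\cdot;u_n)$ does not exceed the admissible measure $N_{n+1}^{-\tau}$ nor inflate their count beyond $N_{n+1}^{\nu+d+r+5}$. This relies crucially on the super-polynomial gap $\sigma=\tau_2+3\delta s_1+3\gg\tau$ fixed in \eqref{E3.42}, which renders the perturbation $C\epsilon N_{n+1}^{-\sigma}$ negligible compared to $N_{n+1}^{-\tau}$ for every $\epsilon\in[0,\epsilon_0]$.
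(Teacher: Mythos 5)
There is a genuine gap in your final transfer step. The eigenvalue perturbation inequality \eqref{E3.1} tells you that, \emph{at a fixed} $\theta$, the eigenvalues of $\mathcal{A}_{N_{n+1},j_0}(\epsilon,\lambda,u,\theta)$ and of $\mathcal{A}_{N_{n+1},j_0}(\epsilon,\lambda,u_n,\theta)$ differ by at most $C\epsilon N_{n+1}^{-\sigma}$. From this you may conclude that $\mathscr{B}^{0}_{N_{n+1}}(j_0;u)$ is contained in the set where $\mathcal{A}_{N_{n+1},j_0}(u_n,\theta)$ has an eigenvalue of modulus less than $N_{n+1}^{-\tau}+C\epsilon N_{n+1}^{-\sigma}$ (an enlargement of the \emph{eigenvalue threshold}), but \emph{not} that it lies in a $C\epsilon N_{n+1}^{-\sigma}$-neighborhood \emph{in the variable} $\theta$ of $\mathscr{B}^{0}_{N_{n+1}}(j_0;u_n)$. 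Converting an eigenvalue displacement into a $\theta$-displacement requires a lower bound on $|\partial_\theta\hat\mu|=2|\lambda\omega_0\cdot l+\theta|$, which degenerates near turning points; there an eigenvalue can hover in the band $[N_{n+1}^{-\tau},\,N_{n+1}^{-\tau}+C\epsilon N_{n+1}^{-\sigma}]$ over a $\theta$-interval of uncontrolled length. Moreover, even granting the neighborhood claim, the definition \eqref{E3.13} of $\mathscr{G}^0_{N_{n+1}}(u_n)$ only supplies a covering of $\mathscr{B}^{0}_{N_{n+1}}(j_0;u_n)$ by intervals of measure $\leq N_{n+1}^{-\tau}$; the finer spacing \eqref{E3.7} is an artifact of the \emph{proof} of Lemma \ref{lemma15} and is not part of the membership $\lambda\in\mathscr{G}^0_{N_{n+1}}(u_n)$, so you cannot invoke it to absorb the enlargement.

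The paper avoids this issue entirely by never comparing the two $\theta$-bad sets. It keeps the multiscale analysis anchored at $u_n$: assuming $\|\mathcal{A}^{-1}_{N_{n+1},j_0}(\epsilon,\lambda,u_n,\theta)\|_0\leq N^{\tau}_{n+1}$ (i.e.\ $\theta\notin\mathscr{B}^0_{N_{n+1}}(j_0;u_n)$, which is exactly assumption $(\mathrm{A2})$), it applies Proposition \ref{pro1} to the matrix at $u_n$ to get $|\mathcal{A}^{-1}_{N_{n+1},j_0}(u_n,\theta)|_s\leq\tfrac12 N^{\tau_2+\delta s}_{n+1}$, and only then passes to $u$ by writing $\mathcal{A}_{N_{n+1},j_0}(u,\theta)=\mathcal{A}_{N_{n+1},j_0}(u_n,\theta)+\mathcal{R}$ with $|\mathcal{R}|_{s_1-\varrho}\leq C'(s_1)N^{-\sigma}_{n+1}$ and invoking the left-inverse perturbation Lemma \ref{lemma2} in the strong $s$-norms. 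Since $N^{\tau_2+\delta s_1}_{n+1}N^{-\sigma}_{n+1}\leq\tfrac12$ by \eqref{E3.42}, the matrix at $u$ is $N_{n+1}$-good, giving the clean inclusion $\mathscr{B}_{N_{n+1}}(j_0;u)\subset\mathscr{B}^{0}_{N_{n+1}}(j_0;u_n)$ with no interval enlargement whatsoever. If you restructure your argument so that Proposition \ref{pro1} is applied to $\mathcal{A}_{N_{n+1},j_0}(u_n,\theta)$ (for which Lemma \ref{lemma20} already verifies $(\mathrm{A3})$, sparing you the replay with $u$) and the passage to $u$ is done at the level of the inverse via Lemma \ref{lemma2}, the proof closes.
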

\begin{proof}
Firstly, if $\|u-u_n\|_{s_1}\leq N^{-\sigma}_{n+1}$, then we claim that
\begin{align}\label{E3.79}
\|\mathcal{A}^{-1}_{N_{n+1},j_0}(\epsilon,\lambda,u_n,\theta)\|_{0}\leq N^{\tau}_{n+1}
\Rightarrow \mathcal{A}_{N_{n+1},j_0}(\epsilon,\lambda,u,\theta)
\text{ is } N_{n+1}\text{-good},
\end{align}
where $\mathcal{A}(\epsilon,\lambda,u,\theta)$ is defined in \eqref{E1.97}. This implies that
\begin{align*}
\mathscr{B}_{N_{n+1}}(j_0;\epsilon,\lambda,u)\subset\mathscr{B}_{N_{n+1}}^{0}(j_0;\epsilon,\lambda,u_n),\quad \forall j_0\in\Gamma_{+}(\boldsymbol M),\forall \theta\in\mathbf{R}
\end{align*}
by  definitions \eqref{E1.80} and \eqref{E2.53}. Hence, using \eqref{E1.81}, \eqref{E3.13},  we establish
\begin{align*}
&\lambda\in\mathscr{G}_{N_{n+1}}^{0}(u_n)\Rightarrow\lambda\in\mathscr{G}_{N_{n+1}}(u),\quad\text{that is }\\
&\|u-u_n\|_{s_1}\leq N^{-\sigma}_{n+1}\text{ and }\lambda\in\bigcap_{k=1}^{n+1}\mathscr{G}_{N_{k}}^{0}(u_{k-1})\Rightarrow \lambda\in\mathscr{G}_{N_{n+1}}(u).
\end{align*}
Let us prove the claim \eqref{E3.79}. It follows from \eqref{E1.22}, \eqref{E1.14}, \eqref{E1.99} and $(F1)_{n}$ that
\begin{align}
\left|\mathcal{A}_{N_{n+1},j_0}(\epsilon,\lambda,u_n,\theta)-\mathrm{Diag}(\mathcal{A}_{N_{n+1},j_0}(\epsilon,\lambda,u_n,\theta))\right|_{s_1-\varrho}
\leq& C(s_1)(\|V\|_{s_1}+\epsilon\|(\mathrm{D}F)(u_{n})\|_{s_1})\nonumber\\
\leq &C'(V)=:\Upsilon.\label{E3.80}
\end{align}
Combining this with the fact $\|\mathcal{A}^{-1}_{N_{n+1},j_0}(\epsilon,\lambda,u_n,\theta)\|_{0}\leq N^{\tau}_{n+1}$ and Lemma \ref{lemma20} yields that the assumptions $\mathrm{(A1)}$-$\mathrm{(A3)}$ in Proposition \ref{pro1} are satisfied. If $2^{n+1}\leq \chi_0$ $(\mathrm{resp}.~2^{n+1}> \chi_0)$, then we apply Proposition \ref{pro1} to $\mathcal{A}:=\mathcal{A}_{N_{n+1}}(\epsilon,\lambda,u_n,\theta)$ with
\begin{align*}
&\mathfrak{A}:=[-N_{n+1},N_{n+1}]^{\nu}\times\Big(\Big\{j_0+\sum\limits_{p=1}^{r}j_{k}\textbf{w}_k:j_k\in[-N_{n+1},N_{n+1}]\Big\}
\cap\Gamma_{+}(\boldsymbol M)\Big),\\
&N':=N_{n+1} \quad N:=\tilde{N}~(\mathrm{resp}.~N:=N_p).
\end{align*}
Hence, for all $s\in[s_0,s_1-\varrho]$, it follows from  \eqref{E2.43}, \eqref{E1.22}, \eqref{E1.14}, \eqref{E1.99}, and $(F1)_{n}$ that
\begin{align}
|\mathcal{A}^{-1}_{N_{n+1},j_0}(\epsilon,\lambda,u_n,\theta)|_{s}\leq\frac{1}{4}N^{\tau_2}_{n+1}(N^{\delta s}_{n+1}+C(s_1)(\|V\|_{s_1}+\epsilon\|(\mathrm{D}F)(u_n)\|_{s_1}))
\leq\frac{1}{2}N^{\tau_2+\delta s}_{n+1}.\label{E3.81}
\end{align}
Moreover
\begin{align*}
\mathcal{A}_{N_{n+1},j_0}(\epsilon,\lambda,u,\theta):=\mathcal{A}_{N_{n+1},j_0}(\epsilon,\lambda,u_n,\theta)+\mathcal{R} \quad
\end{align*}
with
$\mathcal{R}=\mathcal{A}_{N_{n+1},j_0}(\epsilon,\lambda,u,\theta)-\mathcal{A}_{N_{n+1},j_0}(\epsilon,\lambda,u_n,\theta)$. It is clear that
\begin{align*}
|\mathcal{R}|_{s_1-\varrho}\stackrel{\eqref{E1.97},\eqref{E1.22}}{\leq}C(s_1)\|(\mathrm{D}F)(u)-(\mathrm{D}F)(u_n)\|_{s_1}
\stackrel{\eqref{E1.86},(F1)_{n}}{\leq}C'(s_1)\|u-u_{n}\|_{s_1}{\leq}C'(s_1)N^{-\sigma}_{n+1},
\end{align*}
which carries out
\begin{align*}
|\mathcal{A}^{-1}_{N_{n+1},j_0}(\epsilon,\lambda,u_n,\theta)|_{s_1-\varrho}|\mathcal{R}|_{s_1-\varrho}\stackrel{\eqref{E3.81},\eqref{E3.42}}{\leq}1/2.
\end{align*}
By Lemma \ref{lemma2}, we obtain that, for all $s\in[s_0,s_1-\varrho]$,
\begin{align*}
|\mathcal{A}^{-1}_{N_{n+1},j_0}(\epsilon,\lambda,u,\theta)|_{s}\stackrel{\eqref{E1.33},\eqref{E3.81}}{\leq}N^{\tau_2+\delta s}_{n+1},
\end{align*}
which leads to that $\mathcal{A}_{N_{n+1},j_0}(\epsilon,\lambda,u,\theta)$ is $N_{n+1}$-good.
\end{proof}

\subsection{Measure estimate}\label{sec:3.5}

\begin{lemm}\label{lemma18}
The complementary of the set $\mathscr{U}$ defined in \eqref{E1.9} has that, for some constant $\mathfrak{C}_2>0$,
\begin{align}\label{E3.14}
\mathrm{meas}(\Lambda\setminus \mathscr{U})\leq\mathfrak{C}_{2}\gamma.
\end{align}
\end{lemm}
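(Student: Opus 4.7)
The plan is to decompose the bad set into small resonance layers indexed by $(l,j,p)$, bound the measure of each layer by a one-dimensional sublevel set argument, and then sum using the choice $\tau_1=3\nu+d+1$.

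First, I would write
\begin{align*}
\Lambda\setminus\mathscr{U}\;=\;\bigcup_{\substack{|l|\le N_0\\ |j|\le N_0,\;1\le p\le\mathfrak{d}_j}}\mathscr{R}_{l,j,p},\qquad
\mathscr{R}_{l,j,p}:=\bigl\{\lambda\in\Lambda:\bigl|-(\lambda\omega_0\cdot l)^2+\hat{\lambda}_{j,p}\bigr|<\gamma N_0^{-\tau_1}\bigr\}.
\end{align*}
For $l=0$ the condition reduces to $\hat{\lambda}_{j,p}<\gamma N_0^{-\tau_1}$, but assumption \eqref{E1.84} gives $\hat{\lambda}_{j,p}\ge\kappa_0$, so for $\bar{\gamma}$ chosen smaller than $\kappa_0$ (using $N_0\ge1$) the sets $\mathscr{R}_{0,j,p}$ are empty. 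This disposes of the $l=0$ contribution.

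For $l\neq0$, I would control each $\mathscr{R}_{l,j,p}$ by noting that the function $\varphi_{l,j,p}(\lambda):=-(\lambda\omega_0\cdot l)^2+\hat{\lambda}_{j,p}$ has derivative
\begin{align*}
|\partial_\lambda\varphi_{l,j,p}(\lambda)|\;=\;2\lambda(\omega_0\cdot l)^2\;\ge\;(\omega_0\cdot l)^2\;\stackrel{\eqref{E1.96}}{\ge}\;4\gamma_0^2|l|^{-2\nu},
\end{align*}
since $\lambda\in[1/2,3/2]$. Hence $\mathscr{R}_{l,j,p}$ is contained in the preimage of an interval of length $2\gamma N_0^{-\tau_1}$ under a function with derivative bounded below, giving
\begin{align*}
\mathrm{meas}(\mathscr{R}_{l,j,p})\;\le\;\frac{\gamma N_0^{-\tau_1}|l|^{2\nu}}{2\gamma_0^2}.
\end{align*}

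Finally I would sum: using $\sum_{|l|\le N_0}|l|^{2\nu}\le C N_0^{3\nu}$ and $\sum_{|j|\le N_0}\mathfrak{d}_j\le C\sum_{|j|\le N_0}\|j+\rho\|^{d-r}\le C' N_0^d$, one obtains
\begin{align*}
\mathrm{meas}(\Lambda\setminus\mathscr{U})\;\le\;\frac{C''\gamma}{2\gamma_0^2}\,N_0^{3\nu+d-\tau_1}\;=\;\frac{C''\gamma}{2\gamma_0^2}\,N_0^{-1}\;\le\;\mathfrak{C}_2\gamma,
\end{align*}
by the very choice $\tau_1=3\nu+d+1$ in \eqref{E3.22}. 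There is no serious obstacle; the only subtlety is bookkeeping the polynomial growth of $\mathfrak{d}_j$ and of $|l|^{2\nu}$ against $N_0^{-\tau_1}$, which is precisely why $\tau_1$ was fixed at $3\nu+d+1$.
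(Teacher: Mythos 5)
Your proposal is correct and follows essentially the same route as the paper: the same decomposition into resonance sets $\mathscr{U}_{l,j,p}$, the same use of \eqref{E1.84} to kill the $l=0$ terms, a transversality bound on the derivative via the Diophantine condition \eqref{E1.96}, and the same summation yielding the exponent $3\nu+d-\tau_1=-1$. The only cosmetic difference is that the paper substitutes $\xi=\lambda^2$ to make the function linear before differentiating, while you differentiate directly in $\lambda$ and use $2\lambda\ge1$; both give the identical lower bound $(\omega_0\cdot l)^2\ge 4\gamma_0^2|l|^{-2\nu}$ on the derivative.
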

\begin{proof}
Definition \eqref{E1.9} gives
\begin{align*}
\Lambda\setminus\mathscr{U}\subset\bigcup_{|l|,|j|\leq N_0,1\leq p\leq \mathfrak{d}_j}\mathscr{U}_{l,j,p}\quad\text{with}\quad\mathscr{U}_{l,j,p}:=\left\{\lambda\in\Lambda:|-(\lambda{\omega}_0\cdot l)^2+\hat{\lambda}_{j,p}|\leq\gamma N^{-\tau_1}_0\right\},
\end{align*}
where $\hat{\lambda}_{j,p},p=1,\cdots,\mathfrak{d}_j$ are eigenvalues of $\check{P}_{N_0,0}(\Delta^2+V(\boldsymbol x))_{|\check{H}_{N_0,0}}$. Formula \eqref{E1.84} implies $\hat{\lambda}_{j,p}\geq\kappa_0$. If $\gamma N^{-\tau_1}_0<\kappa_0$, then $\mathscr{U}_{0,j,p}=\emptyset$. For $l\neq0$, letting $\xi=\lambda^2$, we have
\begin{align*}
\mathfrak{f}_{l,j,p}(\xi)=\xi(\omega_0\cdot l)^2-\hat{\lambda}_{j,p}.
\end{align*}
Using \eqref{E1.96} and $|l|\leq N_0$, we deduce
\begin{align*}
\partial_{\xi}\mathfrak{f}_{l,j,p}(\xi)=(\omega_0\cdot l)^2\geq4\gamma^2_0N^{-2\nu}_0.
\end{align*}
Thus one has
\begin{align*}
|\xi_1-\xi_2|\leq\frac{2\gamma N^{-\tau_1}_0}{4\gamma^2_0N^{-2\nu}_0}=\frac{\gamma}{2\gamma^2_0}N^{-\tau_1+2\nu}_0,
\end{align*}
which carries out
\begin{align*}
\mathrm{meas}(\mathscr{U}_{l,j,p})\leq\frac{\gamma}{2\gamma^2_0}N^{-\tau_1+2\nu}_0.
\end{align*}
Thus, if $\tau_1-3\nu-d\geq1$, then the following holds:
\begin{align*}
\mathrm{meas}(\Lambda\setminus\mathscr{U})\leq&\sum\limits_{|l|,|j|\leq N_0,p\leq \mathfrak{d}_j}\mathrm{meas}(\mathscr{U}_{l,j,p})
\leq\frac{\gamma}{2\gamma^2_0}N^{-\tau_1+2\nu}_0\mathcal{C}N^{\nu+d}_0=O(\frac{\gamma}{2\gamma^2_0}N^{-1}_0).
\end{align*}
\end{proof}
Finally, for $\epsilon_0$ small enough, we choose
\begin{align}\label{E3.78}
\gamma=\epsilon^{\frac{1}{s_2+1}}_0,\quad N_0=32\gamma^{-1}
\end{align}
to guarantee that \eqref{E3.15} is satisfied, and that $16\gamma^{-1}\geq\max\{\kappa_0^{-\frac{1}{\tau}}+1,2\|\rho\|\}$ (recall Lemma \ref{lemma19} and \eqref{E2.52}). To apply Proposition \ref{pro2} to $\mathscr{G}_{N_{k}}^{0}(u_{k-1}),k\geq1$,  we have to check
\begin{align}\label{E3.76}
\epsilon\kappa^{-1}_{0}(\|\mathcal{T}''_{N_{k}}(u_{k-1})\|_0+\|\partial_{\lambda}\mathcal{T}''_{N_{k}}(u_{k-1})\|_0)\leq c~(\text{recall} \eqref{E3.6}),
\end{align}
where $\mathcal{T}''(u)$ is defined in \eqref{E1.100}. It is easy that
\begin{align*}
\|\mathcal{T}''_{N_{k}}(u_{k-1})\|_0\stackrel{\eqref{E1.66}}{\leq}|\mathcal{T}''_{N_{k}}(u_{k-1})|_{s_0}\stackrel{\eqref{E1.22},\eqref{E1.14}}{\leq} C(s_0)(1+\|u_{k-1}\|_{s_0+\rho})\stackrel{\eqref{E3.34},(F1)_{k-1}}\leq C'(s_0),
\end{align*}
where $ \varrho=(2\nu+d+r+1)/2$ (recall Lemma \eqref{lemma11}). Moreover
\begin{align*}
\|\partial_{\lambda}\mathcal{T}''_{N_{k}}(u_{k-1})\|_0\stackrel{\eqref{E1.66},\eqref{E1.22},\eqref{E1.14}}{\leq} C(s_0)(1+\|\partial_{\lambda}u_{k-1}\|_{s_0+\rho})\stackrel{(F1)_{k-1}}\leq C'(s_0)N^{\tau_2+s_1+1}\gamma^{-1}.
\end{align*}
Hence, by \eqref{E3.15}, we get \eqref{E3.76}. Consequently, the complement of $\mathscr{D}_{\epsilon}$ in $\Lambda $ has measure
\begin{align*}
\mathrm{meas}\left(\mathscr{D}^{c}_{\epsilon}\right)\stackrel{\eqref{E3.17},\eqref{E3.20}}{=}&\mathrm{meas}
\left(\bigcup_{k=1}^{n}(\mathscr{U}_{N_{k}}(u_{k-1}))^{c}\cup\bigcup_{k=1}^{n}(\mathscr{G}_{N_{k}}^{0}(u_{k-1}))^{c}\cup\mathscr{U}^{c}\right)\\
\leq&\sum\limits_{k\geq1}\mathrm{meas}\left((\mathscr{U}_{N_{k}}(u_{k-1}))^{c}\right)+\sum\limits_{k\geq1}\mathrm{meas}\left((\mathscr{G}_{N_{k}}^{0}(u_{k-1}))^{c}\right)
+\mathrm{meas}\left(\mathscr{U}^{c}\right)\\
\stackrel{\eqref{E3.10}, \eqref{E3.77},\eqref{E3.14}}{\leq}&\sum\limits_{k\geq1}N^{-1}_{k}+\mathfrak{C}_1
\sum\limits_{k\geq1}N^{-1}_{k}+\mathfrak{C}_2\gamma\stackrel{\eqref{E3.78}}{\leq}\tilde{\mathfrak{C}}\epsilon^{\frac{1}{s_1+1}}_0=O(\gamma).
\end{align*}

\section{Appendix}\label{sec:4}

\subsection{Proof of lemma \ref{lemma3}}
Before proving the lemma,  we have to give some definitions.  We call that the site $j\in{E}$ is regular if $|-(\lambda{\omega}_0\cdot l+\theta)^2+\lambda^2_j+m|\geq\Theta$, where $A^{j}_{j}=(-(\lambda{\omega}_0\cdot l+\theta)^2+\lambda^2_j+m)\mathrm{I}_{\mathfrak{d}_j}$.
Otherwise $j$ is singular.
Let ${R},{S}$ denote
the following sets
\begin{align*}
{R}:=\{j\in{E}~|~j~\text{is}~\text{regular}\}, \quad{S}:=\{j\in{E}~|~j~\text{is}~\text{singular}\}.
\end{align*}
It is straightforward that ${E}={R}+{S}$. For fixed $l\in\mathbf{Z}^{\nu},$ $\theta\in\mathbf{R}$, let $A$ represent the following linear operator:
\begin{equation*}
(-{(\lambda{\omega}_0\cdot l+\theta)}^2)\mathrm{I}_{\mathfrak{d}_j}+\check{P}_{N_0,j_0}(\Delta^2+V(\boldsymbol x))_{|\check{H}_{N_0,j_0}}.
\end{equation*}
Abusing the notations, we wtite $A^{{E}}_{{E}}:=A,u_{{E}}:=u,h_{{E}}:=h$, where $A^{E}_{E}\in\mathcal{M}^{E}_{E},u_{{E}},h_{E}\in H^s_{E}$. Consider the Cramer system
\begin{align}\label{E1.26}
A^{{E}}_{{E}}u_{{E}}=h_{{E}}.
\end{align}
\begin{rema}
We choose $\Theta$ satisfying $\Theta^{-1}\|V\|_{s_0+\nu+r+\rho}\leq \mathfrak{c}_1(s_0)$ for some constant $\mathfrak{c}_1(s_0)>0$.
\end{rema}
\begin{proof}

Denote ${E}:=\{~j=j_0+\sum^{r}_{p=1}j_p\mathrm{\textbf{w}}_p|~j_p\in[-N,N]~\}\cap \Gamma_{+}(\boldsymbol M)$.
\\
\textbf{The~first~reduction:} There exist $M^{{E}}_{{R}},N^{{E}}_{{R}}\in\mathcal{M}^{{E}}_{{R}}$  such that
\begin{align*}
A^{{E}}_{{E}}u_{{E}}=h_{{E}} \Rightarrow u_{{R}}+M^{{E}}_{{R}}u_{{E}}=N^{{E}}_{{R}}h_{{E}}.
\end{align*}
Moreover, $M^{{E}}_{{R}},N^{{E}}_{{R}}$ satisfy \eqref{E1.30}-\eqref{E1.32}.

In fact, for $j\in {E}$ is regular, we obtain
\begin{align*}
A^{j}_{j}u_j+A^{{E}\backslash\{j\}}_ju_{{E}\backslash\{j\}}=h_j\Rightarrow u_j+ (A^{j}_{j})^{-1}A^{{E}\backslash\{j\}}_ju_{{E}\backslash\{j\}}= (A^{j}_{j})^{-1}h_j.
\end{align*}
From \eqref{E1.23}-\eqref{E1.24}, \eqref{E1.22}, it yields that
\begin{align}
|(A^{j}_{j})^{-1}A^{{E}\backslash\{j\}}_j|_{s_0+\nu+r}\leq& C|(A^{j}_{j})^{-1}|_{s_0+\nu+r}|A^{{E}\backslash\{j\}}_j|_{s_0+\nu+r}\leq C|(A^{j}_{j})^{-1}|_{s_0+\nu+r}\|{V}\|_{s_0+\nu+r+\varrho}\nonumber\\
\leq& C\Theta^{-1}\|{V}\|_{s_0+\nu+r+\varrho},\label{E1.27}\\
|(A^{j}_{j})^{-1}A^{{E}\backslash\{j\}}_j|_{s+\nu+r}\leq& \frac{1}{2}|(A^{j}_{j})^{-1}|_{s+\nu+r}|A^{{E}\backslash\{j\}}_j|_{s_0}+\frac{C(s)}{2}|(A^{j}_{j})^{-1}|_{s_0}|A^{{E}\backslash\{j\}}_j|_{s+\nu+r}\nonumber\\
\leq&C(s)(\Theta^{-1}\|{V}\|_{s_0+\nu+r+\varrho}+\Theta^{-1}\|{V}\|_{s+\nu+r+\varrho}).\label{E1.28}
\end{align}
Define
\begin{equation}\label{E1.29}
M^{j'}_{j}:=
\begin{cases}
((A^{j}_{j})^{-1}A^{{E}\backslash\{j\}}_j)^{j'}_j,\quad &j'\in {E}\backslash\{j\},\\
0,\quad&j'=j
\end{cases}
\quad\text{and}\quad
N^{j'}_{j}:=
\begin{cases}
((A^{{j}}_{{j}})^{-1})^{j'}_j,\quad &j'={j},\\
0,\quad&j'\in {E}\backslash\{j\}.
\end{cases}
\end{equation}
Then \eqref{E1.26} becomes
\begin{align*}
u_j+M^{{E}}_ju_{{E}}=N^{{E}}_jh_{{E}}.
\end{align*}
In addition, it follows from \eqref{E1.21}, \eqref{E1.27}-\eqref{E1.29} and the definition of the set ${R}$ that
\begin{align}
&|M^{{E}}_{{R}}|_{s_0}\leq K_1|M^{{E}}_j|_{s_{0}+\nu+r}\leq C'\Theta^{-1}\|{V}\|_{s_0+\nu+r+\varrho},\label{E1.30}\\
&|M^{{E}}_{{R}}|_{s}\leq K_1|M^{{E}}_j|_{s+\nu+r}\leq C'(s)(\Theta^{-1}\|V\|_{s_0+\nu+r+\varrho}+\Theta^{-1}\|{V}\|_{s+\nu+r+\varrho}),\label{E1.31}\\
&|N^{{E}}_{{R}}|_{s_0}\leq K_1|N^{{E}}_j|_{s_{0}+\nu+r}\leq K_1\Theta^{-1},\quad|N^{{E}}_{{R}}|_{s}\leq K_1|N^{{E}}_j|_{s+\nu+r}\leq K_1\Theta^{-1}.\label{E1.32}
\end{align}
\\
\textbf{The~second~reduction:} For $\Theta^{-1}\|{V}\|_{s_0+\nu+r+\rho}\leq \mathfrak{c}_1(s_0)$ small enough, there exist $\tilde{{M}}^{{S}}_{{R}}\in\mathcal{M}^{{S}}_{{R}}$, $\tilde{N}^{{E}}_{{R}}\in\mathcal{M}^{{E}}_{{R}}$ satisfying \eqref{E1.38}-\eqref{E1.40} such that
\begin{align}\label{E1.41}
A^{{E}}_{{E}}u_{{E}}=h_{{E}} \Rightarrow u_{{R}}=\tilde{{M}}^{{S}}_{{R}} u_{{S}}+\tilde{N}^{{E}}_{{R}} h_{{E}}.
\end{align}

In fact, since ${E}={R}+{S}$, then
\begin{equation*}
u_{{R}}+M^{{E}}_{{R}}u_{{E}}=N^{{E}}_{{R}}h_{{E}}\Rightarrow
u_{{R}}+M^{{R}}_{{R}}u_{{R}}+M^{{S}}_{{R}}u_{{S}}=N^{{E}}_{{R}}h_{{E}},
\end{equation*}
namely,
\begin{equation}\label{E1.35}
(\mathrm{I}^{{R}}_{{R}}+M^{{R}}_{{R}})u_{{R}}+M^{{S}}_{{R}}u_{{S}}=N^{{E}}_{{R}}h_{{E}}.
\end{equation}
For $\Theta^{-1}\|{V}\|_{s_0+\nu+r+\rho}\leq \mathfrak{c}_1(s_0)$ small enough, formula \eqref{E1.30} shows
\begin{equation*}
|(\mathrm{I}^{{R}}_{{R}})^{-1}|_{s_0}|M^{{R}}_{{R}}|_{s_0}\leq C'\Theta^{-1}\|{V}\|_{s_0+\nu+r+\varrho}\leq1/2.
\end{equation*}
 Then it follows from Lemma \ref{lemma2} that $(\mathrm{I}^{{R}}_{{R}}+M^{{R}}_{{R}})$ is invertible with
\begin{align}
&|(\mathrm{I}^{{R}}_{{R}}+M^{{R}}_{{R}})^{-1}|_{s_0}\stackrel{\eqref{E1.33}}{\leq}2,\label{E1.36}\\
|(\mathrm{I}^{{R}}_{{R}}+M^{{R}}_{{R}})^{-1}|_{s}\stackrel{\eqref{E1.92}}{\leq}&C(s)
(|(\mathrm{I}^{{R}}_{{R}})^{-1}|_{s}+|(\mathrm{I}^{{R}}_{{R}})^{-1}|_{s_0}|M^{{R}}_{{R}}|_s)\leq C''(s)(1+\Theta^{-1}\|{V}\|_{s+\nu+r+\varrho})\label{E1.37}.
\end{align}
As a consequence equation \eqref{E1.35} is reduced to
\begin{equation*}
u_{{R}}=\tilde{{M}}^{{S}}_{{R}} u_{{S}}+\tilde{N}^{{E}}_{{R}} h_{{E}}
\end{equation*}
where
\begin{equation*}
\tilde{M}^{{S}}_{{R}}=-(\mathrm{I}^{{R}}_{{R}}+M^{{R}}_{{R}})^{-1}M^{{S}}_{{R}},
\quad\tilde{N}^{{E}}_{{R}}=(\mathrm{I}^{{R}}_{{R}}+M^{{R}}_{{R}})^{-1}N^{{E}}_{{R}}.
\end{equation*}
For $\Theta^{-1}\|{V}\|_{s_0+\nu+r+\rho}\leq \mathfrak{c}_1(s_0)$ small enough, applying \eqref{E1.23}-\eqref{E1.24}, \eqref{E1.30}-\eqref{E1.32} and \eqref{E1.36}-\eqref{E1.37} yields that
\begin{align}
|\tilde{M}^{{S}}_{{R}}|_{s_0}&\leq 2C|M^{{S}}_{{R}}|_{s_0}\leq C,\label{E1.38}\\
|\tilde{M}^{{S}}_{{R}}|_{s}&\leq\frac{1}{2}|(\mathrm{I}^{{R}}_{{R}}+M^{{R}}_{{R}})^{-1}|_{s_0}
|M^{{S}}_{{R}}|_{s}+\frac{C(s)}{2}|(\mathrm{I}^{{R}}_{{R}}+M^{{R}}_{{R}})^{-1}|_{s}
|M^{{S}}_{{R}}|_{s_0}\nonumber\\
&\leq C'''(s)(1+\Theta^{-1}\|{V}\|_{s+\nu+r+\varrho}),\label{E1.39}\\
|\tilde{N}^{{E}}_{{R}}|_{s_0}&\leq C'\Theta^{-1},\quad |\tilde{N}^{{E}}_{{R}}|_{s}\leq C'''(s)\Theta^{-1}(1+\Theta^{-1}\|{V}\|_{s+\nu+r+\varrho}).\label{E1.40}
\end{align}\\
\textbf{The~third~reduction:} There exist $\hat{M}^{{S}}_{{E}}\in\mathcal{M}^{{S}}_{{E}}$, $\hat{N}^{{E}}_{{E}}\in\mathcal{M}^{{E}}_{{E}}$ satisfying \eqref{E1.46}-\eqref{E1.49} such that
\begin{align*}
A^{{E}}_{{E}}u_{{E}}=h_{{E}} \Rightarrow \hat{M}^{{S}}_{{E}}u_{{S}}=\hat{N}^{{E}}_{{E}}h_{{E}}.
\end{align*}
Furthermore, $((A^{{E}}_{{E}})^{-1})^{{E}}_{{S}}$ is a left inverse of $\hat{M}^{{S}}_{{E}}$.\\

In fact, since ${E}={R}+{S}$, for $j\in{E}$ is regular, this holds:
\begin{align}
(A^{{E}}_{{E}}u_{{E}})_{j}=h_{j}&\Rightarrow(A^{{R}}_{{E}}u_{{R}}
+A^{{S}}_{{E}}u_{{S}})_{j}=h_{j}\stackrel{\eqref{E1.41}}{\Rightarrow}
(A^{{R}}_{{E}}(\tilde{{M}}^{{S}}_{{R}} u_{{S}}+\tilde{N}^{{E}}_{{R}} h_{{E}})+A^{{S}}_{{E}}u_{{S}})_{j}=h_{j}\nonumber\\
&\Rightarrow(\hat{M}^{{S}}_{{E}}u_{{S}})_{j}=(\hat{N}^{{E}}_{{E}}h_{{E}})_{j},\label{E1.42}
\end{align}
where
\begin{equation}\label{E1.43}
\hat{M}^{{S}}_{{E}}=A^{{R}}_{{E}}\tilde{{M}}^{{S}}_{{R}}+A^{{S}}_{{E}},\quad
\hat{N}^{{E}}_{{E}}=\mathrm{I}^{{E}}_{{E}}-A^{{R}}_{{E}}\tilde{N}^{{E}}_{{R}}.
\end{equation}
Since $j\in {R}$, formula \eqref{E1.42} infers that $\hat{M}^{{S}}_{j}=0$, which then gives that $\hat{N}^{{E}}_{j}=0$. Therefore
\begin{align}
|A^{{E}}_{{E}}|_{s_0}&=|A^{{E}}_{{S}}|_{s_0}\leq|\mathrm{Diag}(A^{{E}}_{{S}})|_{s_0}
+|A^{{E}}_{{S}}-\mathrm{Diag}(A^{{E}}_{{S}})|_{s_0}\leq\Theta+C\|{V}\|_{s_0+\varrho},\label{E1.44}\\
|A^{{E}}_{{E}}|_{s}&=|A^{{E}}_{{S}}|_{s}\leq|\mathrm{Diag}(A^{{E}}_{{S}})|_{s}
+|A^{{E}}_{{S}}-\mathrm{Diag}(A^{{E}}_{{S}})|_{s}\leq\Theta+C(s)\|{V}\|_{s+\varrho}.\label{E1.45}
\end{align}
It follows from \eqref{E1.23}-\eqref{E1.24}, \eqref{E1.38}-\eqref{E1.40} and \eqref{E1.43}-\eqref{E1.45} that
\begin{align}
&|\hat{M}^{{S}}_{{E}}|_{s_0}=|\hat{M}^{{S}}_{{S}}|_{s_0}\leq C|A^{{R}}_{{S}}|_{s_0}|\tilde{{M}}^{{S}}_{{R}}|_{s_0}+|A^{{S}}_{{S}}|_{s_0}\leq C'(\Theta+\|{V}\|_{s_0+\varrho}),\label{E1.46}\\
&|\hat{M}^{{S}}_{{E}}|_{s}=|\hat{M}^{{S}}_{{S}}|_{s}\leq \frac{1}{2}|A^{{R}}_{{S}}|_{s_0}|\tilde{{M}}^{{S}}_{{R}}|_{s}+\frac{C(s)}{2}
|A^{{R}}_{{S}}|_{s}|\tilde{{M}}^{{S}}_{{R}}|_{s_0}+|A^{{S}}_{{S}}|_{s}\leq C(s,\Theta)(1+\|{V}\|_{s+\nu+r+\varrho}),\label{E1.47}\\
&|\hat{N}^{{E}}_{{E}}|_{s_0}=|\hat{N}^{{E}}_{{S}}|_{s_0}\leq|\mathrm{I}^{{E}}_{{S}}|_{s_0}+C|A^{{R}}_{{S}}|_{s_0}
|\tilde{N}^{{E}}_{{R}}|_{s_0}\leq C',\label{E1.48}\\
&|\hat{N}^{{E}}_{{E}}|_{s}=|\hat{N}^{{E}}_{{S}}|_{s}\leq|\mathrm{I}^{{E}}_{{S}}|_{s_0}+\frac{1}{2}|A^{{R}}_{{S}}|_{s_0}
|\tilde{N}^{{E}}_{{R}}|_{s}+\frac{C(s)}{2}|A^{{R}}_{{S}}|_{s}
|\tilde{N}^{{E}}_{{R}}|_{s_0}\leq C(s,\Theta)(1+\Theta^{-1}\|{V}\|_{s+\nu+r+\varrho}).\label{E1.49}
\end{align}
In addition, by \eqref{E1.43}, it is obvious that
\begin{equation*}
((A^{{E}}_{{E}})^{-1})^{{E}}_{{S}}\hat{M}^{{S}}_{{E}}=((A^{{E}}_{{E}})^{-1})^{{S}}_{{S}}\hat{M}^{{S}}_{{S}}=
((A^{{E}}_{{E}})^{-1})^{{S}}_{{S}}(A^{{R}}_{{S}}\tilde{{M}}^{{S}}_{{R}}+A^{{S}}_{{S}})=\mathrm{I}^{{S}}_{{S}}.
\end{equation*}
This shows that $((A^{{E}}_{{E}})^{-1})^{{E}}_{{S}}$ is a left inverse of $\hat{M}^{{S}}_{{E}}$.

If $\delta<1$, then there exists some constant $C(r)>0$ such that ${S}$ (the set of singular sites) admits a partition with
\begin{equation}\label{E1.60}
\mathrm{diam}(\Omega_\alpha)\leq LB\leq B^{1+C(r)}=N^{\frac{\delta}{2}},\quad\mathrm{d}(\Omega_{\alpha},\Omega_{\beta})>N^{\frac{\delta}{2(1+C(r))}}, \quad\forall\alpha\neq\beta.
\end{equation}
Therefore we have the following result:
\\
\textbf{The~final~reduction:} Define $X^{{S}}_{{E}}\in\mathcal{M}^{{S}}_{{E}}$ by
\begin{equation}\label{E1.63}
X^{j}_{j'}:=
\begin{cases}
\hat{M}^{j}_{j'}\quad&\text{if}\quad(j,j')\in\bigcup_{\alpha}(\Omega_{\alpha}\times\hat{\Omega}_{\alpha}),\\
0&\text{if}\quad(j,j')\notin\bigcup_{\alpha}(\Omega_{\alpha}\times\hat{\Omega}_{\alpha}),
\end{cases}
\end{equation}
where $\hat{\Omega}_{\alpha}:=\left\{j\in{E}: \mathrm{d}(j,\Omega_{\alpha})\leq\frac{1}{4}N^{\frac{\delta}{2(1+C(r))}}\right\}.$
The definition of $\hat{\Omega}_{\alpha}$ together with \eqref{E1.60} may indicate $\hat{\Omega}_{\alpha}\cap\hat{\Omega}_{\beta}=\emptyset,~\forall\alpha\neq\beta$.

\textbf{Step1}: Let us claim that $X^{{S}}_{{E}}\in\mathcal{M}^{{S}}_{{E}}$ has a left inverse $Y^{{E}}_{{S}}\in\mathcal{M}^{{E}}_{{S}}$ with
\begin{align}\label{E1.68}
\|Y^{{E}}_{{S}}\|_{0}\leq2N^{\tau}.
\end{align}
Define $Z^{{S}}_{{E}}:=\hat{M}^{{S}}_{{E}}-X^{{S}}_{{E}}$, which then gives that $X^{{S}}_{{E}}=\hat{M}^{{S}}_{{E}}-Z^{{S}}_{{E}}$. Definition \eqref{E1.63} implies that
\begin{align*}
Z^{j}_{j'}=0\quad\text{if}\quad \mathrm{d}(j,j')\leq{N^{\frac{\delta}{2(1+C(r))}}}/{4}.
\end{align*}
Combining this with \eqref{E1.61}, \eqref{E1.47}, \eqref{E1.99}, for all $s_1\geq s_0+\nu+r+\varrho$, we obtain
\begin{align}
|Z^{S}_{E}|_{s_0}\leq&(N^{\frac{\delta}{2(1+C(r))}}/4)^{-(s_1-\nu-r-\varrho-s_0)}|Z^{S}_{E}|_{s_1-\nu-r-\varrho}
\leq4^{s_1}(N^{\frac{\delta}{2(1+C(r))}})^{-(s_1-\nu-r-\varrho-s_0)}|\hat{M}^{{S}}_{{E}}|_{s_1-\nu-r-\varrho}\nonumber\\
\leq&4^{s_1}(N^{\frac{\delta}{2(1+C(r))}})^{-(s_1-\nu-r-\varrho-s_0)}C(s_1,\Theta)(1+\|{V}\|_{s_1})\quad\\
\leq& C'(s_1,\Theta)(N^{\frac{\delta}{2(1+C(r))}})^{-(s_1-\nu-r-\varrho-s_0)},\label{E1.64}\\
|Z^{S}_{E}|_{s}&\leq|\hat{M}^{{S}}_{{E}}|_{s}\leq C(s,\Theta)(1+\|{V}\|_{s+\nu+r+\varrho}).\label{E1.65}
\end{align}
%
In addition, for $N\geq \tilde{N}(s_1,V)$ large enough and $s_1>\frac{2(1+C(r))}{\delta}\tau+(s_0+\nu+r+\varrho)$, formulae  \eqref{E1.66} and \eqref{E1.64} establish
\begin{align*}
\|((A^{{E}}_{{E}})^{-1})^{{E}}_{{S}}\|_{0}\|Z^{S}_{E}\|_{0}
\leq\|((A^{{E}}_{{E}})^{-1})^{{E}}_{{S}}\|_{0}|Z^{S}_{E}|_{s_0}
\leq C(s_1,\Theta)N^{\tau}(N^{\frac{\delta}{2(1+C(r))}})^{-(s_1-\nu-r-\varrho-s_0)}\leq1/2.
\end{align*}
%
Lemma \ref{lemma2} verifies that $X^{S}_{E}$ has a left inverse $Y^{E}_{S}\in\mathcal{M}^{E}_{S}$ with
\begin{align*}
\|Y^{E}_{S}\|_{0}\stackrel{\eqref{E1.34}}{\leq} 2\|((A^{{E}}_{{E}})^{-1})^{{E}}_{{S}}\|_{0}\leq 2N^{\tau}.
\end{align*}

\textbf{Step2}: Define $\tilde{Y}^{E}_{S}$ by
\begin{equation}\label{E1.67}
\tilde{Y}^{j'}_{j}:=
\begin{cases}
Y^{j'}_{j}\quad&\text{if}\quad(j',j)\in\bigcup_{\alpha}(\Omega_{\alpha}\times\hat{\Omega}_{\alpha})\\
0&\text{if}\quad(j',j)\notin\bigcup_{\alpha}(\Omega_{\alpha}\times\hat{\Omega}_{\alpha}).
\end{cases}
\end{equation}
If the fact $({Y}^{E}_{S}-\tilde{Y}^{E}_{S})X^{S}_{E}=0$ holds, then $\tilde{Y}^{E}_{S}$ is a left inverse of $X^{S}_{E}$ with
\begin{align}\label{E1.69}
|\tilde{Y}^{E}_{S}|_{s}\leq C(s)N^{\frac{\delta}{2}(s+\nu+r)+\tau}.
\end{align}
Let us prove above fact. For $j\in {S}=\bigcup_{\alpha}\Omega_{\alpha}$, there is $\alpha$ such that $j\in\Omega_{\alpha}$. Moreover, since $({Y}^{E}_{S}-\tilde{Y}^{E}_{S})^{j''}_{j}=Y^{j''}_{j}-\tilde{Y}^{j''}_{j}=0$ {for} $j''\in\hat{\Omega}_{\alpha}$, we get
\begin{align*}
(({Y}^{E}_{S}-\tilde{Y}^{E}_{S})X^{S}_{E})^{j'}_{j}=\sum\limits_{j''\notin\hat{\Omega}_{\alpha}}({Y}^{E}_{S}-\tilde{Y}^{E}_{S})^{j''}_{j}X^{j'}_{j''}.
\end{align*}

If $j'\in\Omega_{\alpha}$, then definition \eqref{E1.63} implies that $X^{j'}_{j''}=0$, which shows that $(({Y}^{E}_{S}-\tilde{Y}^{E}_{S})X^{S}_{E})^{j'}_{j}=0$.

If $j'\in\Omega_{\beta}$ with $\alpha\neq\beta$, for $j''\notin\hat{\Omega}_{\beta}$, then $X^{j'}_{j''}=0$ owing to \eqref{E1.63}. As a consequence
\begin{align*}
(({Y}^{E}_{S}-\tilde{Y}^{E}_{S})X^{S}_{E})^{j'}_{j}&=\sum\limits_{j''\in\hat{\Omega}_{\beta}}({Y}^{E}_{S}-\tilde{Y}^{E}_{S})^{j''}_{j}X^{j'}_{j''}
\stackrel{\eqref{E1.67}}{=}\sum\limits_{j''\in\hat{\Omega}_{\beta}}{Y}^{j''}_{j}X^{j'}_{j''}
\stackrel{\eqref{E1.63}}{=}\sum\limits_{j''\in E}{Y}^{j''}_{j}X^{j'}_{j''}\\
&=(Y^{E}_{S}X^{S}_{E})^{j'}_{j}=(\mathrm{I}^{S}_{S})^{j'}_{j}=0.
\end{align*}
Since $\mathrm{diam}(\hat{\Omega}_{\alpha})<2N^{\frac{\delta}{2}}$ (see \eqref{E1.60}), by definition \eqref{E1.67}, we obtain that $\tilde{Y}^{j'}_{j}=0$ for all $|j-j'|\geq2N^{\frac{\delta}{2}}$. Then \eqref{E1.62} infers
\begin{align*}
|\tilde{Y}^{E}_{S}|_{s}\leq C(s)N^{\frac{\delta}{2}(s+\nu+r)}\|\tilde{Y}^{E}_{S}\|_{0}\stackrel{\eqref{E1.68},\eqref{E1.67}}{\leq}C(s)N^{\frac{\delta}{2}(s+\nu+r)+\tau}.
\end{align*}

\textbf{Step3}: For $N\geq \tilde{N}(s_1,V)$ large enough and $s_1>(1+C(r))(s_0+\nu+r)+\frac{2(1+C(r))}{\delta}\tau+(s_0+\nu+r+\varrho)$, it follows from \eqref{E1.64} and \eqref{E1.69} that
\begin{align*}
|\tilde{Y}^{E}_{S}|_{s_0}|Z^{S}_{E}|_{s_0}\leq CN^{\frac{\delta}{2}(s_0+\nu+r)+\tau}C(s_1,\Theta)(N^{\frac{\delta}{2(1+C(r))}})^{-(s_1-\nu-r-\varrho-s_0)}\leq1/2.
\end{align*}
Combining this with the equality $\hat{M}^{{S}}_{{E}}=X^{{S}}_{{E}}+Z^{{S}}_{{E}}$ and Lemma \ref{lemma2} establishes that $\hat{M}^{{S}}_{{E}}$ has a left inverse ${^{[-1]}}(\hat{M}^{{S}}_{{E}})$ with
\begin{align}
|^{[-1]}(\hat{M}^{{S}}_{{E}})|_{s_0}\stackrel{\eqref{E1.33}}{\leq}&2|\tilde{Y}^{E}_{S}|_{s_0}\stackrel{\eqref{E1.69}}{\leq}2CN^{\frac{\delta}{2}(s_{0}+\nu+r)+\tau},\label{E1.70}\\
|^{[-1]}(\hat{M}^{{S}}_{{E}})|_{s}\stackrel{\eqref{E1.92}}{\leq}&C(s)(|\tilde{Y}^{E}_{S}|_{s}+|\tilde{Y}^{E}_{S}|^2_{s_0}|Z^{{S}}_{{E}}|_{s})
\nonumber\\
\stackrel{\eqref{E1.69},\eqref{E1.65}}{\leq}& C'(s,\Theta)N^{\delta(s_0+\nu+r)+2\tau}(N^{\delta s}+\|{V}\|_{s+\nu+r+\varrho}).\label{E1.71}
\end{align}
Thus the system \eqref{E1.26} is equivalent to
\begin{equation*}
\begin{cases}
u_{R}=\tilde{{M}}^{{S}}_{{R}}({^{[-1]}}(\hat{M}^{{S}}_{{E}}))\hat{N}^{E}_{E}h_{E}+\tilde{N}^{{E}}_{{R}} h_{{E}},\\
u_{S}=({^{[-1]}}(\hat{M}^{{S}}_{{E}}))\hat{N}^{E}_{E}h_{E}.
\end{cases}
\end{equation*}
This implies that
\begin{align*}
((A^{E}_{E})^{-1})^{E}_{R}=\tilde{{M}}^{{S}}_{{R}}({^{[-1]}}\hat{M}^{{S}}_{{E}})\hat{N}^{E}_{E}+\tilde{N}^{{E}}_{{R}},\quad
((A^{E}_{E})^{-1})^{E}_{S}=({^{[-1]}}\hat{M}^{{S}}_{{E}})\hat{N}^{E}_{E}.
\end{align*}
From \eqref{E1.23}, \eqref{E1.38}-\eqref{E1.40}, \eqref{E1.48}-\eqref{E1.49} and \eqref{E1.70}-\eqref{E1.71}, it yields that
\begin{align*}
|((A^{E}_{E})^{-1})^E_{S}|_{s}&\leq C''(s,\Theta)N^{\delta(s_0+\nu+r)+2\tau}(N^{\delta s}+\|{V}\|_{s+\nu+r+\varrho}),\\
|((A^{E}_{E})^{-1})^E_{R}|_{s}&\leq C''(s,\Theta)N^{\delta(s_0+\nu+r)+2\tau}(N^{\delta s}+\|{V}\|_{s+\nu+r+\varrho}).
\end{align*}
The definition of $E$ and \eqref{E1.10} give
$\|{V}\|_{s+\nu+r+\varrho}{\leq}c^{\nu+r+\varrho}_{2}N^{\nu+r+\varrho}\|{V}\|_{s}$. Consequently, for $N\geq \tilde{N}(s_2,V)$ large enough, combining this with \eqref{E1.99} yields
\begin{align*}
|(A^E_{E})^{-1}|_{s}&\leq|((A^{E}_{E})^{-1})^E_{S}|_{s}+|((A^{E}_{E})^{-1})^E_{R}|_{s}\leq C'''(s,\Theta)N^{\delta(s_0+\nu+r)+2\tau+\nu+r+\varrho}(N^{\delta s}+\|{V}\|_s)\\
&\leq\frac{1}{4}N^{\delta(s_0+\nu+r)+2\tau+\nu+r+\varrho+1}(N^{\delta s}+\|{V}\|_s)\\
&\leq\frac{1}{2}N^{\tau_2+\delta s},
\end{align*}
where $\tau_2>\delta(s_0+\nu+r)+2\tau+\nu+r+\varrho+1=2\tau+\frac{7}{4}(\nu+r)+\frac{3}{4}(\nu+d)+\frac{1}{2}$.
\end{proof}
Let us verify that the fact \eqref{E1.60} holds.
\begin{defi}
Denote by $\{j_k,k\in[0,L]\cap\mathbf{N}\}$ a sequence of sites with $j_k\neq j_{k'}$, $\forall k\neq k'$. For ${B}\geq2$, we call $\{j_k,k\in[0,L]\cap\mathbf{N}\}$ a ${B}$-chain of length $L$ with $|j_{k+1}-j_{k}|\leq {B}$, $\forall k=0,\cdots,L-1.$
\end{defi}
\begin{lemm}
There exists $C(r)>0$ such that, for fixed $l\in\mathbf{Z}^{\nu}$, $\theta\in\mathbf{R}$, any ${B}$-chain of singular sites has length
$L\leq {B}^{C(r)}$.
\end{lemm}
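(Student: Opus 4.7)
The plan is to mimic the dimensional reduction argument already carried out for $\tilde{B}$-chains of $(\mathcal{A}(u,\theta),N)$-weakly-bad sites (the proof containing \eqref{E2.21}--\eqref{E2.47}), adapting it to the present purely spatial setting where $l$ and $\theta$ are held fixed throughout the chain. The main simplification is that all sites along the chain share the same shell equation, so the analogue of \eqref{E2.22} becomes uniform in $k$ rather than accumulating linearly in $|k-k_0|$.

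First I would exploit the singularity condition. With $l$ and $\theta$ fixed and $\mu := (\lambda\omega_0\cdot l+\theta)^2 - m$, a singular site $j$ satisfies $|\lambda_j^2 - \mu| \leq \Theta$, so $\|j+\rho\|^2 = \|\rho\|^2 + |\lambda_j|$ lies in an interval of length at most $K_0(\Theta,m,\rho)$ independent of $k$. Consequently, for every pair of singular sites $j_k, j_{k'}$ one has $\bigl|\|j_k+\rho\|^2-\|j_{k'}+\rho\|^2\bigr| \leq K_0$. Combined with the $B$-chain condition $|j_{k+1}-j_k|\leq B$, which by \eqref{E1.50} gives $\|j_k-j_{k_0}\|\leq b_2|k-k_0|B$, the polarization identity
\[
(j_{k_0}+\rho)\cdot(j_k - j_{k_0}) = \tfrac{1}{2}\bigl(\|j_k+\rho\|^2 - \|j_{k_0}+\rho\|^2 - \|j_k - j_{k_0}\|^2\bigr)
\]
yields the bound $|(j_{k_0}+\rho)\cdot(j_k-j_{k_0})| \leq K_1(\Theta,m,\rho,b_2)\,(|k-k_0|B)^2$, which is the exact analogue of \eqref{E2.23}.

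Next I would run the dimensional induction verbatim as in the previous lemma. Set $\mathscr{F} := \mathrm{span}_{\mathbf{R}}\{j_k-j_{k_0}\}$ and $\mathscr{F}_{k_0}^{1} := \mathrm{span}_{\mathbf{R}}\{j_k-j_{k_0}:|k-k_0|\leq L^{\upsilon}\}$. In the nondegenerate case $\mathscr{F}_{k_0}^{1} = \mathscr{F}$, pick a basis $\zeta_1,\ldots,\zeta_{\mathfrak t}$ ($\mathfrak{t}\leq r$) out of such indices, write $\Pi_{\mathscr{F}}(j_{k_0}+\rho) = \sum_p z_p \zeta_p$ and solve the Gram-matrix Cramer system using Hadamard's inequality exactly as in the derivation leading to \eqref{E2.29}; the integrality hypothesis \eqref{E1.56} again guarantees $\mathfrak{z}^{\mathfrak t}|\det Q|\geq 1$. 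This produces $|\Pi_{\mathscr{F}}(j_{k_0}+\rho)|\leq K_2(r,\mathfrak{z},\Theta,m,\rho,b_2)(L^{\upsilon}B)^{2r+1}$, hence $|j_k - j_{k_0}|\leq 2K_2 (L^{\upsilon}B)^{2r+1}$ for every $k$. Since by hypothesis the $j_k$ are distinct, counting lattice points of $\Gamma_{+}(\boldsymbol M)$ in the ball of that radius gives
\[
L \;\leq\; K_3(r,\mathfrak{z},\Theta,m,\rho,b_2)\,(L^{\upsilon}B)^{r(2r+1)}.
\]
Unlike in the previous lemma, no extra factor $N^{2\nu+d+2r+6}$ appears, because here distinct $k$ force distinct $j_k$, so there is no Corollary~\ref{coro1}-type multiplicity to absorb. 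Choosing $\upsilon < 1/(2r(2r+1))$ absorbs the $L^{\upsilon}$ factor, giving $L\leq K_4\, B^{r(2r+1)/(1-\upsilon r(2r+1))}$. In the degenerate case $\dim \mathscr{F}_{k_0}^{1}\leq r-1$ for some $k_0$, iterate at most $r$ times on shrinking windows of length $L^{\upsilon^p}$ exactly as in the previous lemma, producing the same polynomial-in-$B$ bound with a worse (but still $r$-dependent) exponent.

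The only point requiring care will be verifying that the shell-thickness constant $K_0$ and, through it, all the constants $K_j$ are independent of $l$, $\theta$, $N$ and $B$ (which they are, since $l,\theta$ are frozen and the singularity window $\Theta$ is a global parameter). Once this is checked, the final exponent depends only on $r$, giving the required bound $L\leq B^{C(r)}$ with $C(r) := r(2r+1)/(1-\upsilon r(2r+1))$ (times an $r$-th power factor from the degenerate iteration) for any admissible $\upsilon$.
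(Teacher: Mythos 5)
Your proposal is correct and follows essentially the same route as the paper's proof: the same polarization identity, the same Gram-matrix/Cramer/Hadamard dimensional reduction with the integrality condition \eqref{E1.56}, the same lattice-point count (with the correct observation that no Corollary \ref{coro1}-type multiplicity factor is needed since distinct $k$ give distinct $j_k$), and the same degenerate-case iteration. Your use of a uniform shell-thickness bound in place of the paper's telescoped increment $\left|\|j_{k+1}+\rho\|^2-\|j_k+\rho\|^2\right|<2(\sqrt{\Theta+m}+\|\rho\|^2)$ is a harmless cosmetic simplification, since the quadratic term $\|j_k-j_{k_0}\|^2$ dominates the analogue of \eqref{E1.53} either way.
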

\begin{proof}
Denote by $\{j_k,k\in[0,L]\cap\mathbf{N}\}$ a ${B}$-chain of singular sites. Then
\begin{align}\label{E1.51}
|j_{k+1}-j_k|\leq {B},\quad\forall k=0,\cdots,L-1.
\end{align}
Letting $\vartheta:=\lambda{\omega}_0\cdot l+\theta$, by the definitions of the singular site and $\lambda_j$, we give
\begin{align*}
&|-\vartheta^2+(\|j_{k}+\rho\|^2-\|\rho\|^2)^2+m|<\Theta\Rightarrow|-\vartheta^2+(\|j_{k}+\rho\|^2-\|\rho\|^2)^2|<\Theta+m\\
\Rightarrow&|-\vartheta+\|j_{k}+\rho\|^2-\|\rho\|^2|<\sqrt{\Theta+m}\quad\text{or}\quad|\vartheta+\|j_{k}+\rho\|^2-\|\rho\|^2|<\sqrt{\Theta+m}\\
\Rightarrow&\left|\|j_{k+1}+\rho\|^2+\|j_{k}+\rho\|^2\right|<2(\sqrt{\Theta+m}+\|\rho\|^2)\quad\text{or}\quad\\
&\left|\|j_{k+1}+\rho\|^2-\|j_{k}+\rho\|^2\right|<2(\sqrt{\Theta+m}+\|\rho\|^2),
\end{align*}
which leads to
\begin{align*}
\left| \|j_{k+1}+\rho\|^2 -\|j_{k}+\rho\|^2 \right|<2(\sqrt{\Theta+m}+\|\rho\|^2).
\end{align*}
This implies
\begin{align*}
\left|\|j_k+\rho\|^2-\|j_{k_0}+\rho\|^2\right|\leq2(\sqrt{\Theta+m}+\|\rho\|^2)|k-k_0|.
\end{align*}
Combining this with \eqref{E1.50}, \eqref{E1.51} and  the equality
\[(j_{k_0}+\rho)\cdot(j_k-j_{k_0})=\frac{1}{2}\left(\|j_k+\rho\|^2-\|j_{k_0}+\rho\|^2-\|j_k-j_{k_0}\|^2\right)\]
yields
\begin{align}\label{E1.53}
\left|(j_{k_0}+\rho)\cdot(j_k-j_{k_0})\right|&\leq(\sqrt{\Theta+m}+\|\rho\|^2)|k-k_0|+(b^2_2/2)|k-k_0|^2{B}^2\nonumber\\
&\leq(\sqrt{\Theta+m}+\|\rho\|^2+b^2_2)|k-k_0|^2{B}^2.
\end{align}
Define the following subspace of $\mathbf{R}^{r}$ by
\begin{equation*}
\mathscr{E}:=\mathrm{span}_{\mathbf{R}}\{j_{k}-j_{k'}:~k,k'=0,\cdots,L\}=\mathrm{span}_{\mathbf{R}}\{j_{k}-j_{k_0}:~k=0,\cdots,L\}.
\end{equation*}
Let ${r}_0$ be the dimension of $\mathscr{E}$. Denote by $\xi_1,\cdots,\xi_{{r}_0}$ a basis of $\mathscr{E}$. It is straightforward that ${r}_0\leq r$.

Case1. For all $k_0\in[0,L]\cap\mathbf{N}$, we have
\begin{equation*}
\mathscr{E}_{k_0}:=\mathrm{span}_{\mathbf{R}}\{j_{k}-j_{k_0}:~|k-k_0|\leq L^{\upsilon},~ k=0,\cdots,L\}=\mathscr{E}.
\end{equation*}
Formula \eqref{E1.51} indicates that
\begin{align}\label{E1.54}
|\xi_{p}|=|j_p-j_{k_0}|\leq|p-k_0|{B}\leq L^{\upsilon}{B},\quad p=1,\cdots,r_0.
\end{align}
Let $\Pi_{\mathscr{E}}$ denote the orthogonal projection on $\mathscr{E}$. Then
\begin{equation}\label{E1.58}
\Pi_{\mathscr{E}}(j_{k_0}+\rho)=\sum\limits_{p=1}^{r_0}z_p\xi_p
\end{equation}
for some $z_p\in\mathbf{R},p=1,\cdots,r_0$. Hence we get
\begin{equation*}
\Pi_{\mathscr{E}}(j_{k_0}+\rho)\cdot \xi_{p'}=\sum\limits_{p=1}^{r_0}z_p\xi_p\cdot \xi_{p'}.
\end{equation*}
Based on above fact, we consider the linear system $Qz=y$, where
\begin{align*}
Q=(Q_{pp'})_{p,p'=1,\cdots,r_0}\quad\text{with}\quad Q_{pp'}=\xi_p\cdot\xi_{p'},\quad y_{p'}=\Pi_{\mathscr{E}}(j_{k_0}+\rho)\cdot \xi_{p'}=(j_{k_0}+\rho)\cdot \xi_{p'}.
\end{align*}
It follows from \eqref{E1.53}-\eqref{E1.54} that
\begin{align}\label{E1.55}
|y_{p'}|\leq(\sqrt{\Theta+m}+\|\rho\|^2+b^2_2)(L^{\upsilon}B)^2, \quad|Q_{pp'}|\leq(L^{\upsilon}{B})^2.
\end{align}
Moreover formula \eqref{E1.56} verifies
\begin{align}\label{E1.57}
\mathfrak{z}^{r_0}\det(Q)\in\mathbf{Z},\quad\text{namely}\quad\mathfrak{z}^{r_0}|\det(Q)|\geq1.
\end{align}
Hadamard inequality gives
\begin{equation*}
\left|(Q^*)_{pp'}\right|\leq\prod\limits_{\mathfrak{p}\neq p,1\leq\mathfrak{p}\leq r_0}\Big(\sum\limits_{\mathfrak{p}'\neq p',1\leq\mathfrak{p}'\leq r_0}|Q_{\mathfrak{p}\mathfrak{p}'}|^2\Big)^{1/2},
\end{equation*}
where $Q^*$ is the adjoint matrix of $Q$. This establishes that
\begin{align*}
\left|(Q^*)_{pp'}\right|\leq(r_0-1)^{\frac{r_0-1}{2}}{(L^{\upsilon}{B})}^{2(r_0-1)}.
\end{align*}
Based on this and Cramer's rule,  \eqref{E1.55}-\eqref{E1.57}, we obtain that
\begin{align*}
|z_p|\leq\sum\limits_{p'=1}^{r_0}|Q^{-1}_{pp'}y_{p'}|\leq \mathfrak{z}^{r_0}{r_0}^{r_0}(\sqrt{\Theta+m}+\|\rho\|^2+b^2_2)
(L^{\upsilon}{B})^{2r_0}.
\end{align*}
Combining this with formulae \eqref{E1.54}-\eqref{E1.58}, we derive
\begin{align*}
|\Pi_{\mathscr{E}}(j_{k_0}+\rho)|\leq r_0|z_p||\xi_{p}|\leq \mathfrak{z}^{r_0}{r_0}^{r_0+1}(\sqrt{\Theta+m}+\|\rho\|^2+b^2_2)
(L^{\upsilon}{B})^{2r_0+1}.
\end{align*}
As a consequence
\begin{align*}
|j_{k_1}-j_{k_2}|&=|(j_{k_1}-j_{k_0})-(j_{k_2}-j_{k_0})|=|\Pi_{\mathscr{E}}(j_{k_1}+\rho)-\Pi_{\mathscr{E}}(j_{k_2}+\rho)|\\
&\leq2\mathfrak{z}^{r}r^{r+1}(\sqrt{\Theta+m}+\|\rho\|^2+b^2_2)(L^{\upsilon}{B})^{2r+1},
\end{align*}
which then implies
\begin{align}\label{E1.59}
L\leq4^r(2\mathfrak{z}^{r}r^{r+1}(\sqrt{\Theta+m}+\|\rho\|^2+b^2_2)(L^{\upsilon}{B})^{2r+1})^r.
\end{align}
If $\upsilon<\frac{1}{2r(2r+1)}$, then \eqref{E1.59} yields that
\begin{align*}
L^{\frac{1}{2}}&\leq2^{3r}\mathfrak{z}^{r^2}r^{r(r+1)}(\sqrt{\Theta+m}+\|\rho\|^2+b^2_2)^{r}{B}^{r(2r+1)}\\
&\Rightarrow
L\leq2^{6r}\mathfrak{z}^{2r^2}r^{2r(r+1)}(\sqrt{\Theta+m}+\|\rho\|^2+b^2_2)^{2r}{B}^{2r(2r+1)}.
\end{align*}

Case2. If there exists some $k'_0\in[0,L]\cap\mathbf{N}$ such that $\dim\mathscr{E}_{k'_0}\leq r-1$, for $k_0\in {\mathfrak{J}}$, then we consider
\begin{align*}
\mathscr{E}^1_{k_0}:=\mathrm{span}_{\mathbf{R}}\left\{j_{k}-j_{k_0}:|k-k_0|<L^\upsilon_1,~k \in {\mathfrak{J}}\right\}=\mathrm{span}_{\mathbf{R}}\left\{j_{k}-j_{k_0}:~k \in {\mathfrak{J}}\right\},
\end{align*}
where
\[L_1=L^{\upsilon}, \quad{\mathfrak{J}}:=\{k:|k-k'_0|<L^{\upsilon},~ k=0,\cdots,L\}\cap([0,L]\cap\mathbf{N}).\]
The upper bound of $L_1$ can be proved by the same method as employed on $L$, namely
\begin{align*}
L_1=L^\upsilon\leq2^{6r}\mathfrak{z}^{2r^2}r^{2r(r+1)}(\sqrt{\Theta+m}+\|\rho\|^2+b^2_2)^{2r}{B}^{2r(2r+1)}.
\end{align*}
The fact $r_0\leq r$ leads to that the iteration is carried out at most $r$ steps. Thus
\begin{align*}
L_r=L^{r\upsilon}\leq2^{6r}\mathfrak{z}^{2r^2}r^{2r(r+1)}(\sqrt{\Theta+m}+\|\rho\|^2+b^2_2)^{2r}{B}^{2r(2r+1)}\Rightarrow L\leq {B}^{C(r)}
\end{align*}
for some constant $C(r)>0$.
Let ${B}=N^{\frac{\delta}{2(1+C(r))}}$.
\begin{defi}
We say that $\mathfrak{x}\equiv \mathfrak{y}$ if there is a $N^{\frac{\delta}{2(1+C(r))}}$-chain $\{j_k,k\in[0,L]\cap\mathbf{N}\}$ connecting $\mathfrak{x}$ to $\mathfrak{y}$, namely,
$j_0=\mathfrak{x},j_L=\mathfrak{y}$.
\end{defi}
The equivalence relation induces that a partition of ${S}$ satisfies
\begin{equation*}
\mathrm{diam}(\Omega_\alpha)\leq L{B}\leq {B}^{1+C(r)}=N^{\frac{\delta}{2}},\quad\mathrm{d}(\Omega_{\alpha},\Omega_{\beta})>N^{\frac{\delta}{2(1+C(r))}}, \quad\forall\alpha\neq\beta.
\end{equation*}
\end{proof}

\subsection{Proof of Proposition \ref{pro1}}

For $\mathcal{A}\in\mathcal{M}^{\mathfrak{A}}_{\mathfrak{A}}$, define
\begin{align*}
\mathrm{Diag}(\mathcal{A}):=(\delta_{\mathfrak{n}\mathfrak{n}'}\mathcal{A}^{\mathfrak{n}'}_{\mathfrak{n}})_{\mathfrak{n},\mathfrak{n}'\in \mathfrak{A}}.
\end{align*}
Denote by ${\mathfrak{G}},{\mathfrak{B}}$
the following sets
\begin{align*}
{\mathfrak{G}}:=\{j\in\mathfrak{{A}}~|~j~\text{is}~(\mathcal{A},N)\text{-good}\}, \quad{\mathfrak{B}}:=\{j\in\mathfrak{{A}}~|~j~\text{is}~(\mathcal{A},N)\text{-bad}\}.
\end{align*}
It is clear that $\mathfrak{A}={\mathfrak{G}}\cup \mathfrak{B}$. Moreover $\mathfrak{G}=\bar{\mathfrak{R}}\cup \mathfrak{R}$, where
\[\bar{\mathfrak{{R}}}:=\{j\in{\mathfrak{G}}~|~j~\text{is}~(\mathcal{A},N)\text{-regular}\},\quad \mathfrak{R}:=\{j\in{\mathfrak{G}}~|~j~\text{is}~\text{regular}\}.\]
\begin{proof}
Abusing the notations, we wtite $\mathcal{A}^{\mathfrak{A}}_{\mathfrak{A}}:=\mathcal{A},u_{\mathfrak{A}}:=u,h_{\mathfrak{A}}:=h$, where $\mathcal{A}^{\mathfrak{A}}_{\mathfrak{A}}\in\mathcal{M}^{\mathfrak{A}}_{\mathfrak{A}},u_{\mathfrak{A}},h_{\mathfrak{A}}\in H^s_{\mathfrak{A}}$. Consider the following Cramer system
\begin{align}\label{E2.5}
\mathcal{A}^{\mathfrak{{A}}}_{\mathfrak{{A}}}u_{\mathfrak{{A}}}=h_{\mathfrak{{A}}}.
\end{align}
\textbf{The~first~reduction:} For $N\geq \bar{N}(\tilde{\Theta},\Upsilon,s_1)$ large enough, there exist $\mathcal{P}^{\mathfrak{A}}_{\mathfrak{{G}}},\mathcal{S}^{\mathfrak{A}}_{\mathfrak{{G}}}\in\mathcal{M}^{\mathfrak{A}}_{\mathfrak{{G}}}$ with
\begin{align}\label{E2.9}
|\mathcal{P}^\mathfrak{A}_{\mathfrak{G}}|_{s_0}\leq C(s_1)\tilde{\Theta}^{-1}\Upsilon,\quad|\mathcal{S}^\mathfrak{A}_{\mathfrak{G}}|_{s_0}\leq N^{\mathfrak{e}},
\end{align}
and, for all $s\geq s_0$,
\begin{align}\label{E2.10}
|\mathcal{P}^\mathfrak{A}_{\mathfrak{G}}|_{s}\leq C(s)N^{\mathfrak{e}}(N^{s-s_0}+N^{-(\nu+r)}|\mathcal{Q}|_{s+\nu+r}),\quad|\mathcal{S}^\mathfrak{A}_{\mathfrak{G}}|_{s}\leq C(s)N^{\mathfrak{e}+s-s_0}
\end{align}
such that
\begin{align}\label{E2.11}
\mathcal{A}^{\mathfrak{A}}_{\mathfrak{A}}u_{\mathfrak{A}}=h_{\mathfrak{A}} \Rightarrow u_{\mathfrak{G}}+\mathcal{P}^{\mathfrak{A}}_{\mathfrak{G}}u_{\mathfrak{A}}=\mathcal{S}^{\mathfrak{A}}_{\mathfrak{G}}h_{\mathfrak{{A}}}.
\end{align}

In fact, since $\mathfrak{n}\in {\mathfrak{A}}$ is ($\mathcal{A},N$)-regular, there exist $\mathfrak{F}\subset\mathfrak{N}$ with $\mathrm{diam}(\mathfrak{F})\leq 4N$, $\mathrm{d}(\mathfrak{n},\mathfrak{A}\backslash \mathfrak{F})\geq N$ such that $\mathcal{A}^{\mathfrak{F}}_{\mathfrak{F}} $ is $N$-good. Then we have
\begin{align*}
\mathcal{A}^{\mathfrak{A}}_{\mathfrak{A}}u_{\mathfrak{A}}=h_{\mathfrak{A}}\Rightarrow \mathcal{A}^{\mathfrak{F}}_{\mathfrak{F}}u_{\mathfrak{F}}+\mathcal{A}^{\mathfrak{A}\backslash{\mathfrak{F}}}_{\mathfrak{F}}u_{\mathfrak{A}\backslash{\mathfrak{F}}}
=h_{\mathfrak{F}}\Rightarrow u_{\mathfrak{F}}+ (\mathcal{A}^{\mathfrak{F}}_{\mathfrak{F}})^{-1}\mathcal{A}^{\mathfrak{A}\backslash{\mathfrak{F}}}_{\mathfrak{F}}u_{\mathfrak{A}\backslash{\mathfrak{F}}}= (\mathcal{A}^{\mathfrak{F}}_{\mathfrak{F}})^{-1}h_{\mathfrak{F}}.
\end{align*}
From \eqref{E1.24}, \eqref{E2.2} and $(\mathrm{A1})$, it yields that
\begin{align}
|(\mathcal{A}^{\mathfrak{F}}_{\mathfrak{F}})^{-1}\mathcal{A}^{{\mathfrak{A}}\backslash{\mathfrak{F}}}_{\mathfrak{F}}|_{s_1-\varrho}\leq& C(s_1)|(\mathcal{A}^{\mathfrak{F}}_{\mathfrak{F}})^{-1}|_{s_1-\varrho}|\mathcal{Q}|_{s_1-\varrho}\leq C(s_1)N^{\tau_2+\delta(s_1-\varrho)}\Upsilon.\label{E2.3}
\end{align}
The fact $\mathrm{diam}(\mathfrak{F})\leq4N$ shows that $((A^{\mathfrak{F}}_{\mathfrak{F}})^{-1})^{\mathfrak{n}'}_{\mathfrak{n}}=0$ for all $|\mathfrak{n}'-\mathfrak{n}|>4N$. Combining this with \eqref{E1.23}, \eqref{E1.62}, \eqref{E2.2} and $(\mathrm{A1})$ verifies
\begin{align}
|(\mathcal{A}^{\mathfrak{F}}_{\mathfrak{F}})^{-1}\mathcal{A}^{{\mathfrak{A}}\backslash{\mathfrak{F}}}_{\mathfrak{F}}|_{s+\nu+r}\leq& \frac{1}{2}|(\mathcal{A}^{\mathfrak{F}}_{\mathfrak{F}})^{-1}|_{s_0}
|\mathcal{Q}|_{s+\nu+r}+ \frac{C(s)}{2}|(\mathcal{A}^{\mathfrak{F}}_{\mathfrak{F}})^{-1}|_{s+\nu+r}|\mathcal{Q}|_{s_0}\nonumber\\
\leq&\frac{1}{2}
|(\mathcal{A}^{\mathfrak{F}}_{\mathfrak{F}})^{-1}|_{s_0}|\mathcal{Q}|_{s+\nu+r}+\frac{C(s)}{2}4^{s+\nu+r-s_0}N^{s+\nu+r-s_0}|(\mathcal{A}^{\mathfrak{F}}_{\mathfrak{F}})^{-1}|_{s_0}|\mathcal{Q}|_{s_0}\nonumber\\
\leq&\frac{1}{2}N^{\tau_2+\delta s_0}|\mathcal{Q}|_{s+\nu+r}+\frac{C(s)}{2}4^{s+\nu+r-s_0}N^{s+\nu+r-s_0}N^{\tau_2+\delta s_0}\Upsilon\nonumber\\
\leq& C'(s)N^{(\delta-1)s_0}(N^{s+\nu+r+\tau_2}\Upsilon+N^{\tau_2+s_0}|\mathcal{Q}|_{s+\nu+r}).\label{E2.4}
\end{align}
Define
\begin{equation}\label{E2.8}
\mathcal{P}^{\mathfrak{n}'}_{\mathfrak{n}}:=
\begin{cases}
((\mathcal{A}^{\mathfrak{F}}_{\mathfrak{F}})^{-1}\mathcal{A}^{\mathfrak{A}\backslash \mathfrak{F}}_{\mathfrak{F}})^{\mathfrak{n}'}_{\mathfrak{n}},\quad &\mathfrak{n}'\in {\mathfrak{A}}\backslash{\mathfrak{F}},\\
0,\quad&\mathfrak{n}'\in \mathfrak{F}
\end{cases}
\quad\text{and}\quad
\mathcal{S}^{\mathfrak{n}'}_{\mathfrak{n}}:=
\begin{cases}
((\mathcal{A}^{{\mathfrak{F}}}_{{\mathfrak{F}}})^{-1})^{\mathfrak{n}'}_\mathfrak{n},\quad &\mathfrak{n}'\in \mathfrak{F},\\
0,\quad&\mathfrak{n}'\in {\mathfrak{A}}\backslash{\mathfrak{F}}.
\end{cases}
\end{equation}
Then \eqref{E2.5} becomes
\begin{align*}
u_{\mathfrak{n}}+\sum\limits_{\mathfrak{n}'\in \mathfrak{A}}\mathcal{P}^{\mathfrak{n}'}_{\mathfrak{n}}u_{\mathfrak{n}}=\sum\limits_{\mathfrak{n}'\in \mathfrak{A}}\mathcal{S}^{\mathfrak{n}'}_{\mathfrak{n}}h_{\mathfrak{n}'}.
\end{align*}
Since $\mathrm{d}(\mathfrak{n},\mathfrak{A}\backslash \mathfrak{F})\geq N$, we have that $\mathcal{P}^{\mathfrak{n}'}_{\mathfrak{n}}=0$ for $|\mathfrak{n}-\mathfrak{n}'|\leq N$. Hence, by means of \eqref{E1.61}, \eqref{E2.3}, \eqref{E2.8}, for $s_1>\frac{1}{1-\delta}(\tau_2+\nu+r+s_0)+\varrho$, we get that, for $N\geq \bar{N}(\tilde{\Theta},s_1)$ large enough,
\begin{align*}
|\mathcal{P}^{\mathfrak{A}}_{\mathfrak{n}}|_{s_0+\nu+r}\leq N^{-(s_1-s_0-\nu-r-\varrho)}|\mathcal{P}^{\mathfrak{A}}_{\mathfrak{n}}|_{s_1-\varrho}{\leq} C(s_1)\Upsilon N^{(\delta-1)s_1+\tau_2+\nu+r+s_0+(1-\delta)\varrho}\leq C(s_1)\tilde{\Theta}^{-1}\Upsilon,
\end{align*}
which leads to
\begin{align*}
|\mathcal{P}^\mathfrak{A}_{\bar{\mathfrak{R}}}|_{s_0}\stackrel{\eqref{E1.21}}{\leq}K_1|\mathcal{P}^{\mathfrak{A}}_{\mathfrak{n}}|_{s_0+\nu+r}\leq C'(s_1)\tilde{\Theta}^{-1}\Upsilon.
\end{align*}
Letting $\mathfrak{e}:=\tau_2+\nu+r+s_0$, for $N\geq \bar{N}(\Upsilon,s_1)$ large enough, we get
\begin{align*}
|\mathcal{P}^{\mathfrak{A}}_{\mathfrak{n}}|_{s+\nu+r}\stackrel{\eqref{E2.8}}{\leq}|(\mathcal{A}^{\mathfrak{F}}_{\mathfrak{F}})^{-1}
A^{{\mathfrak{A}}\backslash{\mathfrak{F}}}_{\mathfrak{F}}|_{s+\nu+r}\stackrel{\eqref{E2.4}}\leq& C'(s)N^{(\delta-1)s_0}(N^{s+\nu+r+\tau_2}\Upsilon+N^{\tau_2+s_0}|\mathcal{Q}|_{s+\nu+r})\nonumber\\
\leq &C'(s)N^{\mathfrak{e}}(N^{s-s_0}+N^{-(\nu+r)}|\mathcal{Q}|_{s+\nu+r}),
\end{align*}
which carries out
\begin{align*}
|\mathcal{P}^\mathfrak{A}_{\bar{\mathfrak{R}}}|_{s}\stackrel{\eqref{E1.21}}{\leq}K_1|\mathcal{P}^{\mathfrak{A}}_{\mathfrak{n}}|_{s+\nu+r}\leq C''(s)N^{\mathfrak{e}}(N^{s-s_0}+N^{-(\nu+r)}|\mathcal{Q}|_{s+\nu+r}).
\end{align*}
In addition, definition \eqref{E2.8} gives that $\mathcal{S}^{\mathfrak{n}'}_{\mathfrak{n}}=0$ for $|\mathfrak{n}-\mathfrak{n}'|>4N$. As a consequence
\begin{align*}
|\mathcal{S}^\mathfrak{A}_{\bar{\mathfrak{R}}}|_{s_0}\leq K_1|\mathcal{S}^{\mathfrak{A}}_{\mathfrak{n}}|_{s_0+\nu+r}\leq K_1|(\mathcal{A}^{\mathfrak{F}}_{\mathfrak{F}})^{-1}|_{s_0+\nu+r}\stackrel{\eqref{E2.2}}{\leq}K_1N^{\tau_2+\delta( s_0+\nu+r)}\leq N^{\mathfrak{e}},
\end{align*}
which gives
\begin{align*}
|\mathcal{S}^\mathfrak{A}_{\bar{\mathfrak{R}}}|_{s}\leq K_1|\mathcal{S}^{\mathfrak{A}}_{\mathfrak{n}}|_{s+\nu+r}\stackrel{\eqref{E1.62}}{\leq}K_1(4N)^{s-s_0}|\mathcal{S}^{\mathfrak{A}}_{\mathfrak{n}}|_{s_0+\nu+r}\leq C(s)N^{\mathfrak{e}+s-s_0}.
\end{align*}
If $\mathfrak{n}\in {\mathfrak{A}}$ is regular, then a similar argument as the first reduction shown in the proof of  Lemma \ref{lemma3} yields
\begin{align*}
&|\mathcal{P}^{{\mathfrak{A}}}_{{\mathfrak{R}}}|_{s_0}\leq K_1|P^{{\mathfrak{A}}}_\mathfrak{n}|_{s_{0}+\nu+r}\leq C'\tilde{\Theta}^{-1}|\mathcal{Q}|_{s_0+\nu+r}\leq C'\tilde{\Theta}^{-1}\Upsilon,\\
&|\mathcal{P}^{{\mathfrak{A}}}_{{\mathfrak{R}}}|_{s}\leq K_1|\mathcal{P}^{{\mathfrak{A}}}_\mathfrak{n}|_{s+\nu+r}\leq C'(s)\tilde{\Theta}^{-1}(|\mathcal{Q}|_{s_0+\nu+r}+|\mathcal{Q}|_{s+\nu+r}),\\
&|\mathcal{S}^{{\mathfrak{A}}}_{{\mathfrak{R}}}|_{s_0}\leq K_1|\mathcal{S}^{{\mathfrak{A}}}_\mathfrak{n}|_{s_{0}+\nu+r}\leq K_1\tilde{\Theta}^{-1},\quad|\mathcal{S}^{{\mathfrak{A}}}_{{\mathfrak{R}}}|_{s}\leq K_1|\mathcal{S}^{{\mathfrak{A}}}_\mathfrak{n}|_{s+\nu+r}\leq K_1\tilde{\Theta}^{-1}.
\end{align*}
Thus formulae \eqref{E2.9}-\eqref{E2.10} hold.
\\
\textbf{The~second~reduction:} If  $\tilde{\Theta}$ is large enough subject to $\Upsilon$, then there exist $\tilde{\mathcal{P}}^{\mathfrak{B}}_{\mathfrak{G}}\in\mathcal{M}^{\mathfrak{B}}_{\mathfrak{G}}$, $\tilde{\mathcal{S}}^{\mathfrak{A}}_{\mathfrak{G}}\in\mathcal{M}^{\mathfrak{A}}_{\mathfrak{G}}$ with
\begin{align}\label{E2.14}
|\tilde{\mathcal{P}}^{\mathfrak{B}}_{\mathfrak{G}}|_{s_0}&\leq C(s_1)\Upsilon\tilde{\Theta}^{-1},\quad |\tilde{\mathcal{S}}^{\mathfrak{A}}_{\mathfrak{G}}|_{s_0}\leq C(s_1)N^{\mathfrak{e}},
\end{align}
and, for all $s\geq s_0$,
\begin{align}\label{E2.15}
|\tilde{\mathcal{P}}^{\mathfrak{B}}_{\mathfrak{G}}|_{s}&\leq C(s)N^{\mathfrak{e}}(N^{s-s_0}+N^{-(\nu+r)}|\mathcal{Q}|_{s+\nu+r}),~ |\tilde{\mathcal{S}}^{\mathfrak{A}}_{\mathfrak{G}}|_{s}\leq C(s)N^{2\mathfrak{e}}(N^{s-s_0}+N^{-(\nu+r)}|\mathcal{Q}|_{s+\nu+r}),
\end{align}
 such that
\begin{align}\label{E2.16}
\mathcal{A}^{\mathfrak{A}}_{\mathfrak{A}}u_{\mathfrak{A}}=h_{\mathfrak{A}} \Rightarrow u_{\mathfrak{G}}=\widetilde{\mathcal{P}}^{\mathfrak{B}}_{\mathfrak{G}} u_{\mathfrak{B}}+\widetilde{\mathcal{S}}^{\mathfrak{A}}_{\mathfrak{G}} h_{\mathfrak{A}}.
\end{align}

In fact, by means of the fact $\mathfrak{A}=\mathfrak{G}+\mathfrak{B}$ and formula \eqref{E2.11}, we infers
\begin{equation*}
u_{\mathfrak{G}}+\mathcal{P}^{\mathfrak{A}}_{\mathfrak{G}}u_{\mathfrak{A}}=\mathcal{S}^{\mathfrak{A}}_{\mathfrak{G}}h_{\mathfrak{A}}\Rightarrow
u_{\mathfrak{G}}+\mathcal{P}^{\mathfrak{G}}_{\mathfrak{G}}u_{\mathfrak{G}}+\mathcal{P}^{\mathfrak{B}}_{\mathfrak{G}}u_{\mathfrak{B}}
=\mathcal{S}^{\mathfrak{A}}_{\mathfrak{G}}h_{\mathfrak{A}},
\end{equation*}
namely,
\begin{equation}\label{E2.17}
(\mathrm{I}^{\mathfrak{G}}_{\mathfrak{G}}+\mathcal{P}^{\mathfrak{G}}_{\mathfrak{G}})u_{\mathfrak{G}}+
\mathcal{P}^{\mathfrak{B}}_{\mathfrak{G}}u_{\mathfrak{B}}=\mathcal{S}^{\mathfrak{A}}_{\mathfrak{G}}h_{\mathfrak{A}}.
\end{equation}
If  $\tilde{\Theta}$ is large enough subject to $\Upsilon$,  then we have
\begin{equation*}
|(\mathrm{I}^{\mathfrak{G}}_{\mathfrak{G}})^{-1}|_{s_0}|\mathcal{P}^{\mathfrak{G}}_{\mathfrak{G}}|_{s_0}\stackrel{\eqref{E2.9}}{\leq} 1/2.
\end{equation*}
Hence Lemma \ref{lemma2} gives that $\mathrm{I}^{\mathfrak{G}}_{\mathfrak{G}}+\mathcal{P}^{\mathfrak{G}}_{\mathfrak{G}}$ is invertible with
\begin{align}
&|(\mathrm{I}^{\mathfrak{G}}_{\mathfrak{G}}+\mathcal{P}^{\mathfrak{G}}_{\mathfrak{G}})^{-1}|_{s_0}\stackrel{\eqref{E1.33}}{\leq}2,\label{E2.12}\\
|(\mathrm{I}^{\mathfrak{G}}_{\mathfrak{G}}+\mathcal{P}^{\mathfrak{G}}_{\mathfrak{G}})^{-1}|_{s}\stackrel{\eqref{E1.92}}{\leq}&
C(s)(1+|\mathcal{P}^{\mathfrak{G}}_{\mathfrak{G}}|_s)\leq C'(s)N^{\mathfrak{e}}(N^{s-s_0}+N^{-(\nu+r)}|\mathcal{Q}|_{s+\nu+r})\label{E2.13}.
\end{align}
As a consequence equation \eqref{E2.17} is reduced to
\begin{equation*}
u_{\mathfrak{G}}=\tilde{\mathcal{P}}^{\mathfrak{B}}_{\mathfrak{G}} u_{\mathfrak{B}}+\tilde{\mathcal{S}}^{\mathfrak{A}}_{\mathfrak{G}} h_{\mathfrak{A}}
\end{equation*}
where
\begin{equation*}
\tilde{\mathcal{P}}^{\mathfrak{B}}_{\mathfrak{G}}=-(\mathrm{I}^{\mathfrak{G}}_{\mathfrak{G}}+
\mathcal{P}^{\mathfrak{G}}_{\mathfrak{G}})^{-1}\mathcal{P}^{\mathfrak{B}}_{\mathfrak{G}},
\quad\tilde{\mathcal{S}}^{\mathfrak{A}}_{\mathfrak{G}}=(\mathrm{I}^{\mathfrak{G}}_{\mathfrak{G}}
+\mathcal{P}^{\mathfrak{G}}_{\mathfrak{G}})^{-1}\mathcal{S}^{\mathfrak{A}}_{\mathfrak{G}}.
\end{equation*}
Hence, due to \eqref{E1.23}-\eqref{E1.24}, \eqref{E2.9}-\eqref{E2.10} and \eqref{E2.12}-\eqref{E2.13}, we get that \eqref{E2.14}-\eqref{E2.15} hold.
\\
\textbf{The~third~reduction:} There exist $\hat{\mathcal{P}}^{\mathfrak{B}}_{\mathfrak{A}}\in\mathcal{M}^{\mathfrak{B}}_{\mathfrak{A}}$, $\hat{\mathcal{S}}^{\mathfrak{A}}_{\mathfrak{A}}\in\mathcal{M}^{\mathfrak{A}}_{\mathfrak{A}}$ with
\begin{align}\label{E2.18}
|\hat{\mathcal{P}}^{\mathfrak{B}}_{\mathfrak{A}}|_{s_0}&\leq C(s_1,\tilde{\Theta}),\quad |\hat{\mathcal{S}}^{\mathfrak{A}}_{\mathfrak{A}}|_{s_0}\leq C(s_1)N^{\mathfrak{e}},
\end{align}
and, for all $s\geq s_0$,
\begin{equation}\label{E2.19}
\begin{aligned}
&|\hat{\mathcal{P}}^{\mathfrak{B}}_{\mathfrak{A}}|_{s}\leq C(s,\tilde{\Theta})N^{\mathfrak{e}}(N^{s-s_0}+N^{-(\nu+r)}|\mathcal{Q}|_{s+\nu+r}),\\ &|\hat{\mathcal{S}}^{\mathfrak{A}}_{\mathfrak{A}}|_{s}\leq C(s,\tilde{\Theta})N^{2\mathfrak{e}}(N^{s-s_0}+N^{-(\nu+r)}|\mathcal{Q}|_{s+\nu+r}),
\end{aligned}
\end{equation}
such that
\begin{align}\label{E2.20}
\mathcal{A}^{\mathfrak{A}}_{\mathfrak{A}}u_{\mathfrak{A}}=h_{\mathfrak{A}} \Rightarrow \hat{\mathcal{P}}^{\mathfrak{B}}_{\mathfrak{A}}u_{\mathfrak{B}}=\hat{\mathcal{S}}^{\mathfrak{A}}_{\mathfrak{A}}h_{\mathfrak{A}}.
\end{align}
Furthermore $((\mathcal{A}^{\mathfrak{A}}_{\mathfrak{A}})^{-1})^{\mathfrak{A}}_{\mathfrak{B}}$ is a left inverse of $\hat{\mathcal{P}}^{\mathfrak{B}}_{\mathfrak{A}}$.\\

In fact, with the help of the quality $\mathfrak{A}=\mathfrak{G}+\mathfrak{B}$, this holds:
\begin{align*}
\mathcal{A}^{\mathfrak{A}}_{\mathfrak{A}}u_{\mathfrak{A}}=h_{\mathfrak{A}}\Rightarrow \mathcal{A}^{\mathfrak{G}}_{\mathfrak{A}}u_{\mathfrak{G}}
+\mathcal{A}^{\mathfrak{B}}_{\mathfrak{A}}u_{\mathfrak{B}}=h_{\mathfrak{A}}.
\end{align*}
Combining this with \eqref{E2.16} leads to
\begin{align*}
\mathcal{A}^{\mathfrak{G}}_{\mathfrak{A}}(\tilde{\mathcal{P}}^{\mathfrak{B}}_{\mathfrak{G}} u_{\mathfrak{B}}+\tilde{S}^{\mathfrak{A}}_{\mathfrak{G}} h_{\mathfrak{A}})+\mathcal{A}^{\mathfrak{B}}_{\mathfrak{A}}u_{\mathfrak{B}}=h_{\mathfrak{A}}, \quad\text{namely}\quad
\hat{\mathcal{P}}^{\mathfrak{B}}_{\mathfrak{A}}u_{\mathfrak{B}}=\hat{\mathcal{S}}^{\mathfrak{A}}_{\mathfrak{A}}h_{\mathfrak{A}},
\end{align*}
where
\begin{equation*}
\hat{\mathcal{P}}^{\mathfrak{B}}_{\mathfrak{A}}=\mathcal{A}^{\mathfrak{G}}_{\mathfrak{A}}\tilde{\mathcal{P}}^{\mathfrak{B}}_{\mathfrak{G}}
+\mathcal{A}^{\mathfrak{B}}_{\mathfrak{A}},\quad
\hat{\mathcal{S}}^{\mathfrak{A}}_{\mathfrak{A}}=\mathrm{I}^{\mathfrak{A}}_{\mathfrak{A}}-
\mathcal{A}^{\mathfrak{G}}_{\mathfrak{A}}\tilde{\mathcal{S}}^{\mathfrak{A}}_{\mathfrak{G}}.
\end{equation*}
Since formulae \eqref{E2.18}-\eqref{E2.19} are proved in the similar way as shown in the proof of Lemma \ref{lemma3} (see the third reduction),  the detail is omitted.
Moreover if there exists some constant $C_{1}:=C_1(\nu,d,r)\geq2$ such that ${\mathfrak{B}}$ (the set of ($\mathcal{A},N$)-bad sites) admits a partition with
\begin{equation*}
\mathrm{diam}(\mathfrak{O}_\alpha)\leq N^{C_1},\quad\mathrm{d}(\mathfrak{O}_{\alpha},\mathfrak{O}_{\beta})>N^2, \quad\forall\alpha\neq\beta,
\end{equation*}
then we have the following result:
\\
\textbf{The~final~reduction:} Define $\mathcal{X}^{\mathfrak{B}}_{\mathfrak{A}}\in\mathcal{M}^{\mathfrak{B}}_{\mathfrak{A}}$ by
\begin{equation}\label{E2.31}
\mathcal{X}^{\mathfrak{n}}_{\mathfrak{n}'}:=
\begin{cases}
\hat{\mathcal{P}}^{\mathfrak{n}}_{\mathfrak{n}'}\quad&\text{if}\quad(\mathfrak{n},\mathfrak{n}')
\in\bigcup_{\alpha}(\mathfrak{O}_{\alpha}\times\hat{\mathfrak{O}}_{\alpha}),\\
0&\text{if}\quad(\mathfrak{n},\mathfrak{n}')\notin\bigcup_{\alpha}(\mathfrak{O}_{\alpha}\times\hat{\mathfrak{O}}_{\alpha}),
\end{cases}
\end{equation}
where $\hat{\mathfrak{O}}_{\alpha}:=\{\mathfrak{n}\in{\mathfrak{A}}: \mathrm{d}(\mathfrak{n},\mathfrak{O}_{\alpha})\leq\frac{N^2}{4}\}$. The definition of $\hat{\mathfrak{O}}_{\alpha}$ together with \eqref{E2.30} may indicate $\hat{\mathfrak{O}}_{\alpha}\cap\hat{\mathfrak{O}}_{\beta}=\emptyset,~\forall\alpha\neq\beta$.

\textbf{Step1}: Let us claim that $\mathcal{X}^{\mathfrak{B}}_{\mathfrak{A}}\in\mathcal{M}^{\mathfrak{B}}_{\mathfrak{A}}$ has a left inverse $\mathcal{Y}^{\mathfrak{A}}_{\mathfrak{B}}\in\mathcal{M}^{\mathfrak{A}}_{\mathfrak{B}}$ with
\begin{align}\label{E2.32}
\|\mathcal{Y}^{\mathfrak{A}}_{\mathfrak{B}}\|_{0}\leq2(N')^{\tau}.
\end{align}
Define $\mathcal{Z}^{\mathfrak{B}}_{\mathfrak{A}}:=\hat{\mathcal{P}}^{\mathfrak{B}}_{\mathfrak{A}}-\mathcal{X}^{\mathfrak{B}}_{\mathfrak{A}}$, which then gives that $\mathcal{X}^{\mathfrak{B}}_{\mathfrak{A}}=\hat{\mathcal{P}}^{\mathfrak{B}}_{\mathfrak{A}}-\mathcal{Z}^{\mathfrak{B}}_{\mathfrak{A}}$. Definition \eqref{E2.32} implies that
\begin{align*}
\mathcal{Z}^{\mathfrak{n}}_{\mathfrak{n}'}=0\quad\text{if}\quad \mathrm{d}(\mathfrak{n},\mathfrak{n}')\leq{{N}^2}/{4}.
\end{align*}
Combining this with \eqref{E1.61}, \eqref{E2.19} and $\mathrm{(A1)}$, for all $s_1\geq s_0+\nu+r+\varrho$, we obtain
\begin{align}
|\mathcal{Z}^{\mathfrak{B}}_{\mathfrak{A}}|_{s_0}&\leq({N}^2/4)^{-(s_1-\nu-r-\varrho-s_0)}|\mathcal{Z}^{\mathfrak{B}}_{\mathfrak{A}}|_{s_1-\nu-r-\varrho}\leq 4^{s_1}{N}^{-2(s_1-\nu-r-\varrho-s_0)}|\hat{\mathcal{P}}^{\mathfrak{B}}_{\mathfrak{A}}|_{s_1-\nu-r-\varrho}\nonumber\\
&\leq C(s_1,\tilde{\Theta})4^{s_1}{N}^{-2(s_1-\nu-r-\varrho-s_0)}N^{\mathfrak{e}}(N^{s_1-\nu-r-s_0}+N^{-(\nu+r)}|\mathcal{Q}|_{s_1-\rho})\nonumber\\
&\leq C'(s_1,\tilde{\Theta})N^{2\mathfrak{e}-(s_1-\rho)},\label{E2.33}\\
|\mathcal{Z}^{\mathfrak{B}}_{\mathfrak{A}}|_{s}&\leq|\hat{\mathcal{P}}^{\mathfrak{B}}_{\mathfrak{A}}|_{s}\leq C(s,\tilde{\Theta})N^{\mathfrak{e}}(N^{s-s_0}+N^{-(\nu+r)}|\mathcal{Q}|_{s+\nu+r}).\label{E2.34}
\end{align}
%
In addition, for $N\geq{N}(\tilde{\Theta},s_1)$ large enough and $s_1>2\mathfrak{e}+\chi\tau+\varrho$, formulae  \eqref{E1.66} and \eqref{E2.33} give
\begin{align*}
\|((\mathcal{A}^{\mathfrak{A}}_{\mathfrak{A}})^{-1})^{\mathfrak{A}}_{\mathfrak{B}}\|_{0}\|\mathcal{Z}^{\mathfrak{B}}_{\mathfrak{A}}\|_{0}
\leq&\|((\mathcal{A}^{\mathfrak{A}}_{\mathfrak{A}})^{-1})^{\mathfrak{A}}_{\mathfrak{B}}\|_{0}|\mathcal{Z}^{\mathfrak{B}}_{\mathfrak{A}}|_{s_0}
\leq C(s_1,\tilde{\Theta})N^{2\mathfrak{e}-(s_1-\rho)}(N')^{\tau}\\
\stackrel{\eqref{E2.1}}{=}&C'(s_1,\tilde{\Theta})N^{2\mathfrak{e}-(s_1-\rho)+\chi\tau}\leq1/2.
\end{align*}
%
Lemma \ref{lemma2} verifies that $\mathcal{X}^{\mathfrak{B}}_{\mathfrak{A}}$ has a left inverse $\mathcal{Y}^{\mathfrak{A}}_{\mathfrak{B}}\in\mathcal{M}^{\mathfrak{A}}_{\mathfrak{B}}$ with
\begin{align*}
\|\mathcal{Y}^{\mathfrak{A}}_{\mathfrak{B}}\|_{0}\stackrel{\eqref{E1.34}}{\leq} 2\|((\mathcal{A}^{\mathfrak{A}}_{\mathfrak{A}})^{-1})^{\mathfrak{A}}_{\mathfrak{B}}\|_{0}\leq 2(N')^{\tau}.
\end{align*}

\textbf{Step2}: Define $\tilde{\mathcal{Y}}^{\mathfrak{A}}_{\mathfrak{B}}$ by
\begin{equation}\label{E2.35}
\tilde{\mathcal{Y}}^{\mathfrak{n}'}_{\mathfrak{n}}:=
\begin{cases}
\mathcal{Y}^{\mathfrak{n}'}_{\mathfrak{n}}\quad&\text{if}\quad(\mathfrak{n},\mathfrak{n}')\in\bigcup_{\alpha}
(\mathfrak{O}_{\alpha}\times\hat{\mathfrak{O}}_{\alpha})\\
0&\text{if}\quad(\mathfrak{n},\mathfrak{n}')\notin\bigcup_{\alpha}(\mathfrak{O}_{\alpha}\times\hat{\mathfrak{O}}_{\alpha}).
\end{cases}
\end{equation}
If the fact
\begin{align}\label{E2.37}
({\mathcal{Y}}^{\mathfrak{A}}_{\mathfrak{B}}-\tilde{\mathcal{Y}}^{\mathfrak{A}}_{\mathfrak{B}})\mathcal{X}^{\mathfrak{B}}_{\mathfrak{A}}=0
\end{align}
holds, then $\tilde{\mathcal{Y}}^{\mathfrak{A}}_{\mathfrak{B}}$ is a left inverse of $\mathcal{X}^{\mathfrak{B}}_{\mathfrak{A}}$ with
\begin{align}\label{E2.38}
|\tilde{\mathcal{Y}}^{\mathfrak{A}}_{\mathfrak{B}}|_{s}\leq C(s)N^{C_1(s+\nu+r)+\chi\tau}.
\end{align}
Let us prove formula \eqref{E2.37}. For any $\mathfrak{n}\in {\mathfrak{B}}=\bigcup_{\alpha}\mathfrak{O}_{\alpha}$, there is $\alpha$ such that $\mathfrak{n}\in \mathfrak{O}_{\alpha}$ and
\begin{align*}
(({\mathcal{Y}}^{\mathfrak{A}}_{\mathfrak{B}}-\tilde{\mathcal{Y}}^{\mathfrak{A}}_{\mathfrak{B}})
\mathcal{X}^{\mathfrak{B}}_{\mathfrak{A}})^{\mathfrak{n}'}_{\mathfrak{n}}=
\sum\limits_{\mathfrak{n}''\notin\hat{\mathfrak{O}}_{\alpha}}({\mathcal{Y}}^{\mathfrak{A}}_{\mathfrak{B}}
-\tilde{\mathcal{Y}}^{\mathfrak{A}}_{\mathfrak{B}})^{\mathfrak{n}''}_{\mathfrak{n}}\mathcal{X}^{\mathfrak{n}'}_{\mathfrak{n}''}.
\end{align*}
Remark that $({\mathcal{Y}}^{\mathfrak{A}}_{\mathfrak{B}}-\tilde{\mathcal{Y}}^{\mathfrak{A}}_{\mathfrak{B}})^{\mathfrak{n}''}_{\mathfrak{n}}
=\mathcal{Y}^{\mathfrak{n}''}_{\mathfrak{n}}-\tilde{\mathcal{Y}}^{\mathfrak{n}''}_{\mathfrak{n}}=0$ if $\mathfrak{n}''\in\hat{\mathfrak{O}}_{\alpha}$.

If $\mathfrak{n}'\in \mathfrak{O}_{\alpha}$, then definition \eqref{E2.31} implies that $\mathcal{X}^{\mathfrak{n}'}_{\mathfrak{n}''}=0$, which shows  $(({\mathcal{Y}}^{\mathfrak{A}}_{\mathfrak{B}}-\tilde{\mathcal{Y}}^{\mathfrak{A}}_{\mathfrak{B}})
\mathcal{X}^{\mathfrak{B}}_{\mathfrak{A}})^{\mathfrak{n}'}_{\mathfrak{n}}=0$.

If $\mathfrak{n}'\in\mathfrak{O}_{\beta}$ with $\alpha\neq\beta$, for $\mathfrak{n}''\notin\hat{\mathfrak{O}}_{\beta}$, then $\mathcal{X}^{\mathfrak{n}'}_{\mathfrak{n}''}=0$ owing to \eqref{E2.31}. As a consequence
\begin{align*}
(({\mathcal{Y}}^{\mathfrak{A}}_{\mathfrak{B}}-\tilde{\mathcal{Y}}^{\mathfrak{A}}_{\mathfrak{B}})
\mathcal{X}^{\mathfrak{B}}_{\mathfrak{A}})^{\mathfrak{n}'}_{\mathfrak{n}}=&\sum\limits_{\mathfrak{n}''\in\hat{\mathfrak{O}}_{\beta}}
({\mathcal{Y}}^{\mathfrak{A}}_{\mathfrak{B}}-\tilde{\mathcal{Y}}^{\mathfrak{A}}_{\mathfrak{B}})^{\mathfrak{n}''}_{\mathfrak{n}}
\mathcal{X}^{\mathfrak{n}'}_{\mathfrak{n}''}{=}\sum\limits_{\mathfrak{n}''\in\hat{\mathfrak{O}}_{\beta}}
({\mathcal{Y}}^{\mathfrak{n}''}_{\mathfrak{n}}-{\tilde{\mathcal{Y}}}^{\mathfrak{n}''}_{\mathfrak{n}})\mathcal{X}^{\mathfrak{n}'}_{\mathfrak{n}''}\\
\stackrel{\eqref{E2.35}}{=}&\sum\limits_{\mathfrak{n}''\in\hat{\mathfrak{O}}_{\beta}}{\mathcal{Y}}^{\mathfrak{n}''}_{\mathfrak{n}}
\mathcal{X}^{\mathfrak{n}'}_{\mathfrak{n}''}\stackrel{\eqref{E2.31}}{=}\sum\limits_{\mathfrak{n}''\in \mathfrak{A}}{\mathcal{Y}}^{\mathfrak{n}''}_{\mathfrak{n}}\mathcal{X}^{\mathfrak{n}'}_{\mathfrak{n}''}\\
=&(\mathcal{Y}^{\mathfrak{A}}_{\mathfrak{B}}\mathcal{X}^{\mathfrak{B}}_{\mathfrak{A}})^{\mathfrak{n}'}_{\mathfrak{n}}
=(\mathrm{I}^{\mathfrak{B}}_{\mathfrak{B}})^{\mathfrak{n}'}_{\mathfrak{n}}=0.
\end{align*}
It is obvious that $\mathrm{diam}(\hat{\mathfrak{O}}_{\alpha})<2N^{C_1}$ due to \eqref{E2.30}. Based on this and definition \eqref{E2.35}, we obtain that $\tilde{\mathcal{Y}}^{\mathfrak{n}'}_{\mathfrak{n}}=0$ for all $|\mathfrak{n}-\mathfrak{n}'|\geq2N^{C_1}$. Thus it follows from \eqref{E1.62}, \eqref{E2.1}, {\eqref{E2.32} and \eqref{E2.35}} that
\begin{align*}
|\tilde{\mathcal{Y}}^{\mathfrak{A}}_{\mathfrak{B}}|_{s}\leq C(s)N^{C_1(s+\nu+r)}\|\tilde{\mathcal{Y}}^{\mathfrak{A}}_{\mathfrak{B}}\|_{0}{\leq}C(s)N^{C_1(s+\nu+r)+\chi\tau}.
\end{align*}

\textbf{Step3}: For $N\geq \bar{N}(\Upsilon,\tilde{\Theta},s_2)$ large enough and $s_1>C_1(s_0+\nu+r)+\chi\tau+2\mathfrak{e}+\varrho$, it follows from \eqref{E2.33} and \eqref{E2.38} that
\begin{align*}
|\tilde{\mathcal{Y}}^{\mathfrak{A}}_{\mathfrak{B}}|_{s_0}|\mathcal{Z}^{\mathfrak{B}}_{\mathfrak{A}}|_{s_0}\leq CN^{C_1(s+\nu+r)+\chi\tau}C(s_1,\tilde{\Theta})N^{2\mathfrak{e}-(s_1-\rho)}\leq1/2.
\end{align*}
Combining this with the equality $\hat{\mathcal{P}}^{\mathfrak{B}}_{\mathfrak{A}}=\mathcal{X}^{\mathfrak{B}}_{\mathfrak{A}}+\mathcal{Z}^{\mathfrak{B}}_{\mathfrak{A}}$, Lemma \ref{lemma2}, \eqref{E2.34} and \eqref{E2.38} establishes that $\hat{\mathcal{P}}^{\mathfrak{B}}_{\mathfrak{A}}$ has a left inverse ${^{[-1]}}(\hat{\mathcal{P}}^{\mathfrak{B}}_{\mathfrak{A}})$ with
\begin{align}
|^{[-1]}(\hat{\mathcal{P}}^{\mathfrak{B}}_{\mathfrak{A}})|_{s_0}\stackrel{\eqref{E1.33}}{\leq}&2
|\tilde{\mathcal{Y}}^{\mathfrak{A}}_{\mathfrak{B}}|_{s_0}
{\leq}CN^{C_1(s_{0}+\nu+r)+\chi\tau},\label{E2.39}\\
|^{[-1]}(\hat{\mathcal{P}}^{\mathfrak{B}}_{\mathfrak{A}})|_{s}\stackrel{\eqref{E1.92}}{\leq}&C(s)(|\tilde{\mathcal{Y}}^{\mathfrak{A}}_{\mathfrak{B}}|_{s}
+|\tilde{Y}^{\mathfrak{A}}_{\mathfrak{B}}|^2_{s_0}|Z^{\mathfrak{B}}_{\mathfrak{A}}|_{s})
{\leq} C'(s,\tilde{\Theta})N^{2\chi\tau+\mathfrak{e}+2C_1(s_0+\nu+r)}(N^{C_1s}+|\mathcal{Q}|_{s+\nu+r}).\label{E2.40}
\end{align}
Thus system \eqref{E2.5} is equivalent to
\begin{equation*}
\begin{cases}
u_{\mathcal{G}}=\tilde{\mathcal{P}}^{\mathfrak{B}}_{\mathfrak{G}}({^{[-1]}}
(\hat{\mathcal{P}}^{\mathfrak{B}}_{\mathfrak{A}}))\hat{\mathcal{S}}^{\mathfrak{A}}_{\mathfrak{A}}h_{\mathfrak{A}}+
\tilde{\mathcal{S}}^{\mathfrak{A}}_{\mathfrak{G}} h_{\mathfrak{A}},\\
u_{\mathfrak{B}}=({^{[-1]}}(\hat{\mathcal{P}}^{\mathfrak{B}}_{\mathfrak{A}}))
\hat{\mathcal{S}}^{\mathfrak{A}}_{\mathfrak{A}}h_{\mathfrak{A}}.
\end{cases}
\end{equation*}
This implies that
\begin{align*}
((\mathcal{A}^{\mathfrak{A}}_{\mathfrak{A}})^{-1})^{\mathfrak{A}}_{\mathfrak{G}}=
\tilde{\mathcal{P}}^{\mathfrak{B}}_{\mathfrak{G}}({^{[-1]}}\hat{\mathcal{P}}^{\mathfrak{B}}_{\mathfrak{A}})
\hat{\mathcal{S}}^{\mathfrak{A}}_{\mathfrak{A}}+\tilde{\mathcal{S}}^{\mathfrak{A}}_{\mathfrak{G}},\quad
((\mathcal{A}^{\mathfrak{A}}_{\mathfrak{A}})^{-1})^{\mathfrak{A}}_{\mathfrak{B}}=({^{[-1]}}
\hat{\mathcal{P}}^{\mathfrak{B}}_{\mathfrak{A}})\hat{\mathcal{S}}^{\mathfrak{A}}_{\mathfrak{A}}.
\end{align*}
The fact $\mathfrak{A}\in\mathfrak{N}$ with $\mathrm{diam}(\mathfrak{A})\leq4N'$ shows
\begin{align*}
\mathcal{Q}^{\mathfrak{n}}_{\mathfrak{n}'}=0\quad\text{if}\quad \mathrm{d}(\mathfrak{n},\mathfrak{n}')>{8{N'}},
\end{align*}
which leads to $|\mathcal{Q}|_{s+\nu+r}\leq C(N')^{\nu+r}|\mathcal{Q}|_{s}=CN^{\chi(\nu+r)}|\mathcal{Q}|_{s}$  due to \eqref{E1.62} and \eqref{E2.1}. Then it follows from \eqref{E1.23}, \eqref{E2.18}-\eqref{E2.19} and \eqref{E2.39}-\eqref{E2.40}  that
\begin{align}
|((\mathcal{A}^{\mathfrak{A}}_{\mathfrak{A}})^{-1})^\mathfrak{A}_{\mathfrak{B}}|_{s}\leq& C''(s,\tilde{\Theta})N^{2\mathfrak{e}+2\chi\tau+2C_1(s_0+\nu+r)}(N^{C_1s}+|\mathcal{Q}|_{s+\nu+r})\nonumber\\
\leq&C'''(s,\tilde{\Theta})N^{2\mathfrak{e}+2\chi\tau+2C_1(s_0+\nu+r)+\chi(\nu+r)}(N^{C_1s}+|\mathcal{Q}|_{s}).\label{E2.41}
\end{align}
In addition
\begin{align*}
|((\mathcal{A}^{\mathfrak{A}}_{\mathfrak{A}})^{-1})^\mathfrak{A}_{\mathfrak{B}}|_{s_0}
\stackrel{\eqref{E1.24}}{\leq}C|{^{[-1]}}\hat{\mathcal{P}}^{\mathfrak{B}}_{\mathfrak{A}}|_{s_0}
|\hat{\mathcal{S}}^{\mathfrak{A}}_{\mathfrak{A}}|_{s_0}
\stackrel{\eqref{E2.18},\eqref{E2.39}}{\leq}C'(s_1)N^{\mathfrak{e}+\chi\tau+C_1(s_0+\nu+r)}.
\end{align*}
Combining this with \eqref{E1.23}, \eqref{E2.14}-\eqref{E2.15}, \eqref{E2.41} gives that
\begin{align*}
|((\mathcal{A}^{\mathfrak{A}}_{\mathfrak{A}})^{-1})^\mathfrak{A}_{\mathfrak{G}}|_{s}&\leq C_{4}(s,\tilde{\Theta})N^{2\mathfrak{e}+2\chi\tau+2C_1(s_0+\nu+r)+\chi(\nu+r)}(N^{C_1 s}+|\mathcal{Q}|_{s}).
\end{align*}
Consequently,  for $N\geq \bar{{N}}(\Upsilon,\tilde{\Theta},s_1)$ large enough, we have
\begin{align*}
|(\mathcal{A}^\mathfrak{A}_{\mathfrak{A}})^{-1}|_{s}\leq&|((\mathcal{A}^{\mathfrak{A}}_{\mathfrak{A}})^{-1})^\mathfrak{A}_{\mathfrak{G}}|_{s}
+|((\mathcal{A}^{\mathfrak{A}}_{\mathfrak{A}})^{-1})^\mathfrak{A}_{\mathfrak{B}}|_{s}\\
\leq&C_{5}(s,\tilde{\Theta})N^{2\mathfrak{e}+2\chi\tau+2C_1(s_0+\nu+r)+\chi(\nu+r)}(N^{C_1s}+|\mathcal{Q}|_{s})\\
\leq&\frac{1}{4}(N')^{\tau_2}((N')^{\delta s}+|\mathcal{Q}|_{s}),
\end{align*}
if $\chi^{-1}C_1<\delta$, $\chi^{-1}(2\mathfrak{e}+2\chi\tau+2C_1(s_0+\nu+r)+\chi(\nu+r))<\tau_2$.
%
\end{proof}


\begin{thebibliography}{10}

\bibitem{Berti2012nonlinearity}
M.~Berti and P.~Bolle.
\newblock Sobolev quasi-periodic solutions of multidimensional wave equations
  with a multiplicative potential.
\newblock {\em Nonlinearity}, 25(9):2579--2613, 2012.

\bibitem{Berti2013Quasi}
M.~Berti and P.~Bolle.
\newblock Quasi-periodic solutions with {S}obolev regularity of {NLS} on
  {$\mathbf T^d$} with a multiplicative potential.
\newblock {\em J. Eur. Math. Soc. (JEMS)}, 15(1):229--286, 2013.

\bibitem{berti2015abstract}
M.~Berti, L.~Corsi, and M.~Procesi.
\newblock An abstract {N}ash-{M}oser theorem and quasi-periodic solutions for
  {NLW} and {NLS} on compact {L}ie groups and homogeneous manifolds.
\newblock {\em Comm. Math. Phys.}, 334(3):1413--1454, 2015.

\bibitem{Berti2011Duke}
M.~Berti and M.~Procesi.
\newblock Nonlinear wave and {S}chr\"odinger equations on compact {L}ie groups
  and homogeneous spaces.
\newblock {\em Duke Math. J.}, 159(3):479--538, 2011.

\bibitem{bourgain1994construction}
J.~Bourgain.
\newblock Construction of quasi-periodic solutions for {H}amiltonian
  perturbations of linear equations and applications to nonlinear {PDE}.
\newblock {\em Internat. Math. Res. Notices}, (11):475ff., approx.\ 21 pp.\
  (electronic), 1994.

\bibitem{bourgain1995construction}
J.~Bourgain.
\newblock Construction of periodic solutions of nonlinear wave equations in
  higher dimension.
\newblock {\em Geom. Funct. Anal.}, 5(4):629--639, 1995.

\bibitem{bourgain1998quasi}
J.~Bourgain.
\newblock Quasi-periodic solutions of {H}amiltonian perturbations of 2{D}
  linear {S}chr\"odinger equations.
\newblock {\em Ann. of Math. (2)}, 148(2):363--439, 1998.

\bibitem{Brocker1995representation}
T.~Br\"ocker and T.~tom Dieck.
\newblock {\em Representations of compact {L}ie groups}, volume~98 of {\em
  Graduate Texts in Mathematics}.
\newblock Springer-Verlag, New York, 1985.

\bibitem{Gao2015quasi}
J.~Chang, Y.~Gao, and Y.~Li.
\newblock Quasi-periodic solutions of nonlinear beam equation with prescribed
  frequencies.
\newblock {\em J. Math. Phys.}, 56(5):052701, 17, 2015.

\bibitem{craig1993newton}
W.~Craig and C.~E. Wayne.
\newblock Newton's method and periodic solutions of nonlinear wave equations.
\newblock {\em Comm. Pure Appl. Math.}, 46(11):1409--1498, 1993.

\bibitem{Eliasson2016beam}
L.~H. Eliasson, B.~Gr\'{e}bert, and S.~B. Kuksin.
\newblock {KAM} for the nonlinear beam equation.
\newblock {\em Geom. Funct. Anal.}, 26(6):1588--1715, 2016.

\bibitem{geng2003kam}
J.~Geng and J.~You.
\newblock K{AM} tori of {H}amiltonian perturbations of 1{D} linear beam
  equations.
\newblock {\em J. Math. Anal. Appl.}, 277(1):104--121, 2003.

\bibitem{gengyou2006kam}
J.~Geng and J.~You.
\newblock A {KAM} theorem for {H}amiltonian partial differential equations in
  higher dimensional spaces.
\newblock {\em Comm. Math. Phys.}, 262(2):343--372, 2006.

\bibitem{geng2006kam}
J.~Geng and J.~You.
\newblock K{AM} tori for higher dimensional beam equations with constant
  potentials.
\newblock {\em Nonlinearity}, 19(10):2405--2423, 2006.

\bibitem{Kuksin1987}
S.~B. Kuksin.
\newblock Hamiltonian perturbations of infinite-dimensional linear systems with
  imaginary spectrum.
\newblock {\em Funktsional. Anal. i Prilozhen.}, 21(3):22--37, 95, 1987.

\bibitem{Liang2006Quasi}
Z.~Liang and J.~Geng.
\newblock Quasi-periodic solutions for 1{D} resonant beam equation.
\newblock {\em Commun. Pure Appl. Anal.}, 5(4):839--853, 2006.

\bibitem{Lufang2015Quasi}
L.~Mi and H.~Cong.
\newblock Quasi-periodic solutions for {$d$}-dimensional beam equation with
  derivative nonlinear perturbation.
\newblock {\em J.Math.Phys.}, 56(7):072702, 2015.

\bibitem{Poschel1996kam}
J.~P\"oschel.
\newblock A {KAM}-theorem for some nonlinear partial differential equations.
\newblock {\em Ann. Scuola Norm. Sup. Pisa Cl. Sci. (4)}, 23(1):119--148, 1996.

\bibitem{poschel1996quasi}
J.~P{\"o}schel.
\newblock Quasi-periodic solutions for a nonlinear wave equation.
\newblock {\em Comment. Math. Helv.}, 71(2):269--296, 1996.

\bibitem{Shi2016On}
Y.~Shi.
\newblock On the existence of sobolev quasi-periodic solutions of
  multidimensional nonlinear beam equation.
\newblock {\em J. Math. Phys.}, 57(10):102701, 2016.

\bibitem{Wang2012result}
Y.~Wang and J.~Si.
\newblock A result on quasi-periodic solutions of a nonlinear beam equation
  with a quasi-periodic forcing term.
\newblock {\em Z. Angew. Math. Phys.}, 63(1):189--190, 2012.

\bibitem{Wayne1990}
C.~E. Wayne.
\newblock Periodic and quasi-periodic solutions of nonlinear wave equations via
  {KAM} theory.
\newblock {\em Comm. Math. Phys.}, 127(3):479--528, 1990.

\end{thebibliography}



\end{document}